\documentclass[reqno]{amsart}
\usepackage[dvipsnames]{xcolor}
\usepackage{marginnote}
\usepackage{thmtools}
\usepackage{amsfonts, fancyhdr, amsmath, amssymb, hyperref, url, amsthm, subfigure, tikz-cd, verbatim, enumitem,lscape, todonotes, varioref}
\usetikzlibrary{decorations.pathmorphing}
\usepackage[mathscr]{eucal} 

\usepackage[noabbrev, capitalise, nameinlink]{cleveref}

\crefname{equiv}{Equivalence}{Equivalences}
\Crefname{equiv}{Equivalence}{Equivalences}

\makeatletter
\if@cref@nameinlink
  \creflabelformat{equiv}{#2\textup{(#1)}#3}
\else
  \creflabelformat{equiv}{\textup{(#2#1#3)}}
\fi
\makeatother

\newenvironment{equivalence}
  {\begingroup\crefalias{equation}{equiv}\begin{equation}}
  {\end{equation}\endgroup}

\usepackage{longtable}
\usepackage{appendix}

\numberwithin{equation}{section}

\newtheorem{thm}{Theorem}[section]
\newtheorem{lem}[thm]{Lemma}
\newtheorem{quest}[thm]{Question}

\newtheorem{prop}[thm]{Proposition}
\newtheorem{df}[thm]{Definition}
\newtheorem*{df*}{Definition}

\newtheorem{cor}[thm]{Corollary}
\newtheorem{rmk}[thm]{Remark}
\newtheorem{notation}[thm]{Notation}

\newtheorem{exam}[thm]{Example}

\newtheorem{thmx}{Theorem}

\crefname{thm}{Theorem}{Theorems}
\Crefname{thm}{Theorem}{Theorems}
\crefname{thmx}{Theorem}{Theorems}
\Crefname{thmx}{Theorem}{Theorems}
\crefname{lem}{Lemma}{Lemmas}
\Crefname{lem}{Lemma}{Lemmas}
\crefname{prop}{Proposition}{Propositions}
\Crefname{prop}{Proposition}{Propositions}
\crefname{cor}{Corollary}{Corollaries}
\Crefname{cor}{Corollary}{Corollaries}
\crefname{rmk}{Remark}{Remarks}
\Crefname{rmk}{Remark}{Remarks}
\crefname{quest}{Question}{Questions}
\Crefname{quest}{Question}{Questions}
\crefname{notation}{Notation}{Notations}
\Crefname{notation}{Notation}{Notations}

\newcommand{\Z}{\mathbb{Z}}
\newcommand{\uZ}{\underline{\mathbb{Z}}}
\newcommand{\Ftwo}{\mathbb{F}_2}
\newcommand{\uFtwo}{\underline{\mathbb{F}}_2}
\newcommand{\uHom}{\underline{\operatorname{Hom}}}
\newcommand{\uExt}{\underline{\operatorname{Ext}}}

\newcommand{\upi}{\underline{\pi}}
\newcommand{\tr}{\operatorname{tr}}
\newcommand{\Pic}{\operatorname{Pic}}
\newcommand{\res}{\operatorname{res}}

\definecolor{crimson}{rgb}{0.86, 0.08, 0.24}
\definecolor{darkcyan}{rgb}{0.0, 0.55, 0.55}

\hypersetup{
hypertexnames=false,
colorlinks={true},
linkcolor= {darkcyan},
citecolor= darkcyan}

\author[Kong]{Hana Jia Kong}\address{School of Mathematical Sciences, Zhejiang University}\email{hjkong@zju.edu.cn}
\author[Krause]{Achim Krause}\address{Department of Mathematics, University of Oslo}\email{achimkr@math.uio.no}
\author[Liu]{Yutao Liu}\address{Department of Mathematics, University of Washington} \email{yutao492@uw.edu}
\author[Shi]{XiaoLin Danny Shi}\address{Department of Mathematics, University of Washington}\email{dannyshi@uw.edu}
\author[Xu]{Zhouli Xu} \address{Department of Mathematics, UCLA} \email{xuzhouli@ucla.edu}

\title{On the Anderson Dual of $H\underline{\Z}$ and $H\underline{\Ftwo}$}

\begin{document}
\begin{abstract}

In this paper, we give complete classifications of the finite groups $G$ for which the constant Eilenberg--Mac Lane spectra $H\underline{\mathbb Z}$ and $H\underline{\mathbb F}_2$ are Anderson self-dual up to a virtual representation shift, and we determine all such shifts. For both of these spectra, we determine the representation periodicity lattice for every finite group.

As applications, we obtain $RO(G)$-graded universal coefficient theorems relating equivariant homology and cohomology, together with formulas for the Anderson duals of $H\underline{\mathbb Z}$-modules. In the $H\underline{\mathbb F}_2$ self-dual cases, we identify the graded $\mathbb F_2$-linear dual of the coefficient ring $\pi_\star^G H\underline{\mathbb F}_2$ with a representation shift of the ring as a module over itself. This gives perfect multiplication pairings and graded self-injectivity.

\end{abstract}

\maketitle
\tableofcontents
\vspace{-32pt}

\section{Introduction}
\enlargethispage{2\baselineskip}

\subsection{Classifications and representation periodicity}

Classically, ordinary integral homology and cohomology are related by the universal coefficient theorem, and mod-$2$ cohomology is the linear dual of homology. In general, for any spectra $E$ and $X$ and every integer $n$, there is a natural short exact sequence
\[
0 \longrightarrow \operatorname{Ext}^1_{\mathbb Z}\!\bigl(E_{n-1}(X),\mathbb Z\bigr)
\longrightarrow (I_{\mathbb Z}E)^n(X)
\longrightarrow \operatorname{Hom}_{\mathbb Z}\!\bigl(E_n(X),\mathbb Z\bigr)
\longrightarrow 0,
\]
where $I_{\mathbb Z}E$ is the Anderson dual of $E$~\cite{And69}.  Both $H\mathbb Z$ and $H\mathbb F_2$ are Anderson self-dual up to suspension:
\[
I_{\mathbb Z}(H\mathbb Z)\simeq H\mathbb Z,
\qquad
I_{\mathbb Z}(H\mathbb F_2)\simeq\Sigma^{-1}H\mathbb F_2.
\]
 These equivalences recover the two classical statements above.

For a finite group $G$, the equivariant Anderson duals of $H\uZ$ and $H\uFtwo$ are 
\[I_{\mathbb Z}^G (H\uZ) \simeq H\uZ^*,
\qquad
I_{\mathbb Z}^G (H\uFtwo) \simeq \Sigma^{-1}H\uFtwo^*
\]
respectively, where $\uZ^*$ and $\uFtwo^*$ are obtained from the constant Mackey functors by interchanging restrictions and transfers. 

\begin{quest}\label{Quest:MainQuestion}\rm\leavevmode
\begin{enumerate}[label=\textup{(\arabic*)},nosep]
\item For which finite groups $G$ and which virtual representations $V\in RO(G)$ is there an equivalence
\begin{equivalence}\label{mainquestion}
H\uZ^{*}\wedge S^V\simeq H\uZ\,?
\end{equivalence}
\item For which pairs $(G,V)$ is there an equivalence
\begin{equivalence}\label{eq:HF2duality-intro}
I_{\mathbb Z}^{G}(H\uFtwo)\wedge S^V\simeq H\uFtwo\,?
\end{equivalence}
\end{enumerate}
\end{quest}

\begin{df}\rm
A pair $(G,V)$ satisfying \cref{mainquestion} is called an \emph{Anderson-dualizing pair}. We call $G$ an \emph{Anderson-dualizing group} if such a $V$ exists. For \cref{eq:HF2duality-intro}, we use the terms \emph{$H\uFtwo$-Anderson-dualizing pair} and \emph{$H\uFtwo$-Anderson-dualizing group}.
\end{df}

In this paper, we classify all Anderson-dualizing groups and pairs for $H\uZ$ and $H\uFtwo$, and determine the representation periodicity of both spectra for every finite group. The resulting dualities give $RO(G)$-graded universal coefficient theorems and a formula for the Anderson dual of every $H\uZ$-module. For $H\uFtwo$, they give perfect multiplication pairings and graded self-injectivity of the coefficient ring.

\begin{samepage}
\begin{thmx}[Group classifications]
{\label{thm:HF2groups-v2}} Let $G$ be a finite group.\label{localcriterion}
\begin{enumerate}[label=\textup{(\arabic*)},nosep]
\item $G$ is Anderson-dualizing if and only if the following conditions hold:
\begin{enumerate}[label=\textup{(\alph*)},nosep,leftmargin=*]
\item Every Sylow $p$-subgroup of $G$ is cyclic when $p$ is odd.
\item Every Sylow $2$-subgroup is cyclic or dihedral.
\item For every nontrivial cyclic $p$-subgroup $H\mathrel{\leq} G$, every element of $N_G(H)$ acts on $H$ either by the identity or by inversion.
\end{enumerate}
\item $G$ is $H\uFtwo$-Anderson-dualizing if and only if every Sylow $2$-subgroup of $G$ is cyclic or dihedral.
\end{enumerate}
\end{thmx}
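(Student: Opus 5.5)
We will reduce the global question to a local one at each prime and then to an explicit condition on subgroups. The starting observation is that an equivalence $H\uZ^*\wedge S^V\simeq H\uZ$ is the same as saying that $\Sigma^V H\uZ^*$ is an invertible $H\uZ$-module whose underlying homotopy Mackey functor is concentrated in degree $0$ and equals $\uZ$. Our plan is to test this on fixed points and geometric fixed points. For every subgroup $H\le G$, restriction to $H$ preserves the condition, with $V$ replaced by $\res^G_H V$. So the condition is inherited by subgroups; in particular it is inherited by Sylow subgroups and by the normalizer quotients $N_G(H)/C$ acting on cyclic $H$.

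\textbf{Reduction to $p$-groups plus a normalizer condition.} After $p$-localization, $H\uZ_{(p)}$-modules over $G$ are detected by restriction to a Sylow $p$-subgroup $P$ together with the $G$-stability of the resulting data (a Cartan--Eilenberg-type stable element argument for Mackey functors). Both $\uZ$ and $\uZ^*$ are cohomological Mackey functors, so we may use this. We therefore expect the following.
\begin{enumerate}[label=\textup{(\roman*)},nosep]
\item $G$ is Anderson-dualizing iff, for each $p$, some $V_p$ works for $P$ and $V_p$ can be chosen to be the restriction of a $G$-representation.
\item The element $V\in RO(G)$ is then assembled using the fact that rationally $\uZ^*\otimes\mathbb Q\cong\uZ\otimes\mathbb Q$ up to an automorphism, so that the prime-local $V_p$ only need to agree after a suitable restriction.
\end{enumerate}

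\textbf{The $p$-group case.} For a cyclic $p$-group $C_{p^n}$ we compute directly with the standard cellular chain complexes of $S^{\lambda}$, where $\lambda$ is a faithful $2$-dimensional rotation. A suitable virtual combination (something like $2-\lambda$, or $2-\lambda_j$ summed appropriately) has the property that $\Sigma^{V}H\uZ$ has homotopy $\uZ^*$ concentrated in one degree. For $D_{2^n}$ we do the analogous chain-level computation using the sign representations and the faithful $2$-dimensional representation. For necessity we use geometric fixed points: an equivalence forces $\Phi^{H}$ of both sides to agree. For a $p$-group $P$ that contains $C_p\times C_p$ (or, at $p=2$, a non-dihedral, non-cyclic group), we compare the $RO$-graded dimensions of the homotopy groups of $\Phi^{C_p\times C_p}$. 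Concretely, the periodicity forced on $\upi_\star$ of $H\uZ$ would require a representation sphere whose Euler class is invertible in a way incompatible with the known cohomology of the rank-$2$ elementary abelian group. This uses the classical fact that the $p$-groups without a $C_p\times C_p$ subgroup are exactly the cyclic and generalized quaternion groups (for $p=2$). The quaternion groups we rule out separately by the $C_4\subset Q_8$ normalizer structure: the two-dimensional free part cannot be matched by real representations.

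\textbf{Condition (c) and the $\uFtwo$ case.} The shift $V$ must be fixed by the $N_G(H)$-action on $RO(H)$. For cyclic $H=C_{p^m}$, the restriction $\res^G_H V$ must contain the specific faithful $\lambda$ up to the relevant equivalence of representation spheres for $H\uZ$. Now $S^{\lambda^{k}}\wedge H\uZ\simeq S^{\lambda}\wedge H\uZ$ only for $k\equiv\pm1$, since the $H\uZ$-orientation classes differ by the unit $k$ in $\Z/p^m$. So $N_G(H)$ must act by $\pm1$. For $H\uFtwo$ the units in $\mathbb F_2$ are trivial, so this constraint disappears, odd primes are irrelevant (there $\uFtwo$ is projective/trivially self-dual), and only the Sylow $2$ condition survives.

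\textbf{Main obstacle.} We expect the hardest step to be sufficiency: gluing the local shifts into a global $V\in RO(G)$ that restricts correctly on every Sylow subgroup simultaneously. Our first attempt would be to take $V=\sum_p V_p$, with each $V_p$ induced/averaged from a $\lambda$ chosen to be stable under the normalizer, and then to use condition (c) to show that this is well defined.
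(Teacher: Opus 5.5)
\textbf{Necessity.} Your argument for condition (c) rests on a false claim. For every $k$ prime to $p$, the class $\lambda(k,p^m)-\lambda(1,p^m)$ has zero fixed-point dimension on every subgroup of $C_{p^m}$. By Angeltveit's theorem, $S^{\lambda(k)}\wedge H\uZ\simeq S^{\lambda(1)}\wedge H\uZ$ for all such $k$; these differences lie in the lattice of \cref{cor:cyclicperiodicity}. So no $H\uZ$-level invariant detects the Galois twist, and reasoning ``up to the relevant equivalence'' proves nothing.

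You are right that $V|_H$ is invariant under the automorphism $g\mapsto g^a$ induced by $N_G(H)$. What makes this useful is an integrality count inside $RO(H)$ itself. The constraints $|V|=0$ and $|V^C|=-2$ for $1\neq C\leq H$ force the total coefficient $\mu_0(V|_H)$ of the faithful rotation representations to be exactly $1$. Invariance makes $\mu_0(V|_H)$ divisible by the orbit length $d$ of multiplication by $a$ on $(\mathbb Z/p^m)^\times/\{\pm1\}$, so $d=1$. Likewise, odd primes drop out of the $H\uFtwo$ problem because $p$-localization kills $2$-torsion homotopy, not because $\mathbb F_2^\times$ is trivial.

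Your Sylow obstructions are also incomplete at $p=2$. Dihedral $2$-groups contain $C_2\times C_2$ and are allowed, so the classification of $p$-groups without a $C_p\times C_p$ subgroup is the wrong input there. You need that a $2$-group which is neither cyclic nor dihedral contains $C_2\times C_2\times C_2$, $C_4\times C_2$ or $Q_8$, together with obstructions for all three; you only treat $Q_8$. The paper obtains these, and the odd $C_p\times C_p$ case, by elementary linear algebra on the coefficient conditions of cyclic restrictions (e.g.\ $pS=p+1$). The Euler-class comparison you gesture at is not an argument as written. For part (2), these computations must be rerun for $V-1$, which needs the fact that $H\uFtwo$- and $H\uZ$-periodicity agree on $2$-groups.

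\textbf{Sufficiency.} This step, which you correctly flag as the crux, is missing, and your proposed fix would not work. Criterion (i) is not usable: you need a single $V$ whose restrictions to all Sylow subgroups are dualizing simultaneously, and separately chosen global lifts $V_p$ need not restrict to periods at the other primes. Remark (ii) is rational, whereas the whole difficulty is integral. By Sylow detection and Angeltveit's theorem, you need $V\in RO(G)$ with prescribed fixed-point dimensions on all prime-power subgroups: $0$ on $e$, $-2$ on nontrivial cyclic ones, and $-3$ on noncyclic $2$-subgroups. The fixed-dimension functionals are linearly independent, so a rational solution always exists. This holds even for $C_p\times C_p$ with $p$ odd, where no integral solution exists.

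Inducing or averaging rotation representations does not control the restrictions. Mackey's formula adds contributions from every double coset $PgP$, i.e.\ from fusion among subgroups of $P$. The paper solves exactly this integrality problem with Bauer's realization theorem (\cref{lem:prime-power-realization}):
\begin{itemize}
\item The Borel--Smith conditions hold on each Sylow subgroup, because $f$ restricts there to the dimension function of a basic local shift.
\item Condition (c) is used precisely to verify Bauer's Artin congruence $f(L)\equiv f(H)\pmod{q^{r-l}}$. The action of $M/H$ on $H/L\cong C_p$ has order at most $2$, while $f(L)-f(H)\in\{0,2\}$.
\item For $H\uFtwo$ the same theorem is applied to the function equal to $1$ on $e$ and on odd prime-power subgroups, $-1$ on nontrivial cyclic $2$-subgroups, and $-2$ on noncyclic ones. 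There the congruence is automatic.
\end{itemize}
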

\end{samepage}
\pagebreak[0]

The groups satisfying the integral conditions admit an explicit classification.

\begin{thmx}[Explicit group classification]\label{thm:maintheorem}\leavevmode\\*
A finite group $G$ is Anderson-dualizing if and only if $G\cong K\ltimes C_m$, where:
\begin{enumerate}[label=\textup{(\arabic*)}]
\item $K$ is one of
\[
C_{2^n},\qquad D_{2^{n+1}},\qquad T_{3^n},\qquad O_{3^n},\qquad A_5,
\]
with $n\geq0$ in the cyclic family and $n\geq1$ in the other three indexed families.
\item $m$ is odd and $(m,|K|)=1$.
\item Every element of $K$ acts on $C_m$ by an automorphism of order at most $2$.
\end{enumerate}
\end{thmx}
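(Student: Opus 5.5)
This reduces to pure finite group theory: by \cref{thm:HF2groups-v2}(1) it suffices to show that the three conditions (a)--(c) hold exactly for the groups $K\ltimes C_m$ of the stated form. The plan is to prove the two directions separately, passing through the odd part and the Sylow structure.

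\textbf{Conditions (a)--(c) imply the semidirect-product form.} First use (c) at odd primes. For odd $p$, a Sylow $p$-subgroup $P$ is cyclic by (a), and by (c) $N_G(P)/C_G(P)$ embeds in $\{\pm1\}\subset\operatorname{Aut}(P)$. If $N_G(P)=C_G(P)$, Burnside's transfer theorem gives a normal $p$-complement. If the quotient has order $2$, inversion acts on the cyclic group $P$. One then has to decide whether such primes survive into $K$ or go into $C_m$. The guiding examples are $3$ in $T_{3^n}$, $O_{3^n}$, $A_5$, and $5$ in $A_5$.

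Next use the $2$-part. A Sylow $2$-subgroup $S$ is cyclic or dihedral by (b).
\begin{itemize}
\item If $S$ is cyclic, Burnside gives a normal $2$-complement $N$. All Sylow subgroups of $N$ are cyclic, so $N$ is metacyclic (Hölder--Burnside--Zassenhaus). Condition (c) then forces $N$ to be abelian, hence cyclic, so $N\cong C_m$ with $m$ odd and $G\cong C_{2^n}\ltimes C_m$.
\item If $S$ is dihedral of order at least $4$, apply the Gorenstein--Walter classification. $G/O(G)$ is one of: a $2$-group; $A_7$; a subgroup of $P\Gamma L(2,q)$ containing $PSL(2,q)$.
\end{itemize}

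In the dihedral case, (c) rules out almost everything. Cyclic subgroups of order $(q\pm1)/2$ in $PSL(2,q)$ have normalizers acting by inversion only, which is fine. However, the Sylow $p$-subgroup for $p\mid q$ is elementary abelian, so it is cyclic only if $q=p$. Moreover its normalizer (a Borel) acts through a cyclic group of order $(p-1)/2$, and this must be at most $2$. This leaves $p\le5$, i.e.\ $PSL(2,3)\cong A_4$, $PSL(2,5)\cong A_5$, and the small cases $PSL(2,2)$ and $PGL(2,3)\cong S_4$. The group $A_7$ fails (a) at $p=3$, and odd $q>5$ fails in the same way. The cases where $G/O(G)$ is a $2$-group, or an extension of $A_4$ by a $2$-group, are handled by the transfer arguments again.

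Finally, show $O(G)$ is cyclic and complemented. It has cyclic Sylows, so it is metacyclic, and (c) makes it abelian, hence cyclic. Schur--Zassenhaus applied to the coprime part then splits off $C_m$. The $3$-parts that get extended give the families $T_{3^n}$ and $O_{3^n}$: these are the non-split $C_{3^n}$-extensions of $Q_8$-less forms, namely $C_{3^n}$ acted on via $A_4$ and $S_4$ quotients. Condition (3) of the theorem on automorphism order is then just (c) restricted to $K$ acting on $C_m$.

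\textbf{The semidirect-product form implies (a)--(c).} Check directly that each of $C_{2^n}$, $D_{2^{n+1}}$, $T_{3^n}$, $O_{3^n}$, $A_5$ satisfies (a)--(c). Then verify that the conditions persist for $K\ltimes C_m$ when $m$ is odd and coprime to $|K|$, with every element acting by an automorphism of order at most $2$. Sylow subgroups are those of $K$ or of $C_m$. For the normalizer condition, decompose a cyclic $p$-subgroup $H$ into its $K$-part or $C_m$-part. On $C_m$ the action is through $\pm1$ on each $C_{p^k}$ factor, since automorphisms of order at most $2$ of a cyclic $p$-group with $p$ odd are $\pm1$. Elements of $C_m$ centralize $C_m$, and conjugates of $K$ reduce to the $K$ case.

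\textbf{Main obstacle.} The hard step is the dihedral case: running through Gorenstein--Walter and pinning down exactly which extensions of $A_4$, $S_4$ and $A_5$ by cyclic $3$-groups satisfy (c). This is where the families $T_{3^n}$ and $O_{3^n}$ arise. I would first try to prove that $O_{3'}$-type arguments reduce everything to $n=1$ plus a cyclic $3$-extension with $3$-part acting by inversion.
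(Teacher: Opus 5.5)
Your reduction to group theory through part (1) of \cref{localcriterion} is legitimate, since the paper proves that part independently via Bauer's theorem and Sylow detection, although the paper's own ``if'' direction writes down explicit shifts instead. The cyclic-Sylow-$2$ branch is correct. Applying Gorenstein--Walter directly to $\bar G=G/O(G)$, with the four-group regarded as dihedral and $O(G)$ the largest normal subgroup of odd order, is a reasonable non-inductive alternative to the paper's induction on $|G|$.

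But the ``only if'' direction has a genuine gap, exactly where two of the five families are produced. Gorenstein--Walter only identifies $\bar G$. You then have to solve an extension problem in which $O(G)$ and $\bar G$ share the prime $3$: $T_{3^n}$ and $O_{3^n}$ have odd core $C_{3^{n-1}}$ with quotients $A_4$ and $S_4$, and for $n\geq2$ this core has no complement. Your last paragraph splits off only the part of $O(G)$ prime to $|\bar G|$ and then asserts the answer; calling these groups ``non-split $C_{3^n}$-extensions of $Q_8$-less forms'' is not an argument. Your closing paragraph concedes that this step is open. The heuristic proposed there is also off: in the $A_4$ case the $3$-part of $O(G)$ is central, not inverted.

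The paper never meets this extension problem. $T_{3^n}$ arises in the Burnside case at $p=3$, and $O_{3^n}$ in the case $G=N_G(H)$ (\cref{inductnormal} with \cref{Tindexextension}). Walter and Gorenstein--Walter are applied only after the odd core has been shown to be trivial.

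There are two smaller omissions. You apply (c) to $\bar G$ without checking that it descends from $G$. Your bound $(p-1)/2\leq 2$ concerns $\operatorname{PSL}_2(p)$, so by itself it does not exclude $\operatorname{PGL}_2(5)\cong S_5$, whose Borel acts on $C_5$ through all of $\mathbb{F}_5^\times$.

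Here is how to close the gap.

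\emph{Descent of (c).} Let $\bar H\leq\bar G$ be a cyclic $p$-subgroup with preimage $M$, and let $H$ be a Sylow $p$-subgroup of $M$. Then $H$ is cyclic (by (a) for odd $p$, and because $H\cong\bar H$ for $p=2$) and maps onto $\bar H$. The Frattini argument writes every element of $N_G(M)$ as an element of $M$, which acts trivially on the abelian group $\bar H$, times an element of $N_G(H)$.

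\emph{Action on the odd core.} $O(G)=C_{m'}$ is cyclic, so $G$ acts on it through $\bar G\to\operatorname{Aut}(C_{m'})$. By (c), the image consists of elements acting by $\pm1$ on each Sylow subgroup. Hence $O(G)$ is central when $\bar G\cong A_4$ or $A_5$, and the action factors through the sign when $\bar G\cong S_4$.

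\emph{The case $A_5$.} Perfectness and $|H_2(A_5;\mathbb{Z})|=2$ split the central extension, so $G\cong A_5\times C_{m'}$, and (a) forces $(m',30)=1$.

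\emph{The case $A_4$.} Write $m'=3^k m$ with $(m,6)=1$. The preimage of the Klein four subgroup is $E\times C_{m'}$, so $E\triangleleft G$. A Sylow $3$-subgroup $Q$ is cyclic of order $3^{k+1}$ by (a) and acts on $E$ by a $3$-cycle. Therefore $G=(Q\ltimes E)\times C_m\cong T_{3^{k+1}}\times C_m$.

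\emph{The case $S_4$.} Split off $C_m$ by Schur--Zassenhaus. Apply the $A_4$ argument to the preimage of $A_4$ in the complement. Then \cref{Tindexextension} gives $O_{3^{k+1}}\ltimes C_m$.

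\emph{The case $\bar G$ a $2$-group.} The extension is coprime and Schur--Zassenhaus suffices.

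In every case, condition (3) is (c) applied to the Sylow subgroups of $C_m$.
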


In \cref{thm:maintheorem}, $D_{2r}=C_2\ltimes C_r$ is the dihedral group of order $2r$, where the generator of $C_2$ acts by inversion. In particular, $D_4=C_2\times C_2$. In a semidirect product $A\ltimes B$, the factor $B$ is normal.

Two families extending $A_4$ and $S_4$ occur in the list of \cref{thm:maintheorem}. To introduce these families, let $E=C_2\times C_2$ and identify $\operatorname{Aut}(E)$ with $S_3$ through its action on the three nonidentity elements of $E$. For $n\geq1$, define
\[
\begin{aligned}
T_{3^n}&=C_{3^n}\ltimes E,\\
O_{3^n}&=D_{2\cdot3^n}\ltimes E,
\end{aligned}
\]
using the natural quotient maps $C_{3^n}\twoheadrightarrow C_3\mathrel{\leq} S_3$ and $D_{2\cdot3^n}\twoheadrightarrow D_6\cong S_3$. The subgroup $T_{3^n}\mathrel{\leq} O_{3^n}$ has index $2$. In particular, $T_3\cong A_4$ and $O_3\cong S_4$.

Condition \textup{(3)} says that on each Sylow subgroup of $C_m$, every element of $K$ acts either by the identity or by inversion. In particular, $A_5$ is the only nonabelian simple Anderson-dualizing group.

\begin{cor}\label{listremark}
Let $G$ be a finite subgroup of $SO(3)$, and let $W$ be the restriction to $G$ of the standard three-dimensional representation of $SO(3)$. Then $(G,W-3)$ is an Anderson-dualizing pair, and $(G,W-2)$ is an $H\uFtwo$-Anderson-dualizing pair:
\[
\begin{aligned}
H\uZ^*\wedge S^{W-3}&\simeq H\uZ,\\
I_{\mathbb Z}^{G}(H\uFtwo)\wedge S^{W-2}&{}\simeq H\uFtwo.
\end{aligned}
\]
\end{cor}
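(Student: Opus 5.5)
We prove the corollary directly, without the full classification. The strategy is to reduce both equivalences to an algebraic statement about Mackey functors, and then verify that statement via the geometry of the $G$-sphere $S^W$, which is the one-point compactification of $\mathbb R^3$ with its $SO(3)$-action.

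\textbf{Reduction to a Mackey functor computation.} First we compute $H\uZ\wedge S^{W}$. Since $S^W$ is the unreduced suspension of the unit sphere $S(W)\cong S^2$, there is a cofiber sequence
\[
S(W)_+\longrightarrow S^0\longrightarrow S^W .
\]
A finite subgroup $G\le SO(3)$ acts on $S(W)=S^2$ by orientation-preserving isometries. So $S(W)$ admits a $G$-CW structure in which every cell stabilizer is cyclic or trivial: vertices at rotation axes, edges and $2$-cells free or with cyclic stabilizers.

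The quotient $S^2/H$ is again a $2$-sphere for every $H\le G$. This is the orbifold fact, and it uses orientation preservation. Hence the cellular chain complex $C_*(S(W);\uZ)$ has the following homology:
\begin{itemize}
\item in degree $2$ it is $\uZ^*$, because $H_2(S^2/H)=\Z$, restriction is the transfer of fundamental classes (multiplication by the index), and transfer is the identity;
\item in degree $0$ it is $\uZ$, because $H_0(S^2/H)=\Z$ with the constant structure;
\item in degree $1$ it is $0$.
\end{itemize}

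Checking that $\underline{H}_2$ has exactly the "dual" structure is where orientability matters. The $G$-action preserves the orientation class at every level, so no sign twists appear. This gives $\underline{H}_2 \cong \uZ^*$ rather than a twisted variant.

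\textbf{Identifying $H\uZ\wedge S^W$.} From the cofiber sequence, $H\uZ\wedge S(W)_+\to H\uZ$ is an isomorphism on $\underline\pi_0$. Hence $H\uZ\wedge S^W$ has a single nonzero homotopy Mackey functor $\underline H_2(S(W))=\uZ^*$, placed in degree $3$. Therefore
\[
H\uZ\wedge S^W\simeq \Sigma^3 H\uZ^*,
\]
that is, $H\uZ\wedge S^{W-3}\simeq H\uZ^*$.

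Smashing with $S^{3-W}$ gives $H\uZ^*\wedge S^{3-W}\simeq H\uZ$. To reach the stated form we need the sign-reversed shift. Apply $I^G_{\Z}$ to $H\uZ\wedge S^{W-3}\simeq H\uZ^*$, using
\[
I_\Z^G(X\wedge S^V)\simeq I_\Z^G(X)\wedge S^{-V}
\]
and $I_\Z^G H\uZ^*\simeq H\uZ$ (double duality). This yields $H\uZ^*\wedge S^{3-W}\simeq H\uZ$. Combining the two equivalences,
\[
H\uZ\wedge S^{2(W-3)}\simeq H\uZ .
\]
Then
\[
H\uZ^*\wedge S^{W-3}\simeq H\uZ\wedge S^{2(W-3)}\simeq H\uZ,
\]
as claimed.

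\textbf{The $H\uFtwo$ case.} Run the same argument with $\uFtwo$ coefficients. The homology of $S(W)$ is $\uFtwo^*$ in degree $2$ and $\uFtwo$ in degree $0$, and it vanishes in degree $1$ because $H_1(S^2/H;\Ftwo)=0$. This gives $H\uFtwo\wedge S^{W-3}\simeq H\uFtwo^*$. Since $I_\Z^G(H\uFtwo)\simeq\Sigma^{-1}H\uFtwo^*$, we get
\[
I_\Z^G(H\uFtwo)\wedge S^{W-2}\simeq H\uFtwo^*\wedge S^{W-3}.
\]
Applying the same periodicity trick, via Anderson duality applied to $H\uFtwo\wedge S^{W-3}\simeq H\uFtwo^*$, identifies the right-hand side with $H\uFtwo$.

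\textbf{Main obstacle.} The key step is computing $\underline H_*(S(W))$ as a Mackey functor: showing $H_1(S^2/H)=0$ and that the restriction maps on $H_2$ are multiplication by the index. The transfers must also be checked to be the identity, with no torsion or sign twists, uniformly for the cyclic, dihedral, $A_4$, $S_4$ and $A_5$ cases. We handle this via the orbifold description $S^2/H\cong S^2$ and the degree of the branched covering $S^2/K\to S^2/H$ for $K\le H$.
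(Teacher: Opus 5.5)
There is a genuine gap at the step you flag as the main obstacle, and the sign-reversal step cannot repair it. Bredon homology is built from the covariant structure of the coefficient Mackey functor, i.e.\ its transfers. So the identification $H^K_*(X;M)\cong H_*(X/K;\Z)$ holds for $M=\uZ^*$, whose transfers are identities. It does not hold for $\uZ$, whose transfers are indices. Dually, $H_K^*(X;\uZ)\cong H^*(X/K;\Z)$ is the identification the paper uses in \cref{lem:orbit-sphere}.

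With $\uZ$ coefficients, the cells on the rotation axes attach through these indices and produce torsion. Take $G=C_2$ acting by a half-turn, so $W=1+2\sigma$. Then $H_0^{C_2}(S(W);\uZ)\cong\Z\oplus\Z/2$, and $\pi_1^{C_2}(H\uZ\wedge S^W)\cong\widetilde{H}_0^{C_2}(S^{2\sigma};\uZ)\cong\Z/2$, because the free $1$-cell of $S^{2\sigma}$ attaches to the fixed point $0$ via $\tr_e^{C_2}=2$. So $H\uZ\wedge S^W$ is not concentrated in degree $3$.

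You have also reversed restriction and transfer on $\underline{H}_2$. In the orbit-space model, the map induced by $S^2/L\to S^2/K$ is the Mackey transfer, and it multiplies the fundamental class by the degree $[K:L]$. The restriction is the branched-cover umkehr map, which sends fundamental class to fundamental class. Thus $\underline{H}_2(S(W))\cong\uZ$, not $\uZ^*$.

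In fact your intermediate claim $H\uZ\wedge S^{W-3}\simeq H\uZ^*$ fails for every nontrivial $G$. Together with the statement being proved, it would give $H\uZ\wedge S^{2(W-3)}\simeq H\uZ$. But for $C\le G$ of prime order, $W^C$ is the rotation axis, so $|2(W-3)^C|=-4$. Geometric fixed points would then give $\Sigma^{-4}\Phi^C H\uZ\simeq\Phi^C H\uZ$, contradicting \cref{geofixedhz}.

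The periodicity trick is also a non sequitur. Applying $I^G_{\Z}$ to $H\uZ\wedge S^{W-3}\simeq H\uZ^*$ yields $H\uZ^*\wedge S^{3-W}\simeq H\uZ$. That is literally the equivalence you already had after smashing with $S^{3-W}$, and combining an equivalence with itself cannot produce the period $2(W-3)$, which is false anyway. Your $H\uFtwo$ paragraph has the same two defects: orbit spaces compute the $\uFtwo^*$-theory, and for $|G|$ even \cref{lem:HF2geofixed-v2} rules out the period $2(W-3)$.

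The fix is to run your own argument with $H\uZ^*$ in place of $H\uZ$. Orbit-space homology then genuinely computes $\underline{\pi}_*(H\uZ^*\wedge S(W)_+)$. This is $\uZ^*=\underline{\pi}_0(H\uZ^*)$ in degree $0$, zero in degree $1$, and $\uZ$ in degree $2$ by the degree count above. Hence $H\uZ^*\wedge S^W\simeq\Sigma^3 H\uZ$, which is the integral statement with the correct sign and no trick needed.

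That is the homological mirror of \cref{lem:orbit-sphere}. There the paper computes $\underline{\pi}_*(H\uZ\wedge S^{3-W})$ from $\widetilde{H}^{3-*}(S^W/K;\Z)$ and reads off $\uZ^*$ from the same degrees. The mod-$2$ statement then follows by reduction mod $2$, as in \cref{rmk:reductionmodn} and \cref{prop:HZtoHF2}, rather than from a separate computation.
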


An integral Anderson-dualizing pair $(G,V)$ gives an $H\uFtwo$-Anderson-dualizing pair $(G,V+1)$. The converse fails: every finite group of odd order admits the mod-$2$ shift $1$, whereas $C_3\times C_3$ admits no integral dualizing shift.

\begin{thmx}[All Anderson-dualizing shifts]\label{thm:maintheoremB}
{\label{thm:HF2shifts-v2}}\leavevmode\\*
Let $G$ be a finite group and $V\in RO(G)$.
\begin{enumerate}[label=\textup{(\arabic*)},nosep]
\item \(\displaystyle
H\uZ^*\wedge S^V\simeq H\uZ
\quad\mathrel{\Longleftrightarrow}\quad
\begin{cases}
|V|=0,\\
|V^C|=-2 \quad\text{for every } C,
\end{cases}
\)
\par\noindent
where $C$ ranges over the nontrivial cyclic subgroups of $G$ of prime-power order.
\item \(\displaystyle
I_{\mathbb Z}^{G}(H\uFtwo)\wedge S^V\simeq H\uFtwo
\quad\mathrel{\Longleftrightarrow}\quad
\begin{cases}
|V|=1,\\
|V^C|=-1 \quad\text{for every } C,
\end{cases}
\)
\par\noindent
where $C$ ranges over the nontrivial cyclic $2$-subgroups of $G$.
\end{enumerate}
\end{thmx}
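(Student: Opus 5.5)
We plan to reduce both equivalences in \cref{thm:maintheoremB} to statements about $RO(G)$-graded homotopy Mackey functors, and then to local conditions at cyclic subgroups of prime-power order.

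\textbf{Step 1: reduction to homotopy Mackey functors.} For part (1), $H\uZ^*\wedge S^V$ is an $H\uZ$-module. Giving an equivalence $H\uZ^*\wedge S^V\simeq H\uZ$ is the same as asking that $S^V\wedge H\uZ^*$ be an Eilenberg--Mac Lane spectrum concentrated in degree $0$ whose $\underline{\pi}_0$ is $\uZ$. Indeed, any $H\uZ$-module $M$ with $\underline{\pi}_*M=\uZ$ in degree $0$ receives a unit map $H\uZ\to M$, which is then an equivalence. So we must determine when
\[
\underline{\pi}_n(S^V\wedge H\uZ^*)=\uZ\ \text{for } n=0, \qquad \underline{\pi}_n(S^V\wedge H\uZ^*)=0\ \text{for } n\neq 0.
\]
Part (2) is identical with $\Sigma^{-1}H\uFtwo^*$ in place of $H\uZ^*$.

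\textbf{Step 2: necessity.} Restricting to the trivial subgroup forces $|V|=0$ in part (1), and $|V|=1$ in part (2), since $I_{\mathbb Z}H\mathbb F_2\simeq\Sigma^{-1}H\mathbb F_2$. Next we restrict to a nontrivial cyclic $p$-subgroup $C$; being $\uZ$ in degree $0$ is preserved under restriction. For a cyclic $p$-group one can compute $\underline{\pi}_\star H\uZ$ explicitly, using the standard cellular chains for representation spheres of cyclic groups. This should show that $S^{V}\wedge H\uZ^*\simeq H\uZ$ over $C$ forces $|V^C|=-2$. Concretely, the $C$-geometric fixed points must match: $\Phi^C H\uZ$ and $\Phi^C H\uZ^*$ differ by a shift of $2$ in the $p$-periodic pattern, namely $\mathbb Z/p[b^{\pm}]$-type structure. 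Comparing the bottom class then pins down $|V^C|$, while $|V^{C'}|$ for the intermediate subgroups is forced by induction on $|C|$. For $\uFtwo$ and odd $p$, $\Phi^C H\uFtwo=0$, so only the $2$-subgroups impose conditions, which explains why only cyclic $2$-subgroups appear in part (2).

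\textbf{Step 3: sufficiency.} We detect equivalences of $H\uZ$-modules (or of $\underline{\pi}_0$-data) via a local criterion, presumably established earlier in the paper in connection with \cref{thm:HF2groups-v2}. A map of $p$-local Mackey-functor-valued objects is an equivalence if it is one after restriction to each Sylow $p$-subgroup, by a transfer argument: $p$-locally, restriction to a Sylow subgroup is split injective on Mackey functors. Over a $p$-group $P$, we use isotropy separation, or geometric fixed points $\Phi^H$ for all $H\le P$. The degree-$0$ class constructed from the dimension conditions (for instance a generator of $\underline{\pi}_0$ at $G/e$ lifted via the hypothesis $|V|=0$) gives a map $H\uZ\to S^V\wedge H\uZ^*$. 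We then check it on $\Phi^H$ for each $H$.

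\textbf{Main obstacle.} For noncyclic $H$ the geometric fixed points $\Phi^H H\uZ$ are rationally trivial and governed by those cyclic subgroups of $H$. The main obstacle is showing that the conditions at cyclic $p$-subgroups alone control the noncyclic ones, such as the dihedral $2$-subgroups allowed by \cref{thm:HF2groups-v2}. I would first try to prove that the map is an equivalence on underlying $\underline{\pi}_*$ by induction over the subgroup lattice, using the cofiber sequence $EP_+\to S^0\to\tilde EP$. For dihedral groups, the explicit computation of $\underline{\pi}_\star H\uZ$ should confirm that no extra condition appears there.
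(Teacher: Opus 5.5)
\textbf{Necessity.} Your necessity argument is essentially the paper's. You restrict to each nontrivial cyclic $C$ and compare geometric fixed points, using \cref{geofixedhz} and $\Phi^C H\uZ^*\simeq\Sigma^2\Phi^C H\uZ$; the paper reads the latter off from the basic shift $\lambda(1,|C|)-2$ of \cref{pgrouprep}. One correction: $\pi_*\Phi^{C_p}H\uZ\cong\mathbb F_p[b]$ with $|b|=2$ is polynomial, not Laurent. If it were $b$-periodic, a shift by $2$ would be invisible and the comparison would give nothing.

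\textbf{The gap is sufficiency.}
\begin{itemize}
\item The obstacle you single out has a short answer you are missing. On a $p$-group, put $a_C(U)=\sum_{\langle g\rangle=C}\chi_U(g)$. Then $|C|\,|U^C|=\sum_{D\le C}a_D(U)$ and $|H|\,|U^H|=\sum_{C\le H\text{ cyclic}}a_C(U)$. So cyclic fixed-point dimensions determine all of them (\cref{lem:cyclic-dimension-detection}); no dihedral computation or isotropy induction is needed.
\item More seriously, nothing in your plan turns dimension data on a Sylow subgroup $P$ into an equivalence. Abstract equivalences between $\Phi^H$ of source and target for all $H\le P$ do not show that a given map is an equivalence on $\Phi^H$. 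For noncyclic $H$ you do not even know $\Phi^H H\uZ^*$; it is $\Sigma^3\Phi^H H\uZ$ for noncyclic dihedral $H$, which again comes from a basic shift.
\item The paper's mechanism has three steps. It constructs basic shifts $V_P$ for cyclic and dihedral $P$ (\cref{lem:orbit-sphere}). It observes that $V|_P-V_P$ has zero dimension function. It then applies Angeltveit's theorem to get $S^{V|_P-V_P}\wedge H\uZ\simeq H\uZ$ (\cref{prop:pgroup-periodicity}).
\item That mechanism presupposes $P$ is cyclic or dihedral, which you never address. You must show the dimension conditions are unsatisfiable otherwise. For $Q_8$, the forced fixed-point dimensions are integers on every subgroup. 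Yet comparing $|V|$ with $|V^{Z(Q_8)}|$ makes the coefficient of the $4$-dimensional irreducible equal to $\tfrac12$. The paper handles this with the cyclic restriction computations of \cref{blacklist} and Bouc--Stancu--Webb, as in \cref{rmk:integral-dimension-existence}.
\end{itemize}

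\textbf{The global map also needs repair.} A generator at $G/e$ cannot be lifted to $G/G$ ``via $|V|=0$'', since $\res^G_e$ need not hit it. Here is a fix, once the local equivalences $X|_P\simeq H\uZ$ are known, with $X=S^V\wedge H\uZ^*$.
\begin{itemize}
\item The orientation action of $G$ on $\pi_0^eX$ is trivial, because it is trivial on elements of prime-power order.
\item Fix a generator $1_e$ and choose $y_P\in\pi_0^PX$ with $\res^P_e y_P=1_e$. Then $\res^G_e\tr^G_Py_P=[G:P]\,1_e$.
\item So $x=\sum_p n_p\tr^G_{P_p}y_{P_p}$, with $\sum_p n_p[G:P_p]=1$, restricts to $1_e$. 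Since $\res^P_e$ is an isomorphism, $x$ restricts to $y_P$ at every Sylow subgroup.
\end{itemize}
With such a class, your retract argument is a valid and more elementary substitute for Liu's splitting theorem in \cref{decomposetopsubgps}. The argument is that, $p$-locally on cohomological Mackey functors, $\res^K_Q$ is split by $[K:Q]^{-1}\tr^K_Q$. It also explains why odd subgroups impose nothing in part~(2).

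Part~(2) is not literally identical, though. You still need local mod-$2$ shifts; the paper takes $U_0+1$ from the integral ones via $H\uZ/2$. You also need the identification of mod-$2$ periodicity on $2$-groups with $\Lambda_P(H\uZ)$.
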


In particular, the existence of a virtual representation satisfying either dimension test already forces $G$ to satisfy the corresponding group condition in \cref{localcriterion}.

\begin{samepage}
\begin{df}\label{def:periodicity-lattices}\rm
For a finite group $G$ and a $G$-spectrum $E$, define
\[
\Lambda_G(E)=\{V\in RO(G)\mid \Sigma^V E\simeq E\}.
\]
We call $\Lambda_G(E)$ the \textit{periodicity lattice} of $E$.
\end{df}
\end{samepage}

Whenever a dualizing shift exists for $E$, all such shifts form an affine translate of $\Lambda_G(E)$. \cref{thm:maintheoremC} completely determines $\Lambda_G(H\uZ)$ and $\Lambda_G(H\uFtwo)$ for every finite group.

\begin{thmx}[Representation periodicity]
\label{thm:maintheoremC}
{\label{thm:HF2periodicity-intro}} Let $G$ be a finite group.
\begin{enumerate}[label=\textup{(\arabic*)},nosep]
\item The periodicity lattice of $H\uZ$ is
\begin{equation*}
\Lambda_G(H\uZ)\mathrel{=}
\mathopen{\{}V\in RO(G)\mathrel{\mid}
|V|=0, \,\,\, |V^C|=0\text{ for every }C\mathclose{\}},
\end{equation*}
where $C$ ranges over the nontrivial cyclic subgroups of $G$ of prime-power order. The quotient $RO(G)/\Lambda_G(H\uZ)$ is a free abelian group whose rank is the number of $G$-conjugacy classes of cyclic subgroups of prime-power order, including the trivial subgroup.
\item The periodicity lattice of $H\uFtwo$ is
\begin{equation*}
\Lambda_G(H\uFtwo)\mathrel{=}
\mathopen{\{}W\in RO(G)\mathrel{\mid}
|W|=0, \,\,\, |W^C|=0\text{ for every }C\mathclose{\}},
\end{equation*}
where $C$ ranges over the nontrivial cyclic $2$-subgroups of $G$. The quotient $RO(G)/\Lambda_G(H\uFtwo)$ is a free abelian group whose rank is the number of $G$-conjugacy classes of cyclic $2$-subgroups, including the trivial subgroup.
\end{enumerate}
\end{thmx}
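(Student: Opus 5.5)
We will prove \cref{thm:maintheoremC} by reducing $\Sigma^V H\uZ\simeq H\uZ$ to a statement about the geometric and categorical fixed points at cyclic subgroups of prime-power order.

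\textbf{Necessity.} Suppose $\Sigma^V H\uZ\simeq H\uZ$. Restricting to the trivial subgroup gives $\Sigma^{|V|}H\Z\simeq H\Z$, so $|V|=0$.

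Now let $C\le G$ be a nontrivial cyclic $p$-group. Restrict the equivalence to $C$ and pass to $p$-local information. If $C'\le C$ is the subgroup of index $p$, the relevant invariant is the geometric fixed points $\Phi^{C}$ (or equivalently $\Phi^{C/C'}$ of the $C'$-fixed points). For the constant Mackey functor, $\Phi^{C_p}H\uZ$ has homotopy $\Z/p$ concentrated in periodic even degrees, with a polynomial generator in degree $2$ (and $\Ftwo$ in every degree when $p=2$). Since $\Phi^C(S^V)=S^{V^C}$, we get $\Phi^C H\uZ\simeq\Sigma^{|V^C|}\Phi^C H\uZ$, and this is not yet enough: it only forces $|V^C|$ to be even.

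We therefore use a sharper invariant, namely the bottom homotopy group of $H\uZ$ after smashing. Inductively along the chain of subgroups of $C$, we assume $|V^{C''}|=0$ for all $C''<C$. Then the $C$-Mackey functor $\underline{\pi}_0(\Sigma^V H\uZ)$ can be computed from the cellular filtration of $S^V$. If $|V^C|=d\ne 0$, this Mackey functor differs from $\uZ$: its value at $C/C$ is shifted so that the transfer/restriction pattern changes, for instance restriction becomes multiplication by $p$ (with $d>0$) rather than an isomorphism. This computation is the main obstacle, and we would carry it out with the explicit chain complex for $S^{\lambda}$, where $\lambda$ is a faithful rotation of $C/C'$. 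Concretely, we reduce to $V=k(\lambda-2)$ plus representations with no fixed points beyond $C'$, and use the known $RO(C_{p^n})$-graded computation of $\pi_\star H\uZ$ of Hill--Hopkins--Ravenel and Zeng: $\underline{\pi}_0\Sigma^{k(\lambda-2)}H\uZ\cong\uZ(k)$, which is distinct from $\uZ$ for $k\ne0$.

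\textbf{Sufficiency.} Suppose $|V|=0$ and $|V^C|=0$ for all nontrivial cyclic prime-power $C$. Then $\Sigma^V H\uZ$ is an $H\uZ$-module, so it suffices to show that it is Eilenberg--Mac Lane with $\underline{\pi}_0\cong\uZ$ as a module over $\uZ$ (a Green functor module); an invertible $H\uZ$-module with these homotopy groups is then equivalent to $H\uZ$. We argue $p$-locally for each prime $p$. The $p$-localization of $H\uZ$ is detected on restrictions to Sylow $p$-subgroups, using that transfer composed with restriction is multiplication by the index. On a $p$-group, homotopy is further detected by the $p$-subgroups of the isotropy family. The homology of $H\uZ_{(p)}$ with respect to geometric fixed points at non-cyclic $p$-groups vanishes, since $\Phi^{C_p\times C_p}H\uZ\simeq 0$ (the classical fact that $\Phi^H H\uZ$ is contractible for non-cyclic $H$). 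Hence the only dimension invariants that matter are $|V^C|$ for cyclic $C$. For cyclic $p$-groups, the HHR/Zeng computation shows that $\Sigma^{V}H\uZ\simeq H\uZ$ whenever all fixed-point dimensions vanish; indeed $\lambda_i-\lambda_j$ with the same kernel lies in $\Lambda$ by the HHR "gold relation" (the orientability/$u$-class argument).

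\textbf{Rank.} The map $V\mapsto(|V^C|)_C$ from $RO(G)$ to $\Z^{\{C\}/\text{conj}}$ has kernel $\Lambda_G(H\uZ)$. Its image has finite index: the permutation representations $\mathbb R[G/C]$ give a triangular system with respect to the inclusion order on cyclic subgroups. Hence the image is free of full rank, and the quotient is free of the stated rank. The $H\uFtwo$ case is identical, replacing $p$ by $2$ and using that $H\uFtwo$ is $2$-local, so odd cyclic subgroups impose no conditions.
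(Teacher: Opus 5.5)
\textbf{Gap in sufficiency.} Your sufficiency argument breaks at the step that discards non-cyclic $p$-subgroups. The contractibility you quote holds for groups that are \emph{not} of prime-power order. For every nontrivial $p$-group $H$, cyclic or not, the cofiber sequence for $\widetilde E\mathcal P$ gives $\pi_0\Phi^H H\uZ\cong\Z/\langle [H:K]\mid K<H\rangle\cong\mathbb F_p$ (\cref{geofixedhz}). So $\Phi^{C_p\times C_p}H\uZ\not\simeq *$. Isotropy separation therefore does not reduce a non-cyclic Sylow subgroup to its cyclic subgroups. This already fails for $C_2\times C_2$ or $D_8$, and the theorem allows arbitrary $p$-groups. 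The HHR/Zeng calculation covers only cyclic Sylow subgroups.

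Two ingredients are missing.
\begin{enumerate}
\item You need sufficiency for an arbitrary $p$-group $P$: if $|V^H|=0$ for \emph{every} $H\le P$, then $S^V\wedge H\uZ\simeq H\uZ$. The paper gets this from Angeltveit's theorem that an invertible $P$-spectrum with zero dimension function acts trivially on the Eilenberg--Mac Lane spectrum of a cohomological Mackey functor (\cref{prop:pgroup-periodicity}).
\item The reason only cyclic subgroups appear in the criterion is character-theoretic, not a vanishing of geometric fixed points. Set $a_C(U)=\sum_{\langle g\rangle=C}\chi_U(g)$. The identities $|C|\,|U^C|=\sum_{D\le C}a_D(U)$ show inductively that $|U|=0$ and $|U^C|=0$ for all nontrivial cyclic $C$ force every $a_C(U)=0$. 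Hence $|H|\,|U^H|=\sum_{C\le H\text{ cyclic}}a_C(U)=0$ for every $H\le P$ (\cref{lem:cyclic-dimension-detection}).
\end{enumerate}

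\textbf{Problems in necessity.} This direction also starts from an error. $\Phi^C H\uZ$ is connective, not periodic (e.g.\ $\pi_*\Phi^{C_p}H\uZ\cong\mathbb F_p[b]$ with $|b|=2$), and $\pi_0\Phi^CH\uZ\cong\mathbb F_p\neq0$. Hence $\Sigma^{|V^C|}\Phi^C H\uZ\simeq\Phi^C H\uZ$ already forces $|V^C|=0$; this is exactly the paper's argument.

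Your substitute invariant does not work as stated. Take $C=C_p$ and $V=\lambda-2$. The cellular chains of $S^\lambda$ with $\uZ$-coefficients have differentials $\tr=p$ and $1-\gamma=0$. They give $\underline{\pi}_0(\Sigma^{\lambda-2}H\uZ)\cong\uZ$, and the discrepancy appears only in $\pi_{-2}^{C_p}\cong\Z/p$. Moreover, multiples of $\lambda-2$ never reach odd values of $|V^C|$ when $p=2$, such as $V=\sigma-1$.

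\textbf{The mod-$2$ case.} Since you declare the $H\uFtwo$ case identical, it inherits both problems. There the correct inputs are:
\begin{itemize}
\item $\pi_0\Phi^HH\uFtwo\cong\Ftwo$ for $2$-groups $H$;
\item extension of scalars along $H\uZ\to H\uFtwo$ from the integral result on a Sylow $2$-subgroup;
\item Sylow $2$-detection.
\end{itemize}

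\textbf{What is sound.} Your Sylow-detection step via stable elements for cohomological Mackey functors is a legitimate, more elementary substitute for Liu's splitting. Your rank argument via the marks of $G/C$ is the paper's triangularity argument.
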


\subsection{Universal coefficients and module duality}

The dualizing shifts determine the relation between equivariant homology and cohomology.

\begin{cor}[$RO(G)$-graded universal coefficient theorem]\label{thm:UCT}
Let $(G,V)$ be an Anderson-dualizing pair. For every $G$-spectrum $X$, there is a natural short exact sequence
\begin{equation}\label{eq:ROGUCT}
0\longrightarrow\uExt^1_{\mathbb Z}\!\bigl(H\uZ_{\star-1}(X),\mathbb Z\bigr)
\longrightarrow H\uZ^{\star-V}(X)
\longrightarrow\uHom_{\mathbb Z}\!\bigl(H\uZ_\star(X),\mathbb Z\bigr)
\longrightarrow0
\end{equation}
of $RO(G)$-graded Mackey functors.
\end{cor}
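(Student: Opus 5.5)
The plan is to start from the equivariant Anderson duality short exact sequence and then substitute the dualizing equivalence. For a genuine $G$-spectrum $E$, its Anderson dual $I^G_{\mathbb Z}E$ is defined as the function spectrum $F(E, I_{\mathbb Z}^G)$, where $I^G_{\mathbb Z}$ represents the functor $X\mapsto \operatorname{Hom}(\pi_0^G X,\mathbb Q/\mathbb Z)$ shifted by the injective resolution $\mathbb Q\to\mathbb Q/\mathbb Z$. For every $G$-spectrum $X$ and every $W\in RO(G)$ this gives a natural short exact sequence of Mackey functors
\[
0\longrightarrow \uExt^1_{\mathbb Z}\bigl(\underline{E}_{W-1}(X),\mathbb Z\bigr)\longrightarrow \underline{(I^G_{\mathbb Z}E)}^{W}(X)\longrightarrow \uHom_{\mathbb Z}\bigl(\underline{E}_W(X),\mathbb Z\bigr)\longrightarrow 0 .
\]

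To establish this sequence levelwise, I would evaluate at $G/H$. There one has $[G/H_+\wedge S^{-W}\wedge X, I^G_{\mathbb Z}E]^G$. The defining fiber sequence $I_{\mathbb Z}\to I_{\mathbb Q}\to I_{\mathbb Q/\mathbb Z}$, together with the fact that $\mathbb Q$ and $\mathbb Q/\mathbb Z$ are injective, produces the classical $\operatorname{Ext}$/$\operatorname{Hom}$ sequence in the homotopy groups $\pi^H_{W}(E\wedge X)$ and $\pi^H_{W-1}(E\wedge X)$. Naturality in $G/H$ is obtained by checking that restrictions and transfers on the cohomology side correspond to the duals of transfers and restrictions on the homology side. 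This dual structure is exactly what the notation $\uHom_{\mathbb Z}(-,\mathbb Z)$ and $\uExt^1_{\mathbb Z}(-,\mathbb Z)$ encodes, namely the levelwise dual with restrictions and transfers swapped. Equivalently, the sequence is the $RO(G)$-graded version of the statement $I^G_{\mathbb Z}(H\uZ)\simeq H\uZ^*$ recorded in the introduction.

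Next, apply this with $E=H\uZ$. The introduction gives $I^G_{\mathbb Z}H\uZ\simeq H\uZ^*$, and the Anderson-dualizing hypothesis gives $H\uZ^*\wedge S^V\simeq H\uZ$. Together these yield
\[
I^G_{\mathbb Z}H\uZ\simeq H\uZ^*\simeq \Sigma^{-V}H\uZ .
\]
It follows that $\underline{(I^G_{\mathbb Z}H\uZ)}^{W}(X)\cong H\uZ^{W-V}(X)$. Because the identification comes from a fixed equivalence of spectra, it is natural in $X$ and compatible with the Mackey structure. Setting $W=\star$ then gives exactly \cref{eq:ROGUCT}.

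I expect the main obstacle to be the bookkeeping of the Mackey-functor-level naturality, so that the sequence really is one of $RO(G)$-graded Mackey functors rather than just a sequence of abelian groups at $G/G$. I would handle this by replacing $X$ with $G/H_+\wedge X$ and identifying $[G/H_+\wedge Y, Z]^G\cong[Y,Z]^H$ via the Wirthmüller isomorphism. This reduces naturality in $G/H$ to functoriality of the underlying Anderson sequence with respect to maps of spectra of the form $G/H_+\to G/K_+$ and their duals. A secondary point is to confirm that the suspension index conventions produce $\star-1$ in the $\operatorname{Ext}$ term. This follows from the boundary map of the fiber sequence lowering degree by one.
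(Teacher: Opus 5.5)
Your proposal is correct and is the paper's argument: apply the equivariant Anderson universal coefficient sequence with $E=H\uZ$, then substitute $I_{\mathbb Z}^G(H\uZ)\simeq H\uZ^*\simeq\Sigma^{-V}H\uZ$, which turns the middle term into $H\uZ^{\star-V}(X)$. Your levelwise re-derivation and the $\star-1$ index check are fine but already contained in the sequence the paper cites, and one wording slip needs fixing: that sequence is not ``equivalent'' to $I_{\mathbb Z}^G(H\uZ)\simeq H\uZ^*$; that identification is a consequence of the sequence at $X=S^0$.
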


The integral dualizing shift also determines the Anderson dual of every $H\uZ$-module in terms of its $H\uZ$-linear dual.

\begin{cor}[Anderson dual of $H\uZ$-modules]\label{cor:MAndersonDual}
Let $(G,V)$ be an Anderson-dualizing pair. For every $H\uZ$-module $M$, there is a natural equivalence
\begin{equation}\label{eq:MAndersonDual}
I_{\mathbb Z}^G(M)\simeq\Sigma^{-V}F_{H\uZ}(M,H\uZ).
\end{equation}
In particular, this formula applies to $H\underline M$ for every cohomological Mackey functor $\underline M$.
\end{cor}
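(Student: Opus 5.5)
We aim to deduce \eqref{eq:MAndersonDual} from the defining adjunction of the Anderson dual, combined with the equivalence $I_{\mathbb Z}^G(H\uZ)\simeq H\uZ^*\simeq \Sigma^{-V}H\uZ$ supplied by the Anderson-dualizing pair $(G,V)$. Recall that $I_{\mathbb Z}^G$ is the functor $F(-,I_{\mathbb Z}^G S)$, where $I_{\mathbb Z}^G S$ denotes the Anderson dual of the sphere. For any $H\uZ$-module $M$ we therefore have natural equivalences
\[
I_{\mathbb Z}^G(M)=F(M,I_{\mathbb Z}^G S)\simeq F\bigl(M\wedge_{H\uZ}H\uZ,\,I_{\mathbb Z}^G S\bigr)\simeq F_{H\uZ}\bigl(M,\,F(H\uZ,I_{\mathbb Z}^G S)\bigr)=F_{H\uZ}\bigl(M,\,I_{\mathbb Z}^G(H\uZ)\bigr).
\]
The middle step is the standard extension–restriction adjunction for the ring map $S\to H\uZ$. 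Here $F(H\uZ,I_{\mathbb Z}^G S)$ carries the $H\uZ$-module structure coming from the source.

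Next we insert the dualizing equivalence. By hypothesis $H\uZ^*\wedge S^V\simeq H\uZ$, so $I_{\mathbb Z}^G(H\uZ)\simeq \Sigma^{-V}H\uZ$ as $G$-spectra. To substitute this inside $F_{H\uZ}(M,-)$ we need the equivalence to hold as $H\uZ$-modules, and this is the main obstacle.

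I would handle it as follows. Since $H\uZ$ is connective with $\upi_0 = \uZ$, the module $I_{\mathbb Z}^G(H\uZ)$ is an $H\uZ$-module whose underlying spectrum is equivalent to $\Sigma^{-V}H\uZ$. An equivalence of underlying spectra $\phi\colon \Sigma^{-V}H\uZ\to I_{\mathbb Z}^G(H\uZ)$ corresponds to a class in $\pi_{-V}^G$ of the target. Such a class extends to an $H\uZ$-module map $\Sigma^{-V}H\uZ\to I_{\mathbb Z}^G(H\uZ)$ via the free-module adjunction, since $\Sigma^{-V}H\uZ = H\uZ\wedge S^{-V}$. The extended map induces the same map on $\upi_\star$ after restriction along the unit. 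The point to check is that this free extension still agrees with $\phi$ up to a unit, hence is an equivalence.

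To check this, it suffices to compare on the generator of $\upi_{-V}$ and to use that $\upi_\star\Sigma^{-V}H\uZ$ is generated as a $\upi_\star H\uZ$-module by that class. Under the free extension, $\upi_\star$ becomes the module map determined by the image of the generator. That image generates the target's homotopy because $\phi$ is an equivalence, so the free extension is an isomorphism on homotopy Mackey functors and hence an equivalence of $H\uZ$-modules.

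Finally, we use that $\Sigma^{-V}$ commutes with $F_{H\uZ}(M,-)$, since $S^{-V}$ is invertible. This gives
\[
I_{\mathbb Z}^G(M)\simeq \Sigma^{-V}F_{H\uZ}(M,H\uZ),
\]
and naturality in $M$ follows because every step is natural. The last assertion follows because $H\underline M$ is an $H\uZ$-module for every cohomological Mackey functor $\underline M$, a fact I would cite from the standard identification of $H\uZ$-module Eilenberg--Mac Lane spectra with cohomological Mackey functors.
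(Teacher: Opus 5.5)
Your outline matches the paper's: the adjunction $I_{\mathbb Z}^G(M)\simeq F_{H\uZ}(M,I_{\mathbb Z}^G(H\uZ))$, then an $H\uZ$-linear form of the dualizing equivalence, then pulling out $\Sigma^{-V}$. You also correctly flag linearity as the crux. The argument you give for linearity, however, has a gap.

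You extend $x=\phi_*(\iota)\in\pi^G_{-V}(I_{\mathbb Z}^G(H\uZ))$ freely to $\tilde x$. You then claim $x$ generates $\upi_\star(I_{\mathbb Z}^G(H\uZ))$ as a $\upi_\star H\uZ$-module ``because $\phi$ is an equivalence.'' But $\phi_*$ is only an additive isomorphism. The $\upi_\star H\uZ$-action on the target comes from the module structure on $F(H\uZ,I_{\mathbb Z}^G)$, and $\phi$ is not known to respect it. From $\phi$ alone you learn only that $x$ generates the single group $\pi^G_{-V}\cong\mathbb Z$. Saying that $y\mapsto xy$ is bijective in every degree amounts to saying $\phi_*$ is $\upi_\star H\uZ$-linear, which is what you set out to prove. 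Similarly, a $G$-map out of $\Sigma^{-V}H\uZ$ is determined by its restriction along the unit only if it is a module map. So ``$\tilde x$ agrees with $\phi$ up to a unit'' is also unsupported.

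The missing input is that everything is concentrated in integer degree zero. Smashing the hypothesis with $S^{-V}$ gives $\Sigma^{-V}H\uZ\simeq H\uZ^*$, and also $I_{\mathbb Z}^G(H\uZ)\simeq H\uZ^*$. So both are $H\uZ$-modules whose integer-graded homotopy Mackey functors vanish outside degree $0$, with $\upi_0\cong\uZ^*$.

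The paper then invokes \cref{lem:HZPostnikov}: for such modules $[X,Y]_{H\uZ}\cong\operatorname{Hom}(\upi_0X,\upi_0Y)$, so any isomorphism of $\upi_0$ lifts to an $H\uZ$-linear equivalence.

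Your $\tilde x$ can also be rescued this way. It suffices to check it on $\upi_0$, where it is a map between Mackey functors isomorphic to $\uZ^*$. In $\uZ^*$ the transfers out of $G/e$ are identities. Hence such a map is multiplication by the same integer at every level, and it is an isomorphism as soon as it is one at $G/e$. At $G/e$, $\res^G_e\tilde x$ is the free $H\mathbb Z$-module map sending $1$ to $\phi^e_*(1)$, which is a generator of $\pi_0\cong\mathbb Z$. With this replacement, the rest of your proof agrees with the paper's.
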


For $H\uFtwo$, the universal coefficient sequence gives linear duality. The module structure makes this a statement about multiplication in the coefficient ring. Put $R_G=\pi_\star^G H\uFtwo$. For an $RO(G)$-graded $\Ftwo$-vector space $M$, write
\[
D(M)_\alpha=\operatorname{Hom}_{\Ftwo}(M_{-\alpha},\Ftwo),
\qquad
(\Sigma^W M)_\alpha=M_{\alpha-W}.
\]

\begin{samepage}
\begin{thmx}[Universal coefficients and coefficient-ring duality]
\label{thm:HF2selfinjective-v2}
Let $(G,V)$ be an Anderson-dualizing pair for $H\uFtwo$.
\begin{enumerate}[label=\textup{(\arabic*)},nosep]
\item For every $G$-spectrum $X$, there is a natural isomorphism
\[
H\uFtwo^{\star+1-V}(X)
\cong
\uHom_{\Ftwo}\!\bigl({H\uFtwo}_{\star}(X),\Ftwo\bigr)
\]
of $RO(G)$-graded Mackey functors.
\item There is an isomorphism of $RO(G)$-graded $R_G$-modules
\[
D(R_G)\cong \Sigma^{1-V}R_G.
\]
There is a linear map $\epsilon\colon(R_G)_{V-1}\longrightarrow\Ftwo$ such that multiplication followed by $\epsilon$ gives perfect pairings
\[
(R_G)_\gamma\otimes_{\Ftwo}(R_G)_{V-1-\gamma}\longrightarrow\Ftwo,
\qquad x\otimes y\longmapsto\epsilon(xy)
\]
for every $\gamma\in RO(G)$. The ring $R_G$ is injective as an $RO(G)$-graded module over itself.
\end{enumerate}
\end{thmx}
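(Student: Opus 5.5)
The plan is to derive both parts from the defining equivalence $I_{\mathbb Z}^G(H\uFtwo)\wedge S^V\simeq H\uFtwo$, that is, $I_{\mathbb Z}^G(H\uFtwo)\simeq\Sigma^{-V}H\uFtwo$, together with the equivariant Anderson universal coefficient sequence.

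For part (1), recall that for any $G$-spectra $E$ and $X$ there is a natural short exact sequence of $RO(G)$-graded Mackey functors
\[
0\to\uExt^1_{\mathbb Z}(E_{\star-1}(X),\mathbb Z)\to (I^G_{\mathbb Z}E)^{\star}(X)\to\uHom_{\mathbb Z}(E_\star(X),\mathbb Z)\to 0 .
\]
This follows from the defining fiber sequence $I_{\mathbb Z}E\to I_{\mathbb Q}E\to I_{\mathbb Q/\mathbb Z}E$ and the injectivity of $\mathbb Q$ and $\mathbb Q/\mathbb Z$, applied levelwise on $G/H$. We apply this with $E=H\uFtwo$. Every value of $H\uFtwo_\star(X)$ is an $\Ftwo$-vector space, so $\uHom_{\mathbb Z}(-,\mathbb Z)$ vanishes on it. Moreover $\uExt^1_{\mathbb Z}(A,\mathbb Z)\cong\uHom_{\mathbb Z}(A,\mathbb Q/\mathbb Z)=\uHom_{\Ftwo}(A,\Ftwo)$ when $A$ is $2$-torsion. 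This gives $(I_{\mathbb Z}^G H\uFtwo)^{\star}(X)\cong\uHom_{\Ftwo}(H\uFtwo_{\star-1}(X),\Ftwo)$. Substituting $I^G_{\mathbb Z}H\uFtwo\simeq\Sigma^{-V}H\uFtwo$ turns the left side into $H\uFtwo^{\star+V}(X)$. Reindexing $\star\mapsto\star+1$ then gives the natural isomorphism of part (1). The only care needed is bookkeeping of the sign conventions for $\Sigma^{-V}$ in the cohomological grading, which I will fix by checking against the nonequivariant case $V=0$ with $G$ trivial.

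For part (2), take $X=S^0$ and evaluate at $G/G$. The previous step gives $\pi_{\alpha}^G(I_{\mathbb Z}H\uFtwo)\cong\operatorname{Hom}_{\Ftwo}(\pi^G_{-\alpha-1}H\uFtwo,\Ftwo)$. This identifies $D(R_G)$, up to a shift, with the homotopy of the Anderson dual, i.e. with $\pi_\star^G(\Sigma^{-V}H\uFtwo)$, and after regrading with $\Sigma^{1-V}R_G$.

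To obtain an isomorphism of $R_G$-modules rather than just of graded vector spaces, I will use the fact that $I_{\mathbb Z}H\uFtwo=F(H\uFtwo,I_{\mathbb Z})$ is an $H\uFtwo$-module. Its homotopy is then an $R_G$-module, and the duality pairing $\pi_\star(I_{\mathbb Z}H\uFtwo)\otimes\pi_\star H\uFtwo\to\pi_{-1}I_{\mathbb Z}\cong\mathbb Q/\mathbb Z$ is $R_G$-balanced. One point must be checked: the given equivalence $\Sigma^{-V}H\uFtwo\simeq I_{\mathbb Z}H\uFtwo$ should be $H\uFtwo$-linear. I plan to arrange this by noting that any equivalence $\Sigma^{-V}H\uFtwo\to M$ with $M$ an $H\uFtwo$-module is determined by its unit class $u\in\pi_{-V}M$, and that the extension $H\uFtwo\wedge S^{-V}\to M$ is an $H\uFtwo$-module map. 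Its being an equivalence reduces to the original map being one, since on homotopy both are multiplication by $u$. I expect this linearity step to be the main obstacle. If the free-module argument fails, I will fall back on the observation that an equivalence of spectra induces an isomorphism of $RO(G)$-graded homotopy on which $R_G$ acts through $H\uFtwo\wedge H\uFtwo\to H\uFtwo$, and compare unit classes directly.

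Given $D(R_G)\cong\Sigma^{1-V}R_G$ as modules, let $\epsilon\in D(R_G)$ be the image of $1\in R_G$. It lies in degree $1-V$, so it is a functional on $(R_G)_{V-1}$. Module linearity gives that $y\mapsto(x\mapsto\epsilon(xy))$ is the isomorphism $(R_G)_{V-1-\gamma}\cong D(R_G)_{\ldots}$, which is exactly perfectness of the pairing in each degree. Finally, graded self-injectivity follows because $D(R_G)=\operatorname{Hom}_{\Ftwo}(R_G,\Ftwo)$ is coinduced from the field $\Ftwo$, hence injective as a graded $R_G$-module by the standard Hom–tensor adjunction. Its shift $\Sigma^{1-V}R_G$ is then injective too, and so $R_G$ itself is injective.
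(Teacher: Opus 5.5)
\textbf{Gap: the $H\uFtwo$-linearity step.} Your overall route is the paper's: the universal coefficient sequence with $E=H\uFtwo$, vanishing of the $\operatorname{Hom}$ term, $\operatorname{Ext}^1$ as the $\Ftwo$-dual, then $X=S^0$ at $G/G$, $\epsilon$ as the image of the unit, and injectivity by coinduction. The problem is the step you flagged, the $H\uFtwo$-linearity of the duality equivalence.

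You claim the free extension $\tilde f\colon H\uFtwo\wedge S^{-V}\to M$ of $u=f_*(\iota)$ is an equivalence because ``on homotopy both are multiplication by $u$''. That holds for $\tilde f$, but not for $f$, which is only a map of spectra. A non-linear equivalence is not determined by its unit class and need not intertwine the $R_G$-actions, so nothing relates $\tilde f_*$ to $f_*$. The claim is false in general. Take a discrete ring $k[t]$ and the module $\bigoplus_{n\geq0}k$ with $t$ acting by $0$. There is an additive isomorphism between them, but the linear extension of the image of $1$ has one-dimensional image. Your fallback, comparing unit classes under an arbitrary equivalence, has the same defect.

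\textbf{How to close it.} The missing input is specific to this situation. Since $\Sigma^{-V}H\uFtwo\simeq I_{\mathbb Z}^G(H\uFtwo)\simeq\Sigma^{-1}H\uFtwo^*$, both $H\uFtwo\wedge S^{-V}$ and $M$ have integer-graded homotopy Mackey functors concentrated in degree $-1$, with value $\uFtwo^*$. There are two ways to use this.
\begin{enumerate}
\item The paper invokes the Eilenberg--Mac Lane correspondence for $H\uFtwo$-modules concentrated in one degree (\cref{lem:HZPostnikov}, \cref{prop:HF2dual-v2}). It lifts the Mackey isomorphism on $\underline\pi_{-1}$ to an $H\uFtwo$-linear equivalence.
\item Your construction can also be rescued. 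The map $\underline\pi_{-1}(\tilde f)\colon\uFtwo^*\to\uFtwo^*$ is an isomorphism at $G/e$, because $\res^G_e u=f_*(\res^G_e\iota)\neq0$ and the underlying modules are shifts of $H\Ftwo$. Every transfer $\tr_e^H$ of $\uFtwo^*$ is an isomorphism, and $\underline\pi_{-1}(\tilde f)$ commutes with transfers. Hence it is an isomorphism at every orbit, and $\tilde f$ is an equivalence.
\end{enumerate}
With linearity in hand, the rest of your part (2) goes through as in the paper.

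\textbf{Bookkeeping errors.} In part (1), $(\Sigma^{-V}H\uFtwo)^{\alpha}(X)=[X,\Sigma^{\alpha-V}H\uFtwo]^G=H\uFtwo^{\alpha-V}(X)$. So substitution gives $H\uFtwo^{\star-V}(X)$, not $H\uFtwo^{\star+V}(X)$; with your sign, reindexing would produce $\star+1+V$. Your proposed check with $G=e$ and $V=0$ cannot detect a sign on $V$. It is also not an instance of the theorem, since the nonequivariant mod-$2$ shift is $V=1$.

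In part (2), your pairing cannot land in $\pi_{-1}I_{\mathbb Z}$. That group is $0$ even equivariantly: its $\operatorname{Ext}$ term vanishes because $\pi_0^GS$ is free, and its $\operatorname{Hom}$ term vanishes because $\pi_1^GS$ is finite. Route the pairing through $I_{\mathbb Q/\mathbb Z}$, using $I^G_{\mathbb Z}(H\uFtwo)\simeq\Sigma^{-1}I^G_{\mathbb Q/\mathbb Z}(H\uFtwo)$. Alternatively, do as the paper does: apply naturality of the universal coefficient sequence to the module maps $m_r\colon\Sigma^\beta H\uFtwo\to H\uFtwo$ representing multiplication by $r$. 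This identifies the induced action with $\phi\mapsto(x\mapsto\phi(xr))$.
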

\end{samepage}

The proof of the self-injectivity of $R_G$ was shown to us by Foling Zou (see also \cite{HogleMayZou}). For a finite subgroup of $SO(3)$, the shift $W-2$ gives the following explicit form of the coefficient-ring duality.

\begin{cor}[\cite{HogleMayZou}]\label{cor:SO3-HF2-selfinjective}
Let $G\mathrel{\leq} SO(3)$ be a finite subgroup, and let $W$ denote the restriction to $G$ of the standard three-dimensional representation of $SO(3)$. Then there is an isomorphism of $RO(G)$-graded $\pi_\star^G H\uFtwo$-modules
\[
D\!\left(\pi_\star^G H\uFtwo\right)
\cong\Sigma^{3-W}\pi_\star^G H\uFtwo.
\]
In particular, the $RO(G)$-graded coefficient ring $\pi_\star^G H\uFtwo$ is injective as an $RO(G)$-graded module over itself.
\end{cor}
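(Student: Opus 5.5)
This follows from \cref{thm:HF2selfinjective-v2}(2) once we know that $(G,W-2)$ is an $H\uFtwo$-Anderson-dualizing pair. By \cref{listremark}, that is the case for every finite subgroup $G\leq SO(3)$.

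Alternatively, one can verify the pair directly from the dimension test in \cref{thm:HF2shifts-v2}(2). Take $V=W-2$. Then $|V|=3-2=1$. Now let $C$ be a nontrivial cyclic $2$-subgroup of $G$. Its generator acts on $\mathbb R^3$ as a nontrivial rotation about a fixed axis, so $W^C$ is exactly that axis and $|W^C|=1$. Hence $|V^C|=1-2=-1$, and both conditions of the test hold.

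Substituting $V=W-2$ into \cref{thm:HF2selfinjective-v2}(2) gives
\[
D(\pi_\star^G H\uFtwo)\cong\Sigma^{1-V}\pi_\star^G H\uFtwo=\Sigma^{3-W}\pi_\star^G H\uFtwo
\]
as $RO(G)$-graded $\pi_\star^G H\uFtwo$-modules. Self-injectivity is the last assertion of the same theorem.

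The only point that needs checking is that the shift conventions line up, namely that $1-V=3-W$ when $V=W-2$. This is immediate, so there is no real obstacle: all of the substantive input sits in \cref{thm:HF2selfinjective-v2} and in \cref{listremark}.
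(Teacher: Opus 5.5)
Your argument is correct and is essentially the paper's route: the paper gets $(G,W-2)$ from the integral shift $W-3$ of \cref{listremark} via \cref{prop:HZtoHF2} (this is \cref{cor:HF2basic-consequences-v2}), and then applies part \textup{(2)} of \cref{thm:HF2selfinjective-v2} with $1-(W-2)=3-W$. Your alternative check via the dimension test in part \textup{(2)} of \cref{thm:HF2shifts-v2} is also valid, since every nontrivial element of $SO(3)$ fixes exactly its rotation axis.
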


The module-duality formula \labelcref{eq:MAndersonDual} also applies to $H\uZ$-module slices in equivariant constructions. We discuss this in \cref{sec:integral-applications}.

\subsection{Outline of Paper}

In \cref{sec:Section2}, we recall Anderson duality and its universal coefficient exact sequences and then use Liu's $p$-local splitting theorem \cite{Liu23} to prove Sylow detection for integral Anderson duality. In \cref{sec:Verification}, we prove the ``if'' direction of \cref{thm:maintheorem} by constructing a dualizing shift for every group in its list. These constructions use the basic cyclic and dihedral shifts and Sylow detection.

In \cref{sec:HZperiodicity}, we prove part \textup{(1)} of \cref{thm:maintheoremC}, which determines the integral periodicity lattice for every finite group. Liu's splitting theorem reduces the problem to $p$-groups, where geometric fixed points and Angeltveit's theorem \cite{AngeltveitPicard} give the fixed-point dimension criterion. A character calculation reduces the test to cyclic $p$-subgroups and computes the quotient rank. We also compute the periodicity lattices for cyclic $p$-groups and dihedral $2$-groups explicitly.

In \cref{sec:LocalClassification}, we prove part \textup{(1)} of \cref{thm:maintheoremB} by expressing all integral dualizing shifts as an affine translate of the periodicity lattice in \cref{thm:maintheoremC}. We also determine the shifts for $A_4$, $S_4$, and $A_5$ explicitly. In \cref{sec:integral-applications}, we prove \cref{thm:UCT} and \cref{cor:MAndersonDual}, which give integral universal coefficients and module duality, and record their coefficient and reduction-modulo-$n$ consequences.

Section~6 establishes the subgroup obstructions used in the group classification. In \cref{sec:Section4}, we prove part \textup{(1)} of \cref{localcriterion}, the local criterion for Anderson-dualizing groups. The subgroup obstructions and a restriction argument give the Sylow and normalizer conditions. Bauer's realization theorem \cite{Bauer89} and Sylow detection prove the ``if'' direction.

In \cref{sec:Section5}, we begin the proof of the ``only if'' direction of \cref{thm:maintheorem}, classifying groups satisfying the local criterion by induction. Burnside's normal $p$-complement theorem gives the first reduction, and we then handle the normal-complement and normalizer cases. In \cref{sec:classification}, we show that the remaining groups have trivial odd core and use Walter's theorem \cite{Wal69} and the Gorenstein--Walter classification \cite{GW65I,GW65II,GW65III} to complete the proof of \cref{thm:maintheorem}.

In \cref{sec:HF2application}, we prove the mod-$2$ results. Section~10.1 identifies the Anderson dual of $H\uFtwo$ and uses Liu's splitting theorem to prove Sylow $2$-detection. Section~10.2 proves part \textup{(2)} of \cref{thm:HF2periodicity-intro}, using integral periodicity on $2$-groups and Sylow detection to determine the mod-$2$ periodicity lattice for every finite group. Section~10.3 establishes the local $2$-group classification. In Section~10.4, we combine this classification with Bauer's realization theorem to prove part \textup{(2)} of \cref{thm:HF2groups-v2} and part \textup{(2)} of \cref{thm:HF2shifts-v2}. Finally, Section~10.5 proves \cref{thm:HF2selfinjective-v2}, using the Anderson universal coefficient sequence and its compatibility with the module structure to obtain coefficient-ring duality and graded self-injectivity.

\subsection{Further Questions}

The module-duality formula \eqref{eq:MAndersonDual} leads to two further classification problems.

\begin{quest}\rm\label{Quest:ModuleSelfDuality}
For a fixed Anderson-dualizing group $G$, which $H\uZ$-modules $M$ satisfy
\[
I_{\mathbb Z}^{G}(M)\simeq\Sigma^W M
\]
for some $W\in RO(G)$?
\end{quest}

At the Eilenberg--Mac Lane level, this asks which cohomological Mackey functors give Anderson self-dual spectra. Examples include $H\uZ/n$ and, for cyclic $p$-groups, Zeng's forms of $\uZ$ \cite[Theorem~5.9]{Zen18}.

\begin{quest}\rm\label{Quest:EMSuspend}
For a finite group $G$, which virtual representations $W\in RO(G)$ have the property that $\Sigma^W H\uZ$ is an Eilenberg--Mac Lane spectrum?
\end{quest}

For an Anderson-dualizing group, every such suspension is Anderson self-dual and supplies an example for \cref{Quest:ModuleSelfDuality}.

The slice-level duality formula obtained from \eqref{eq:MAndersonDual} also suggests an approach to Anderson duality for chromatic spectra.

\begin{quest}\rm\label{Quest:EOAndersonDual}
Let $X$ be a chromatically localized spectrum arising from an equivariant construction, for example one related to the models of Lubin--Tate spectra and their homotopy fixed points $EO_n(G)=E_n^{hG}$ in \cite{BHSZ21}. Can one determine whether its Anderson dual is a suspension of $X$ or differs from one by an exotic Picard twist? When the construction uses $H\uZ$-module slices, can this duality be detected and computed from the slices?
\end{quest}

The module-duality formula \eqref{eq:MAndersonDual} suggests a strategy: identify the slices, compute their Anderson duals, and compare the resulting spectral sequence with the original slice spectral sequence. This strategy is currently being developed, in joint work with Mike Hill, for spectra related to the $EO$-theories constructed in \cite{BHSZ21}.
\par

\subsection{Acknowledgements}

The authors would like to heartily thank Mike Hill for introducing them to this problem during the 2016 Namboodiri Lectures at the University of Chicago, when they were still graduate students, and for the many helpful conversations throughout the years. We are grateful to Foling Zou for showing us the proof of the self-injectivity of $R_G$ during a visit to the Isaac Newton Institute in January 2025, and to Eric Hogle, Clover May, and Foling Zou for sharing a draft of their paper with us. We would also like to thank John Greenlees for helpful conversations.

The authors would like to thank the Isaac Newton Institute for Mathematical Sciences, Cambridge, for support and hospitality during the programme Introductory Workshop for Equivariant Homotopy Theory in Context, where some of the work on this paper was undertaken.

The fourth author is partially supported by NSF Grant No. DMS-2404828 and NSF CAREER Grant No. DMS-2541934. The fifth author is partially supported by the AMS Centennial Research Fellowship and NSF Grant DMS-2506247.

\section{Reduction to Sylow subgroups} \label{sec:Section2}
We recall the construction of Anderson duality and then prove that \cref{mainquestion} can be checked on Sylow subgroups (\cref{decomposetopsubgps}). This reduction allows the later classification to focus on cyclic $p$-groups and dihedral $2$-groups.

\subsection{Anderson duality}\label{sec:AndersonBackground}

A basic question in stable homotopy theory is how the homology and cohomology represented by a spectrum $E$ are related. For $E=H\mathbb F_2$ and any spectrum $X$, cohomology is the linear dual of homology:
\[
H^n(X;\mathbb F_2)\cong
\operatorname{Hom}_{\mathbb F_2}\!\bigl(H_n(X;\mathbb F_2),\mathbb F_2\bigr).
\]
For $E=H\mathbb Z$, the universal coefficient sequence has an additional $\operatorname{Ext}$ term. Anderson duality gives such a sequence for any spectrum $E$.

Let $G$ be a finite group. For an injective abelian group $A$, equivariant Brown representability gives a $G$-spectrum $I_A^G$ with natural isomorphisms
\[
[Y,I_A^G]^G\cong
\operatorname{Hom}_{\mathbb Z}\bigl(\pi_0^G Y,A\bigr).
\]
The injective resolution
\[
0\longrightarrow\mathbb Z\longrightarrow\mathbb Q
\longrightarrow\mathbb Q/\mathbb Z\longrightarrow0
\]
induces a map $I_{\mathbb Q}^G\to I_{\mathbb Q/\mathbb Z}^G$. Its homotopy fiber is the Anderson-dualizing spectrum $I_{\mathbb Z}^G$ \cite{And69,BC76}. For a $G$-spectrum $E$, define
\[
I_{\mathbb Z}^G(E)=F_G(E,I_{\mathbb Z}^G).
\]
When $G=e$, we omit the superscript $G$.
\par

For spectra $E$ and $X$ and every integer $n$, the defining fiber sequence gives the natural short exact sequence \cite{Kai71}
\begin{equation} \label{ses:classicalUCT}
0 \longrightarrow \operatorname{Ext}^1_{\mathbb{Z}}\!\bigl(E_{n-1}(X),\mathbb{Z}\bigr)
\longrightarrow (I_{\mathbb{Z}}E)^n(X)
\longrightarrow \operatorname{Hom}_{\mathbb{Z}}\!\bigl(E_n(X),\mathbb{Z}\bigr)
\longrightarrow 0.
\end{equation}

If $I_{\mathbb Z}E\simeq\Sigma^k E$, this becomes a relation between $E$-homology and $E$-cohomology. In particular, the equivalence
\[
I_{\mathbb Z}(H\mathbb Z)\simeq H\mathbb Z
\]
recovers the classical integral universal coefficient sequence by taking $E=H\mathbb Z$.

Anderson duality has supplied universal coefficient theorems and self-duality statements throughout chromatic homotopy theory. Anderson established the duality of complex and real $K$-theory \cite{And69}. Stojanoska proved Anderson self-duality over $\mathbb Z[1/2]$ for $\mathrm{Tmf}[1/2]$, using Grothendieck--Serre duality and descent \cite{Sto12}. Heard and Stojanoska gave an equivariant refinement of the $K$-theoretic duality and applied it to $KO$ and $K(1)$-local duality \cite{HS14}. Greenlees and Stojanoska related Anderson-duality shifts to Gorenstein duality and local cohomology \cite{GS18}.

For a Mackey functor $M$, we form $\uHom_{\mathbb Z}(M,\mathbb Z)$ and $\uExt^1_{\mathbb Z}(M,\mathbb Z)$ by taking ordinary $\operatorname{Hom}_{\mathbb Z}$ and $\operatorname{Ext}^1_{\mathbb Z}$ at every orbit. Their restrictions are induced by the transfers of $M$, and their transfers are induced by its restrictions. Conjugation by $g$ is induced by conjugation by $g^{-1}$ on $M$. For background on Mackey functors, see \cite{Greenlees92,GreenleesMay92}.

For $G$-spectra $E$ and $X$, applying the defining fiber sequence orbitwise gives a natural short exact sequence of $RO(G)$-graded Mackey functors
\begin{equation} \label{ses:equivariantUCT}
0 \longrightarrow \uExt^1_{\mathbb{Z}}\!\bigl(\underline{E}_{\star-1}(X),\mathbb{Z}\bigr)
\longrightarrow \underline{(I_{\mathbb{Z}}^G E)}^\star(X)
\longrightarrow \uHom_{\mathbb{Z}}\!\bigl(\underline{E}_{\star}(X),\mathbb{Z}\bigr)
\longrightarrow 0.
\end{equation}

The constant Mackey functor $\uZ$ has value $\mathbb Z$ at every orbit, identity restrictions, and transfers given by subgroup indices. Its dual $\uZ^*$ has identity transfers and restrictions given by subgroup indices. Since $\uZ$ is free abelian at every orbit,
\[
\uExt^1_{\mathbb Z}(\uZ,\mathbb Z)=0,
\qquad
\uHom_{\mathbb Z}(\uZ,\mathbb Z)=\uZ^*.
\]
Let $E=H\uZ$ and $X=S^0$. The universal coefficient sequence \labelcref{ses:equivariantUCT} then shows that $I_{\mathbb Z}^G(H\uZ)$ has integer-graded homotopy concentrated in degree zero, with degree-zero Mackey functor $\uZ^*$. Therefore
\[
I_{\mathbb Z}^G(H\uZ)\simeq H\uZ^*.
\]

Anderson-dualizing shifts already play a role in equivariant homotopy theory. For $G=C_2$, Ricka proved that $2\sigma-2$ is an Anderson-dualizing shift, where $\sigma$ is the sign representation, and used this duality to study integral Morava $K$-theories with reality \cite{Ric16}. Hill and Meier used the same duality in their slice-theoretic computation of the Anderson dual of $\mathrm{Tmf}_1(3)$ \cite{HM17}. These applications use the duality of Eilenberg--Mac Lane spectra as input for duality calculations of more complicated spectra.

For cyclic $p$-groups, Zeng established the duality with shift $\lambda(1)-2$, where $\lambda(1)$ is the faithful two-dimensional rotation representation \cite{Zen18}. Basu and Ghosh also use integral Anderson duality for cyclic groups of odd square-free order in their computation of equivariant cohomology \cite{BasuGhosh24}.

\subsection{Sylow detection}\label{sec:SylowDetection}

We first record compatibility with restriction to subgroups.
\par
\par

\begin{prop} \label[prop]{subgpcrit}
Let $H\mathrel{\leq} G$. If $(G,V)$ is an Anderson-dualizing pair, then $(H,V|_H)$ is an Anderson-dualizing pair. More precisely, if
\[
H\uZ^*\wedge S^V\simeq H\uZ
\]
as $G$-spectra, then
\[
H\uZ^*\wedge S^{V|_H}\simeq H\uZ
\]
as $H$-spectra.
\end{prop}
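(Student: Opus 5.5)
The plan is to apply the restriction functor $\operatorname{res}^G_H\colon \mathrm{Sp}^G\to\mathrm{Sp}^H$ to the given equivalence. Then we identify each piece of the result. Restriction is a strong symmetric monoidal functor, so it preserves smash products and sends equivalences to equivalences. Hence
\[
\operatorname{res}^G_H\bigl(H\uZ^*\wedge S^V\bigr)\simeq \operatorname{res}^G_H(H\uZ^*)\wedge \operatorname{res}^G_H(S^V)\simeq \operatorname{res}^G_H(H\uZ)
\]
as $H$-spectra. For an actual representation $V=V_1-V_2$ with $V_1,V_2$ genuine, the representation sphere restricts to the representation sphere of the restricted representation: $\operatorname{res}^G_H S^{V_i}=S^{V_i|_H}$. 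Invertibility and monoidality then give $\operatorname{res}^G_H S^V\simeq S^{V|_H}$.

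The remaining task is to identify the restrictions of the Eilenberg--Mac Lane spectra. For any Mackey functor $\underline M$ for $G$, the spectrum $\operatorname{res}^G_H H\underline M$ has integer-graded homotopy Mackey functors concentrated in degree $0$. Its value there is the restricted Mackey functor $\operatorname{res}^G_H\underline M$, given by $(\operatorname{res}^G_H\underline M)(H/K)=\underline M(G\times_H H/K)$, or equivalently by evaluating on $G/K$ for $K\le H$, with the restrictions and transfers inherited from $\underline M$. Hence $\operatorname{res}^G_H H\underline M\simeq H(\operatorname{res}^G_H\underline M)$. Applied to $\uZ$, this gives the constant Mackey functor $\uZ$ for $H$: the values are $\mathbb Z$, restrictions are identities, and transfers are indices $[K':K]$ for $K\le K'\le H$. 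Similarly, restricting $\uZ^*$ gives the dual constant Mackey functor $\uZ^*$ for $H$.

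An alternative for the $\uZ^*$ step is to use that restriction commutes with Anderson duality: $\operatorname{res}^G_H I^G_{\mathbb Z}\simeq I^H_{\mathbb Z}$, and $\operatorname{res}$ commutes with function spectra. Either route works. Combining the identifications yields $H\uZ^*\wedge S^{V|_H}\simeq H\uZ$ as $H$-spectra.

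The only step needing care is the identification of the restricted Mackey functors. In particular, the index-valued structure maps must restrict to the correct indices for subgroups of $H$. This holds because $[K':K]$ depends only on the pair $K\le K'$ and not on the ambient group. I expect no real obstacle here.
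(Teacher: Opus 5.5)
Your proposal is correct and is the paper's argument: the paper's proof is the one line ``apply $\res_H^G(-)$,'' and you have only written out the identifications $\res_H^G S^V\simeq S^{V|_H}$, $\res_H^G H\uZ\simeq H\uZ$, and $\res_H^G H\uZ^*\simeq H\uZ^*$ that the paper leaves implicit. One wording fix: $V=V_1-V_2$ is a virtual representation, not an ``actual'' one.
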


\begin{proof}
The proof is immediate by applying the restriction functor $\res_H^G(-)$.  \end{proof}

\begin{thm}[Sylow detection] \label[thm]{decomposetopsubgps}
Let $G$ be a finite group and let $V\in RO(G)$. Then
\[
H\uZ^*\wedge S^V\simeq H\uZ
\]
as $G$-spectra if and only if, for every prime $p$ and every Sylow $p$-subgroup $P\mathrel{\leq} G$ (with $P=e$ when $p\nmid |G|$), one has
\[
H\uZ^*\wedge S^{V|_P}\simeq H\uZ
\]
as $P$-spectra.
\end{thm}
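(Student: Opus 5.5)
The ``only if'' direction is \cref{subgpcrit} applied to each Sylow subgroup $P$, so all the work is in the ``if'' direction. The plan is to localize at each prime and use Liu's $p$-local splitting theorem \cite{Liu23} to reduce the $p$-local question to the Sylow $p$-subgroup. First, rationally there is nothing to check beyond the underlying level. Second, an equivalence of $G$-spectra $H\uZ^*\wedge S^V\simeq H\uZ$ can be assembled from equivalences after $p$-localization for every prime $p$, provided these are compatible rationally.

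\textbf{Step 1: make the comparison map canonical.} To avoid the gluing problem, I would produce a single map $f\colon H\uZ^*\wedge S^V\to H\uZ$ of $G$-spectra, or equivalently a class in $\pi^G_{-V}$ of $F(H\uZ^*,H\uZ)=I^G_{\mathbb Z}(I^G_{\mathbb Z}H\uZ\wedge\ldots)$. It is then enough to check that $f_{(p)}$ is an equivalence for all $p$, since a map of $G$-spectra is an equivalence iff it is one after localization at every prime. Both sides are $H\uZ$-modules: $H\uZ^*$ is a module via Anderson duality. So I would work in $H\uZ$-modules and take $f$ to be $H\uZ$-linear, determined by a generator in $\pi^G_{-V}(\Sigma^{-V}\ldots)$, i.e.\ by an element of an appropriate Mackey functor value at $G/G$.

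\textbf{Step 2: the local statement.} For a fixed $p$ with Sylow subgroup $P$, Liu's splitting theorem says that $p$-locally the category of $H\uZ_{(p)}$-modules (cohomological Mackey functors, $p$-locally) over $G$ is controlled by $P$. More precisely, $H\uZ_{(p)}$-module spectra $M$ are retracts of $G_+\wedge_P\res^G_P M$, since $\tr\circ\res=[G:P]$ is invertible. Hence a map $f$ of $p$-local $H\uZ$-modules is an equivalence iff $\res^G_P f$ is. One still has to check that the hypothesis $H\uZ^*\wedge S^{V|_P}\simeq H\uZ$ implies that $\res^G_P f$ itself is an equivalence, rather than merely that some equivalence exists.

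\textbf{Step 3: choosing $f$, the main obstacle.} Choosing $f$ is where the real difficulty lies. Over $P$, an equivalence $H\uZ^*\wedge S^{V|_P}\to H\uZ$ is a unit in $\pi_0$ of the endomorphism ring after identification, namely $\mathbb Z_{(p)}^\times$ at the underlying level. I would compare Mackey functor values: the relevant group $A=\pi^G_{V}(H\uZ\wedge(H\uZ^*)^{\vee}\ldots)$ at $G/G$ restricts to $P$, and $p$-locally the restriction is split injective onto the $G$-stable elements. The plan is to pick an element $x\in A$ whose restriction to $e$ is $\pm1\in\mathbb Z$, so that $f$ is an underlying equivalence. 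For each $p$, the $p$-local restriction to $P$ then differs from the given equivalence by a $P$-equivariant self-map that is an underlying unit, hence an equivalence.

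I would argue that the underlying degree map $A\to\mathbb Z$ is surjective using the local hypotheses. $p$-locally its image is all of $\mathbb Z_{(p)}$, since the local equivalence over $P$ has underlying degree a $p$-local unit. Transferring it up with $\tr^G_P$ and dividing by $[G:P]$ gives a $G$-class with the same underlying degree. Consequently the image contains an integer prime to $p$ for every $p$, so it is all of $\mathbb Z$. Any $x$ mapping to $1$ then works, provided every $P$-self-map of $H\uZ$ with unit underlying degree is an equivalence $p$-locally. That last point holds because $\pi_0$ of the $P$-endomorphism ring of $H\uZ_{(p)}$ is $\mathbb Z_{(p)}$ acting through restriction to $e$.
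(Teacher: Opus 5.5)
Your argument is correct, and it takes a genuinely different route from the paper's.

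\textbf{The paper's route.} The paper sets $X=H\uZ\wedge S^{-V}$ and applies Liu's limit description (\cref{prop:LiuLimit}) at every subgroup and every prime. This shows that $X$ has integer-graded homotopy concentrated in degree zero. The paper then identifies $\underline\pi_0(X)\cong\uZ^*$ by tracking generators, restrictions and transfers, and concludes by the Eilenberg--Mac Lane correspondence.

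\textbf{Your route.} You build a single global map $f\colon H\uZ^*\wedge S^V\to H\uZ$ of underlying degree $1$, using the following steps.
\begin{enumerate}
\item The class $\res^G_e\tr^G_P$ of the local equivalence has underlying degree $\pm[G:P]$. These indices have $\gcd$ equal to $1$ over all primes, so some $G$-class has underlying degree $1$.
\item Since $[H\uZ,H\uZ]^P\cong\Z$, detected by underlying degree, each $\res^G_P f$ equals $\pm$ the given local equivalence.
\item The composite $S^0\to G/P_+\to S^0$ is $[G/P]\in A(G)$, which acts on $H\uZ$-modules by $[G:P]$. Hence $f_{(p)}$ is a retract of the equivalence $G/P_+\wedge f_{(p)}$, so $f_{(p)}$ is an equivalence for every $p$.
\end{enumerate}

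\textbf{Comparison.} Your route uses only the Burnside-ring transfer, the double coset formula and rigidity of self-maps of $H\uZ$. The retract step does not actually need Liu's splitting theorem, despite your attribution. The same argument proves \cref{thm:HZperiodicity} and the mod-$2$ detection statements. The paper's route instead yields the homotopy Mackey functor explicitly, and it reuses the Liu machinery that the paper employs elsewhere.

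\textbf{One omitted step.} The double coset formula gives $\res^G_e\tr^G_P(y)=\sum_{gP\in G/P}c_g\res^P_e(y)$. Here $G$ acts on $\underline{A}(G/e)\cong\Z$, where $\underline{A}=\underline\pi_0F(H\uZ^*\wedge S^V,H\uZ)$ and $\underline{A}(G/G)=[H\uZ^*\wedge S^V,H\uZ]^G$ is your group $A$. This action is through the orientation character of $V$. You need it to be trivial in order to obtain $[G:P]\cdot\deg$ rather than a signed sum that could vanish. It is trivial: the local equivalences show it is trivial on each Sylow subgroup, and the Sylow subgroups generate $G$. The paper makes the same observation for $M(e)$.

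Your remark that restriction is split injective onto the stable elements is not needed.
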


The proof uses Liu's $p$-local splitting theorem in the following form \cite[Theorem~4.6, Lemma~4.7, and Proposition~4.9]{Liu23}. We will also use this description for the Sylow-detection arguments in \cref{sec:HF2application}.
\par

\begin{prop} \label{prop:LiuLimit}
Let $X$ be an $H\uZ$-module and let $p$ be a prime. After localizing at $p$, $\pi_*^G(X)$ is naturally identified with the limit of the diagram consisting of
\[
\pi_*^H(X)_{(p)}
\]
for all $p$-subgroups $H\mathrel{\leq} G$, and whose maps are the restriction maps
\[
\res_{H_1}^{H_2}\colon \pi_*^{H_2}(X)_{(p)}\longrightarrow
\pi_*^{H_1}(X)_{(p)}, \qquad H_1\mathrel{\leq} H_2,
\]
and conjugation maps
\[
c_g\colon \pi_*^H(X)_{(p)}\longrightarrow
\pi_*^{gHg^{-1}}(X)_{(p)}.
\]
Moreover, if $K\mathrel{\leq} G$, then the diagram computing $\pi_*^K(X)$ is the subdiagram obtained by restricting to the $p$-subgroups of $K$ and to conjugations by elements of $K$. The map between the limits agrees with the restriction map
\[
\res_K^G\colon \pi_*^G(X)_{(p)}\longrightarrow \pi_*^K(X)_{(p)}.
\]
\end{prop}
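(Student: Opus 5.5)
The plan is to deduce the statement from the fact that the homotopy Mackey functors of an $H\uZ$-module are \emph{cohomological}, combined with the double coset formula. This is a purely algebraic argument about Mackey functors, applied degreewise.

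\emph{Step 1: cohomologicality.} For each $n$, the Mackey functor $\upi_n(X)$ is a module over the Green functor $\upi_0(H\uZ)=\uZ$, using the $H\uZ$-module structure. Frobenius reciprocity for a module over a Green functor gives
\[
\tr_H^K\bigl(\res_H^K(a)\cdot x\bigr)=a\cdot \tr_H^K(x).
\]
Taking $a=1$ and replacing $x$ by $\res^K_H x$, we get
\[
\tr_H^K\res_H^K(x)=\tr_H^K(1)\cdot x=[K:H]\,x,
\]
since in $\uZ$ the transfer of $1$ is the index. Localization at $p$ is exact, so $M=\upi_n(X)_{(p)}$ is again a cohomological Mackey functor.

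\emph{Step 2: injectivity.} Fix $K\le G$, and let $L$ denote the limit of the diagram in the statement for $K$. Let $\phi\colon M(K)\to L$ be the canonical map $x\mapsto(\res^K_H x)_H$. This is a compatible family because restrictions compose and commute with conjugation. Also, $c_g$ acts trivially on $M(K)$ for $g\in K$. Choose a Sylow $p$-subgroup $P\le K$. If $\phi(x)=0$, then $\res^K_P x=0$, so
\[
[K:P]\,x=\tr_P^K\res_P^K x=0.
\]
Since $[K:P]$ is prime to $p$, it is a unit after localizing at $p$, and hence $x=0$.

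\emph{Step 3: surjectivity.} This is the main step. Given a compatible family $(x_H)$, set
\[
x=[K:P]^{-1}\tr_P^K(x_P).
\]
The double coset formula gives
\[
\res^K_P\tr^K_P(x_P)=\sum_{PgP}\tr^P_{P\cap gPg^{-1}}\,c_g\,\res^{P}_{g^{-1}Pg\cap P}(x_P).
\]
Compatibility of the family under restriction and conjugation identifies $c_g\res(x_P)$ with $x_{P\cap gPg^{-1}}$, which in turn equals $\res^P_{P\cap gPg^{-1}}x_P$. By Step 1, the summand indexed by $PgP$ is therefore $[P:P\cap gPg^{-1}]\,x_P$. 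These indices sum to $[K:P]$, so $\res^K_P x=x_P$.

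It remains to check the other coordinates. Any $p$-subgroup $H\le K$ satisfies $H\le gPg^{-1}$ for some $g\in K$, by Sylow's theorem. Then
\[
\res^K_H x=\res^{gPg^{-1}}_H c_g\res^K_P x=\res^{gPg^{-1}}_H c_g x_P=x_H,
\]
again by compatibility and because $c_g$ acts trivially on $M(K)$. Hence $\phi(x)=(x_H)$, and $\phi$ is an isomorphism.

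\emph{Step 4: naturality.} The final clause follows from the construction. For $K\le G$, restricting a compatible family indexed by the $p$-subgroups of $G$ to the subdiagram indexed by $p$-subgroups of $K$ and conjugations in $K$ commutes with $\res^G_K$, because $\res^K_H\res^G_K=\res^G_H$. Naturality in $X$ holds because all maps involved are maps of Mackey functors.

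I expect the only delicate point to be the bookkeeping in the double coset computation of Step 3: one must use compatibility for both the conjugation maps and the restrictions to the intersections $P\cap gPg^{-1}$.
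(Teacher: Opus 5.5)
Your proof is correct, but it takes a different route from the paper.

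The paper does not work with the Mackey structure directly. It quotes Liu's $p$-local splitting \cite[Theorem~4.6]{Liu23}, which presents $\pi_*^G(X)$ as a finite limit of the groups at $p$-subgroups together with geometric-fixed-point terms. It kills those terms for $H\uZ$-modules using \cite[Proposition~4.9]{Liu23} and \cref{lem:LiuModuleVanishing}. It then identifies the surviving maps, and the compatibility with subdiagrams, via \cite[Lemma~4.7]{Liu23}. The cohomological relation enters only when $p\nmid|G|$, where $|K|^{-1}\tr_e^K$ inverts restriction on invariants; this is exactly your Step~3 with $P=e$.

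You instead give the classical Cartan--Eilenberg stable-elements argument for cohomological Mackey functors. After $p$-localization, $[K:P]^{-1}\tr_P^K$ splits $\res_P^K$, and the double coset formula shows that restriction hits exactly the compatible families. Your version buys several things:
\begin{itemize}
\item it is self-contained;
\item it needs no vanishing of geometric terms and no separate case $p\nmid|G|$;
\item it proves the stable-elements description of \cref{rmk:SylowLimit} at the same time;
\item it applies verbatim to every cohomological Mackey functor.
\end{itemize}
The paper's version instead places the statement inside Liu's splitting for general $G$-spectra, where the geometric terms need not vanish.

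One small fix in Step~1. The Frobenius identity you wrote, $\tr_H^K(\res_H^K(a)\cdot x)=a\cdot\tr_H^K(x)$, is a tautology at $a=1$, so it cannot give $\tr_H^K\res_H^K(x)=\tr_H^K(1)\cdot x$. The relation you need is the other Frobenius identity, $\tr_H^K\bigl(b\cdot\res_H^K(y)\bigr)=\tr_H^K(b)\cdot y$ for $b\in\uZ(H)$ and $y$ in the value at $K$. At $b=1$ it gives $\tr_H^K\res_H^K=[K:H]$; the paper cites \cite[Proposition~2.5]{Angeltveit22} for this.
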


\begin{lem}
\label[lem]{lem:LiuModuleVanishing}
Let $\mathcal F$ be one of the families appearing in the localized splitting used in \cref{prop:LiuLimit}. If
\[
\widetilde E\mathcal F\wedge H\uZ\simeq *
\]
after the relevant localization, then for every $H\uZ$-module $X$ one also has
\[
\widetilde E\mathcal F\wedge X\simeq *.
\]
Consequently, the geometric-fixed-point terms in Liu's general splitting diagram vanish for $H\uZ$-modules, leaving precisely the full diagram of $p$-subgroups with restrictions and conjugations stated in \cref{prop:LiuLimit}.
\end{lem}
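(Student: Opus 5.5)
The aim is \cref{lem:LiuModuleVanishing}: vanishing of $\widetilde E\mathcal F\wedge H\uZ$ should pass to all $H\uZ$-modules. The plan is to use the fact that smashing with $\widetilde E\mathcal F$ is a smashing localization, so it is compatible with the $H\uZ$-module structure. This is the standard ``module over a zero ring is zero'' argument.

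First, for an $H\uZ$-module $X$, write $X$ as a retract of $H\uZ\wedge X$. The unit $S^0\to H\uZ$ gives a map $\eta\wedge 1\colon X\to H\uZ\wedge X$, and the action map $\mu\colon H\uZ\wedge X\to X$ satisfies $\mu\circ(\eta\wedge 1)\simeq \mathrm{id}_X$ by the unit axiom. This only requires $X$ to be a homotopy module over the homotopy ring spectrum $H\uZ$. Smashing with $\widetilde E\mathcal F$ and localizing at the relevant prime (or rationalizing, depending on which family in Liu's splitting is involved) exhibits $\widetilde E\mathcal F\wedge X_{(p)}$ as a retract of
\[
\widetilde E\mathcal F\wedge H\uZ_{(p)}\wedge X\simeq (\widetilde E\mathcal F\wedge H\uZ)_{(p)}\wedge X .
\]
Localization at $p$ is smashing, so it commutes with these smash products.

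By hypothesis $(\widetilde E\mathcal F\wedge H\uZ)_{(p)}\simeq *$, so the right-hand side is contractible. A retract of a contractible spectrum is contractible, hence $\widetilde E\mathcal F\wedge X\simeq *$ after localization. Concretely, the identity of $\widetilde E\mathcal F\wedge X$ factors through $*$.

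For the \enquote{consequently} clause, I would recall the shape of Liu's splitting diagram from \cite[Theorem~4.6, Lemma~4.7, Proposition~4.9]{Liu23}. After localization, $\pi_*^G$ of a $G$-spectrum is expressed as a limit over $p$-subgroups together with correction terms built from $\widetilde E\mathcal F$-smashings, namely geometric fixed points for non-$p$-subgroups. For $H\uZ$-modules these correction terms vanish by the first part, and only the restriction-and-conjugation diagram of \cref{prop:LiuLimit} survives.

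The main obstacle is bookkeeping rather than mathematics. One has to match exactly which families $\mathcal F$ and which localizations occur in Liu's diagram, and check that the hypothesis $\widetilde E\mathcal F\wedge H\uZ\simeq*$ holds for each of them. The latter follows from Liu's Lemma~4.7, which says that $H\uZ_{(p)}$ has vanishing geometric fixed points at non-$p$-subgroups because transfer–restriction composites are invertible indices. The argument above is uniform in $\mathcal F$, so once these identifications are made the deduction is immediate.
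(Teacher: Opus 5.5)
Your argument is correct. It rests on the same principle as the paper's proof: a module over something contractible is contractible. You implement it slightly differently.

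The paper identifies $\widetilde E\mathcal F\wedge X$ with the relative smash product $(\widetilde E\mathcal F\wedge H\uZ)\wedge_{H\uZ}X$ after the smashing localization, and notes that the first factor vanishes. You instead use the unit and action maps to exhibit $\widetilde E\mathcal F\wedge X$ as a retract of $(\widetilde E\mathcal F\wedge H\uZ)\wedge X$.

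Your route needs only a homotopy-unital action map $H\uZ\wedge X\to X$. It does not require a structured module category in which $\wedge_{H\uZ}$ is defined, so it is marginally more general. The paper's version is a one-line identification that fits the structured $H\uZ$-module setting it uses throughout, for example for $F_{H\uZ}$.

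Both arguments work for any smashing localization, so your hedge between $p$-localization and rationalization is harmless. In the paper, the localization inverts the primes dividing $|G|$ other than $p$.

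One bookkeeping correction: the paper takes the vanishing of the geometric terms for $H\uZ$ itself from \cite[Proposition~4.9]{Liu23}, not \cite[Lemma~4.7]{Liu23}. The latter is the restriction-compatibility result, used to identify the surviving maps in the diagram with restrictions.
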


\begin{proof}
Regard $\widetilde E\mathcal F\wedge H\uZ$ as a right $H\uZ$-module. Since $X$ is a left $H\uZ$-module, after the relevant smashing localization there is an equivalence
\[
\widetilde E\mathcal F\wedge X
\simeq
\bigl(\widetilde E\mathcal F\wedge H\uZ\bigr)
\wedge_{H\uZ}X.
\]
The first factor is contractible after the localization under consideration, so the relative smash product is contractible as well. It follows that every geometric term which vanishes for $H\uZ$ also vanishes for every $H\uZ$-module.
\end{proof}

\begin{proof}[Proof of \cref{prop:LiuLimit}]
First suppose that $p\mid |G|$ and invert the prime divisors of $|G|$ other than $p$. By \cite[Theorem~4.6]{Liu23}, $\pi_*^G(X)$ is the limit of a finite diagram consisting of the homotopy groups at $p$-subgroups together with geometric-fixed-point terms arising in the iterated splitting.  The latter terms vanish by \cite[Proposition~4.9]{Liu23} and \cref{lem:LiuModuleVanishing}.  Liu's restriction-compatibility result \cite[Lemma~4.7]{Liu23} identifies the maps between the surviving subgroup terms with the stated restriction maps.  The conjugation maps are those in \cite[Theorem~4.6]{Liu23}, and the same compatibility identifies subgroup restriction with passage to the corresponding subdiagram.  Finally, further localization at $p$ commutes with this finite limit, since localization of abelian groups is exact.

If $p\nmid |G|$, the only $p$-subgroup is $e$.  Since $\underline\pi_*(X)$ is a module over the constant Green functor $\uZ$, it is cohomological.  After $p$-localization, restriction identifies
\[
\pi_*^K(X)_{(p)}\cong \bigl(\pi_*^e(X)_{(p)}\bigr)^K
\]
for every $K\mathrel{\leq} G$: on invariants an inverse is $|K|^{-1}\tr_e^K$.  This is precisely the limit of the diagram with its single subgroup $e$ and the conjugation maps.
\end{proof}

\begin{rmk}\rm \label[rmk]{rmk:SylowLimit}
Let $P\mathrel{\leq} G$ be a Sylow $p$-subgroup. Projection to the $P$-component identifies the inverse limit in \cref{prop:LiuLimit} with the subgroup of fusion-stable elements of $\pi_*^P(X)_{(p)}$ as in \cite[Theorem~4.19]{Liu23}. Explicitly, an element $x\in\pi_*^P(X)_{(p)}$ is stable if, for every $H\mathrel{\leq} P$ and every $g\in G$ such that $gHg^{-1}\mathrel{\leq} P$, one has
\[
c_g\!\left(\res_H^P x\right)
=
\res_{gHg^{-1}}^P x.
\]
In general this stable-elements subgroup need not equal
\[
\bigl(\pi_*^P(X)_{(p)}\bigr)^{W_G(P)},
\qquad
W_G(P)=N_G(P)/P,
\]
because elements of $G$ outside $N_G(P)$ can induce additional fusion maps between proper subgroups of $P$.
\end{rmk}

\begin{proof}[Proof of \cref{decomposetopsubgps}]
The ``only if'' direction follows by restriction. For the ``if'' direction, put
\[
X=H\uZ\wedge S^{-V}.
\]
The hypothesis gives $X|_H\simeq H\uZ^*$ for every subgroup $H$ of prime-power order, including $e$. Applying \cref{prop:LiuLimit} to $X|_K$, for every $K\mathrel{\leq} G$ and every prime $p$, gives
\[
\pi_n^K(X)_{(p)}=0\qquad(n\neq0).
\]
Therefore $X$ has integer-graded homotopy concentrated in degree zero.

Put $M=\underline\pi_0(X)$, with $M(K)=\pi_0^K(X)$, and choose a generator $1_e\in M(e)\cong\Z$. The $G$-action on $M(e)$ is trivial because its restriction to every Sylow subgroup is trivial. For every subgroup $H$ of prime-power order, choose the generator $e_H\in M(H)$ characterized by
\[
\res_e^H(e_H)=|H|\,1_e.
\]
The local identifications with $\uZ^*$ give
\[
\res_L^H(e_H)=[H:L]e_L,
\qquad
c_g(e_H)=e_{gHg^{-1}}.
\]

Fix $K\mathrel{\leq} G$, a prime $p$, and a Sylow $p$-subgroup $Q\mathrel{\leq} K$. For every $a\in\Z_{(p)}$, the elements
\[
x_H=a\frac{|Q|}{|H|}e_H,
\qquad H\mathrel{\leq} K\text{ a }p\text{-subgroup},
\]
form a compatible family in the diagram of \cref{prop:LiuLimit}. Every compatible family is determined by its $Q$-component. Consequently, \cref{prop:LiuLimit} identifies restriction with an isomorphism
\[
\res_Q^K\colon
M(K)_{(p)}\xrightarrow{\;\cong\;}M(Q)_{(p)}.
\]

Since $M$ is cohomological \cite[Proposition~2.5]{Angeltveit22},
\[
\tr_Q^K\res_Q^K=[K:Q].
\]
The index $[K:Q]$ is a unit in $\Z_{(p)}$, so $\tr_Q^K$ is also an isomorphism after $p$-localization. The local identification with $\uZ^*$ makes $\tr_e^Q$ an isomorphism. Therefore
\[
\tr_e^K=\tr_Q^K\tr_e^Q
\]
is an isomorphism after localization at every prime, and therefore integrally.

Identify $M(K)$ with $\Z$ using the generator $\tr_e^K(1_e)$. Transfer transitivity makes all transfers identities. The cohomological relation makes restrictions multiplication by subgroup indices, and conjugation preserves these generators. Therefore $M\cong\uZ^*$, so
\[
X\simeq H\uZ^*.
\]
Smashing with $S^V$ proves the result.
\end{proof}
\par

\begin{rmk}\label[rmk]{lem:HZPostnikov}\rm
The equivalence constructed in the preceding proof may be chosen to be $H\uZ$-linear. More generally, let $X$ and $Y$ be $H\uZ$-modules whose integer-graded homotopy Mackey functors vanish outside degree zero. Every isomorphism $\underline\pi_0(X)\cong\underline\pi_0(Y)$ of Mackey functors is induced by an $H\uZ$-linear equivalence $X\simeq Y$.

To see this, write $M=\underline\pi_0(X)$ and $N=\underline\pi_0(Y)$. The Eilenberg--Mac Lane correspondence for $H\uZ$-modules concentrated in degree zero gives a natural bijection
\[
[X,Y]_{H\uZ}\cong\operatorname{Hom}_{\uZ}(M,N).
\]
The $\uZ$-actions on $M$ and $N$ are integer multiplication at each orbit, so every map of Mackey functors between them is $\uZ$-linear. Therefore the given isomorphism lifts to an $H\uZ$-module map $X\to Y$. This map induces isomorphisms on all integer-graded homotopy Mackey functors, so it is an equivalence.
\end{rmk}
\par

\section{Verification of Anderson-dualizing groups in \texorpdfstring{\protect\cref{thm:maintheorem}}{Theorem B}} \label{sec:Verification}

In this section, we prove the ``if'' direction of \cref{thm:maintheorem} by showing that every group appearing in the list of \cref{thm:maintheorem} is Anderson-dualizing.  The ``only if'' direction is proved later in the paper.  The proof here uses the Sylow detection theorem, \cref{decomposetopsubgps}.  For each group in the list, we construct a virtual representation whose restriction to every Sylow subgroup is one of the two basic local shifts in \cref{pgrouprep}.

For $T_{3^n}$ and $A_5$, the action on $C_m$ is trivial. For $O_{3^n}$, it factors through $O_{3^n}/T_{3^n}\cong C_2$.

\begin{notation} \rm \label{notation:cyclicGroupRep}
Let \(C_m\) be a cyclic group with a chosen generator.  We write \(\lambda(i,m)\) for the real two-dimensional representation on which the generator acts by rotation through angle \(2\pi i/m\).  When \(m\) is clear from context, we also write \(\lambda(i)\) for \(\lambda(i,m)\).  In particular, \(\lambda(1,m)\) is the faithful rotation representation of \(C_m\).  We use the convention that \(\lambda(1,1)=2\), where \(2\) denotes the trivial two-dimensional real representation.
\end{notation}

We first record the general geometric fact used in the cyclic and dihedral calculations.

\begin{lem}\label[lem]{lem:orbit-sphere}
Let \(G\) be a finite group and let \(W\) be a faithful real \(d\)-dimensional representation of \(G\).  Suppose that every element of \(G\) has determinant \(+1\) on \(W\), and that \(S^W/K\) is homeomorphic to \(S^d\) for every subgroup \(K\mathrel{\leq} G\).  Then
\[
H\uZ\wedge S^{d-W}\simeq H\uZ^*.
\]
Equivalently,
\[
H\uZ^*\wedge S^{W-d}\simeq H\uZ.
\]
\end{lem}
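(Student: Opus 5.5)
Write $X=H\uZ\wedge S^{d-W}$. I will show that the integer-graded homotopy Mackey functors of $X$ are concentrated in degree zero and that $\underline\pi_0(X)\cong\uZ^*$. By \cref{lem:HZPostnikov} this already gives an $H\uZ$-linear equivalence $X\simeq H\uZ^*$.

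For $K\mathrel{\leq} G$ and an integer $n$, duality gives
\[
\pi_n^K(X)=[S^{n+W-d},H\uZ]^K=\widetilde H^{d-n}_K(S^W;\uZ).
\]
This is the Bredon cohomology of the representation sphere with coefficients in the constant Mackey functor $\uZ$. Since $\uZ$ is constant, Bredon cohomology with $\uZ$ coefficients of a $K$-CW complex $Y$ agrees with the ordinary cellular cohomology of the orbit space: the cochain complex $\operatorname{Hom}(C_*^{cell}(Y),\uZ)$ evaluated at $K$ is $\operatorname{Hom}_{\Z}(C_*(Y/K),\Z)$. I would verify this cell by cell: a cell $K/L\times D^k$ contributes $\uZ(K/L)=\Z$, and $K/L\times D^k$ maps to a single $k$-cell in $Y/K$.

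So $\pi_n^K(X)\cong\widetilde H^{d-n}(S^W/K;\Z)$. By hypothesis $S^W/K\cong S^d$, hence $\pi_n^K(X)=0$ for $n\ne 0$ and $\pi_0^K(X)\cong\Z$. This proves concentration in degree zero.

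Next I identify the Mackey structure on $M=\underline\pi_0(X)$, with $M(K)=\widetilde H^d(S^W/K;\Z)\cong\Z$. Restriction $M(K)\to M(L)$ is pullback along the quotient map $q\colon S^W/L\to S^W/K$. Transfer is the pushforward (sum over cosets) on cochains, since transfer with constant coefficients corresponds to the transfer map on orbit spaces. The map $q$ is a branched covering of degree $[K:L]$ between oriented $d$-spheres, and orientations are compatible because every element of $K$ acts by determinant $+1$. Hence $q^*$ on top cohomology is multiplication by $\pm[K:L]$, and after fixing orientations coherently it is $[K:L]$. Conjugation by $g$ is induced by the homeomorphism $g\colon S^W/K\to S^W/gKg^{-1}$; with orientations induced from $S^W$ this preserves generators because $\det g=+1$.

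Since $M$ is cohomological (\cite[Proposition~2.5]{Angeltveit22}), $\tr\circ\res=[K:L]$, which forces transfers to be identities on these generators. Therefore $M\cong\uZ^*$, and the second form of the statement follows by smashing with $S^{W-d}$.

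\textbf{Main obstacle.} The delicate step is the degree and orientation bookkeeping: showing that $q^*$ is exactly multiplication by $[K:L]$ with a consistent sign for all pairs $L\mathrel{\leq} K$ simultaneously. I would handle this by choosing the orientation of $S^W/K$ so that the composite $S^W\to S^W/K$ has positive degree $|K|$. This is possible because $S^W\to S^W/K$ is a branched cover that is orientation-preserving on the free part, using $\det=+1$ and faithfulness, which makes the free locus dense. Multiplicativity of degree then gives $\deg q=[K:L]$. The identification of Bredon cohomology with orbit-space cohomology is standard, but would be stated as a lemma.
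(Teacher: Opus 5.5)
Your proposal is correct and follows the paper's proof essentially step for step. You identify $\pi_n^K$ with $\widetilde H^{d-n}(S^W/K;\mathbb Z)$, get $\deg(S^W/L\to S^W/K)=[K:L]$ from the generic-point count and multiplicativity of degree, use the cohomological relation to force the transfers to be identities, and then invoke \cref{lem:HZPostnikov}. Your explicit check that conjugations preserve the chosen generators, because $\det g=+1$, is a small point the paper leaves implicit.
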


\begin{proof}
Choose an orientation of \(W\).  Every element of \(G\) then acts on the representation sphere \(S^W\) by a map of degree \(+1\).  We first show that, if \(L\mathrel{\leq} K\mathrel{\leq} G\), then the natural map
\[
q_{L,K}\colon S^W/L \longrightarrow S^W/K
\]
has degree \([K:L]\).

Let
\[
q_H\colon S^W\longrightarrow S^W/H
\]
be the quotient map for a subgroup \(H\mathrel{\leq} G\). The union of the fixed-point sets of nonidentity elements of \(H\) is a proper closed subset of \(S^W\).  Choose a point in the complement of its image in \(S^W/H\).  Its preimage consists of \(|H|\) points, and each local degree is \(+1\) because the action of \(H\) preserves the orientation of \(W\).  Therefore
\[
\deg(q_H)=|H|.
\]
The quotient maps satisfy
\[
q_K=q_{L,K}\circ q_L.
\]
Therefore, by multiplicativity of degree,
\[
\deg(q_{L,K})=
\frac{\deg(q_K)}{\deg(q_L)}=
\frac{|K|}{|L|}=
[K:L].
\]

For every \(K\mathrel{\leq} G\) and every integer \(q\), there are natural isomorphisms
\[
\begin{aligned}
\pi_q^K\bigl(H\uZ\wedge S^{d-W}\bigr)
&\cong \widetilde H_K^{d-q}(S^W;\uZ) \\
&\cong \widetilde H^{d-q}(S^W/K;\Z).
\end{aligned}
\]
Since the restriction maps of \(\uZ\) are identities, the equivariant cellular cochain complex computing \(\widetilde H_K^*(S^W;\uZ)\) is the ordinary cellular cochain complex of \(S^W/K\).  Under this identification, restriction from \(K\) to \(L\) is pullback along
\[
q_{L,K}\colon S^W/L\longrightarrow S^W/K.
\]
Since \(S^W/K\cong S^d\), the cohomology group above is \(\Z\) for \(q=0\) and vanishes for \(q\neq0\).  In degree zero, restriction is pullback on top cohomology along \(q_{L,K}\), so it is multiplication by \(\deg(q_{L,K})=[K:L]\).  By \cite[Proposition~2.5]{Angeltveit22}, the homotopy Mackey functor of \(H\uZ\wedge S^{d-W}\) is cohomological. Therefore the corresponding transfer maps are all identity maps.  It follows that this homotopy Mackey functor is \(\uZ^*\), concentrated in degree zero, and
\[
H\uZ\wedge S^{d-W}\simeq H\uZ^*.
\]
By \cref{lem:HZPostnikov}, this equivalence may be chosen to be $H\uZ$-linear.
Smashing this equivalence with \(S^{W-d}\) gives the equivalent formulation in the statement.
\end{proof}

We now apply \cref{lem:orbit-sphere} to the cyclic and dihedral local computations.  These are the basic inputs for all of the verifications below.

\begin{prop} \label[prop]{pgrouprep} \hfill
\begin{enumerate}
\item Let \(G=C_m\) be any finite cyclic group, and let \(\lambda(1,m)\) be its faithful two-dimensional rotation representation.  Then
\[
H\uZ^*\wedge S^{\lambda(1,m)-2}\simeq H\uZ.
\]

\item Let \(G=C_2\ltimes C_{2^n}\) be a dihedral \(2\)-group, where \(C_2\) acts on \(C_{2^n}\) by inversion.  Let \(\rho\) be the two-dimensional dihedral representation whose restriction to \(C_{2^n}\) is \(\lambda(1,2^n)\), and let \(\sigma\) be the sign representation \(G\to C_2\) with kernel \(C_{2^n}\).  Then
\[
H\uZ^*\wedge S^{\rho+\sigma-3}\simeq H\uZ.
\]
\end{enumerate}
\end{prop}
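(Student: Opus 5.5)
Both parts follow from \cref{lem:orbit-sphere}. For each group we take a faithful orientation-preserving representation $W$ of dimension $d$ and show that $S^W/K\cong S^d$ for every subgroup $K$. The lemma then gives $H\uZ^*\wedge S^{W-d}\simeq H\uZ$. In part (1) we take $W=\lambda(1,m)$ with $d=2$. In part (2) we take $W=\rho+\sigma$ with $d=3$.

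\emph{Part (1).} Here $C_m$ acts on $\mathbb C$ by rotations, so every element has determinant $+1$, and the action is faithful. When $m=1$, the convention $\lambda(1,1)=2$ makes the statement trivial. Every subgroup $K\le C_m$ is cyclic, say $K=C_k$, acting by rotations of order $k$. The map $z\mapsto z^k$ induces a homeomorphism $\mathbb C/C_k\cong\mathbb C$. It extends to the one-point compactifications and gives $S^{\lambda}/K\cong S^2$. The lemma then yields the claim.

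\emph{Part (2).} The representation $W=\rho+\sigma$ sends $G$ into $O(3)$. Reflections $g\notin C_{2^n}$ act on $\rho$ with determinant $-1$ and on $\sigma$ by $-1$, so the total determinant is $+1$. Rotations have determinant $1$ on $\rho$ and act trivially on $\sigma$. Hence $G\to SO(3)$, and this map is faithful because $\rho$ is faithful. The image is the standard dihedral subgroup of $SO(3)$: rotations about the $z$-axis, together with half-turns about horizontal axes.

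It remains to check that $S^W/K\cong S^3$ for every $K\le G$. Since $S^W$ is the unreduced suspension of the unit sphere $S(W)=S^2$ and $K$ fixes the suspension points,
\[
S^W/K\cong\Sigma^{u}\bigl(S^2/K\bigr).
\]
So it suffices to show $S^2/K\cong S^2$ for every finite $K\le SO(3)$ appearing here. This holds for any finite subgroup of $SO(3)$. The quotient of $S^2$ by an orientation-preserving isometric action is a closed orientable $2$-orbifold, and its underlying space is an orientable surface. The orbifold Euler characteristic is positive, namely $2/|K|$. By the Riemann–Hurwitz formula, the underlying surface has positive Euler characteristic, and so it is $S^2$.

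For the subgroups that actually occur, the check can also be done directly. The cyclic rotation subgroups are handled as in part (1), applied to the suspension. Subgroups generated by a single half-turn about a horizontal axis are cyclic of order $2$ acting by rotation, so they fall under the same case. The dihedral subgroups $D_{2k}$ require an explicit fundamental domain: a lune of angle $2\pi/k$ cut in half by the equator, whose edges are glued so that the quotient is a disk with its boundary folded, which is $S^2$.

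The main obstacle is making this quotient identification rigorous for the dihedral subgroups. I would cite the classical fact that $S^2/K\cong S^2$ for every finite $K\le SO(3)$, which is what Riemann–Hurwitz gives, and carry out the lune computation only as a sanity check. With this in hand, \cref{lem:orbit-sphere} yields $H\uZ^*\wedge S^{\rho+\sigma-3}\simeq H\uZ$.
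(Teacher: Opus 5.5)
Your proposal is correct and follows the paper's proof: both apply \cref{lem:orbit-sphere} with $W=\lambda(1,m)$, $d=2$ and with $W=\rho\oplus\sigma$, $d=3$, check that the image lies in $SO(3)$, and check that $S^W/K\cong\Sigma(S(W)/K)$ is a sphere for every $K$. The only difference is in the dihedral subgroups: the paper quotients first by the rotation subgroup $R$ and then by the residual orientation-preserving half-turn involution, while you apply the Riemann--Hurwitz count $\chi(S^2/K)=2/|K|+\sum_i(1-1/m_i)>0$ uniformly to all finite $K\le SO(3)$, which covers this step at least as completely.
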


\begin{proof}
We apply \cref{lem:orbit-sphere} in both cases.

For (1), take \(G=C_m\), \(W=\lambda(1,m)\), and \(d=2\).  The unit circle \(S(W)\) is acted on by rotations, and \(S(W)/K\) is again a circle for every subgroup \(K\mathrel{\leq} C_m\).  We have\[
S^W/K\cong \Sigma\bigl(S(W)/K\bigr)\cong S^2.
\]
All rotations preserve orientation.  Therefore \cref{lem:orbit-sphere} gives
\[
H\uZ\wedge S^{2-\lambda(1,m)}\simeq H\uZ^*,
\]
and therefore
\[
H\uZ^*\wedge S^{\lambda(1,m)-2}\simeq H\uZ.
\]

For (2), take \(G=C_2\ltimes C_{2^n}\), \(W=\rho\oplus\sigma\), and \(d=3\).  Elements of the rotation subgroup act by rotations on \(\rho\) and trivially on \(\sigma\), so they have determinant \(+1\) on \(W\).  A reflection has determinant \(-1\) on \(\rho\), and it acts by \(-1\) on \(\sigma\).  Therefore a reflection has determinant \((-1)(-1)=+1\) on \(W\).  
It follows that the image of \(G\) lies in \(SO(3)\) and is the usual rotational dihedral group.  Every subgroup \(K\) is cyclic or dihedral.  If \(K\) is cyclic, the quotient of the unit sphere \(S(W)\cong S^2\) by rotations about an axis is again \(S^2\).  If \(K\) is dihedral, let \(R\trianglelefteq K\) be its cyclic rotation subgroup.  We first have \(S(W)/R\cong S^2\).  The residual involution \(K/R\) is induced by a half-turn and is orientation-preserving, so its quotient is again a \(2\)-sphere.  Therefore \(S(W)/K\cong S^2\) in all cases, and we have \[
S^W/K\cong S^3.
\]
Therefore \cref{lem:orbit-sphere} gives
\[
H\uZ\wedge S^{3-\rho-\sigma}\simeq H\uZ^*,
\]
and therefore
\[
H\uZ^*\wedge S^{\rho+\sigma-3}\simeq H\uZ.
\]
\end{proof}

\begin{notation} \rm \label{notation:standardDihedralRep}
Let \(D_{2m}=C_2\ltimes C_m\), where the generator of \(C_2\) acts on \(C_m\) by inversion.  We write \(\rho_m\) for the standard two-dimensional dihedral representation: the subgroup \(C_m\) acts through \(\lambda(1,m)\), and the generator of \(C_2\) acts by complex conjugation.  If \(m=1\), then \(\rho_1=1\oplus \sigma\), where \(\sigma\) is the sign representation of \(C_2\).
\end{notation}

\begin{prop} \label{prop:cyclic-two-family-good}
Let \(G=C_{2^n}\ltimes C_m\) be a group in the cyclic \(2\)-primary family of \cref{thm:maintheorem}.  Then \(G\) is Anderson-dualizing.
\end{prop}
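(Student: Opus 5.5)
We construct an explicit virtual representation $V$ of $G=C_{2^n}\ltimes C_m$ and verify the equivalence with Sylow detection (\cref{decomposetopsubgps}). Every Sylow subgroup of $G$ is cyclic: the Sylow $2$-subgroup is conjugate to $K=C_{2^n}$, and for odd $p\mid m$ the Sylow $p$-subgroup is the Sylow $p$-subgroup $C_{p^a}$ of $C_m$. By \cref{pgrouprep}(1), it therefore suffices to find $V\in RO(G)$ whose restriction to $K$ is $\lambda(1,2^n)-2$ and whose restriction to each $C_{p^a}$ is $\lambda(1,p^a)-2$. Since $H\uZ$ and $H\uZ^*$ are unchanged by group automorphisms, any faithful rotation representation of a cyclic $p$-group also works. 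Hence the Sylow condition for $C_{p^a}$ only needs $\operatorname{res}V=\lambda(j,p^a)-2$ with $p\nmid j$.

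To build $V$, we take two pieces and subtract a trivial part. The first piece is the pullback of $\lambda(1,2^n)$ along the quotient $G\to G/C_m\cong C_{2^n}$. It restricts to $\lambda(1,2^n)$ on $K$ and to $2$ on $C_m$.

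The second piece is a representation $U$ of $G$ whose restriction to $C_m$ is $\lambda(1,m)$ and whose restriction to $K$ is $2$, up to a lattice correction. By condition (3), the generator $t$ of $K$ acts on $C_m$ by an automorphism $\phi$ with $\phi^2=1$. By the Chinese remainder theorem, $C_m=A\times B$, where $\phi$ is the identity on $A$ and inversion on $B$.

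On $A\times B$ we split $U$ into two parts:
\begin{itemize}
\item On the $A$ factor, extend $\lambda(1,|A|)$ to $G$ by letting $t$ act trivially. This is a well-defined representation because $t$ centralizes $A$.
\item On the $B$ factor, let $t$ act on $\lambda(1,|B|)$ by complex conjugation. This gives the dihedral-type representation $\rho_{|B|}$ pulled back along $G\to D_{2|B|}$ or an index-two quotient; it is well defined because $t^2$ acts trivially on $B$.
\end{itemize}
Then take
\[
V=\lambda(1,2^n)\circ\pi \;+\;U\;-\;4,
\]
where $U$ is the sum of these pieces. Here we use $\lambda(1,m)\cong\lambda(1,|A|)\otimes$-type products only up to restriction. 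More simply, the restriction to each Sylow $C_{p^a}\le A$ or $\le B$ is taken from the appropriate $2$-dimensional summand, with trivial summands otherwise.

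The difficulty is the bookkeeping of trivial summands. Each odd prime factor needs its own rotation piece, which makes the restriction of $V$ to $C_{p^a}$ equal to $\lambda(1,p^a)+(\text{trivial})-\dots$. To fix this, we treat each odd prime $p\mid m$ separately. For each $p$, choose a $2$-dimensional $G$-representation $U_p$ pulled back from $G\to C_{2^n}\ltimes C_{p^a}$, with $t$ acting trivially or by conjugation according to $\phi$. Then set
\[
V=\lambda(1,2^n)\circ\pi+\sum_p U_p-2\bigl(1+\#\{p\}\bigr).
\]

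We now check the restrictions:
\begin{itemize}
\item On $C_{p^a}$: $U_p$ restricts to $\lambda(1,p^a)$, and every other summand is trivial of dimension $2$. So the restriction is $\lambda(1,p^a)-2$.
\item On $K$: each $U_p$ restricts either to $2$, or to $1+\sigma'$, where $\sigma'$ is the sign representation of $K$ through $K\to C_2$.
\end{itemize}
This last case is the main obstacle. The restriction to $K$ becomes $\lambda(1,2^n)+r(\sigma'-1)-2$, which is not $\lambda(1,2^n)-2$ on the nose.

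The remaining step is to show that $\sigma'-1$ already lies in the periodicity lattice of $C_{2^n}$. Equivalently, we show $H\uZ\wedge S^{\sigma'-1}\not\simeq H\uZ$ fails, or else we compensate. First we would try compensation: replace each conjugation-type $U_p$ by $U_p+1-\sigma$, where $\sigma$ is the pullback of $\sigma'$ along $G\to K\to C_2$. The representation $\sigma$ is trivial on $C_m$, so it does not affect the odd Sylow restrictions. On $K$ it cancels $\sigma'-1$. With this correction, every Sylow restriction is exactly a basic cyclic shift from \cref{pgrouprep}(1), and \cref{decomposetopsubgps} concludes the proof.
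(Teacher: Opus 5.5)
Your final construction is correct and is essentially the paper's argument. You pull back $\lambda(1,2^n)$ along $G\to C_{2^n}$, add rotation or dihedral pieces for the odd part, cancel the resulting sign terms on $C_{2^n}$ with $-\sigma$, and conclude with \cref{decomposetopsubgps} and \cref{pgrouprep}(1). The paper does the same but groups the primes, using one $\rho_{m_1}$ for all inverted primes and one $\lambda(1,m_2)$ for the rest, so that a single $-\sigma$ and the constant $-5$ suffice.

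You should delete two things from the write-up, since neither is used in the final argument. The abandoned first attempt $V=\lambda(1,2^n)\circ\pi+U-4$ has virtual dimension $2$, not $0$. The suggestion that $\sigma'-1$ might be a period is false, because its $C_{2^n}$-fixed dimension is $-1$.
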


\begin{proof}
We first construct the shift.  If \(n=0\), then \(G=C_m\), and we set
\[
V=\lambda(1,m)-2.
\]
This is the cyclic shift in \cref{pgrouprep}.  We now assume \(n\geq 1\).  Let \(\alpha\) be a generator of \(C_{2^n}\).  Since \(m\) is odd, every automorphism of \(C_m\) of order at most \(2\) restricts on each Sylow subgroup of \(C_m\) either to the identity or to inversion.  Therefore we can write
\[
C_m=C_{m_1}\times C_{m_2},
\]
where \(m_1\) and \(m_2\) are coprime, \(\alpha\) acts by inversion on \(C_{m_1}\), and \(\alpha\) acts trivially on \(C_{m_2}\).

Let \(\sigma\) denote the sign representation of \(G\) obtained from the quotient
\[
G \longrightarrow G/(C_{2^{n-1}}\times C_m)\cong C_2.
\]
Here \(C_{2^{n-1}}\mathrel{\leq} C_{2^n}\) is the subgroup generated by the square of a generator of \(C_{2^n}\). Since the action of \(C_{2^n}\) on \(C_m\) has order at most \(2\), this subgroup acts trivially on \(C_m\), so \(C_{2^{n-1}}\times C_m\) is the kernel of the quotient above. By abuse of notation, we also write \(\sigma\) for the sign representation of \(C_{2^n}\) obtained by restricting this \(G\)-representation to \(C_{2^n}\).

Let \(V_1\) be the pullback of \(\lambda(1,2^n)\) from \(C_{2^n}\).  Let \(V_2\) be the pullback of \(\rho_{m_1}\) from
\[
(C_{2^n}/C_{2^{n-1}})\ltimes C_{m_1},
\]
and let \(V_3\) be the pullback of \(\lambda(1,m_2)\) from \(C_{m_2}\).  Set
\[
V=V_1+V_2+V_3-\sigma-5.
\]
We will check that \((G,V)\) is Anderson-dualizing.

By \cref{decomposetopsubgps}, it is enough to check the restrictions of \(V\) to the Sylow subgroups of \(G\).  The Sylow \(2\)-subgroup is \(C_{2^n}\).  On this subgroup, \(V_1\) restricts to \(\lambda(1,2^n)\), \(V_2\) restricts to \(1\oplus\sigma\), and \(V_3\) restricts to the trivial two-dimensional representation.  Therefore
\[
V|_{C_{2^n}}
=\lambda(1,2^n)+(1+\sigma)+2-\sigma-5
=\lambda(1,2^n)-2.
\]
This is the cyclic shift in \cref{pgrouprep}.

Now let \(P\) be a Sylow \(p\)-subgroup for an odd prime \(p\).  Since \(m_1\) and \(m_2\) are coprime, \(P\) lies in exactly one of \(C_{m_1}\) and \(C_{m_2}\).  If \(P\mathrel{\leq} C_{m_1}\), then \(V_2|_P\cong \lambda(1,|P|)\), while \(V_1\), \(\sigma\), and \(V_3\) restrict to trivial representations of dimensions \(2\), \(1\), and \(2\).  Therefore
\[
V|_P=2+\lambda(1,|P|)+2-1-5=\lambda(1,|P|)-2.
\]
If \(P\mathrel{\leq} C_{m_2}\), the same calculation with \(V_3\) in place of \(V_2\) gives
\[
V|_P=\lambda(1,|P|)-2.
\]
All Sylow restrictions are Anderson-dualizing by \cref{pgrouprep}, so \cref{decomposetopsubgps} proves the result.
\end{proof}

\begin{prop} \label{prop:dihedral-two-family-good}
Let \(G=D_{2^{n+1}}\ltimes C_m\) be a group in the dihedral \(2\)-primary family of \cref{thm:maintheorem}.  Then \(G\) is Anderson-dualizing.
\end{prop}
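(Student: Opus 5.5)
The plan is to imitate the proof of \cref{prop:cyclic-two-family-good}: build an explicit virtual representation $V$ of $G=K\ltimes C_m$, with $K=D_{2^{n+1}}=C_2\ltimes C_{2^n}$, and then check through \cref{decomposetopsubgps} that every Sylow restriction of $V$ is one of the basic local shifts in \cref{pgrouprep}.

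\textbf{Splitting $C_m$ by sign characters.} First I will analyze the action of $K$ on $C_m$. Since $\operatorname{Aut}(C_m)$ is abelian and every element of $K$ acts by an automorphism of order at most $2$, the action factors through an elementary abelian $2$-quotient of $K$. Hence on each Sylow subgroup of $C_m$ the action is given by a character $\chi\colon K\to\{\pm1\}$, acting by identity or inversion. Grouping Sylow subgroups by their character gives a decomposition
\[
C_m=\prod_{\chi}C_{m_\chi},
\]
with the $m_\chi$ pairwise coprime, where $\chi$ ranges over the (at most four) characters of $K$ and $K$ acts on $C_{m_\chi}$ via $\chi$. For $\chi$ nontrivial with kernel $K_\chi$, the quotient map
\[
G\longrightarrow (K/K_\chi)\ltimes C_{m_\chi}\cong D_{2m_\chi}
\]
is a homomorphism, because $K_\chi$ acts trivially on $C_{m_\chi}$ and the other factors are projected away. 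Let $W_\chi$ be the pullback of $\rho_{m_\chi}$ along this map. For the trivial character let $W_0$ be the pullback of $\lambda(1,m_0)$ from $C_{m_0}$. Each $\chi$ is also regarded as a one-dimensional $G$-representation via $G\to G/C_m\cong K$. Let $\rho,\sigma$ be the representations of $K$ from \cref{pgrouprep}(2), pulled back to $G$ along the same map.

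\textbf{The shift.} Set
\[
V=\rho+\sigma-3+\sum_{\chi\neq 1}\bigl(W_\chi-1-\chi\bigr)+\bigl(W_0-2\bigr).
\]

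\textbf{Sylow $2$-subgroup.} The Sylow $2$-subgroup is $K$. On $K$, each $W_\chi$ restricts to $\rho_1$ for $K/K_\chi\cong C_2$ pulled back, which is $1+\chi$, and $W_0$ restricts to $2$. So the correction terms cancel and
\[
V|_K=\rho+\sigma-3,
\]
which is the dihedral shift of \cref{pgrouprep}(2). This includes the case $n=1$, where $K=D_4$.

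\textbf{Odd Sylow subgroups.} For an odd prime $p$, a Sylow $p$-subgroup $P$ lies in a unique factor $C_{m_\chi}$, by coprimality. On $P$ the representations $\rho,\sigma,\chi$ are trivial, so $\rho+\sigma-3$ restricts to $0$. If $P\leq C_{m_\chi}$, then the factor $W_\chi$ (or $W_0$) restricts to $\lambda(1,|P|)$, because $\rho_{m_\chi}$ restricted to the rotation subgroup is $\lambda(1,m_\chi)$. Every other correction term restricts to a zero-dimensional trivial class. Hence
\[
V|_P=\lambda(1,|P|)-2,
\]
the cyclic shift of \cref{pgrouprep}(1). \cref{decomposetopsubgps} then finishes the proof.

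\textbf{Expected main obstacle.} The only delicate point is the first step: justifying that the action on $C_m$ is encoded by characters of $K$, and that the pullback maps are genuine homomorphisms. This reduces to $\operatorname{Aut}(C_m)$ being abelian and to the order-$\le 2$ hypothesis. I will also check that the Sylow restrictions of pulled-back representations are computed correctly, since the dimension bookkeeping must produce exactly $-3$ and $-2$.
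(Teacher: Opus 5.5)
Your proposal is correct and is essentially the paper's proof. The paper splits $C_m$ into four coprime factors according to whether the reflection $\alpha$ and the rotation generator $\beta$ invert them; this is your splitting by the four characters of $K$. It pulls back the same standard dihedral representations and $\lambda(1,m_4)$, and its shift $V_0+V_1+V_2+V_3+V_4-\sigma_2-\sigma_3-8$ is exactly your $V$ after regrouping. The Sylow restriction checks are the same.
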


\begin{proof}
Write
\[
D_{2^{n+1}}=\langle \alpha,\beta\mid \beta^{2^n}=1,\ \alpha^2=1,\ \alpha\beta\alpha=\beta^{-1}\rangle,
\]
where \(\beta\) generates the rotation subgroup.  Since \(\operatorname{Aut}(C_m)\) is abelian, the action of \(D_{2^{n+1}}\) on \(C_m\) factors through the abelian quotient
\[
T=D_{2^{n+1}}/\langle \beta^2\rangle\cong C_2\times C_2.
\]
The action condition in \cref{thm:maintheorem} gives a decomposition
\[
C_m=C_{m_1}\times C_{m_2}\times C_{m_3}\times C_{m_4}
\]
with pairwise coprime factors, where \(\alpha\) acts by inversion on \(C_{m_1}\times C_{m_2}\) and trivially on \(C_{m_3}\times C_{m_4}\), while \(\beta\) acts by inversion on \(C_{m_1}\times C_{m_3}\) and trivially on \(C_{m_2}\times C_{m_4}\).

Let \(V_0\) be the standard two-dimensional dihedral representation of \(D_{2^{n+1}}\), and let \(\sigma_1,\sigma_2,\sigma_3\) be the three sign representations of \(T\), chosen so that \(\sigma_1\) has kernel \(C_{2^n}\), \(\sigma_2(\alpha)=1\), \(\sigma_2(\beta)=-1\), and \(\sigma_3=\sigma_1\sigma_2\).  Let \(V_1,V_2,V_3\) be the pullbacks of the standard dihedral representations from
\[
(T/\langle \alpha\beta\rangle)\ltimes C_{m_1},\qquad
(T/\langle \beta\rangle)\ltimes C_{m_2},\qquad
(T/\langle \alpha\rangle)\ltimes C_{m_3},
\]
and let \(V_4\) be the pullback of \(\lambda(1,m_4)\) from \(C_{m_4}\).  Set
\[
V=V_0+V_1+V_2+V_3+V_4-\sigma_2-\sigma_3-8.
\]
We will check that \((G,V)\) is Anderson-dualizing.

By \cref{decomposetopsubgps}, it is enough to check Sylow subgroups.  First restrict to the Sylow \(2\)-subgroup \(D_{2^{n+1}}\).  The representations \(V_1,V_2,V_3\) restrict respectively to
\[
1\oplus \sigma_3,
\qquad
1\oplus \sigma_1,
\qquad
1\oplus \sigma_2,
\]
and \(V_4\) restricts to the trivial two-dimensional representation.  Therefore
\[
\begin{aligned}
V|_{D_{2^{n+1}}}
&=V_0+(1+\sigma_3)+(1+\sigma_1)+(1+\sigma_2)+2-\sigma_2-\sigma_3-8 \\
&=V_0+\sigma_1-3.
\end{aligned}
\]
This is the dihedral shift in \cref{pgrouprep}.

Now let \(P\) be a Sylow \(p\)-subgroup for an odd prime \(p\).  Then \(P\) lies in exactly one of the factors \(C_{m_i}\).  If \(P\mathrel{\leq} C_{m_i}\), the representation \(V_i\) restricts to \(\lambda(1,|P|)\), and the other \(V_j\)'s restrict to trivial two-dimensional representations.  The representations \(V_0\), \(\sigma_2\), and \(\sigma_3\) restrict to trivial representations of dimensions \(2\), \(1\), and \(1\).  In all four cases,
\[
V|_P=\lambda(1,|P|)-2.
\]
The Sylow detection theorem now gives the result.
\end{proof}

Recall that \(T_{3^n}\) is the group
\[
T_{3^n}=C_{3^n}\ltimes E,\qquad E=C_2\times C_2,
\]
where a generator of \(C_{3^n}\) acts on the three nonidentity elements of \(E\) by a \(3\)-cycle.  Equivalently, the action is the composite \(C_{3^n}\twoheadrightarrow C_3\mathrel{\leq} \operatorname{Aut}(E)\cong S_3\).  In the special case \(n=1\), this group is \(T_3\cong A_4\).

\begin{prop} \label{prop:T-family-good}
Let \(G=T_{3^n}\ltimes C_m\) be a group in the \(T_{3^n}\)-family of \cref{thm:maintheorem}.  Then \(G\) is Anderson-dualizing.
\end{prop}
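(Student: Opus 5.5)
Following the pattern of \cref{prop:cyclic-two-family-good} and \cref{prop:dihedral-two-family-good}, I will write down an explicit virtual representation $V$ of $G=T_{3^n}\ltimes C_m$ and check its Sylow restrictions using \cref{decomposetopsubgps}. Here the action of $T_{3^n}$ on $C_m$ is trivial, as noted at the start of this section, so $G\cong T_{3^n}\times C_m$. Hence every representation of $T_{3^n}$ or of $C_m$ pulls back to $G$ along the projections. The Sylow subgroups are:
\begin{itemize}
\item[] the Sylow $2$-subgroup $E\cong C_2\times C_2=D_4$;
\item[] the Sylow $3$-subgroup, a conjugate of $C_{3^n}$ (note $3\nmid m$ because $(m,|K|)=1$);
\item[] for $p\geq5$, the Sylow $p$-subgroups of $C_m$.
\end{itemize}

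For the $T_{3^n}$ factor I will use the three-dimensional representation $W$ pulled back from $A_4\leq SO(3)$ along $T_{3^n}\twoheadrightarrow T_3\cong A_4$. This quotient map is induced by $C_{3^n}\twoheadrightarrow C_3$ and is the identity on $E$. The shift $W-3$ is the candidate suggested by \cref{listremark}.
\begin{itemize}
\item[] On $E$, $W$ restricts to $\sigma_1+\sigma_2+\sigma_3$, the sum of the three nontrivial characters. This equals $\rho+\sigma-3$ with $\rho=\rho_2=\sigma_a+\sigma_b$ the dihedral representation of $D_4$, so $W|_E-3$ is the dihedral shift of \cref{pgrouprep}. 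I will check this against the conventions there for $n=1$, where $\rho$ restricted to the rotation subgroup $C_2$ is $\lambda(1,2)$, the sign-rotation. Both $\rho+\sigma$ and $W|_E$ are the sum of all three nontrivial characters, so they agree.
\item[] On $C_{3^n}$, the generator acts on $W$ through $C_3$ by a rotation of order $3$ about an axis. So $W|_{C_{3^n}}=1+\lambda(1,3)$, which equals $1+\lambda(3^{n-1},3^n)$ in the notation of \cref{notation:cyclicGroupRep}. This is not the faithful $\lambda(1,3^n)$ when $n\geq2$.
\end{itemize}

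To repair the $3$-primary restriction I will add a correction term. Let $U$ be the pullback of $\lambda(1,3^n)$ along the abelianization $T_{3^n}\to C_{3^n}$; this exists because $E$ is normal with quotient $C_{3^n}$. Let $U'$ be the pullback of $\lambda(1,3)$ along $T_{3^n}\to C_{3^n}\to C_3$. Both are trivial on $E$. Set
\[
V=W+U-U'+V_4-5,
\]
where $V_4$ is the pullback of $\lambda(1,m)$ from $C_m$. The Sylow restrictions are then as follows.
\begin{itemize}
\item[] On $E$: $U$, $U'$ and $V_4$ restrict to trivial two-dimensional representations, giving $W|_E+2-2+2-5=W|_E-3$. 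This is the dihedral shift.
\item[] On $C_{3^n}$: $1+\lambda(1,3)+\lambda(1,3^n)-\lambda(1,3)+2-5=\lambda(1,3^n)-2$. This is the cyclic shift.
\item[] On a Sylow $p$-subgroup $P\leq C_m$: $3+2-2+\lambda(1,|P|)-5=\lambda(1,|P|)-2$, using that $\lambda(1,m)|_P$ is a faithful rotation of $P$. A Galois twist $\lambda(i,|P|)$ with $p\nmid i$ may appear instead; this is harmless because \cref{lem:orbit-sphere} applies verbatim to any faithful rotation representation.
\end{itemize}
The same Galois-twist remark covers any relabelling issues in the $3$-primary restriction.

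Every Sylow restriction is therefore one of the basic shifts of \cref{pgrouprep}, and \cref{decomposetopsubgps} finishes the proof. The main point needing care is the identification $W|_E\cong\rho+\sigma$ compatibly with the conventions of \cref{pgrouprep}(2) for $D_4$, i.e.\ for $n=1$ in the dihedral family. If that convention check were to fail, I would verify the $D_4$ case directly from \cref{lem:orbit-sphere}: $E$ acts on $\mathbb R^3$ by the diagonal rotations by $\pi$, and $S^2/K\cong S^2$ for each subgroup $K$.
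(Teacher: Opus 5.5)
Your proposal is correct and is essentially the paper's proof: the paper uses the same shift $V=W-\lambda_3+\lambda_{3^n}+\lambda_m-5$ (your $W+U-U'+V_4-5$) and the same three Sylow checks, followed by Sylow detection. The one difference is that the paper proves the triviality of the $T_{3^n}$-action on $C_m$ directly rather than citing the section's opening remark, and you should include that short argument, since you need $G\cong T_{3^n}\times C_m$ in order to pull back $\lambda(1,m)$. The argument runs as follows: $\operatorname{Aut}(C_m)$ is abelian, and the three nonidentity elements of $E$ are conjugate with product $1$, so their common image $u$ satisfies $u^3=u^2=1$; moreover, the image of $C_{3^n}$ has order dividing both $3^n$ and $2$.
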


\begin{proof}
Let \(K=T_{3^n}\).  We first check that the action of \(K\) on \(C_m\) is trivial.  Since \(\operatorname{Aut}(C_m)\) is abelian, the action map \(K\to \operatorname{Aut}(C_m)\) sends conjugate elements of \(E\) to the same automorphism.  The three nonidentity elements of \(E\) are conjugate under \(C_{3^n}\), so their images are equal.  Their product is the identity in \(E\), so the common image has order dividing \(3\).  By the action condition in \cref{thm:maintheorem}, the common image also has order dividing \(2\).  Therefore \(E\) acts trivially.  The image of \(C_{3^n}\) is trivial as well, because every element in the image has order dividing both \(3^n\) and \(2\).  Therefore \(K\) acts trivially on \(C_m\), and \(G=K\times C_m\).

We now construct the shift.  Let \(W\) be the pullback of the standard three-dimensional representation of
\[
K/C_{3^{n-1}}\cong A_4\mathrel{\leq} SO(3).
\]
Let \(\lambda_3\) be the pullback of \(\lambda(1,3)\) from
\[
K/(C_{3^{n-1}}\ltimes E)\cong C_3,
\]
let \(\lambda_{3^n}\) be the pullback of \(\lambda(1,3^n)\) from \(K/E\cong C_{3^n}\), and let \(\lambda_m\) be the pullback of \(\lambda(1,m)\) from \(C_m\).  Set
\[
V=W-\lambda_3+\lambda_{3^n}+\lambda_m-5.
\]

We check Sylow restrictions.  Let \(P=E\) be the Sylow \(2\)-subgroup.  The restriction of \(W\) to \(E\) is the sum of the three nontrivial sign representations of \(E\).  The representations \(\lambda_3\), \(\lambda_{3^n}\), and \(\lambda_m\) restrict to trivial two-dimensional representations.  Therefore
\[
V|_E=\sigma_1+\sigma_2+\sigma_3-2+2+2-5
=\sigma_1+\sigma_2+\sigma_3-3.
\]
This is the dihedral shift for \(C_2\times C_2\) from \cref{pgrouprep}.

Let \(P=C_{3^n}\) be the Sylow \(3\)-subgroup.  The restriction of \(W\) to \(P\) is
\[
1\oplus \lambda(3^{n-1},3^n),
\]
and \(\lambda_3|_P\cong \lambda(3^{n-1},3^n)\), while \(\lambda_{3^n}|_P\cong \lambda(1,3^n)\) and \(\lambda_m|_P\) is trivial of dimension \(2\).  Therefore
\[
V|_P=(1+\lambda(3^{n-1},3^n))-\lambda(3^{n-1},3^n)+\lambda(1,3^n)+2-5
=\lambda(1,3^n)-2.
\]

Finally, if \(P\mathrel{\leq} C_m\) is a Sylow \(p\)-subgroup for an odd prime \(p\neq 3\), then \(\lambda_m|_P\cong \lambda(1,|P|)\), while \(W\), \(\lambda_3\), and \(\lambda_{3^n}\) restrict to trivial representations of dimensions \(3\), \(2\), and \(2\).  Therefore
\[
V|_P=3-2+2+\lambda(1,|P|)-5=\lambda(1,|P|)-2.
\]
All Sylow restrictions are basic shifts, so \cref{decomposetopsubgps} proves the proposition.
\end{proof}

\begin{rmk}\label{rmk:shifts}\rm
The shifts constructed in this section need not have the form $U-|U|$ for an actual representation $U$. For a finite subgroup of $SO(3)$, the shift in \cref{listremark} has this form. In the $T_{3^n}$-family with $n>1$, the shift above contains the term
\[
-\lambda(1,3)+\lambda(1,3^n),
\]
where these representations are inflated from the cyclic quotients of $T_{3^n}$. The negative coefficient of the nontrivial summand $\lambda(1,3)$ shows why the classification of all shifts must be stated in $RO(G)$.
\end{rmk}

Recall next that \(O_{3^n}\) is the group
\[
O_{3^n}=D_{2\cdot 3^n}\ltimes E,
\qquad E=C_2\times C_2,
\]
where the action of \(D_{2\cdot 3^n}\) on \(E\) is the composite
\[
D_{2\cdot 3^n}\twoheadrightarrow D_6\cong \operatorname{Aut}(E).
\]
Equivalently, if \(D_{2\cdot 3^n}=C_2\ltimes C_{3^n}\), then the generator of \(C_{3^n}\) acts on the three nonidentity elements of \(E\) by a \(3\)-cycle, and the generator of \(C_2\) acts by a transposition.  The subgroup
\[
T_{3^n}=C_{3^n}\ltimes E
\]
has index \(2\) in \(O_{3^n}\).  In the special case \(n=1\), the group is
\[
O_3\cong D_6 \ltimes (C_2 \times C_2) \cong S_4.
\]

\begin{prop} \label{prop:O-family-good}
Let \(G=O_{3^n}\ltimes C_m\) be a group in the \(O_{3^n}\)-family of \cref{thm:maintheorem}.  Then \(G\) is Anderson-dualizing.
\end{prop}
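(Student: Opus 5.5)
The plan follows the proof of \cref{prop:T-family-good}: first pin down the action of $K=O_{3^n}$ on $C_m$, then write down an explicit virtual representation $V$ of $G$, and finally check with \cref{decomposetopsubgps} that $V$ restricts on each Sylow subgroup to one of the basic shifts of \cref{pgrouprep}.

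\emph{The action.} The three nonidentity elements of $E$ are conjugate in $K$, and $\operatorname{Aut}(C_m)$ is abelian, so they have a common image in $\operatorname{Aut}(C_m)$. Their product is trivial, so this image has order dividing $3$. By condition (3) of \cref{thm:maintheorem} it also has order dividing $2$, so $E$ acts trivially. Elements of $C_{3^n}$ have odd order, so they also act trivially. Hence the action factors through $K/T_{3^n}\cong C_2$. Let $\sigma_K$ be the corresponding sign representation. As in \cref{prop:cyclic-two-family-good}, we get a splitting $C_m=C_{m_1}\times C_{m_2}$ with coprime factors, where the nontrivial element of $K/T_{3^n}$ acts by inversion on $C_{m_1}$ and trivially on $C_{m_2}$.

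\emph{The shift.} Define the following representations of $G$.
\begin{itemize}[nosep]
\item[(i)] $W$ is the pullback of the standard three-dimensional representation of $K/C_{3^{n-1}}\cong S_4\leq SO(3)$.
\item[(ii)] $\rho_3'$ is the pullback of $\rho_3$ from $K/(C_{3^{n-1}}\ltimes E)\cong D_6$.
\item[(iii)] $\rho'_{3^n}$ is the pullback of $\rho_{3^n}$ from $K/E\cong D_{2\cdot 3^n}$.
\item[(iv)] $V_1$ is the pullback of $\rho_{m_1}$ from $(K/T_{3^n})\ltimes C_{m_1}$.
\item[(v)] $V_2$ is the pullback of $\lambda(1,m_2)$ from $C_{m_2}$.
\end{itemize}
Set
\[
V=W-\rho_3'+\rho_{3^n}'+V_1+V_2-\sigma_K-6 .
\]

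\emph{Sylow checks.} A Sylow $2$-subgroup $P$ is dihedral of order $8$, and $P\cap T_{3^n}=E$. Each of $\rho'_3$, $\rho'_{3^n}$, $V_1$ factors through a dihedral quotient of $K$ in which $P$ maps onto the reflection subgroup $C_2$. So each restricts on $P$ to $1+\sigma''$, where $\sigma''$ is the sign of $P$ with kernel $E$, and $\sigma_K|_P=\sigma''$. The representation $W|_P$ is the rotational $D_8\subset SO(3)$, namely $\rho+\sigma$ with $\sigma$ having kernel the cyclic $C_4$. Hence
\[
V|_P=W|_P+(1+\sigma'')-(1+\sigma'')+(1+\sigma'')+2-\sigma''-6=\rho+\sigma-3,
\]
which is the dihedral shift of \cref{pgrouprep}(2).

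On the Sylow $3$-subgroup $C_{3^n}$ we have $W\mapsto 1+\lambda(3^{n-1},3^n)$ and $\rho_3'\mapsto\lambda(3^{n-1},3^n)$, because a generator maps to a $3$-cycle acting by rotation. Also $\rho'_{3^n}\mapsto\lambda(1,3^n)$, $V_1,V_2\mapsto 2$ and $\sigma_K\mapsto 1$. Summing gives $\lambda(1,3^n)-2$.

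For $P\leq C_{m_i}$ with $p\neq 3$ odd, $V_i\mapsto\lambda(1,|P|)$ and every other summand restricts trivially. The dimension count is $3-2+2+2-1-6=-2$, so $V|_P=\lambda(1,|P|)-2$.

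\emph{Main obstacle.} The delicate step is the $2$-Sylow identification. One must confirm that $W|_{D_8}$ is precisely $\rho+\sigma$ with $\ker\sigma=C_4$; this holds because half-turns in $D_8\subset SO(3)$ reverse the $C_4$-axis. One must also confirm that the three inflated dihedral representations all contribute the \emph{same} sign $\sigma''$, which is the one with kernel $E$ and not one of the other index-two subgroups. That is what makes the cancellations above work.
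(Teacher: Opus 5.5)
Your proposal is correct and follows the paper's proof essentially verbatim. You use the same virtual representation $V=W+\widetilde\rho_{3^n}-\widetilde\rho_3-\sigma+V_1+V_2-6$ and the same three Sylow checks, including the cancellation of the sign terms with kernel $E$ on the $D_8$. The only difference is that you rederive directly that $T_{3^n}$ acts trivially on $C_m$, whereas the paper cites the proof of \cref{prop:T-family-good} for this.
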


\begin{proof}
Let
\[
K=O_{3^n},\qquad T=T_{3^n}\mathrel{\leq} K,\qquad E=C_2\times C_2.
\]
The subgroup \(T\) has index \(2\) in \(K\) and acts trivially on \(C_m\) by the proof of \cref{prop:T-family-good}.  Therefore the action of \(K\) on \(C_m\) factors through
\[
K/T\cong C_2.
\]
Since \(m\) is odd, we can write
\[
C_m=C_{m_1}\times C_{m_2},
\]
where \(m_1\) and \(m_2\) are coprime, the nontrivial element of \(K/T\) acts by inversion on \(C_{m_1}\), and it acts trivially on \(C_{m_2}\).

Let \(W\) be the pullback of the standard three-dimensional representation of
\[
K/C_{3^{n-1}}\cong O_3\cong S_4\mathrel{\leq} SO(3).
\]
Let \(\widetilde\rho_{3^n}\) be the pullback of the standard two-dimensional dihedral representation \(\rho_{3^n}\) from
\[
K/E\cong D_{2\cdot 3^n},
\]
and let \(\widetilde\rho_3\) be the pullback of \(\rho_3\) from
\[
K/(C_{3^{n-1}}\ltimes E)\cong D_6.
\]

Let \(\sigma\) denote the one-dimensional representation of \(G=K\ltimes C_m\) obtained from the quotient
\[
G\longrightarrow G/(T\ltimes C_m)\cong K/T\cong C_2
\]
and the nontrivial character of \(C_2\).  Equivalently,
\[
\ker(\sigma)=T\ltimes C_m.
\]
By abuse of notation, we also write \(\sigma\) for the restriction of this representation to subgroups of \(G\).

Let \(V_1\) be the pullback of the standard two-dimensional dihedral representation \(\rho_{m_1}\) from
\[
(K/T)\ltimes C_{m_1},
\]
where \(K/T\cong C_2\) acts on \(C_{m_1}\) by inversion.  Let \(V_2\) be the pullback of \(\lambda(1,m_2)\) from \(C_{m_2}\).  We use the conventions that \(\rho_1=1\oplus\sigma\) for the group \(C_2\), and that \(\lambda(1,1)\) is the trivial two-dimensional representation.

Set
\[
V=W+\widetilde\rho_{3^n}-\widetilde\rho_3-\sigma+V_1+V_2-6.
\]
We will check that \((G,V)\) is Anderson-dualizing.

By \cref{decomposetopsubgps}, it is enough to check one representative of each Sylow conjugacy class.  Since \((m,|K|)=1\), we may choose the Sylow \(2\)- and Sylow \(3\)-subgroups inside \(K\), while the Sylow subgroups for primes \(q\neq 2,3\) lie inside \(C_m\).

First let \(P\) be a Sylow \(2\)-subgroup.  Then \(P\mathrel{\leq} K\), and
\[
P\cap T=E,\qquad P/E\cong K/T\cong C_2.
\]
The group \(P\) is a dihedral group of order \(8\).  The restriction \(\sigma|_P\) is the sign representation associated to the quotient
\[
P\longrightarrow P/E\cong C_2.
\]
This sign representation has kernel \(E\).  We will only use it in the terms that factor through \(P/E\).

The representation \(W|_P\) is the restriction of the standard three-dimensional representation of \(SO(3)\) to the dihedral subgroup \(P\mathrel{\leq} S_4\mathrel{\leq} SO(3)\). Therefore \(W|_P-3\) is the basic dihedral shift from \cref{pgrouprep}.

The representations \(\widetilde\rho_{3^n}\) and \(\widetilde\rho_3\) have the same restriction to \(P\).  Both representations factor through \(P/E\cong C_2\), and both restrict as
\[
1\oplus \sigma|_P.
\]
Also,
\[
V_1|_P\cong 1\oplus \sigma|_P,
\qquad
V_2|_P\cong 2.
\]
Therefore
\[
\begin{aligned}
V|_P
&=
W|_P+(1+\sigma|_P)-(1+\sigma|_P)-\sigma|_P
 +(1+\sigma|_P)+2-6  \\
&=W|_P-3.
\end{aligned}
\]
This is Anderson-dualizing by the dihedral case of \cref{pgrouprep}.

Next let \(P=C_{3^n}\) be the Sylow \(3\)-subgroup.  Then
\[
W|_P\cong 1\oplus \lambda(3^{n-1},3^n),
\]
because \(W\) factors through
\[
K/C_{3^{n-1}}\cong O_3\cong S_4.
\]
Moreover,
\[
\widetilde\rho_{3^n}|_P\cong \lambda(1,3^n),
\qquad
\widetilde\rho_3|_P\cong \lambda(3^{n-1},3^n).
\]
The representations \(\sigma\), \(V_1\), and \(V_2\) restrict to trivial representations of dimensions \(1\), \(2\), and \(2\), respectively.  Therefore
\[
\begin{aligned}
V|_P
&=
(1+\lambda(3^{n-1},3^n))
+\lambda(1,3^n)
-\lambda(3^{n-1},3^n)
-1+2+2-6  \\
&=\lambda(1,3^n)-2.
\end{aligned}
\]
This is Anderson-dualizing by the cyclic case of \cref{pgrouprep}.

Finally, let \(P\) be a Sylow \(q\)-subgroup of \(C_m\), where \(q\) is an odd prime different from \(3\).  Since \(m_1\) and \(m_2\) are coprime, \(P\) lies in exactly one of \(C_{m_1}\) and \(C_{m_2}\).

If \(P\mathrel{\leq} C_{m_1}\), then
\[
V_1|_P\cong \lambda(1,|P|),
\qquad
V_2|_P\cong 2.
\]
If \(P\mathrel{\leq} C_{m_2}\), then
\[
V_1|_P\cong 2,
\qquad
V_2|_P\cong \lambda(1,|P|).
\]
The representations pulled back from \(K\) restrict trivially to \(P\), so
\[
W|_P\cong 3,\qquad
\widetilde\rho_{3^n}|_P\cong 2,\qquad
\widetilde\rho_3|_P\cong 2,\qquad
\sigma|_P\cong 1.
\]
In either case, we get
\[
V|_P
=
3+2-2-1+\lambda(1,|P|)+2-6
=
\lambda(1,|P|)-2.
\]
This is Anderson-dualizing by the cyclic case of \cref{pgrouprep}.

Every Sylow restriction of \(V\) is one of the basic Anderson-dualizing shifts in \cref{pgrouprep}.  Therefore \cref{decomposetopsubgps} proves that \((G,V)\) is Anderson-dualizing.
\end{proof}

\begin{prop} \label{prop:A5-family-good}
Let \(G=A_5\ltimes C_m\) be a group in the \(A_5\)-family of \cref{thm:maintheorem}.  Then \(G\) is Anderson-dualizing.
\end{prop}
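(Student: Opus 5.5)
Following the pattern of \cref{prop:T-family-good}, we first show that the action of $K=A_5$ on $C_m$ is trivial. Since $\operatorname{Aut}(C_m)$ is abelian, the action map $A_5\to\operatorname{Aut}(C_m)$ factors through the abelianization of $A_5$. This abelianization is trivial because $A_5$ is perfect. Hence $G=A_5\times C_m$, with $m$ odd and $(m,60)=1$, so every prime divisor of $m$ is at least $7$.

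We then take the shift
\[
V=W+\lambda_m-5,
\]
where $W$ is the pullback along $G\to A_5$ of the standard three-dimensional representation of $A_5\cong I\mathrel{\leq} SO(3)$ (the icosahedral group), and $\lambda_m$ is the pullback of $\lambda(1,m)$ along $G\to C_m$. By \cref{decomposetopsubgps}, it suffices to check one Sylow $p$-subgroup $P$ for each prime $p$, and we may choose these inside $A_5$ or inside $C_m$.

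For $p=2$, $P\cong C_2\times C_2=D_4$. Its image in $SO(3)$ is the Klein four-group of half-turns about three orthogonal axes, so $W|_P=\sigma_1+\sigma_2+\sigma_3$. Since $\lambda_m|_P=2$, we get
\[
V|_P=\sigma_1+\sigma_2+\sigma_3-3,
\]
which is the dihedral shift of \cref{pgrouprep}(2) for $D_4$, exactly as in the $T_{3^n}$ case. For $p=3$ and $p=5$, $P=C_p$ acts on $\mathbb R^3$ by rotation about an axis, so $W|_P\cong 1\oplus\lambda(j,p)$ for some $j$ prime to $p$. This gives
\[
V|_P=\lambda(j,p)-2.
\]
This is not literally $\lambda(1,p)-2$, but $\lambda(j,p)$ is $\lambda(1,p)$ for a different choice of generator of $C_p$, so \cref{pgrouprep}(1) still applies. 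Alternatively, one can apply \cref{lem:orbit-sphere} directly to $W|_P$, or to all of $A_5$ at once. For a prime $q\mid m$, $W|_P=3$ and $\lambda_m|_P=\lambda(1,|P|)$, so
\[
V|_P=\lambda(1,|P|)-2.
\]

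The main point needing care is the identification of the restrictions of $W$ to the Sylow subgroups: that the $2$-Sylow acts by the three sign characters, and that the cyclic $3$- and $5$-Sylows act by a fixed axis plus a faithful rotation. Both follow from the geometry of the icosahedral group, whose nontrivial elements are rotations with nontrivial angle. As a cross-check, for $m=1$ one could bypass Sylow detection entirely by using \cref{lem:orbit-sphere} with $W$. The hypotheses hold because $A_5\mathrel{\leq} SO(3)$, and $S^2/K\cong S^2$ for every finite $K\mathrel{\leq} SO(3)$, since quotients of the $2$-sphere by orientation-preserving finite actions are orientable genus-zero orbifolds. This recovers the shift $W-3$ of \cref{listremark}.
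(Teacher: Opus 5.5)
Your proposal is correct and is essentially the paper's proof. You use perfectness of $A_5$ to get $G=A_5\times C_m$, and you verify the shift $V=W+\lambda_m-5$ on Sylow subgroups via \cref{decomposetopsubgps} and \cref{pgrouprep}, with the same choice-of-generator remark for the cyclic Sylow subgroups. Your extra cross-check via \cref{lem:orbit-sphere} when $m=1$ is also valid, but it is not needed.
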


\begin{proof}
The action of \(A_5\) on \(C_m\) is trivial because \(\operatorname{Aut}(C_m)\) is abelian and \(A_5\) is perfect (it has no nontrivial abelian quotients).  Therefore \(G=A_5\times C_m\).

Let \(W\) be the standard three-dimensional representation of \(A_5\mathrel{\leq} SO(3)\), and let \(\lambda_m\) be the pullback of \(\lambda(1,m)\) from \(C_m\).  Set
\[
V=W+\lambda_m-5.
\]

By \cref{decomposetopsubgps}, it is enough to check Sylow subgroups.  Let \(P\mathrel{\leq} A_5\) be a Sylow subgroup.  If \(P\) is cyclic, then \(W|_P\cong 1\oplus \lambda(1,|P|)\) after choosing a generator of \(P\) suitably, so \(V|_P\cong \lambda(1,|P|)-2\).  If \(P\cong C_2\times C_2\), then \(W|_P\) is the sum of the three nontrivial sign representations of \(P\), so \(V|_P\) is the dihedral shift for \(C_2\times C_2\).  In both cases \(\lambda_m\) restricts to the trivial two-dimensional representation, and \cref{pgrouprep} applies.

If \(P\mathrel{\leq} C_m\) is a Sylow \(p\)-subgroup for an odd prime \(p\), then \(W|_P\) is trivial of dimension \(3\), and \(\lambda_m|_P\cong \lambda(1,|P|)\).  Therefore
\[
V|_P=3+\lambda(1,|P|)-5=\lambda(1,|P|)-2.
\]
Again \cref{pgrouprep} applies, and the Sylow detection theorem proves the claim.
\end{proof}

\begin{proof}[Proof of the ``if'' direction of \cref{thm:maintheorem}]
Let \(G=K\ltimes C_m\) be a group in the list of \cref{thm:maintheorem}.  The list has five possible choices for \(K\).  They are \(K=C_{2^n}\), \(D_{2^{n+1}}\), \(T_{3^n}\), \(O_{3^n}\), and \(A_5\).  The required shifts are given by \cref{prop:cyclic-two-family-good}, \cref{prop:dihedral-two-family-good}, \cref{prop:T-family-good}, \cref{prop:O-family-good}, and \cref{prop:A5-family-good}.  It follows that every group in the list of \cref{thm:maintheorem} is Anderson-dualizing.
\end{proof}

The next two sections separate the periodicity problem from the classification of Anderson-dualizing shifts. We first determine the periodicity lattice of $H\uZ$ for every finite group. We then use this lattice to classify all Anderson-dualizing shifts.

\section{The periodicity lattice of \texorpdfstring{$H\uZ$}{HZ}}
\label{sec:HZperiodicity}

In this section, we prove part \textup{(1)} of \cref{thm:maintheoremC}. We first reduce the periodicity problem to $p$-groups. We then show that fixed-point dimensions on cyclic $p$-subgroups determine those on every $p$-subgroup. We finish by computing the periodicity lattices for cyclic $p$-groups and dihedral $2$-groups explicitly.

\begin{thm}[Sylow detection for $H\uZ$-periodicity]
\label[thm]{thm:HZperiodicity}
Let $G$ be a nontrivial finite group and let $V,W\in RO(G)$. Then
\[
H\uZ\wedge S^V\simeq H\uZ\wedge S^W
\]
as $G$-spectra if and only if, for every $p\mid |G|$ and every Sylow $p$-subgroup $P\mathrel{\leq} G$, one has
\[
H\uZ\wedge S^{V|_P}\simeq H\uZ\wedge S^{W|_P}
\]
as $P$-spectra. When these conditions hold, the global equivalence may be chosen to be $H\uZ$-linear.
\end{thm}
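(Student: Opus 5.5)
The ``only if'' direction is immediate: apply $\res^G_P$, exactly as in \cref{subgpcrit}. For the converse I will mimic the proof of \cref{decomposetopsubgps}. Put $U=W-V$ and
\[
X=H\uZ\wedge S^{U}.
\]
The claim is equivalent to $X\simeq H\uZ$, and for the ``if'' direction that is what I will show. The hypothesis says $X|_P\simeq H\uZ$ for every Sylow subgroup $P$ (for trivial Sylows it is automatic). Restricting further, $X|_H\simeq H\uZ$ as $H$-spectra for every subgroup $H$ of prime-power order, including $e$.

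The first step is to show that the integer-graded homotopy of $X$ is concentrated in degree zero. Fix $K\leq G$ and a prime $p$. By \cref{prop:LiuLimit} applied to the $H\uZ$-module $X|_K$, the group $\pi_n^K(X)_{(p)}$ is a limit over $p$-subgroups of groups $\pi_n^H(X)_{(p)}$. These vanish for $n\neq 0$, so $\pi_n^K(X)=0$ for $n\neq0$ after localizing at every prime, hence integrally.

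Next I will identify $M=\underline\pi_0(X)$ with $\uZ$. The group $M(e)\cong\Z$ carries a trivial $G$-action, since its restriction to every Sylow subgroup is trivial and the Sylows generate $G$. Let $1\in M(e)$ be a generator. For each $p$-subgroup $H$, the local identification $X|_H\simeq H\uZ$ gives a unique generator $u_H\in M(H)$ with $\res^H_e u_H=1$. Uniqueness forces compatibility: $\res^H_L u_H=u_L$ and $c_g u_H=u_{gHg^{-1}}$. The key point is that the constant family $(a\,u_H)_H$ with $a\in\Z_{(p)}$ is compatible, and any compatible family is determined by its Sylow component, since restriction $\pi_0^Q\to\pi_0^e$ is injective. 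Hence \cref{prop:LiuLimit} gives that
\[
\res^K_e\colon M(K)_{(p)}\to M(e)_{(p)}
\]
is an isomorphism for each $p$. So $\res^K_e$ is an integral isomorphism $M(K)\cong\Z$. Restrictions are then identities, transfers are indices by the cohomological relation \cite[Proposition~2.5]{Angeltveit22}, and conjugations fix the generators. Thus $M\cong\uZ$.

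Finally, \cref{lem:HZPostnikov} upgrades the isomorphism $\underline\pi_0(X)\cong\uZ$ to an $H\uZ$-linear equivalence $X\simeq H\uZ$. Smashing with $S^{V}$ over $H\uZ$ gives an $H\uZ$-linear equivalence $H\uZ\wedge S^W\simeq H\uZ\wedge S^V$.

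The main obstacle is the limit identification. It needs the ``conjugation preserves generators'' step, and it needs care that the local equivalences are not assumed compatible with one another. I resolve this by defining every generator through restriction to $e$ rather than through the chosen local equivalences. This works here, whereas for $\uZ^*$ one had to normalize using $|H|$.
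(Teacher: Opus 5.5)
Your proposal is correct and follows essentially the same route as the paper's proof. \cref{prop:LiuLimit} concentrates the homotopy of $X$ in degree zero, normalizing generators by restriction to $e$ turns all restrictions and conjugations in the diagram into identities so that $\res_e^K$ is an isomorphism after every localization, and the cohomological relation together with \cref{lem:HZPostnikov} finishes the argument; the one small point is that a compatible family is most cleanly seen to be determined by its common image in $M(e)$, since each $\res_e^H$ is an isomorphism, rather than by its Sylow component.
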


\begin{proof}
The ``only if'' direction follows by restriction. For the ``if'' direction, put
\[
X=H\uZ\wedge S^{V-W}.
\]
The Sylow hypotheses give $X|_H\simeq H\uZ$ for every subgroup $H$ of prime-power order. As in the proof of \cref{decomposetopsubgps}, \cref{prop:LiuLimit} shows that $X$ has integer-graded homotopy concentrated in degree zero.

Put $M=\underline\pi_0(X)$. The action of $G$ on $M(e)\cong\Z$ is trivial. For every $p$-subgroup $H$, restriction to $e$ identifies $M(H)$ with $M(e)$. Under these identifications, all restriction and conjugation maps in Liu's diagram are identities. \Cref{prop:LiuLimit} and its compatibility with restriction therefore give
\[
\res_e^K\colon
M(K)_{(p)}\xrightarrow{\;\cong\;}M(e)_{(p)}
\]
for every $K\mathrel{\leq} G$ and every prime $p$. It follows that $\res_e^K$ is an isomorphism integrally.

These identifications make all restrictions identities, and the cohomological relation makes transfers multiplication by subgroup indices. Therefore $M\cong\uZ$. \Cref{lem:HZPostnikov} gives an $H\uZ$-linear equivalence $X\simeq H\uZ$. Smashing with $S^W$ proves the result.
\end{proof}
\par

\begin{lem}\label[lem]{geofixedhz}
Let $H$ be a nontrivial finite $p$-group. Then $\Phi^H H\uZ$ is connective,
\[
\pi_0\Phi^H H\uZ\cong\mathbb F_p,
\]
and it is not equivalent to any nonzero integer suspension of itself.
\end{lem}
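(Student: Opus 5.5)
We need to show three things about $\Phi^H H\uZ$ for a nontrivial finite $p$-group $H$: it is connective, its $\pi_0$ is $\mathbb F_p$, and it is not equivalent to $\Sigma^n$ of itself for any $n\neq 0$. The plan is to obtain the first two from a general formula for $\pi_0$ of geometric fixed points. The third then follows because a connective spectrum with nonzero $\pi_0$ has a well-defined bottom homotopy degree.

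\emph{Connectivity.} We use the model $\Phi^H X\simeq (\widetilde E\mathcal P\wedge X)^H$, where $\mathcal P$ is the family of proper subgroups of $H$. The space $\widetilde E\mathcal P$ has a $G$-CW structure with cells of orbit type $H/K$ in positive dimensions ($K$ proper), together with the basepoint and a $0$-cell of type $H/H$. Since $H\uZ$ is connective, smashing with this connective $G$-CW complex gives a connective $G$-spectrum, and categorical fixed points of a connective $G$-spectrum are connective. Alternatively, $\Phi^H$ is symmetric monoidal and $H\uZ$ is built from connective cells $\Sigma^\infty_+ G/K\wedge S^n$ with $n\geq 0$. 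Each such cell has geometric fixed points $\Sigma^\infty_+(G/K)^H\wedge S^n$, which is connective, and geometric fixed points commute with colimits. This cell argument is the one I would write.

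\emph{Computing $\pi_0$.} I would invoke the standard formula $\pi_0\Phi^H X\cong \pi_0^H X/\sum_{K<H}\operatorname{im}(\tr_K^H)$, valid for connective $X$. It follows from the cofiber sequence $E\mathcal P_+\wedge X\to X\to \widetilde E\mathcal P\wedge X$: the image of $\pi_0^H(E\mathcal P_+\wedge X)$ is exactly the sum of the transfers from proper subgroups, because the $0$-skeleton of $E\mathcal P$ is a wedge of orbits $H/K$ with $K<H$ and the higher cells do not affect $\pi_0$ surjectivity. For $X=H\uZ$, every transfer $\tr_K^H$ is multiplication by $[H:K]$ on $\Z$. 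The indices of proper subgroups of a $p$-group are positive powers of $p$, and a maximal subgroup has index exactly $p$. Hence the sum of the images is $p\Z$, and $\pi_0\Phi^H H\uZ\cong\Z/p=\mathbb F_p$.

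\emph{Non-periodicity.} Let $Y=\Phi^H H\uZ$, and suppose $Y\simeq \Sigma^n Y$ with $n\neq 0$. If $n>0$, then $\Sigma^n Y$ is $(n-1)$-connected, so $\pi_0\Sigma^n Y=0$, contradicting $\pi_0 Y=\mathbb F_p\neq 0$. If $n<0$, then $\pi_n\Sigma^n Y=\pi_0 Y\neq 0$ while $\pi_n Y=0$ by connectivity, again a contradiction. The main care point is the $\pi_0$ formula, and in particular checking that the cofiber map on $\pi_0$ is surjective. Surjectivity holds because $\pi_{-1}^H(E\mathcal P_+\wedge H\uZ)=0$, which follows from connectivity.
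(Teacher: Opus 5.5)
Your proposal is correct and follows the paper's proof essentially step for step: you use the model $(\widetilde E\mathcal P\wedge H\uZ)^H$, the cofiber sequence identifying $\pi_0$ with $\Z$ modulo the transfers $[H:K]$ from proper subgroups (which generate $(p)$ because a maximal subgroup has index $p$), and the observation that a connective spectrum with nonzero $\pi_0$ cannot be a nonzero suspension of itself. You give more detail than the paper on connectivity and on surjectivity onto the cokernel via $\pi_{-1}^H(E\mathcal P_+\wedge H\uZ)=0$, which is fine; just write $H/K$ rather than $G/K$ for your cells and orbits, since everything here lives in $H$-spectra.
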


\begin{proof}
Let $\mathcal P$ be the family of proper subgroups of $H$. The spectrum
\[
\Phi^H H\uZ\simeq (\widetilde E\mathcal P\wedge H\uZ)^H
\]
is connective. The cofiber sequence defining $\widetilde E\mathcal P$ gives
\[
\pi_0\Phi^H H\uZ
\cong
\mathbb Z\big/\bigl\langle\tr_K^H(1)\mid K\mathrel{<} H\bigr\rangle.
\]
For the constant Mackey functor, $\tr_K^H(1)=[H:K]$. These indices generate the ideal $(p)$, since every proper subgroup has index divisible by $p$ and a maximal subgroup has index $p$. Therefore $\pi_0\Phi^H H\uZ\cong\mathbb F_p$. A connective spectrum with nonzero $\pi_0$ cannot be equivalent to a nonzero integer suspension of itself.
\end{proof}

The converse in the following proposition follows from Angeltveit's theorem for cohomological Mackey functors.

\begin{prop}
\label[prop]{prop:pgroup-periodicity}
Let $P$ be a finite $p$-group and let $V,W\in RO(P)$. Then
\[
H\uZ\wedge S^V\simeq H\uZ\wedge S^W
\]
if and only if
\[
|V^H|=|W^H|
\]
for every subgroup $H\mathrel{\leq} P$.
\end{prop}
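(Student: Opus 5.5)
Replacing $V$ by $V-W$, we may assume $W=0$ and must show that $H\uZ\wedge S^V\simeq H\uZ$ if and only if $|V^H|=0$ for every $H\mathrel{\leq} P$.

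For the forward direction we take geometric fixed points. Suppose $H\uZ\wedge S^V\simeq H\uZ$ and fix $H\mathrel{\leq} P$. Restricting to $H$ and applying $\Phi^H$ gives
\[
\Phi^H H\uZ\wedge S^{|V^H|}\simeq \Phi^H H\uZ,
\]
since $\Phi^H$ is symmetric monoidal and $\Phi^H S^V\simeq S^{|V^H|}$. If $H=e$, the underlying spectrum $H\mathbb Z$ is not a nonzero integer suspension of itself. If $H\neq e$, \cref{geofixedhz} says the same for $\Phi^H H\uZ$. Either way $|V^H|=0$.

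For the converse, assume all fixed-point dimensions of $V$ vanish. We then want to invoke Angeltveit's theorem \cite{AngeltveitPicard}, in the form used for cohomological Mackey functors. For a $p$-group, this should say that an $H\uZ$-module in the Picard group, whose homotopy is cohomological and whose geometric fixed-point dimensions all vanish, is equivalent to $H\uZ$. Concretely, the plan is:
\begin{enumerate}
\item Show that $X=H\uZ\wedge S^V$ has integer-graded homotopy concentrated in degree $0$. This is the content of Angeltveit's theorem: a virtual representation whose fixed-point dimensions vanish gives an $H\uZ$-module whose homotopy Mackey functors vanish in nonzero degrees.
\item Identify $M=\underline\pi_0(X)$ with $\uZ$. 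It is cohomological by \cite[Proposition~2.5]{Angeltveit22}. It is invertible over $\uZ$, and its underlying value is $\Z$ with trivial action, since the underlying spectrum is $S^0\wedge H\mathbb Z$ (as $|V|=0$). The argument then reduces to showing that the restrictions $\res^K_e$ are isomorphisms.
\item Conclude $X\simeq H\uZ$ from \cref{lem:HZPostnikov}.
\end{enumerate}

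The main obstacle is step (1), together with the fact that $M$ could a priori be a nonconstant form of $\uZ$, such as $\uZ^*$ or one of Zeng's forms. What separates these forms is their geometric fixed points: $\pi_0\Phi^H$ differs between them. I would control this by induction on $|P|$. Restricting to proper subgroups, the inductive hypothesis gives $X|_K\simeq H\uZ$ for all $K<P$. Then the isotropy separation cofiber sequence $EP_+\wedge X\to X\to \widetilde E\mathcal P\wedge X$ has its left term equivalent to $EP_+\wedge H\uZ$. Its right term has $P$-fixed points $\Phi^P H\uZ\wedge S^{|V^P|}=\Phi^P H\uZ$, after identifying $\Phi^P X$ with $\Phi^P H\uZ$ as a module over it. The remaining task is to compare the gluing map with that of $H\uZ$ itself. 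For this step I would rely on Angeltveit's theorem rather than redo the gluing by hand.
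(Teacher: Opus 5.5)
Your proposal is essentially the paper's proof. The forward direction restricts to $H$, takes $\Phi^H$, and uses \cref{geofixedhz} (or the underlying spectrum when $H=e$), and the converse cites Angeltveit's theorem. Note that \cite[Theorem~B]{AngeltveitPicard}, applied to the invertible $P$-spectrum $S^V$ with zero dimension function and the cohomological Mackey functor $\uZ$, directly gives $S^V\wedge H\uZ\simeq H\uZ$. Your steps (1)--(3), the concern about other forms of $\uZ$, and the isotropy-separation induction are therefore unnecessary, and that induction as sketched would not close the argument on its own.
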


\begin{proof}
Set $U=V-W$. If $H\uZ\wedge S^U\simeq H\uZ$, the underlying equivalence gives $|U|=0$. For each nontrivial $H\mathrel{\leq} P$, geometric fixed points and \cref{geofixedhz} give $|U^H|=0$.

Conversely, suppose that $|U^H|=0$ for every $H\mathrel{\leq} P$. The invertible $P$-spectrum $S^U$ then has zero dimension function. Since $\uZ$ is a cohomological Mackey functor, \cite[Theorem~B]{AngeltveitPicard} gives
\[
S^U\wedge H\uZ\simeq H\uZ.
\]
\end{proof}

We next explain why cyclic $p$-subgroups are enough in \cref{thm:maintheoremC}.

\begin{lem}
\label[lem]{lem:cyclic-dimension-detection}
Let $P$ be a finite $p$-group and let $U\in RO(P)$. If
\[
|U|=0
\qquad\text{and}\qquad
|U^C|=0
\]
for every nontrivial cyclic subgroup $C\mathrel{\leq} P$, then $|U^H|=0$ for every subgroup $H\mathrel{\leq} P$.
\end{lem}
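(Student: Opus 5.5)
The plan is a character computation: I will show that the hypotheses force the character of $U$ to sum to zero over the generators of every cyclic subgroup, and then recover every fixed-point dimension from these sums. Extend $U\mapsto |U^H|$ and $U\mapsto\chi_U$ additively to virtual representations, where $\chi_U$ is the character. For an actual representation, $U^H$ is the image of the averaging idempotent $\frac1{|H|}\sum_{h\in H}h$. Taking traces, and using additivity for virtual $U$, gives
\[
|U^H|=\frac{1}{|H|}\sum_{h\in H}\chi_U(h)
\]
for every subgroup $H\mathrel{\leq} P$.

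For a cyclic subgroup $D\mathrel{\leq} P$, I will set
\[
f(D)=\sum_{\langle g\rangle = D}\chi_U(g),
\]
the sum of $\chi_U$ over the generators of $D$. Every element of a subgroup $H$ generates exactly one cyclic subgroup of $H$. Partitioning $H$ by the cyclic subgroup each element generates therefore gives
\[
|H|\,|U^H|=\sum_{D\mathrel{\leq} H,\ D\text{ cyclic}} f(D).
\]

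The key step is to show $f(D)=0$ for every cyclic $D\mathrel{\leq} P$, by induction on $|D|$. For $D=e$ we have $f(e)=\chi_U(1)=|U|=0$. For a nontrivial cyclic $C$, the displayed identity with $H=C$ reads
\[
0=|C|\,|U^C|=f(C)+\sum_{D\mathrel{<}C}f(D),
\]
where the left side vanishes by hypothesis. Every proper subgroup $D$ of $C$ is cyclic of smaller order, so $f(D)=0$ by induction, and hence $f(C)=0$.

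Finally, for an arbitrary $H\mathrel{\leq} P$, the same identity gives $|H|\,|U^H|=\sum_{D}f(D)=0$, so $|U^H|=0$. This is a M\"obius inversion over the poset of cyclic subgroups, and it does not use that $P$ is a $p$-group. The only point needing care is that character values need not be rational. This causes no difficulty, because the argument only manipulates the sums $f(D)$, and the identities above hold for arbitrary complex character values. So I do not expect a serious obstacle.
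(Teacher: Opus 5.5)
Your proposal is correct and is essentially the paper's own proof. The paper defines the same generator sums $a_C(U)=\sum_{\langle g\rangle=C}\chi_U(g)$ and shows they vanish by induction on $|C|$ via $|C|\,|U^C|=\sum_{D\leq C}a_D(U)$; it then regroups $\sum_{h\in H}\chi_U(h)$ by cyclic subgroups to get $|U^H|=0$, and your remark that the $p$-group hypothesis is never used is accurate.
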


\begin{proof}
Let $\chi_U$ be the virtual character of $U$. For every cyclic subgroup $C\mathrel{\leq} P$, set
\[
a_C(U)=\sum_{\langle g\rangle=C}\chi_U(g).
\]
Grouping the elements of $C$ according to the subgroup they generate gives
\[
|C|\,|U^C|=\sum_{D\mathrel{\leq} C}a_D(U).
\]
The hypothesis gives $a_e(U)=|U|=0$. Induction on $|C|$ now gives $a_C(U)=0$ for every cyclic subgroup $C\mathrel{\leq} P$. For any subgroup $H\mathrel{\leq} P$, grouping its elements in the same way gives
\[
|H|\,|U^H|
=\sum_{h\in H}\chi_U(h)
=\sum_{\substack{C\mathrel{\leq} H\\ C\text{ cyclic}}}a_C(U)
=0.
\]
Therefore $|U^H|=0$.
\end{proof}

\begin{lem}
\label[lem]{lem:fixed-dimension-rank}
Let $C_0=e$, and let $C_1,\ldots,C_r$ represent the $G$-conjugacy classes of nontrivial cyclic subgroups of prime-power order. The fixed-dimension map
\[
d_G\colon RO(G)\longrightarrow\mathbb Z^{r+1},
\qquad
U\longmapsto\bigl(|U^{C_0}|,|U^{C_1}|,\ldots,|U^{C_r}|\bigr),
\]
has rank $r+1$.
\end{lem}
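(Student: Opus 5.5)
The plan is to tensor with $\mathbb Q$ and compute the rank of $d_G\otimes\mathbb Q$ through characters. For each $j$, the map $U\mapsto|U^{C_j}|$ equals $\frac{1}{|C_j|}\sum_{g\in C_j}\chi_U(g)$. So every coordinate of $d_G$ is a rational linear combination of the character values $\chi_U(g)$. Surjectivity onto a rank-$(r+1)$ lattice then reduces to two things: showing that the $r+1$ functionals are linearly independent on $RO(G)\otimes\mathbb Q$, and noting that the rank is at most $r+1$ because the target is $\mathbb Z^{r+1}$.

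First I will describe $RO(G)\otimes\mathbb Q$ concretely. Real characters are the $\mathbb Q$-span (after tensoring with $\mathbb C$) of class functions that are constant on "real-rational classes". Rather than using that fact, I will use permutation representations, which avoids it. For each $j$, take $U_j=\mathbb R[G/C_j]$. Then $|U_j^{C_i}|=|(G/C_j)^{C_i}|$, the number of cosets $gC_j$ with $C_i\le gC_jg^{-1}$.

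Second, I will show the $(r+1)\times(r+1)$ matrix $A_{ij}=|(G/C_j)^{C_i}|$ is nonsingular. Order the classes by the order of the subgroup, non-decreasing. The entry $A_{ij}$ is nonzero only if $C_i$ is subconjugate to $C_j$, which forces $|C_i|\le|C_j|$. If the orders are equal, it further forces $C_i$ to be conjugate to $C_j$, so $i=j$. Hence $A$ is triangular with diagonal entries $|W_G(C_j)|=|N_G(C_j)/C_j|>0$. This is the standard table-of-marks triangularity argument, restricted to the chosen subgroups.

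Therefore the images $d_G(U_0),\dots,d_G(U_r)$ are linearly independent. The rank of $d_G$ is thus at least $r+1$, and hence exactly $r+1$.

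I expect no serious obstacle. The only point to check is that, when restricted to a subset of conjugacy classes of subgroups, the marks matrix is still triangular with nonzero diagonal. This holds because subconjugacy is a partial order on conjugacy classes, compatible with order, and nothing requires the subset to be closed under taking subgroups. If the intended meaning of "rank" concerns $RO(G)/\Lambda_G(H\uZ)$ being free of rank $r+1$, then the image of $d_G$ is a subgroup of $\mathbb Z^{r+1}$ and hence free. The kernel of $d_G$ is exactly $\Lambda_G(H\uZ)$ by the description in \cref{thm:maintheoremC} (via \cref{thm:HZperiodicity}, \cref{prop:pgroup-periodicity}, \cref{lem:cyclic-dimension-detection}). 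So the quotient is isomorphic to the image, which has rank $r+1$.
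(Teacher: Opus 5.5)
There is a genuine error in your second step: the identification $|U_j^{C_i}|=|(G/C_j)^{C_i}|$ is false. For a permutation representation $\mathbb{R}[X]$, the $H$-fixed subspace is spanned by the $H$-orbit sums. So $|U_j^{C_i}|=|C_i\backslash G/C_j|$ is the number of $C_i$-orbits on $G/C_j$, not the number of $C_i$-fixed cosets. Your own formula $|U^C|=\frac{1}{|C|}\sum_{g\in C}\chi_U(g)$, with $\chi_{U_j}(g)=|(G/C_j)^g|$, gives exactly this orbit count by Burnside's lemma.

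Already for $G=C_2$, the regular representation $U_0=\mathbb{R}[C_2]\cong 1\oplus\sigma$ has $|U_0^{C_2}|=1$, although $(C_2/e)^{C_2}=\varnothing$. The matrix $\bigl(|U_j^{C_i}|\bigr)$ is then $\begin{pmatrix}2&1\\1&1\end{pmatrix}$, which is not triangular. In general this matrix is the Gram matrix $\bigl(\langle\psi_i,\psi_j\rangle_G\bigr)$ of the permutation characters $\psi_j=\chi_{U_j}$, and all its entries are positive. So the vectors $d_G(U_j)$ are not the columns of your matrix $A$, and the marks-triangularity argument does not apply to them.

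What your matrix actually computes is the character values $A_{ij}=\psi_j(g_i)$, where $g_i$ generates $C_i$. This works because the fixed points of $g_i$ are the fixed points of $\langle g_i\rangle=C_i$. Its triangularity with diagonal entries $|N_G(C_j):C_j|$ shows that $\psi_0,\ldots,\psi_r$ are linearly independent in $RO(G)\otimes\mathbb{Q}$. That is the input the paper uses, but one more step is needed to reach $d_G$, and it is how the paper finishes.

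Frobenius reciprocity gives $|U^{C_i}|=\langle\chi_U,\psi_i\rangle_G$. The character inner product is nondegenerate, indeed positive definite, on real-valued class functions. Hence the functionals $U\mapsto|U^{C_i}|$ are linearly independent. Equivalently, $\bigl(|U_j^{C_i}|\bigr)=\bigl(\langle\psi_j,\psi_i\rangle_G\bigr)$ is the Gram matrix of linearly independent vectors, so it is nonsingular. With this step inserted, your argument is complete and agrees with the paper's proof.
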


\begin{proof}
Order the subgroups so that their orders are nondecreasing. Let
\[
\psi_j=\operatorname{Ind}_{C_j}^G(1)
\]
be the permutation character of $G/C_j$, and choose a generator $g_i$ of $C_i$. The value $\psi_j(g_i)$ is zero unless $C_i$ is conjugate to a subgroup of $C_j$. Therefore the matrix
\[
\bigl(\psi_j(g_i)\bigr)_{0\leq i,j\leq r}
\]
is upper triangular, and its $i$th diagonal entry is $|N_G(C_i):C_i|$. The characters $\psi_0,\ldots,\psi_r$ are therefore linearly independent. Frobenius reciprocity gives
\[
|U^{C_i}|=\langle\chi_U,\psi_i\rangle_G.
\]
Since the character inner product is nondegenerate on $RO(G)\otimes\mathbb Q$, the fixed-dimension functionals are linearly independent. This proves the result.
\end{proof}

\begin{proof}[Proof of part \textup{(1)} of \cref{thm:maintheoremC}]
The result is immediate when $G=e$, so suppose that $G\ne e$. Let $V\in\Lambda_G(H\uZ)$. The underlying nonequivariant equivalence gives $|V|=0$. For every nontrivial subgroup $P\mathrel{\leq} G$ of prime-power order, restriction to $P$ and \cref{prop:pgroup-periodicity} give $|V^P|=0$. This proves one implication.

Conversely, suppose that $|V|=0$ and $|V^C|=0$ for every nontrivial cyclic subgroup $C\mathrel{\leq} G$ of prime-power order. Applying \cref{lem:cyclic-dimension-detection} to every Sylow subgroup shows that $|V^P|=0$ for every subgroup $P\mathrel{\leq} G$ of prime-power order, since every such subgroup is conjugate into a Sylow subgroup. Therefore \cref{prop:pgroup-periodicity} gives
\[
H\uZ\wedge S^{V|_P}\simeq H\uZ
\]
for every Sylow subgroup $P\mathrel{\leq} G$. The reverse implication in \cref{thm:HZperiodicity} proves that $V\in\Lambda_G(H\uZ)$. It also shows that the equivalence may be chosen to be $H\uZ$-linear.

For the rank statement, the description of $\Lambda_G(H\uZ)$ gives
\[
RO(G)/\Lambda_G(H\uZ)\cong\operatorname{im}(d_G).
\]
The image is a subgroup of a free abelian group, so it is free abelian. By \cref{lem:fixed-dimension-rank}, its rank is the number of $G$-conjugacy classes of cyclic subgroups of prime-power order, including the trivial subgroup.
\end{proof}

\begin{exam}\label{exam:C6-periodicity}\rm
Let $G=C_6$, let $\sigma$ be its nontrivial one-dimensional real representation, and let $\lambda(i,6)$ be as in \cref{notation:cyclicGroupRep}. The virtual representation
\[
U=-2+2\sigma-\lambda(1,6)+\lambda(2,6)
\]
has underlying dimension zero. Its fixed-point dimension is also zero on the subgroups $C_2$ and $C_3$. Therefore $U\in\Lambda_{C_6}(H\uZ)$ by \cref{thm:maintheoremC}. On the other hand, $|U^{C_6}|=-2$. This example shows why the criterion tests cyclic subgroups of prime-power order rather than all cyclic subgroups.
\end{exam}

\subsection{\texorpdfstring{Cyclic $p$-groups}{Cyclic p-groups}}

Let $G=C_{p^n}$, and choose a generator $\gamma\in G$. We use the notation from \cref{notation:cyclicGroupRep}: for an integer $i$, the real two-dimensional representation
\[
\lambda(i,p^n)
\]
is the representation on which $\gamma$ acts by rotation through angle $2\pi i/p^n$. The representations $\lambda(i,p^n)$ and $\lambda(-i,p^n)$ are isomorphic, and $\lambda(i,p^n)$ is faithful if and only if $(i,p)=1$.

\begin{notation}\rm \label{notation:cyclicDsets}
For $0\leq j\leq n-1$, let $\mathcal D_j$ denote the set of isomorphism classes of nontrivial two-dimensional rotation representations $\lambda(i,p^n)$ with
\[
v_p(i)=j.
\]
Equivalently, the representations in $\mathcal D_j$ have kernel of order $p^j$. When $p=2$ and $j=n-1$, the representation $\lambda(2^{n-1},2^n)$ is the sum of two copies of the sign representation. We write $\sigma$ for this sign representation, so that
\[
\lambda(2^{n-1},2^n)=2\sigma.
\]

Let $V\in RO(C_{p^n})$. We define $\mu_j(V)$ to be the total coefficient of the rotation representations in $\mathcal D_j$. More explicitly, if $p$ is odd, this is the sum of the coefficients of the irreducible real representations in $\mathcal D_j$. If $p=2$ and $j=n-1$, then
\[
\mu_{n-1}(V)=\frac{1}{2}\cdot
\{\text{coefficient of }\sigma\text{ in }V\}.
\]
The functions $\mu_j$ are additive on $RO(C_{p^n})$ and may take negative values. In the exceptional case $p=2$ and $j=n-1$, $\mu_{n-1}(V)$ may be a half-integer. We write $|V|$ for the underlying virtual real dimension of $V$.
\end{notation}

\begin{cor}
\label[cor]{cor:cyclicperiodicity}
Let $G=C_{p^n}$ with $n\geq 1$. Then $U\in\Lambda_G(H\uZ)$ if and only if
\[
|U|=0
\qquad\text{and}\qquad
\mu_j(U)=0\quad(0\leq j\leq n-1).
\]
More explicitly, the lattice has the following bases.
\begin{enumerate}
\item Suppose that $p$ is odd. Choose one representation $\lambda_j\in\mathcal D_j$ for every $0\leq j\leq n-1$. Then
\[
\Lambda_G(H\uZ)=
\bigoplus_{j=0}^{n-1}
\ \bigoplus_{\lambda\in\mathcal D_j\setminus\{\lambda_j\}}
\mathbb Z(\lambda-\lambda_j).
\]
In particular,
\[
\operatorname{rank}\Lambda_G(H\uZ)=\frac{p^n-1}{2}-n.
\]

\item Suppose that $p=2$ and $n\geq 2$. Choose one representation $\lambda_j\in\mathcal D_j$ for every $0\leq j\leq n-2$. Then
\[
\Lambda_G(H\uZ)=
\bigoplus_{j=0}^{n-2}
\ \bigoplus_{\lambda\in\mathcal D_j\setminus\{\lambda_j\}}
\mathbb Z(\lambda-\lambda_j).
\]
In particular,
\[
\operatorname{rank}\Lambda_G(H\uZ)=2^{n-1}-n.
\]

\item If $G=C_2$, then $\Lambda_G(H\uZ)=0$.
\end{enumerate}
\end{cor}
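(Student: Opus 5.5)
The plan is to specialize part (1) of \cref{thm:maintheoremC} to $G=C_{p^n}$. The nontrivial cyclic subgroups of prime-power order are exactly the subgroups $C_{p^k}$ with $1\leq k\leq n$. So $U\in\Lambda_G(H\uZ)$ if and only if $|U|=0$ and $|U^{C_{p^k}}|=0$ for $1\leq k\leq n$. It then remains to rewrite these conditions in terms of the $\mu_j$.

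Write $U=a\cdot 1+\sum_{\lambda}c_\lambda\lambda$ (plus $b\sigma$ when $p=2$). A rotation representation in $\mathcal D_j$ has kernel $C_{p^j}$, so it is fixed pointwise by $C_{p^k}$ exactly when $k\leq j$. Hence
\[
|U^{C_{p^k}}|=a+2\sum_{j\geq k}\mu_j(U),\qquad 0\leq k\leq n,
\]
where $k=0$ gives $|U|$ and $k=n$ gives $a$. For $p=2$ the convention $\mu_{n-1}=\tfrac12(\text{coefficient of }\sigma)$ makes the $j=n-1$ term contribute the coefficient of $\sigma$, as it should.

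The system is triangular. The condition at $k=n$ gives $a=0$. Then the conditions at $k=n-1,\dots,0$ successively force $\mu_{n-1}=\dots=\mu_0=0$. Conversely, these vanishings give all the fixed-point conditions. This proves the first claim.

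For the bases, $\Lambda_G(H\uZ)$ is the kernel of a map defined on the free abelian group with basis the irreducibles $1$ and $\lambda\in\mathcal D_j$ (and $\sigma$ if $p=2$). The conditions say that the trivial coefficient vanishes and that the coefficient sum over each $\mathcal D_j$ vanishes. For $p=2$ there is the extra condition that the $\sigma$ coefficient vanishes, since $\mathcal D_{n-1}=\{2\sigma\}$ and that condition is $\mu_{n-1}=0$.

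The kernel of a coefficient-sum map on $\mathbb Z^{\mathcal D_j}$ has basis $\{\lambda-\lambda_j\}_{\lambda\neq\lambda_j}$. Taking the direct sum over $j$ gives the stated bases. For $p=2$ only the indices $j\leq n-2$ appear, since the $\sigma$ summand is killed outright.

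The ranks follow from counting irreducibles. For $p$ odd there are $(p^n-1)/2$ nontrivial two-dimensional irreducibles, and each $\mathcal D_j$ is nonempty, so the rank is $(p^n-1)/2-n$. For $p=2$ there are $2^{n-1}-1$ two-dimensional irreducibles, namely $\lambda(i)$ with $0<i<2^{n-1}$, distributed over $j\leq n-2$. This gives rank $2^{n-1}-1-(n-1)=2^{n-1}-n$.

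For $C_2$ the irreducibles are $1$ and $\sigma$, and both coefficients must vanish, so the lattice is $0$.

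The only step needing care is the $p=2$ bookkeeping with $\sigma$ and the half-integer $\mu_{n-1}$. Everything else is routine counting.
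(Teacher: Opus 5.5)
Your proposal is correct and follows essentially the same route as the paper: specialize part (1) of \cref{thm:maintheoremC} to the subgroups $C_{p^k}$, write each fixed-point dimension as $a+2\sum_{j\geq k}\mu_j(U)$, solve the resulting triangular system, and read off the difference bases and ranks from the coefficient-sum conditions on each $\mathcal D_j$. Your explicit handling of the $\sigma$ summand and the half-integer $\mu_{n-1}$ when $p=2$ spells out bookkeeping that the paper leaves implicit.
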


\begin{proof}
Write the trivial coefficient of $U$ as $a$ and set $b_j=\mu_j(U)$. Since every representation in $\mathcal D_j$ has the same fixed-point dimensions,
\[
|U|=a+2\sum_{j=0}^{n-1}b_j
\]
and, for $1\leq k\leq n$,
\[
|U^{C_{p^k}}|=a+2\sum_{j=k}^{n-1}b_j.
\]
By \cref{thm:maintheoremC}, $U\in\Lambda_G(H\uZ)$ exactly when all these expressions vanish. Their successive differences give $b_j=0$ for every $j$, and then $a=0$. This proves the coefficient criterion.

The kernel of the coefficient-sum map on each $\mathcal D_j$ has the displayed difference basis. There are $(p^n-1)/2$ rotation representations in the odd case and $2^{n-1}-1$ in the even case. Subtracting the number of valuation classes gives the two rank formulas. The coefficient criterion also gives $\Lambda_{C_2}(H\uZ)=0$.
\end{proof}

\subsection{\texorpdfstring{Dihedral $2$-groups}{Dihedral 2-groups}}

\begin{notation}\label{notation:dihedral-periodicity}\rm
Let
\[
G=C_2\ltimes C_{2^n}=\langle r,s\mid r^{2^n}=1,\ s^2=1,\ srs=r^{-1}\rangle.
\]
Here $C_{2^n}=\langle r\rangle$ is the rotation cyclic subgroup.

For $n\geq 1$, let $\sigma_1,\sigma_2,\sigma_3$ denote the three sign representations that factor through
\[
G/\langle r^2\rangle\cong C_2\times C_2.
\]
We choose the notation so that
\[
\sigma_1(r)=1,\qquad \sigma_1(s)=-1,
\]
\[
\sigma_2(r)=-1,\qquad \sigma_2(s)=1,
\]
and $\sigma_3=\sigma_1\sigma_2$. Here $\sigma_1$ is the sign representation with kernel $C_{2^n}$.

For $n\geq 2$ and $1\leq i\leq 2^n-1$, with $i\neq 2^{n-1}$, let $\rho(i,2^n)$ denote the two-dimensional dihedral representation on $\mathbb C$. If $\zeta=e^{2\pi i/2^n}$, then the action is given by
\[
r\cdot z=\zeta^i z,
\qquad
s\cdot z=\overline z.
\]
The representation $\rho(i,2^n)$ is isomorphic to $\rho(-i,2^n)$. When $i=2^{n-1}$, $\rho(2^{n-1},2^n)$ is reducible:
\[
\rho(2^{n-1},2^n)\cong \sigma_2\oplus \sigma_3.
\]
Therefore this case is not included among the two-dimensional irreducible summands below. Its contribution is recorded instead by the coefficients of $\sigma_2$ and $\sigma_3$.

For $0\leq j\leq n-2$, let $\mathcal E_j$ be the set of representations $\rho(i,2^n)$ with $v_2(i)=j$. If $V\in RO(G)$, let $\mu_j(V)$ be the total coefficient of the representations in $\mathcal E_j$. The function $\mu_j$ is additive on $RO(G)$.
\end{notation}

The periodicity criterion depends only on the fixed-point dimensions of these representations. The next lemma records the calculation we need.

\begin{lem}
\label[lem]{lem:dihedral-fixed-dimensions}
If $\rho(i,2^n)$ and $\rho(i',2^n)$ belong to the same $\mathcal E_j$, then
\[
|\rho(i,2^n)^H|=|\rho(i',2^n)^H|
\]
for every subgroup $H\mathrel{\leq} G$.
\end{lem}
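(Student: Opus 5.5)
The plan is to compute the dimension $|\rho(i,2^n)^H|$ directly from the character, using
\[
|\rho^H|=\frac{1}{|H|}\sum_{h\in H}\chi_\rho(h),
\]
and to check that every value of $\chi_{\rho(i,2^n)}$ that can contribute depends on $i$ only through $v_2(i)$. The character is explicit. On rotations, $\chi(r^k)=\zeta^{ik}+\zeta^{-ik}=2\cos(2\pi ik/2^n)$. On reflections $r^ks$, the matrix is $z\mapsto\zeta^{ik}\overline z$, which is a reflection of the plane, so $\chi(r^ks)=0$ for every $i$. The reflection terms therefore contribute nothing independent of the class $\mathcal E_j$. It remains to control the sum of $\chi$ over the rotations lying in $H$.

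The first step is to note that $H\cap C_{2^n}$ is a cyclic subgroup $\langle r^{2^a}\rangle$ of order $2^{n-a}$. The rotation part of the sum is
\[
\sum_{k=0}^{2^{n-a}-1}\bigl(\zeta^{i2^ak}+\zeta^{-i2^ak}\bigr).
\]
This is a sum of a character of a cyclic group over the whole group, so it equals $2\cdot 2^{n-a}$ if $2^n\mid i2^a$ and $0$ otherwise. The divisibility condition $2^n\mid i2^a$ is equivalent to $v_2(i)+a\geq n$, so it depends only on $j=v_2(i)$. Consequently $|H|\,|\rho(i,2^n)^H|$ depends only on $j$ and on $H$, which proves the lemma.

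An alternative is to argue geometrically, noting that $|\rho(i,2^n)^H|$ is $2$, $1$, or $0$ according to whether $H$ acts trivially, through a single reflection, or otherwise. The character computation is cleaner, and I expect no real obstacle. The only care needed is to verify that the trace of the conjugation-twisted map vanishes (it is a real-linear reflection, with eigenvalues $\pm1$) and to treat uniformly both the case where $H$ is contained in $C_{2^n}$ and the case where it is not.
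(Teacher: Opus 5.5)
Your argument is correct, but it takes a different route from the paper: you average the character, while the paper uses the geometric alternative you mention at the end.

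The paper's proof goes as follows. It notes that $\rho(i,2^n)$ and $\rho(i',2^n)$ have the same kernel on $\langle r\rangle$, namely the subgroup of order $2^j$. It then sets $R=H\cap\langle r\rangle$ and reads off the fixed subspace directly. When $H=R$, the dimension is $2$ or $0$. When $H$ contains a reflection, it is $1$ or $0$. In each case the value depends on whether $R$ lies in that common kernel.

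Your computation reaches the same conclusion without splitting into cases. The trace vanishes on each reflection $r^ks$, and the orthogonality sum over $R=\langle r^{2^a}\rangle$ gives $|\rho(i,2^n)^H|=2|R|/|H|$ when $v_2(i)+a\geq n$, and $0$ otherwise. The condition $v_2(i)+a\geq n$ says exactly that $R$ lies in the kernel, and $[H:R]\in\{1,2\}$, so your formula reproduces the paper's list of values.

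The paper's version is shorter and makes the values visible at a glance. Yours is more mechanical and treats subgroups with and without reflections in a single computation. Its cost is the small check that each $z\mapsto\zeta^{ik}\overline z$ is a real reflection, which you carry out correctly.
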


\begin{proof}
Let $R=H\cap\langle r\rangle$. The two representations have the same kernel on $\langle r\rangle$. If $H=R$, their $H$-fixed subspaces both have dimension $2$ when $R$ lies in this kernel and dimension $0$ otherwise. If $H$ contains a reflection, their fixed subspaces both have dimension $1$ when $R$ lies in the common kernel and dimension $0$ otherwise.
\end{proof}

\begin{prop}
\label[prop]{prop:dihedral-periodicity-criterion}
Let $G=C_2\ltimes C_{2^n}$, with $n\geq1$, and let $U\in RO(G)$.
\begin{enumerate}
\item If $n=1$, so that $G=C_2\times C_2$, then
\[
U\in\Lambda_G(H\uZ)\quad\Longleftrightarrow\quad U=0.
\]
\item Suppose that $n\geq 2$, and write
\[
U=a+c_1\sigma_1+c_2\sigma_2+c_3\sigma_3
+\sum_{\rho}c_\rho\rho,
\]
where the sum runs over the two-dimensional irreducible representations. Then $U\in\Lambda_G(H\uZ)$ if and only if
\[
a=c_1=c_2=c_3=0
\qquad\text{and}\qquad
\mu_j(U)=0\quad(0\leq j\leq n-2).
\]
\end{enumerate}
\end{prop}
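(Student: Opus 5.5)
Since the dihedral $2$-group $G$ is itself a $2$-group, \cref{thm:maintheoremC}(1) (equivalently \cref{prop:pgroup-periodicity} combined with \cref{lem:cyclic-dimension-detection}) says that $U\in\Lambda_G(H\uZ)$ exactly when $|U|=0$ and $|U^C|=0$ for every nontrivial cyclic subgroup $C\mathrel{\leq} G$. The plan is to write down these linear conditions in terms of the coefficients of $U$ and solve them. The cyclic subgroups up to conjugacy are the rotation subgroups $\langle r^{2^{n-k}}\rangle\cong C_{2^k}$ for $1\leq k\leq n$, together with the reflection subgroups $\langle s\rangle$ and $\langle rs\rangle$. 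These two reflection classes are distinct for $n\geq 2$; for $n=1$ we instead have the three order-two subgroups of $C_2\times C_2$.

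For $n=1$, write $U=a+c_1\sigma_1+c_2\sigma_2+c_3\sigma_3$. The four conditions are $|U|=0$ and the vanishing of the fixed dimensions on the three $C_2$'s, for instance $a+c_1=0$ on $\langle r\rangle$ and $a+c_2=0$ on $\langle s\rangle$. This gives an invertible $4\times4$ system (it is the character table of $C_2\times C_2$ up to a change of basis), so $U=0$.

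For $n\geq 2$, first use \cref{lem:dihedral-fixed-dimensions}: all members of $\mathcal E_j$ have the same fixed dimensions on every subgroup, so the conditions depend only on $a,c_1,c_2,c_3$ and $b_j=\mu_j(U)$. Then tabulate the fixed dimensions. A representation $\rho\in\mathcal E_j$ has kernel $C_{2^j}$ on the rotations, so it contributes $2$ to $|U^{C_{2^k}}|$ when $k\leq j$ and $0$ otherwise. On a reflection subgroup it contributes $1$: every reflection $r^ts$ acts on $\mathbb C$ by a reflection, so its fixed line is $1$-dimensional. The signs contribute as follows. On $C_{2^k}$ with $k\leq n-1$, all three signs are trivial, since $r^2$ acts trivially. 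On $\langle r\rangle$, only $\sigma_1$ is fixed. On $\langle s\rangle$, only $\sigma_2$ is fixed. On $\langle rs\rangle$, only $\sigma_3$ is fixed, because $\sigma_3(rs)=1$.

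The equations are therefore:
\begin{itemize}
\item $|U|=a+c_1+c_2+c_3+2\sum_j b_j=0$;
\item $|U^{C_{2^k}}|=a+c_1+c_2+c_3+2\sum_{j\geq k}b_j=0$ for $1\leq k\leq n-1$;
\item $|U^{\langle r\rangle}|=a+c_1=0$;
\item $|U^{\langle s\rangle}|=a+c_2+\sum_j b_j=0$;
\item $|U^{\langle rs\rangle}|=a+c_3+\sum_j b_j=0$.
\end{itemize}
Taking successive differences of the rotation equations for $k=0,\dots,n-1$ gives $b_0=\dots=b_{n-2}=0$. Then $a+c_1+c_2+c_3=0$, while $a+c_1=0$, $a+c_2=0$ and $a+c_3=0$. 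Substituting, $a+c_1+c_2+c_3=-2a=0$, so $a=0$ and hence $c_1=c_2=c_3=0$. The converse is the same computation run backwards: these conditions make every fixed dimension vanish.

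The main obstacle is bookkeeping rather than substance. One must get the sign values on the reflection classes right, including the conjugacy of $s$ with $r^2s$ and of $rs$ with $r^3s$, and check the degenerate case $n=2$, where the rotation list has only $k=1$. One should also confirm that $\rho(2^{n-1},2^n)$ is correctly absorbed into $\sigma_2+\sigma_3$, which the notation already arranges.
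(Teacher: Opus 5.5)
Your proof is correct and is essentially the paper's argument. The paper packages your rotation and reflection equations as restrictions to $\langle r\rangle$, $\langle s\rangle$, and $\langle sr\rangle$, combined with the already computed cyclic lattices (\cref{cor:cyclicperiodicity} and $\Lambda_{C_2}(H\uZ)=0$), whereas you write out the same fixed-point-dimension system directly from \cref{thm:maintheoremC}; the converse via \cref{lem:dihedral-fixed-dimensions} and \cref{thm:maintheoremC} is the same.
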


\begin{proof}
Suppose first that $n\geq 2$ and $U\in\Lambda_G(H\uZ)$. Restriction to the rotation subgroup and \cref{cor:cyclicperiodicity} give
\[
\mu_j(U)=0\quad(0\leq j\leq n-2),
\qquad
c_2+c_3=0,
\qquad
a+c_1=0.
\]
Every two-dimensional dihedral representation restricts to $1\oplus\tau$ on each reflection subgroup, where $\tau$ is the sign representation of $C_2$. Its total coefficient is $\sum_j\mu_j(U)=0$. Restricting to $\langle s\rangle$ and $\langle sr\rangle$, and using $\Lambda_{C_2}(H\uZ)=0$, gives
\[
a+c_2=0
\qquad\text{and}\qquad
a+c_3=0.
\]
These equations imply $a=c_1=c_2=c_3=0$.

Conversely, suppose that the displayed coefficient conditions hold. The virtual representation $U$ is a sum of differences of representations in the same $\mathcal E_j$. By \cref{lem:dihedral-fixed-dimensions}, all its fixed-point dimensions vanish. Therefore $U\in\Lambda_G(H\uZ)$ by \cref{thm:maintheoremC}.

If $n=1$, write $U=a+c_1\sigma_1+c_2\sigma_2+c_3\sigma_3$. Restriction to the three subgroups of order $2$ and the equality $\Lambda_{C_2}(H\uZ)=0$ give
\[
a+c_1=a+c_2=a+c_3=0,
\qquad
c_2+c_3=c_1+c_3=c_1+c_2=0.
\]
Therefore all four coefficients vanish.
\end{proof}

\begin{cor}
\label[cor]{cor:dihedralperiodicity}
Let $G=D_{2^{n+1}}=C_2\ltimes C_{2^n}$ with $n\geq 1$. If $n=1$, then $G=C_2\times C_2$,
\[
\Lambda_G(H\uZ)=0.
\]
If $n\geq 2$, choose one representation $\rho_j\in\mathcal E_j$ for every $0\leq j\leq n-2$. Then
\[
\Lambda_G(H\uZ)=
\bigoplus_{j=0}^{n-2}
\ \bigoplus_{\rho\in\mathcal E_j\setminus\{\rho_j\}}
\mathbb Z(\rho-\rho_j).
\]
In particular,
\[
\operatorname{rank}\Lambda_G(H\uZ)=2^{n-1}-n.
\]
\end{cor}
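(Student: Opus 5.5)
The case $n=1$ is immediate from part (1) of \cref{prop:dihedral-periodicity-criterion}, which says $U\in\Lambda_G(H\uZ)$ forces $U=0$. So I assume $n\geq 2$ and work from the coefficient criterion in part (2) of that proposition. Write $U=a+c_1\sigma_1+c_2\sigma_2+c_3\sigma_3+\sum_\rho c_\rho\rho$. Then $U\in\Lambda_G(H\uZ)$ exactly when $a=c_1=c_2=c_3=0$ and $\mu_j(U)=0$ for $0\leq j\leq n-2$.

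I will use that the irreducible real representations of $G$ form a $\Z$-basis of $RO(G)$. These are the trivial representation, $\sigma_1,\sigma_2,\sigma_3$, and the two-dimensional $\rho(i,2^n)$, which are indexed up to $\rho(i)\cong\rho(-i)$ by $1\leq i<2^{n-1}$. With this basis, $\Lambda_G(H\uZ)$ is the subgroup of integer combinations of two-dimensional irreducibles whose coefficients in each block $\mathcal E_j$ sum to zero. The blocks $\mathcal E_0,\ldots,\mathcal E_{n-2}$ partition the set of two-dimensional irreducibles. So the lattice is the direct sum over $j$ of the kernels of the coefficient-sum maps $\Z^{\mathcal E_j}\to\Z$.

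For a single block, the kernel of the sum map $\Z^{S}\to\Z$ on a finite set $S$ with a chosen element $s_0$ is freely generated by $\{s-s_0\mid s\neq s_0\}$. This is a standard change of basis: $\{s_0\}\cup\{s-s_0\}$ is a basis of $\Z^S$, and the sum map sends $s_0\mapsto1$ and each $s-s_0\mapsto0$. Applying this with $\rho_j\in\mathcal E_j$ gives the displayed basis. Each $\mathcal E_j$ is nonempty, because $\rho(2^j,2^n)$ lies in it, so the choice of $\rho_j$ makes sense.

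For the rank, count the two-dimensional irreducibles. The values $1\leq i\leq 2^n-1$ with $i\neq 2^{n-1}$ give $2^n-2$ elements, and the identification $i\sim -i$ has no fixed points on this set, so there are $2^{n-1}-1$ representations. Subtracting one chosen $\rho_j$ for each of the $n-1$ blocks gives rank $2^{n-1}-1-(n-1)=2^{n-1}-n$. This agrees with the cyclic computation in \cref{cor:cyclicperiodicity}.

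The only point needing care is that $\rho(i,2^n)$ is irreducible and pairwise distinct for the stated $i$, so that the coefficients $c_\rho$ are well defined. Irreducibility holds because $r$ acts by a non-real rotation. Distinctness holds because the restriction to $\langle r\rangle$ determines $\pm i$.
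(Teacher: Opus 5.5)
Your proposal is correct and takes the same route as the paper. You read off $n=1$ from part (1) of \cref{prop:dihedral-periodicity-criterion}, and for $n\geq 2$ you identify the lattice with the direct sum of the kernels of the blockwise coefficient-sum maps, take the difference basis, and count $2^{n-1}-1$ two-dimensional irreducibles in $n-1$ blocks; your extra checks (irreducibility and distinctness of the $\rho(i,2^n)$, and nonemptiness of each $\mathcal E_j$) fill in details the paper leaves implicit.
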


\begin{proof}
The case $n=1$ is \cref{prop:dihedral-periodicity-criterion}. If $n\geq 2$, the same proposition says that the trivial and sign coefficients vanish and that the coefficients in each $\mathcal E_j$ sum to zero. The kernel of each coefficient-sum map has the displayed difference basis. There are $2^{n-1}-1$ two-dimensional representations and $n-1$ valuation classes, which gives the rank formula.
\end{proof}

\section{Classification of all Anderson-dualizing shifts}
\label{sec:LocalClassification}

In this section, we use the periodicity lattice to classify all Anderson-dualizing shifts and prove part \textup{(1)} of \cref{thm:maintheoremB}.

Write $\mathcal S_G$ for the set of \textit{Anderson-dualizing shifts} for $H\uZ$.

\begin{prop}
\label[prop]{prop:affine-shifts}
Let $G$ be a finite group. If $V_0\in\mathcal S_G$, then
\[
\mathcal S_G=V_0+\Lambda_G(H\uZ).
\]
\end{prop}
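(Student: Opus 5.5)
The plan is to prove the two inclusions directly from the definitions $\mathcal S_G=\{V\mid H\uZ^*\wedge S^V\simeq H\uZ\}$ and $\Lambda_G(H\uZ)=\{U\mid \Sigma^U H\uZ\simeq H\uZ\}$. The one piece of structure to use is that both equivalences can be taken compatible with smashing by representation spheres. Fix $V_0\in\mathcal S_G$ and an equivalence $\phi\colon H\uZ^*\wedge S^{V_0}\simeq H\uZ$.

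For the inclusion $V_0+\Lambda_G(H\uZ)\subseteq\mathcal S_G$, take $U\in\Lambda_G(H\uZ)$, so $\Sigma^U H\uZ\simeq H\uZ$. Smashing $\phi$ with $S^U$ gives
\[
H\uZ^*\wedge S^{V_0+U}\simeq H\uZ\wedge S^U\simeq H\uZ,
\]
so $V_0+U\in\mathcal S_G$. For the reverse inclusion, take $V\in\mathcal S_G$ and set $U=V-V_0$. Smashing $\phi$ with $S^{U}$ gives $H\uZ\wedge S^{U}\simeq H\uZ^*\wedge S^{V}$. The right-hand side is $\simeq H\uZ$ because $V\in\mathcal S_G$. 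Hence $U\in\Lambda_G(H\uZ)$, and so $V\in V_0+\Lambda_G(H\uZ)$.

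There is essentially no obstacle here. Smashing with the invertible $G$-spectrum $S^U$ preserves equivalences, and $S^{V_0}\wedge S^U\simeq S^{V_0+U}$ holds for virtual representations in the usual sense of the $RO(G)$-graded Picard elements. The only point worth remarking on is that $\Lambda_G(H\uZ)$ is indeed a subgroup, which follows by the same manipulation: closure under sums comes from iterating equivalences, and closure under inverses from smashing $\Sigma^UH\uZ\simeq H\uZ$ with $S^{-U}$. This makes "affine translate" meaningful.

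After this, \cref{thm:maintheoremC} will be combined with one explicit shift to obtain part (1) of \cref{thm:maintheoremB}, but that step is not needed for the present statement.
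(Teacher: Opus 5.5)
Your proof is correct and follows the paper's argument. In both inclusions you smash the fixed equivalence $H\uZ^*\wedge S^{V_0}\simeq H\uZ$ with $S^{U}$ and compare the result with the defining equivalence, for $V$ or for $U$.
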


\begin{proof}
If $U\in\Lambda_G(H\uZ)$, then
\[
H\uZ^*\wedge S^{V_0+U}
\simeq H\uZ\wedge S^U
\simeq H\uZ,
\]
so $V_0+U\in\mathcal S_G$. Conversely, if $V\in\mathcal S_G$, comparing the equivalences for $V$ and $V_0$ gives
\[
H\uZ\wedge S^{V-V_0}\simeq H\uZ.
\]
Therefore $V-V_0\in\Lambda_G(H\uZ)$.
\end{proof}

\subsection{\texorpdfstring{Cyclic $p$-groups}{Cyclic p-groups}}\label{sec:Cyclic}
\leavevmode\par

\begin{thm}[Cyclic restriction theorem]\label[thm]{cyclicpgroup}
Let $G=C_{p^n}$, with $n\geq1$, and let $V\in RO(G)$. Then $(G,V)$ is an Anderson-dualizing pair if and only if
\begin{enumerate}
\item $|V|=0$.
\item $\mu_0(V)=1$.
\item $\mu_j(V)=0$ for $1\leq j\leq n-1$.
\end{enumerate}
\end{thm}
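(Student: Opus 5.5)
We combine \cref{prop:affine-shifts} with \cref{cor:cyclicperiodicity}. By \cref{pgrouprep}(1), the virtual representation $V_0=\lambda(1,p^n)-2$ is an Anderson-dualizing shift for $C_{p^n}$. Consequently \cref{prop:affine-shifts} gives
\[
\mathcal S_{C_{p^n}}=V_0+\Lambda_{C_{p^n}}(H\uZ).
\]
It therefore suffices to show that $V-V_0\in\Lambda_{C_{p^n}}(H\uZ)$ holds exactly when conditions (1)--(3) hold.

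First we compute the invariants of $V_0$. The representation $\lambda(1,p^n)$ is faithful, so it lies in $\mathcal D_0$. Hence $\mu_0(V_0)=1$ and $\mu_j(V_0)=0$ for $1\le j\le n-1$. The trivial summand $-2$ contributes to no $\mu_j$, and $|V_0|=0$.

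There is one exceptional case to check: $p=2$ and $n=1$. Here $\lambda(1,2)=2\sigma$ sits in the class $\mathcal D_{n-1}=\mathcal D_0$. By the half-coefficient convention of \cref{notation:cyclicDsets}, its $\mu_0$ is $\tfrac12\cdot 2=1$, so the computation above still holds.

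Since each $\mu_j$ and the dimension $|\cdot|$ are additive, \cref{cor:cyclicperiodicity} says that $V-V_0\in\Lambda$ exactly when $|V-V_0|=0$ and $\mu_j(V-V_0)=0$ for all $0\le j\le n-1$. This unwinds to $|V|=0$, $\mu_0(V)=1$, and $\mu_j(V)=0$ for $j\ge1$, which are precisely conditions (1)--(3).

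The only delicate point is the bookkeeping for $p=2$. There the class $\mathcal D_{n-1}$ is represented by $2\sigma$ and $\mu_{n-1}$ may be a half-integer. We must confirm that \cref{cor:cyclicperiodicity}'s criterion, stated with the same conventions, applies verbatim in this case. That criterion's proof expresses the fixed-point dimensions $|U^{C_{p^k}}|=a+2\sum_{j\ge k}\mu_j(U)$. This formula remains valid with the half-coefficient convention, since $\sigma$ has fixed-point dimension $0$ on $C_{2^n}$ and $1$ on the underlying space, so the criterion applies as stated.

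As a sanity check, one can instead argue directly from \cref{thm:maintheoremB}(1). That route uses the same displayed fixed-dimension formula and gives conditions equivalent to (1)--(3).
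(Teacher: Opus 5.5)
Your argument is correct and is exactly the paper's proof. You take $V_0=\lambda(1,p^n)-2$ from \cref{pgrouprep}, use \cref{prop:affine-shifts} to reduce to $V-V_0\in\Lambda_{C_{p^n}}(H\uZ)$, and translate this via the coefficient criterion of \cref{cor:cyclicperiodicity}; your check of the $p=2$ half-coefficient convention, including $C_2$ where $\lambda(1,2)=2\sigma$, is a correct detail the paper leaves implicit.
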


\begin{proof}
Let $V_0=\lambda(1,p^n)-2$. By \cref{pgrouprep}, $V_0\in\mathcal S_G$. Therefore \cref{prop:affine-shifts} gives
\[
V\in\mathcal S_G
\quad\Longleftrightarrow\quad
V-V_0\in\Lambda_G(H\uZ).
\]
The coefficient criterion in \cref{cor:cyclicperiodicity} translates this condition into the three conditions in the statement.
\end{proof}

\subsection{\texorpdfstring{Dihedral $2$-groups}{Dihedral 2-groups}}
\label{sec:Dihedral}

\begin{thm}[Dihedral restriction theorem]\label[thm]{dihedralgoodrep}
Let $G=C_2\ltimes C_{2^n}$, with $n\geq1$, and let $V\in RO(G)$.
\begin{enumerate}
\item If $n=1$, so that $G=C_2\times C_2$, then $(G,V)$ is an Anderson-dualizing pair if and only if
\[
V=\sigma_1+\sigma_2+\sigma_3-3.
\]
\item If $n\geq 2$, then $(G,V)$ is an Anderson-dualizing pair if and only if
\begin{enumerate}
\item $|V|=0$.
\item $\mu_0(V)=1$.
\item $\mu_j(V)=0$ for $1\leq j\leq n-2$.
\item The coefficients of $\sigma_1,\sigma_2,\sigma_3$ in $V$ are $1,0,0$, respectively.
\end{enumerate}
\end{enumerate}
\end{thm}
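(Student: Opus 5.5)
The argument follows the cyclic restriction theorem. Take the base shift $V_0=\rho+\sigma_1-3$ from \cref{pgrouprep}(2). Here $\rho$ is the standard dihedral representation whose restriction to the rotation subgroup is $\lambda(1,2^n)$, and $\sigma$ in that proposition is $\sigma_1$ in \cref{notation:dihedral-periodicity}, since its kernel is $C_{2^n}$. By \cref{prop:affine-shifts}, $V\in\mathcal S_G$ if and only if $U=V-V_0\in\Lambda_G(H\uZ)$. It therefore suffices to translate the coefficient criterion of \cref{prop:dihedral-periodicity-criterion} (equivalently \cref{cor:dihedralperiodicity}) into conditions on $V$.

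For $n=1$ we have $\rho=\rho_2=1\oplus\sigma$ for the appropriate sign. Concretely, $r$ acts by $-1$ on $\mathbb C$ and $s$ by conjugation, so $\rho\cong\sigma_2\oplus\sigma_3$. Hence $V_0=\sigma_1+\sigma_2+\sigma_3-3$. Since $\Lambda_G(H\uZ)=0$ by \cref{prop:dihedral-periodicity-criterion}(1), the only shift is $V_0$, which is statement (1).

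For $n\geq 2$, $\rho=\rho(1,2^n)$ is irreducible and lies in $\mathcal E_0$. So $V_0$ has trivial coefficient $-3$, sign coefficients $(1,0,0)$, $\mu_0(V_0)=1$, and $\mu_j(V_0)=0$ for $1\le j\le n-2$. Write $V$ in the basis of \cref{prop:dihedral-periodicity-criterion}(2). The conditions $c_1(U)=c_2(U)=c_3(U)=0$ become condition (d). The conditions $\mu_j(U)=0$ become (b) and (c). The remaining condition is $a(U)=0$, that is, the trivial coefficient of $V$ equals $-3$.

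I must check that, given (b)–(d), this is equivalent to (a). The dimension of $V$ is
\[
a(V)+c_1+c_2+c_3+2\sum_j\mu_j(V)=a(V)+1+2=a(V)+3.
\]
So $|V|=0$ holds exactly when $a(V)=-3$.

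The only delicate point is the bookkeeping. The reducible representation $\rho(2^{n-1},2^n)\cong\sigma_2\oplus\sigma_3$ must be counted through the sign coefficients, not through any $\mathcal E_j$. Also $\sigma_1$ must be matched with the sign representation of \cref{pgrouprep}. Beyond that the proof is a direct translation.
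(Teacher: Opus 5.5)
Your proposal is correct and follows the paper's proof: fix the basic shift $V_0$ from \cref{pgrouprep}, use \cref{prop:affine-shifts} to reduce to $V-V_0\in\Lambda_G(H\uZ)$, and read the conditions off \cref{prop:dihedral-periodicity-criterion}, with your check that $|V|=a(V)+3$ under (b)--(d) spelling out the dimension bookkeeping the paper leaves implicit. One small slip: $\rho_2$ is not of the form $1\oplus\sigma$ (that is $\rho_1$), but your concrete computation $\rho\cong\sigma_2\oplus\sigma_3$ is correct and is the only fact you actually use.
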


\begin{proof}
For $n=1$, let $V_0=\sigma_1+\sigma_2+\sigma_3-3$. For $n\geq 2$, let
\[
V_0=\rho(1,2^n)+\sigma_1-3.
\]
In both cases, \cref{pgrouprep} gives $V_0\in\mathcal S_G$. By \cref{prop:affine-shifts}, $V$ is Anderson-dualizing if and only if $V-V_0\in\Lambda_G(H\uZ)$. The result now follows from \cref{prop:dihedral-periodicity-criterion}.
\end{proof}

\subsection{Proof of \texorpdfstring{part \textup{(1)} of \protect\cref{thm:maintheoremB}}{part (1) of Theorem C}}

\begin{proof}[Proof of part \textup{(1)} of \cref{thm:maintheoremB}]
First suppose that $\mathcal S_G\neq\varnothing$, and choose $V_0\in\mathcal S_G$. The underlying equivalence gives $|V_0|=0$. For every nontrivial cyclic subgroup $C\mathrel{\leq} G$ of prime-power order, \cref{subgpcrit} gives $V_0|_C\in\mathcal S_C$. The basic shift $\lambda(1,|C|)-2$ also lies in $\mathcal S_C$ by \cref{pgrouprep}. By \cref{prop:affine-shifts}, their difference lies in $\Lambda_C(H\uZ)$. Therefore \cref{thm:maintheoremC} gives $|V_0^C|=-2$.

Now \cref{prop:affine-shifts} gives
\[
V\in\mathcal S_G
\quad\Longleftrightarrow\quad
V-V_0\in\Lambda_G(H\uZ).
\]
By \cref{thm:maintheoremC}, the latter condition is equivalent to $|V|=0$ and $|V^C|=-2$ for every nontrivial cyclic subgroup $C\mathrel{\leq} G$ of prime-power order. The dimension conditions imply $\mathcal S_G\neq\varnothing$ by \cref{rmk:integral-dimension-existence}, which completes the proof.
\end{proof}

By Sylow detection, the equivalent coefficient conditions are those in \cref{cyclicpgroup} and \cref{dihedralgoodrep}, applied to the restrictions to Sylow subgroups.

For the polyhedral groups, the periodicity lattice gives the following classification of all shifts.

\begin{cor}\label[cor]{cor:polyhedral-shifts}
For $G=A_4$, $S_4$, or $A_5$, let $W$ denote the three-dimensional rotation representation of $G$. For $G=A_5$, let $W'$ denote its other irreducible three-dimensional real representation. Then
\[
\Lambda_{A_4}(H\uZ)=\Lambda_{S_4}(H\uZ)=0,
\qquad
\Lambda_{A_5}(H\uZ)=\mathbb Z(W-W').
\]
Consequently,
\[
\mathcal S_{A_4}=\{W-3\},
\qquad
\mathcal S_{S_4}=\{W-3\},
\]
and
\[
\mathcal S_{A_5}=W-3+\mathbb Z(W-W').
\]
\end{cor}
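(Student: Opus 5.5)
Both parts follow from \cref{thm:maintheoremC} together with \cref{prop:affine-shifts}. The work is to compute the fixed-dimension map $d_G$ on the real representation ring and compare its rank with $\operatorname{rank} RO(G)$.

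\textbf{The periodicity lattices.} By \cref{thm:maintheoremC}, $RO(G)/\Lambda_G(H\uZ)$ is free of rank equal to the number of conjugacy classes of cyclic subgroups of prime-power order, counting $e$.

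For $A_4$ these classes are $e$, $C_2$ and $C_3$; the two $C_3$-classes in $A_4$ are not conjugate as subgroups? In fact all subgroups of order $3$ are Sylow, hence conjugate, so there are three classes. The real irreducibles of $A_4$ are $1$, $W$, and the two-dimensional real form $\lambda$ of the pair of complex characters $\omega,\bar\omega$, so $\operatorname{rank}RO(A_4)=3$. The lattice is therefore a rank-zero subgroup of a free group, hence $0$.

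For $S_4$ the prime-power cyclic classes are $e$, the double transpositions $\langle(12)(34)\rangle$, the transpositions $\langle(12)\rangle$, $C_3$ and $C_4$. That gives $5$ classes, matching the $5$ real irreducibles, so again $\Lambda_{S_4}(H\uZ)=0$.

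For $A_5$ the prime-power cyclic classes are $e$, $C_2$, $C_3$ and $C_5$; the two classes of order-$5$ elements generate conjugate subgroups. That gives $4$ classes. The real irreducibles are $1,W,W',4,5$, so the rank is $5$ and $\Lambda_{A_5}(H\uZ)$ has rank $1$.

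To identify the lattice for $A_5$, I check that $W-W'$ lies in it. Its underlying dimension is $0$. On each of $C_2$, $C_3$ and $C_5$, both $W$ and $W'$ have one-dimensional fixed space, since each is a rotation representation with a single axis. Because the lattice is a rank-one subgroup of $RO(A_5)$, it remains to check that $W-W'$ is primitive in it. Any element of the lattice satisfies $d=0$, and the kernel of $d$ on $\mathbb Z^5$ is saturated. Computing $d$ on the basis $(1,W,W',4,5)$ explicitly gives fixed dimensions
\[
1:(1,1,1,1),\quad W,W':(3,1,1,1),\quad 4:(4,0,2,0),\quad 5:(5,1,1,1).
\]
An element $a+bW+cW'+d\cdot 4+e\cdot 5$ lies in the kernel exactly when $d=0$ (comparing the $C_2$ and $C_3$ entries), $a+b+c+e=0$, and $a+3b+3c+5e=0$. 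These give $b+c+2e=0$, and hence $e=0$ and $a=0$ after subtracting the $C_2$ equation from the underlying-dimension equation with the $4$'s removed. So the kernel is exactly $\mathbb Z(W-W')$.

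\textbf{The shifts.} By \cref{listremark} (equivalently, from the $T_3$, $O_3$ and $A_5$ families in \cref{sec:Verification} with $n=1$ and $m=1$, where the extra correction terms cancel), $W-3\in\mathcal S_G$ in all three cases. \cref{prop:affine-shifts} then gives $\mathcal S_G=W-3+\Lambda_G(H\uZ)$, which is the stated list.

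\textbf{Main obstacle.} The only real risk is the bookkeeping in the counts: getting the conjugacy classes of prime-power cyclic subgroups right (for instance, that the two order-$5$ element classes of $A_5$ give a single subgroup class), and the real character tables. For $A_4$ and $S_4$ an explicit kernel computation, as done above for $A_5$, serves as a cross-check.
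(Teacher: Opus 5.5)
Your overall route is the paper's: the rank count from \cref{thm:maintheoremC} (your counts of subgroup classes and real irreducibles are correct), the check that $W-W'\in\Lambda_{A_5}(H\uZ)$, the shift $W-3$ from the constructions in \cref{sec:Verification}, and \cref{prop:affine-shifts}.

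The explicit kernel computation you use to finish the $A_5$ case is wrong as written. The $C_2$-fixed dimensions of the $4$- and $5$-dimensional irreducibles are $2$ and $3$, not $0$ and $1$. Their character values at $(12)(34)$ are $0$ and $1$, which give $(4+0)/2=2$ and $(5+1)/2=3$; equivalently, $\langle(12)(34)\rangle$ has three orbits on the five letters. The correct rows are $4\colon(4,2,2,0)$ and $5\colon(5,3,1,1)$. Your table creates two problems:
\begin{itemize}
\item Its $C_2$ and $C_5$ rows coincide, so the fixed-dimension map $d_{A_5}$ would have rank $3$. This contradicts the rank-$4$ count you had just used.
\item Your three equations (coefficient $d=0$, $a+b+c+e=0$, $a+3b+3c+5e=0$) do not force $a=e=0$. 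For instance, $a=e=1$, $b=-2$, $c=d=0$ satisfies all three. So the step ``hence $e=0$ and $a=0$'' does not follow.
\end{itemize}

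Either of two repairs works. With the corrected table, comparing the $C_2$ and $C_3$ entries gives $2e=0$, and comparing the $C_3$ and $C_5$ entries gives $2d=0$. Then $a+b+c=0=a+3b+3c$ gives $a=0$ and $b=-c$, so the kernel is exactly $\mathbb Z(W-W')$. Alternatively, as the paper does, skip the computation. The class $W-W'$ has coefficients $1$ and $-1$ on two distinct irreducibles, so it is primitive in $RO(A_5)$. A primitive element of a rank-one subgroup generates it, so $W-W'$ generates $\Lambda_{A_5}(H\uZ)$. The rest of your argument is fine, including the check that the correction terms in the $T_3$, $O_3$, and $A_5$ constructions cancel to give $W-3$.
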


\begin{proof}
The ranks of $RO(A_4)$, $RO(S_4)$, and $RO(A_5)$ are $3$, $5$, and $5$. These groups have $2$, $4$, and $3$ conjugacy classes of nontrivial cyclic subgroups of prime-power order, respectively. Therefore \cref{thm:maintheoremC} gives
\[
\operatorname{rank}\Lambda_{A_4}(H\uZ)
=\operatorname{rank}\Lambda_{S_4}(H\uZ)=0,
\qquad
\operatorname{rank}\Lambda_{A_5}(H\uZ)=1.
\]
Both three-dimensional irreducible real representations of $A_5$ are faithful rotation representations. Their fixed subspaces on every nontrivial cyclic subgroup have dimension $1$, so $W-W'\in\Lambda_{A_5}(H\uZ)$. This difference is primitive in $RO(A_5)$, since $W$ and $W'$ are distinct irreducible representations. It therefore generates the rank-one lattice $\Lambda_{A_5}(H\uZ)$.

The constructions in \cref{sec:Verification} give the shift $W-3$ in each case. The classifications of shifts follow from \cref{prop:affine-shifts}.
\end{proof}

\subsection{Universal coefficients and module duality}
\label{sec:integral-applications}

We conclude with the universal coefficient and module-duality consequences stated in the introduction.
\par

\begin{proof}[Proof of \cref{thm:UCT}]
Apply \cref{ses:equivariantUCT} with $E=H\uZ$ and substitute the equivalence $I_{\mathbb Z}^G(H\uZ)\simeq\Sigma^{-V}H\uZ$.
\end{proof}

\begin{rmk}\label[rmk]{rmk:integral-coefficients}\rm
Let $(G,V)$ be an Anderson-dualizing pair. Taking $X=S^0$ in \cref{thm:UCT} gives
\begin{equation}\label{eq:piHZUCT}
0 \longrightarrow \uExt^1_{\mathbb{Z}}\!\bigl(\upi_{\star-1}H\uZ,\mathbb{Z}\bigr)
\longrightarrow \upi_{V-\star}H\uZ
\longrightarrow \uHom_{\mathbb{Z}}\!\bigl(\upi_{\star}H\uZ,\mathbb{Z}\bigr)
\longrightarrow 0.
\end{equation}
\end{rmk}

\begin{proof}[Proof of \cref{cor:MAndersonDual}]
Let $R$ be a $G$-commutative ring spectrum. The forgetful functor from $R$-modules to $G$-spectra has right adjoint $F_G(R,-)$, so for every $R$-module $M$ there is a natural equivalence
\begin{equation}\label{eq:module-adjunction}
I_{\mathbb Z}^G(M)\simeq F_R\bigl(M,I_{\mathbb Z}^G(R)\bigr).
\end{equation}
The duality equivalence for $(G,V)$ may be chosen to be $H\uZ$-linear by \cref{lem:HZPostnikov}. Setting $R=H\uZ$ gives
\[
\begin{aligned}
I_{\mathbb Z}^G(M)
&\simeq F_{H\uZ}(M,\Sigma^{-V}H\uZ)\\
&\simeq\Sigma^{-V}F_{H\uZ}(M,H\uZ).
\end{aligned}
\]
\end{proof}

Another motivation for studying the Anderson duality of $H\uZ$ comes from the slice filtration. In many equivariant constructions, including examples arising from norms of the Real bordism spectrum $MU_{\mathbb R}$, the slices are wedges of representation suspensions of Eilenberg--Mac Lane spectra \cite{HHR,BHLSZ25}. For an Anderson-dualizing pair $(G,V)$ and an $H\uZ$-module $H\underline M$, \cref{cor:MAndersonDual} gives
\[
I_{\mathbb Z}^G(\Sigma^W H\underline M)
\simeq\Sigma^{-W-V}F_{H\uZ}(H\underline M,H\uZ).
\]
\Cref{thm:maintheorem} and part \textup{(1)} of \cref{thm:maintheoremB} determine exactly which groups admit this uniform formula for $H\uZ$-module slices, and every representation shift that realizes it. For groups excluded by \cref{thm:maintheorem}, $H\uZ$ itself has no Anderson-dualizing representation shift. The classification therefore also identifies an obstruction to this form of slice-by-slice duality.

\begin{rmk}\label{rmk:reductionmodn}\rm
Let $(G,V)$ be an Anderson-dualizing pair, and let $n\geq2$. Applying $F_{H\uZ}(-,H\uZ)$ to the cofiber sequence
\[
H\uZ\xrightarrow{\,n\,}H\uZ\longrightarrow H\uZ/n
\]
gives
\[
F_{H\uZ}(H\uZ/n,H\uZ)\simeq\Sigma^{-1}H\uZ/n.
\]
\Cref{cor:MAndersonDual} therefore gives
\[
I_{\mathbb Z}^G(H\uZ/n)\simeq\Sigma^{-V-1}H\uZ/n.
\]
Consequently, $(G,V+1)$ is Anderson-dualizing for $H\uZ/n$. The complete classification for $n=2$ is given by part \textup{(2)} of \cref{thm:HF2groups-v2} and part \textup{(2)} of \cref{thm:HF2shifts-v2}.
\end{rmk}
\par

\section{\texorpdfstring{Minimal subgroup obstructions}{Minimal subgroup obstructions}}

One important application of \cref{cyclicpgroup} is to show that certain groups do not satisfy \cref{mainquestion}. 

\begin{prop} \label[prop]{blacklist}\hfill

\begin{enumerate}[label=\textup{\arabic*)}]
\item The following groups do not satisfy \cref{mainquestion}:
\begin{enumerate}[label=\textup{(\alph*)}]
\item $C_2\times C_2\times C_2$.
\item $C_4\times C_2$.
\item $C_p\times C_p$, where $p$ is odd.
\item $Q_{2^n}$ for $n\geq 3$, which is generated by two elements $a,b$ with relations $a^{2^{n-1}}=e$, $b^2=a^{2^{n-2}}$, $bab^{-1}=a^{-1}$ (when $n=3$, this is the usual quaternion group).
\end{enumerate}

\item Moreover, any group containing one of the groups listed above does not satisfy \cref{mainquestion}.
\end{enumerate}
\end{prop}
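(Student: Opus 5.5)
Part (2) follows at once from \cref{subgpcrit}. If $G$ contains a subgroup $H$ from the list and $(G,V)$ were Anderson-dualizing, then $(H,V|_H)$ would be Anderson-dualizing, contradicting part (1). So the work is in part (1). For each listed group $H$, I will assume a dualizing shift $V\in RO(H)$ exists and derive a contradiction from fixed-point dimensions. The main tool is the necessary condition extracted in the proof of part (1) of \cref{thm:maintheoremB}, which uses only \cref{subgpcrit}, \cref{pgrouprep}, \cref{prop:affine-shifts} and \cref{thm:maintheoremC}. It says that $|V|=0$ and $|V^C|=-2$ for every nontrivial cyclic subgroup $C$ of prime-power order. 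Here every subgroup is a $p$-group, so I will also use \cref{prop:pgroup-periodicity} together with the affine structure. Concretely, $H\uZ\wedge S^{V}\simeq H\uZ^*$ pins down all the $|V^K|$ via geometric fixed points. Rather than relying on this, I will derive a contradiction purely from the character identity behind \cref{lem:cyclic-dimension-detection}.

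\textbf{Key computation.} Set $a_C(V)=\sum_{\langle g\rangle=C}\chi_V(g)$ for each cyclic $C\leq H$. From $|C|\,|V^C|=\sum_{D\leq C}a_D(V)$ and the known values $|V|=0$, $|V^C|=-2$, induction gives
\[
a_e=0,\qquad a_C=-2(p-1)\ \text{ for } |C|=p,\qquad a_C=-2(p^k-p^{k-1})\ \text{ for } |C|=p^k,\ k\geq2.
\]
Indeed $|C|\,|V^C|=-2p^k$ and the proper subgroups contribute $-2p^{k-1}$. In short, $a_C=-2\varphi(|C|)$ for every nontrivial cyclic $C$. Then for the whole group $H$,
\[
|H|\,|V^H|=\sum_{C\leq H\text{ cyclic}}a_C=-2\bigl(|H|-1\bigr),
\]
since $\sum_C\varphi(|C|)$ over nontrivial cyclic $C$ counts the nonidentity elements. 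Hence $|V^H|=-2(|H|-1)/|H|$. This forces $|H|$ to divide $2(|H|-1)$, i.e.\ $|H|\mid 2$. The contradiction is immediate whenever $|H|>2$ — in particular for every noncyclic $H$, which covers $C_2^3$, $C_4\times C_2$, $C_p\times C_p$ and $Q_{2^n}$.

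\textbf{The obstacle.} This argument proves too much, since it would also rule out $C_4$ and $C_2\times C_2$, which are Anderson-dualizing. The error is the assumption that $|V^C|=-2$ for all cyclic $C$ implies the formula for $|V^H|$ with $H$ itself cyclic. For cyclic $H$, the top subgroup $C=H$ is itself constrained to $-2$, so there is no contradiction there. For noncyclic $H$, there is no a priori constraint on $|V^H|$ from cyclic data, and the computation above honestly forces $|V^H|=-2+2/|H|$. For $C_2\times C_2$ this gives $-3/2$, which is not an integer — yet $\sigma_1+\sigma_2+\sigma_3-3$ has $|V^H|=-3$.

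So I must recheck the induction: $\sum_{D\leq C}a_D$ counts only the elements of $C$ by generated subgroup. For $C_2\times C_2$, the three $C_2$'s contribute $3\cdot(-2)=-6$, giving $4|V^H|=-6+0$, hence $|V^H|=-3/2$. But the true sum of characters is $-12$, since $\chi(g)=-1-1+1-3=-4$ for each nonidentity $g$. So my value of $a_C$ is wrong: $2|V^{C}|=\chi(e)+\chi(g)$, so $a_C=-4$, not $-2$. Correcting this, $a_C=|C|\,|V^C|-|D|\,|V^D|$ telescopes to $-2p^k+2p^{k-1}$ only when the contribution from the smaller subgroups is $-2p^{k-1}$. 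At $k=1$, however, that contribution is $a_e=|V|=0$, so $a_C=-2p$.

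The corrected plan is therefore to recompute $\sum_C a_C$ carefully, case by case, for the four families and check integrality of $|V^H|$, or else a contradiction with $|V^H|$ forced by geometric fixed points of $H\uZ^*$. If integrality does not fail, I would fall back on restricting to index-$p$ subgroups, using \cref{cyclicpgroup} and \cref{dihedralgoodrep} on them, to get incompatible coefficient equations. I expect this last step to be the main obstacle.
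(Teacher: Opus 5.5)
Part (2) matches the paper's argument. Part (1) is not proved. Your displayed key computation is wrong: for $|C|=p$ one has $a_C=p\,|V^C|-a_e=-2p$, not $-2(p-1)$. You eventually spot this, but you never redo the count. (Your first diagnosis, about cyclic $H$, is a red herring.) Done correctly, the count gives
\[
|H|\,|V^H|=\sum_{C\leq H\ \text{cyclic}}a_C(V)=-2\bigl(|H|-1+n_p(H)\bigr),
\]
where $n_p(H)$ is the number of subgroups of order $p$. As a check, it returns $-2$ for cyclic groups and $-3$ for dihedral $2$-groups. This does dispose of (a), (b) and (c). It forces $|V^H|$ to be $-7/2$, $-5/2$ and $-2(p+1)/p$ respectively, none of which is an integer. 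That is a legitimate, uniform alternative to the paper's case-by-case coefficient equations for these three groups.

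The genuine gap is case (d). A generalized quaternion group has a unique involution, so $n_2=1$ and the formula gives the integer $|V^H|=-2$; there is no contradiction. More careful counting cannot fix this. On $Q_8$, let $\rho=\mathbb H$ be the $4$-dimensional irreducible real representation and consider the rational class $-2+\tfrac12\rho$.
\begin{itemize}
\item It has integer character values.
\item It has exactly the forced fixed-point dimensions: $0$ on $e$, and $-2$ on the center, on each $C_4$, and on $Q_8$.
\item Its character is that of the virtual complex representation $-2+\psi$, where $\psi$ is the $2$-dimensional complex irreducible.
\end{itemize}
What excludes it is that $\psi$ has quaternionic type, so it enters $RO(Q_8)$ only through $\rho=2\psi$. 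Character sums and integrality of $|V^H|$ cannot detect this.

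The missing step is the paper's. Write $V=a+c_1\sigma_1+c_2\sigma_2+c_3\sigma_3+d\rho$, with $\sigma_i$ the nontrivial one-dimensional characters. On a $C_4$, $\rho$ restricts to $2\lambda(1,4)$ and each $\sigma_i$ to a trivial or sign representation. So \cref{cyclicpgroup} forces $\mu_0(V|_{C_4})=2d=1$, which is impossible. Equivalently, $|V|-|V^{Z(Q_8)}|=4d$ would have to equal $2$. You also need to reduce $Q_{2^n}$ to $Q_8$, using $\langle a^{2^{n-3}},b\rangle\cong Q_8$ and \cref{subgpcrit}. Your fallback of restricting to index-$p$ subgroups would work here, but only with this real-representation input, which the proposal does not supply.
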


\begin{proof}
It is enough to prove the first assertion.  The second assertion follows from \cref{subgpcrit}.

First let $G=C_2\times C_2\times C_2$.  Let $x_\chi$ be the coefficient of a nontrivial real character $\chi$ of $G$ in $V$.  For each nontrivial element $u\in G$, restriction to $\langle u\rangle\cong C_2$ and \cref{cyclicpgroup} give
$$
\sum_{\chi(u)=-1}x_\chi=2.
$$
Summing over the seven nontrivial elements $u$, each nontrivial character appears exactly four times.  Therefore
$$
4\sum_{\chi\neq 1}x_\chi=14,
$$
which is impossible.

Next let $G=C_4\times C_2=\langle a,b\rangle$.  Let $x_{10},x_{01},x_{11}$ be the coefficients of the three nontrivial one dimensional characters factoring through $C_2\times C_2$.  Let $y_0,y_1$ be the coefficients of the two real two dimensional representations on which $a$ acts faithfully, while $b$ acts trivially for $y_0$ and by $-1$ for $y_1$.  Restricting to $\langle a\rangle$, $\langle ab\rangle$, $\langle b\rangle$, and $\langle a^2b\rangle$ gives
$$
y_0+y_1=1,\qquad x_{10}+x_{11}=0,
$$
$$
y_0+y_1=1,\qquad x_{10}+x_{01}=0,
$$
$$
x_{01}+x_{11}+2y_1=2,
$$
$$
x_{01}+x_{11}+2y_0=2.
$$
The last two equations give $y_0=y_1$, while the first gives $y_0+y_1=1$.  This is impossible since $y_0,y_1$ are integers.

Now let $G=C_p\times C_p$ with $p$ odd.  Nontrivial real irreducible representations are grouped by their kernel lines.  For each line $L\mathrel{\leq} G$, let $x_L$ be the total coefficient of the real irreducible representations with kernel $L$.  If $H\mathrel{\leq} G$ is a cyclic subgroup of order $p$, then restriction to $H$ and \cref{cyclicpgroup} give
$$
\sum_{L\neq H}x_L=1.
$$
Let $S=\sum_L x_L$.  Then $S-x_H=1$ for every line $H$, so $x_H=S-1$ for all $H$.  Since there are $p+1$ lines, summing over $H$ gives
$$
S=(p+1)(S-1).
$$
Therefore $pS=p+1$, impossible for an integer $S$.

Finally, consider generalized quaternion groups.  Write
$$
Q_{2^n}=\langle a,b\mid a^{2^{n-1}}=1,\ b^2=a^{2^{n-2}},\ bab^{-1}=a^{-1}\rangle.
$$
Every $Q_{2^n}$ with $n\geq 3$ contains a copy of $Q_8$.  If
$$
x=a^{2^{n-3}},
$$
then $x$ has order $4$, $b^2=x^2$, and $bxb^{-1}=x^{-1}$.  It follows that $\langle x,b\rangle\cong Q_8$.  Therefore, by \cref{subgpcrit}, it suffices to exclude $Q_8$. The group $Q_8$ has four one dimensional real representations and one irreducible real representation $\rho$ of dimension four.  Let $d$ be the coefficient of $\rho$ in $V$. For any cyclic subgroup $C_4\mathrel{\leq} Q_8$, the restriction of $\rho$ to $C_4$ is $2\lambda(1,4)$.  The one dimensional representations restrict only to trivial or sign representations.  Therefore \cref{cyclicpgroup} gives
$$
2d=1,
$$
which is impossible.
\end{proof}

\section{The local criterion for Anderson-dualizing groups}
\label{sec:Section4}

In this section, we prove part \textup{(1)} of \cref{localcriterion}. We first establish the restrictions on conjugacy and Sylow subgroups, then use Bauer's realization theorem to prove the ``if'' direction.

\begin{thm} \label[thm]{psubgroupconjugacy}
Assume that $G$ satisfies \cref{mainquestion} and $C_{p^n}$ is a subgroup of $G$. Let $g\in C_{p^n}$ be the generator of the subgroup. Then elements in $C_{p^n}$ other than $g,g^{-1}$ are not in the conjugacy class of $g$.

In other words, every automorphism of $C_{p^n}$ induced by an element of its normalizer is either the identity or inversion.
\end{thm}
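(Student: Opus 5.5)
The plan is to use the class-function invariance of the restricted shift together with the coefficient condition $\mu_0=1$ from \cref{cyclicpgroup}. Let $V\in RO(G)$ be a dualizing shift. Let $C=\langle g\rangle\cong C_{p^n}$ with $n\geq 1$, and let $x\in N_G(C)$ with $xgx^{-1}=g^k$ for some $k\in(\Z/p^n)^\times$. We must show $k\equiv\pm1\pmod{p^n}$. By \cref{subgpcrit}, $V|_C$ is an Anderson-dualizing shift for $C$. Hence \cref{cyclicpgroup} gives $\mu_0(V|_C)=1$: the coefficients of the faithful rotation representations $\lambda(i,p^n)$, with $i$ prime to $p$, sum to $1$.

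The second step is to show that these coefficients are invariant under multiplication of the index by $k$. The virtual character $\chi_V$ is a class function on $G$, so $\chi_V(c^k)=\chi_V(xcx^{-1})=\chi_V(c)$ for all $c\in C$. Therefore $V|_C$ is fixed by the automorphism of $RO(C)$ induced by $c\mapsto c^k$. This automorphism sends $\lambda(i,p^n)$ to $\lambda(ki,p^n)$, permutes the trivial and sign summands trivially, and preserves each set $\mathcal D_j$. Since the irreducible real representations form a basis of $RO(C)$, the coefficient of $\lambda(i,p^n)$ in $V|_C$ equals the coefficient of $\lambda(ki,p^n)$.

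Third, we count orbits. The set $\mathcal D_0$ is identified with the quotient group $A=(\Z/p^n)^\times/\{\pm1\}$ via $\lambda(i,p^n)\mapsto[i]$. Multiplication by $k$ is translation by the image $\bar k\in A$. The cyclic subgroup $\langle\bar k\rangle$ acts freely on $A$, so every orbit has size $m=\operatorname{ord}(\bar k)$. The coefficients are constant on orbits, so $\mu_0(V|_C)=m\cdot(\text{an integer})$. Since $\mu_0(V|_C)=1$, we get $m=1$, that is, $k\equiv\pm1\pmod{p^n}$.

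For $p=2$ the same argument applies verbatim. The set $\mathcal D_0$ consists of the $\lambda(i,2^n)$ with $i$ odd, modulo $\pm$, and this is nonempty for $n\geq2$. When $n\leq 2$ the statement is vacuous, since then every unit is $\pm1$. The main point to check carefully is the second step: that conjugation-invariance of the character translates into equality of coefficients in the basis of real irreducibles. The exceptional half-integer normalization of $\mu_{n-1}$ for $p=2$ plays no role, because only $\mu_0$ is used. Finally, the statement about conjugates of $g$ is the same claim: any conjugate of $g$ lying in $C$ is a generator $g^k$ with $k$ forced to be $\pm1$.
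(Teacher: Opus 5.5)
Your proposal is correct and is essentially the paper's proof. Like the paper, you restrict the shift to $C_{p^n}$ and use conjugation-invariance of the character to make the coefficients of the faithful rotation representations constant on the free orbits of multiplication by $k$ on $(\Z/p^n)^\times/\{\pm1\}$. You then conclude from $\mu_0(V|_C)=1$ in \cref{cyclicpgroup} that the image of $k$ has order $1$. You also correctly handle the small $2$-primary cases where the statement is vacuous.
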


\begin{proof}
Let $H=C_{p^n}=\langle g\rangle$.  Suppose that $g^a$ is conjugate to $g$ in $G$, where $a$ is prime to $p$.  Let $V$ be a virtual representation of $G$ which satisfies \cref{mainquestion}. Choose $h\in G$ such that $hgh^{-1}=g^a$.  For every integer $k$,
$$
hg^kh^{-1}=g^{ak}.
$$
Therefore the character of $V|_H$ is invariant under the automorphism of $H$ given by $g\mapsto g^a$ on every element of $H$.  It follows that the virtual representation $V|_H$ itself is invariant under this automorphism.

Consider the action of multiplication by $a$ on the set of faithful real rotation representations of $H$.  We identify $\lambda(i,p^n)$ with $\lambda(-i,p^n)$.  If the image of $a$ in $(\mathbb Z/p^n)^\times/\{\pm 1\}$ has order $d$, then every orbit of this action has length $d$.  If $a^r i\equiv \pm i \pmod {p^n}$ for some $i$ prime to $p$, then $a^r\equiv \pm 1 \pmod {p^n}$.

Since $V|_H$ is preserved by this action, the total coefficient $\mu_0(V|_H)$ is a multiple of $d$.  On the other hand, \cref{cyclicpgroup} gives $\mu_0(V|_H)=1$.  Therefore $d=1$, so $a\equiv \pm 1 \pmod {p^n}$.

Therefore $g^a$ can be conjugate to $g$ only when $g^a=g$ or $g^a=g^{-1}$.
\end{proof}

\bigskip

\begin{cor} \label[cor]{NLC}
Assume that $G$ satisfies \cref{mainquestion} and $H$ is a cyclic $p$-subgroup of $G$. Then $|N_G(H):C_G(H)|$ is either 1 or 2.
\end{cor}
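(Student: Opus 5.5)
The corollary follows directly from \cref{psubgroupconjugacy} by considering the conjugation action of the normalizer on $H$. Write $H=\langle g\rangle\cong C_{p^n}$. Conjugation gives a homomorphism
\[
c\colon N_G(H)\longrightarrow \operatorname{Aut}(H)\cong(\mathbb Z/p^n)^\times,\qquad h\longmapsto\bigl(x\mapsto hxh^{-1}\bigr).
\]
Its kernel is exactly $C_G(H)$, since an element centralizes $H$ if and only if it acts trivially on it. Therefore $N_G(H)/C_G(H)$ embeds in $\operatorname{Aut}(H)$, and $|N_G(H):C_G(H)|=|\operatorname{im}(c)|$.

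Next I will bound the image using \cref{psubgroupconjugacy}. For $h\in N_G(H)$, the automorphism $c(h)$ sends $g$ to $hgh^{-1}=g^a$ for some $a$ prime to $p$. Thus $g^a$ is conjugate to $g$ in $G$, and \cref{psubgroupconjugacy} forces $g^a\in\{g,g^{-1}\}$. An automorphism of a cyclic group is determined by the image of a generator, so $c(h)\in\{\mathrm{id},\iota\}$, where $\iota$ is inversion. Hence $\operatorname{im}(c)\subseteq\{\mathrm{id},\iota\}$.

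This subset is a subgroup of order at most $2$, since $\iota$ may coincide with the identity when $|H|\le 2$. So the image has order $1$ or $2$, and the index $|N_G(H):C_G(H)|$ is $1$ or $2$. The trivial case $H=e$ is immediate because then $N_G(H)=C_G(H)=G$.

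There is no real obstacle here. The only point to check is that \cref{psubgroupconjugacy} applies to every nontrivial cyclic $p$-subgroup; it does, since it is stated for an arbitrary subgroup $C_{p^n}$ of a group satisfying \cref{mainquestion}.
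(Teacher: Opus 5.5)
Your proposal is correct and follows the paper's proof: both take the conjugation homomorphism $N_G(H)\to\operatorname{Aut}(H)$ with kernel $C_G(H)$ and use \cref{psubgroupconjugacy} to confine its image to $\{\mathrm{id},\mathrm{inv}\}$. Your separate handling of the trivial subgroup and the remark that inversion equals the identity when $|H|\le 2$ are harmless additions.
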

\begin{proof}
Consider the homomorphism $\psi\colon N_G(H)\to \operatorname{Aut}(H)$ induced by conjugation.  Its kernel is $C_G(H)$.  By \cref{psubgroupconjugacy}, the image of $\psi$ is contained in $\{\operatorname{id},\operatorname{inv}\}$. Therefore $|N_G(H):C_G(H)|$ is either $1$ or $2$.
\end{proof}

\begin{cor} \label[cor]{cyclicconjugacy}
Assume that $G$ satisfies \cref{mainquestion} and $C_n$ is a subgroup of $G$ with generator $g$. Then for any $h\in G$ such that $hC_nh^{-1}=C_n$, we have $h^2gh^{-2}=g$.
\end{cor}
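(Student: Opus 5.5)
Since $C_n$ need not have prime-power order, the plan is to reduce to its Sylow subgroups, where \cref{psubgroupconjugacy} applies. Write $n=\prod_p p^{k_p}$ and decompose $C_n=\prod_p C_{p^{k_p}}$. Each factor $P=C_{p^{k_p}}$ is the unique subgroup of $C_n$ of order $p^{k_p}$, hence characteristic in $C_n$. So if $h$ normalizes $C_n$, it also normalizes every such $P$.

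Next, I would apply \cref{psubgroupconjugacy} to each nontrivial $P$; trivial factors need nothing. By that result, conjugation by $h$ restricts on $P$ to either the identity or inversion. Each of these automorphisms has order dividing $2$, so conjugation by $h^2$ acts as the identity on $P$. Equivalently, $h$ lies in $N_G(P)$, and the image of $N_G(P)\to\operatorname{Aut}(P)$ lies in $\{\operatorname{id},\operatorname{inv}\}$, a group of exponent $2$. Hence $h^2\in C_G(P)$. This is the same mechanism as in \cref{NLC}.

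Finally, $C_n$ is generated by its Sylow factors, and $h^2$ centralizes each of them. Therefore $h^2$ centralizes $C_n$, and in particular $h^2gh^{-2}=g$.

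The only point needing care is that $h$ may act by inversion on some primary components and trivially on others. So $h$ itself need not act as $\pm1$ on $C_n$; this is consistent with the groups in \cref{thm:maintheorem}. It is exactly why the statement is about $h^2$ rather than $h$, and why the argument has to work prime by prime.
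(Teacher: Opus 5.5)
Your proposal is correct and follows essentially the same argument as the paper. Like the paper, you note that $h$ preserves each unique Sylow $p$-subgroup of $C_n$, apply \cref{psubgroupconjugacy} to see that $h^2$ acts trivially on each one, and conclude because $C_n$ is the product of these subgroups.
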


\begin{proof}
For each prime $p\mid n$, let $P_p$ be the unique Sylow $p$-subgroup of $C_n$. The subgroup $P_p$ is preserved by $h$, and \cref{psubgroupconjugacy} shows that $h^2$ acts trivially on $P_p$. Since $C_n=\prod_{p\mid n}P_p$, it follows that $h^2$ acts trivially on $C_n$.
\end{proof}
\bigskip

Now we can describe the Sylow $p$-subgroups of $G$:

\begin{thm} \label[thm]{sylowpsubgp}
Assume that $G$ satisfies \cref{mainquestion} and is a finite $p$-group.  Then $G$ is cyclic, except that for $p=2$ it may also be dihedral.
\end{thm}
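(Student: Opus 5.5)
The plan is to combine the minimal obstructions of \cref{blacklist} with the classical classification of $p$-groups that have a unique subgroup of order $p$, or few elementary abelian subgroups.

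\textbf{Odd $p$.} By \cref{blacklist} part 2), $G$ contains no subgroup isomorphic to $C_p\times C_p$. Hence $G$ has $p$-rank one. Recall the standard fact: a $p$-group of rank one has a unique subgroup of order $p$. Indeed, any two distinct subgroups of order $p$ in a $p$-group generate a subgroup containing some $C_p\times C_p$; to see this, take a normal subgroup of order $p$, which is central, and adjoin any other subgroup of order $p$. Then the classical theorem (Burnside; see e.g.\ Gorenstein, Thm.~5.4.10) says that a $p$-group with a unique subgroup of order $p$ is cyclic for $p$ odd, and cyclic or generalized quaternion for $p=2$. This settles odd $p$.

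\textbf{$p=2$, rank one.} If $G$ has a unique involution, it is cyclic or generalized quaternion $Q_{2^n}$. The latter is excluded by \cref{blacklist}(d).

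\textbf{$p=2$, rank two.} Otherwise $G$ contains $C_2\times C_2$. By \cref{blacklist}(a) it contains no $C_2^3$, so $G$ has $2$-rank exactly $2$. Here I would invoke the classification of $2$-groups of normal $2$-rank one, or more directly the known list of $2$-groups of $2$-rank two in which every abelian subgroup is cyclic or $C_2\times C_2$. The obstruction $C_4\times C_2$ from \cref{blacklist}(b) forbids any abelian subgroup of rank two other than $C_2\times C_2$: an abelian rank-two noncyclic $2$-group other than $C_2\times C_2$ contains $C_4\times C_2$. So every abelian subgroup of $G$ is cyclic or $C_2\times C_2$.

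Concretely, I would argue as follows. Take a maximal cyclic normal subgroup $A=\langle a\rangle$ of $G$. Since $G$ is not cyclic, $C_G(A)$ is abelian with all abelian subgroups cyclic or Klein, and it contains $A$ as a maximal cyclic normal subgroup. This forces $C_G(A)=A$ when $|A|\ge 4$, which handles the case via a standard argument. By \cref{psubgroupconjugacy}, $G/C_G(A)=G/A$ embeds in $\{\mathrm{id},\mathrm{inv}\}$, so $[G:A]=2$. Now pick $b\notin A$ with $bab^{-1}=a^{-1}$. Then $b^2\in A$ is fixed by inversion, so $b^2\in\{1,a^{|A|/2}\}$. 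If $b^2=1$, $G$ is dihedral. If instead $b^2=a^{|A|/2}$, then every element of $bA$ squares to $a^{|A|/2}$. In that case $G$ is generalized quaternion, which has a unique involution and contradicts rank two (and anyway is excluded by \cref{blacklist}(d)).

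The small case $|A|\le 2$ needs care. There $C_G(A)$ may be bigger, and I would fall back on $|G|\le 8$ by a direct check. Such groups are $C_2\times C_2$ or $D_8$; the others, $C_4\times C_2$, $C_2^3$ and $Q_8$, are excluded.

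\textbf{Main obstacle.} The hard step is justifying that $C_G(A)=A$ with $A$ maximal cyclic normal. The key fact is that in a $p$-group a maximal abelian normal subgroup is self-centralizing. So I would rather take $A$ to be a maximal abelian normal subgroup. Then $A$ is cyclic or $C_2\times C_2$, and $C_G(A)=A$.
\begin{itemize}
\item If $A$ is cyclic, \cref{psubgroupconjugacy} gives $[G:A]\le 2$ and the argument above applies.
\item If $A\cong C_2\times C_2$, then $G/A$ embeds in $\operatorname{Aut}(A)\cong S_3$, whose $2$-part has order $2$. So $|G|\le 8$, and we conclude by the direct check above.
\end{itemize}
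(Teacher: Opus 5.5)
Your argument is correct, but it takes a different route from the paper. The paper's proof is a short reduction. It cites \cite[Corollary~6.2]{BoucStancuWebb17}: every noncyclic $p$-group with $p$ odd contains $C_p\times C_p$, and every $2$-group that is neither cyclic nor dihedral contains $C_2\times C_4$, $Q_8$, or $C_2\times C_2\times C_2$. It then applies \cref{blacklist}.

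You replace that citation with classical $p$-group theory. For odd $p$ you use Burnside's theorem on $p$-groups with a unique subgroup of order $p$. For $p=2$ you use a self-centralizing maximal abelian normal subgroup $A$, which is cyclic or Klein four by \cref{blacklist}. You combine this with the normalizer constraint of \cref{psubgroupconjugacy} to force $[G:A]\leq 2$, which leaves only the dihedral and generalized quaternion possibilities.

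Your route is more self-contained. It is not circular, since \cref{psubgroupconjugacy} precedes the theorem. It does use an extra input, the normalizer condition. The paper's route shows that the conclusion follows from the subgroup obstructions of \cref{blacklist} alone. That purely group-theoretic reduction is exactly the form the paper reuses in \cref{rmk:integral-dimension-existence} and \cref{cor:HF2localgroups-v2}.

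Two tidying remarks:
\begin{enumerate}
\item Drop the intermediate attempt with a maximal cyclic normal subgroup. There the claim that $C_G(A)$ is abelian is unjustified.
\item The self-centralizing argument handles every case by itself, so you do not need the rank-one/rank-two split or the appeal to Burnside. For odd $p$, $A$ is cyclic and $G/A$ is a $p$-group embedding in a group of order at most $2$, so $G=A$. For $p=2$ with $A$ cyclic of order at most $2$, $\operatorname{Aut}(A)=1$ gives $G=A$ directly. In the unique-involution case the same argument produces only cyclic or generalized quaternion groups.
\end{enumerate}
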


\begin{proof}
By \cref{blacklist}, a group satisfying \cref{mainquestion} cannot contain any of the bad subgroups listed there.

Suppose first that $p$ is odd.  By \cite[Corollary~6.2]{BoucStancuWebb17}, every finite noncyclic $p$-group contains a subgroup isomorphic to $C_p\times C_p$.  This subgroup is excluded by \cref{blacklist}.  Therefore $G$ is cyclic.

Now let $p=2$.  Again by \cite[Corollary~6.2]{BoucStancuWebb17}, every finite $2$-group which is neither cyclic nor dihedral contains a subgroup isomorphic to one of
$$
C_2\times C_4,\qquad Q_8,\qquad C_2\times C_2\times C_2.
$$
Each is excluded by \cref{blacklist}.  Therefore $G$ is cyclic or dihedral.
\end{proof}
\bigskip

\begin{prop} \label[prop]{abelian}
If $G$ is abelian and satisfies \cref{mainquestion}, then $G$ is either cyclic or $C_2\times C_2\times C_m$ with $m$ odd.
\end{prop}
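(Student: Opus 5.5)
The plan is to write the abelian group $G$ as the product of its Sylow subgroups, $G=\prod_p G_p$, and to constrain each factor using the results already established. By \cref{subgpcrit} each $G_p$ satisfies \cref{mainquestion}, so \cref{sylowpsubgp} applies. For odd $p$ it gives $G_p$ cyclic. For $p=2$ it gives $G_2$ cyclic or dihedral.

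For the $2$-part, we note that a dihedral $2$-group $D_{2^{n+1}}=C_2\ltimes C_{2^n}$ is abelian only when $n=1$, which is exactly $D_4=C_2\times C_2$. When $n\geq2$ the reflection inverts an element of order $2^n\geq4$ nontrivially, so the group is nonabelian. Hence $G_2$ is cyclic or $C_2\times C_2$. One could also argue directly from \cref{blacklist}: an abelian $2$-group that is neither cyclic nor $C_2\times C_2$ contains $C_4\times C_2$ or $C_2\times C_2\times C_2$, and both are excluded.

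Now set $m=\prod_{p\text{ odd}}|G_p|$. Since the odd Sylow subgroups are cyclic of pairwise coprime orders, their product is cyclic, namely $C_m$, and $m$ is odd. If $G_2$ is cyclic, then $G\cong G_2\times C_m$ is cyclic by the Chinese remainder theorem. Otherwise $G\cong C_2\times C_2\times C_m$ with $m$ odd. This gives exactly the two cases in the statement.

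There is no real obstacle here. The only step needing care is the remark that a dihedral $2$-group is abelian only in the Klein four case, and this is a one-line check from the relation $srs=r^{-1}$: it forces $r=r^{-1}$, so $r$ has order at most $2$ and hence $n=1$.
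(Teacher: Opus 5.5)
Your proposal is correct and follows essentially the same route as the paper. The paper's proof likewise splits $G$ into its Sylow subgroups and uses \cref{subgpcrit} and \cref{sylowpsubgp} to see that odd $G_p$ are cyclic and $G_2$ is cyclic or $C_2\times C_2$. Your explicit check that a dihedral $2$-group is abelian only in the Klein four case is exactly the step the paper leaves implicit.
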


\begin{proof}
Writing $G=\prod_p G_p$ as the product of its Sylow subgroups, \cref{subgpcrit} and \cref{sylowpsubgp} show that every odd $G_p$ is cyclic and that $G_2$ is cyclic or $C_2\times C_2$, which gives the two stated possibilities.
\end{proof}

\bigskip

\begin{rmk}\label[rmk]{rmk:integral-dimension-existence}\rm
The following argument proves that the dimension conditions in part \textup{(1)} of \cref{thm:maintheoremB} imply that $G$ is Anderson-dualizing. More precisely, suppose that $G$ is any finite group and $V\in RO(G)$ satisfies $|V|=0$ and $|V^C|=-2$ for every nontrivial cyclic subgroup $C\mathrel{\leq} G$ of prime-power order. Then $V\in\mathcal S_G$.

For each such $C$, the difference $V|_C-(\lambda(1,|C|)-2)$ has zero underlying dimension and zero fixed-point dimension on every nontrivial subgroup of $C$. By \cref{thm:maintheoremC} and \cref{prop:affine-shifts}, $V|_C\in\mathcal S_C$. The cyclic restriction calculations in the proof of \cref{blacklist} therefore exclude $C_p\times C_p$ for odd $p$ and $C_2\times C_2\times C_2$, $C_4\times C_2$, and $Q_8$ as subgroups of $G$. By \cite[Corollary~6.2]{BoucStancuWebb17}, the odd Sylow subgroups are cyclic and the Sylow $2$-subgroups are cyclic or dihedral.

For each Sylow subgroup $P$, let $V_P$ be the basic integral shift in \cref{pgrouprep}. Its underlying dimension is zero, and its fixed-point dimension on every nontrivial cyclic subgroup is $-2$. Therefore \cref{thm:maintheoremC} gives $V|_P-V_P\in\Lambda_P(H\uZ)$. By \cref{prop:affine-shifts}, $V|_P\in\mathcal S_P$ for every Sylow subgroup $P$. Now \cref{decomposetopsubgps} gives $V\in\mathcal S_G$. For $G=e$, the dimension condition gives $V=0$, which is the unique shift.
\end{rmk}

We use the following consequence of Bauer's realization theorem.

\begin{lem}\label[lem]{lem:prime-power-realization}
Let $G$ be a finite group, and let $\mathcal P(G)$ be the collection of prime-power subgroups of $G$, including $e$. Let $f\colon\mathcal P(G)\to\mathbb Z$ be invariant under conjugation. Suppose that the restriction of $f$ to each Sylow subgroup is the fixed-point dimension function of a virtual real representation. Suppose also that
\[
f(L)\equiv f(H)\pmod{q^{r-l}}
\]
whenever $p$ and $q$ are primes and
\[
L\triangleleft H\triangleleft M\mathrel{\leq} N_G(L),
\qquad H/L\cong C_p,
\]
where $H$ is a $p$-group and $M/H\cong C_{q^r}$ acts on $H/L$ with kernel of order $q^l$. Then there is $V\in RO(G)$ such that $|V^H|=f(H)$ for every $H\in\mathcal P(G)$.
\end{lem}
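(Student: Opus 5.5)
The plan is to deduce the lemma from Bauer's realization theorem \cite{Bauer89} by matching its hypotheses to ours. Bauer characterizes which functions on the conjugacy classes of prime-power subgroups of $G$ arise as fixed-point dimension functions $H\mapsto|V^H|$ of virtual real representations $V\in RO(G)$. As I recall it, his criterion has two parts, which I will confirm against \cite{Bauer89} before relying on them.
\begin{itemize}
\item \emph{Local condition.} For each prime $p$, the restriction of the function to the $p$-subgroups of a Sylow $p$-subgroup $P$ is the dimension function of some element of $RO(P)$.
\item \emph{Gluing condition.} There are congruences $f(L)\equiv f(H)\pmod{q^{r-l}}$ for each configuration $L\triangleleft H\triangleleft M\mathrel{\leq} N_G(L)$ with $H/L\cong C_p$ and $M/H\cong C_{q^r}$ acting on $H/L$ with kernel of order $q^l$.
\end{itemize}
The proof therefore proceeds in three steps.
\begin{enumerate}
\item Observe that a conjugation-invariant $f\colon\mathcal P(G)\to\mathbb Z$ is exactly a function on conjugacy classes of prime-power subgroups, which is the domain Bauer uses.
\item Check that our Sylow hypothesis is his local condition. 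Every $p$-subgroup is conjugate into the chosen Sylow $p$-subgroup $P$, so by conjugation invariance it suffices to realize $f|_{\mathcal P(P)}$ by some $V_P\in RO(P)$, which is assumed. The case $G=e$ is trivial.
\item Check that the displayed congruences are verbatim Bauer's gluing conditions for the configurations $(L,H,M)$.
\end{enumerate}
Bauer's theorem then produces $V\in RO(G)$ with $|V^H|=f(H)$ for all $H\in\mathcal P(G)$.

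If Bauer's statement is phrased for functions on all subgroups, or in terms of homotopy representations rather than virtual linear ones, I would reduce to the version above as follows. For the domain, I would use that the dimension function of a virtual representation is determined by its character. By the argument of \cref{lem:cyclic-dimension-detection}, it is then determined by its values on cyclic subgroups. I would also use his identification of the relevant invariants with values on prime-power subgroups; this identification is the content of his ``$\mathcal P$-local'' formulation. For the representation type, I would invoke his result that these realizability conditions coincide for $RO(G)$, up to the usual stabilization.

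As a fallback, independent of the exact form of his theorem, I would argue in two stages.
\begin{itemize}
\item \emph{Rational stage.} First produce a rational virtual representation with the prescribed dimensions. The functionals $U\mapsto|U^{C}|$ on cyclic prime-power subgroups are linearly independent by \cref{lem:fixed-dimension-rank}, but they do not surject onto all class functions. One then has to use that $p$-local realizability determines the values of $f$ on cyclic subgroups of mixed order in a consistent way.
\item \emph{Integral stage.} Then fix integrality prime by prime, using the $V_P$ and induction--restriction across Sylow subgroups. The congruences are exactly the obstruction to making the $p$-local corrections compatible under the fusion by $N_G(L)$.
\end{itemize}

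The main obstacle is this last compatibility: matching the normalizer configurations $L\triangleleft H\triangleleft M\mathrel{\leq}N_G(L)$ with the fusion data that Bauer's proof actually uses. In particular, I would have to check that the modulus $q^{r-l}$, which is determined by the kernel of the $C_{q^r}$-action on $H/L$, is precisely his. I would first try to settle this by quoting \cite{Bauer89} directly rather than reproving it.
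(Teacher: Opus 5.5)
Your proposal is correct and matches the paper's proof, which simply quotes Bauer's Theorem~1.3 in the form recorded in Reeh--Yal\c{c}{\i}n's Theorem~5.1; the displayed congruence is Bauer's Artin condition. The one adjustment is that the local hypothesis in that theorem is phrased as the Borel--Smith conditions, which follow from your Sylow realizability assumption (the easy direction, since dimension functions of virtual real representations satisfy them), so your fallback two-stage argument is not needed.
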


\begin{proof}
The local realization hypothesis implies that $f$ satisfies the Borel--Smith conditions on every Sylow subgroup. The stated congruence is Bauer's additional Artin condition. The result follows from \cite[Theorem~1.3]{Bauer89}, in the form recorded in \cite[Theorem~5.1]{ReehYalcin18}.
\end{proof}

\begin{proof}[Proof of part \textup{(1)} of \cref{localcriterion}]
For the ``only if'' direction, the Sylow condition follows from \cref{subgpcrit} and \cref{sylowpsubgp}. The normalizer condition follows from \cref{psubgroupconjugacy}.

Conversely, suppose that $G$ satisfies the Sylow and normalizer conditions. Define $f\colon\mathcal P(G)\to\mathbb Z$ by
\[
f(H)=
\begin{cases}
0,&H=e,\\
-2,&H\neq e\text{ is cyclic},\\
-3,&H\text{ is a noncyclic }2\text{-group}.
\end{cases}
\]
This function is invariant under conjugation. On a cyclic Sylow subgroup $P$, it is the fixed-point dimension function of $\lambda(1,|P|)-2$. On a dihedral Sylow $2$-subgroup, it is the fixed-point dimension function of the basic shift $\rho+\sigma-3$ in \cref{pgrouprep}. A nontrivial cyclic subgroup fixes a line in $\rho\oplus\sigma$, while a noncyclic subgroup fixes no vector.

We check the congruence in \cref{lem:prime-power-realization}. If $p=2$, the action on $H/L\cong C_2$ is trivial, so the modulus is $1$. If $p$ is odd, then $H$ is cyclic. Since $M$ normalizes $H$, the normalizer condition shows that its induced action on $H/L$ has order dividing $2$. The difference $f(L)-f(H)$ is $2$ when $L=e$, and is $0$ otherwise. The congruence follows.

By \cref{lem:prime-power-realization}, there is $V\in RO(G)$ with $|V^H|=f(H)$ for every prime-power subgroup $H$. For each Sylow subgroup $P$, let $V_P$ be its basic integral Anderson-dualizing shift from \cref{pgrouprep}. The representations $V|_P$ and $V_P$ have the same fixed-point dimensions on every subgroup of $P$. By \cref{thm:maintheoremC},
\[
V|_P-V_P\in\Lambda_P(H\uZ).
\]
Therefore \cref{prop:affine-shifts} shows that $V|_P$ is Anderson-dualizing. Sylow detection, \cref{decomposetopsubgps}, proves that $G$ is Anderson-dualizing.
\end{proof}

\section{Inductive reductions for the converse} \label{sec:Section5}

In this section and the next section, we prove that every finite group satisfying the Sylow and normalizer conditions lies in the list of \cref{thm:maintheorem}.

We use $A\ltimes B$ to mean that $B$ is the normal subgroup.

These conditions are inherited by subgroups. If $L\mathrel{\leq} G$, a Sylow $p$-subgroup of $L$ is contained in a Sylow $p$-subgroup of $G$. Subgroups of cyclic groups are cyclic, and subgroups of dihedral $2$-groups are cyclic or dihedral. For every cyclic $p$-subgroup $H\mathrel{\leq} L$, we also have
\[
N_L(H)\mathrel{\leq} N_G(H).
\]

Let $G$ be a finite group satisfying the Sylow and normalizer conditions. The result is clear when $G=1$, so we assume that $|G|>1$. Assume inductively that every group $G'$ of smaller order satisfying these conditions is isomorphic to one of the groups listed in \cref{thm:maintheorem}.
 
We will show that $G$ itself is also in this list.

We summarize the strategy of the remaining proof.

\begin{enumerate}[label=\textup{\arabic*)}]
\item If Burnside's normal $p$-complement theorem applies, then $ G=P\ltimes N $ where $N\triangleleft G$ is a normal $p$-complement.  By induction, $N$ belongs to the list in \cref{thm:maintheorem}.  Analyzing the possible actions of $P$ on $N$ shows that $G$ is still in the list.

\item Therefore we may assume that Burnside's theorem never applies.
The Sylow and normalizer conditions then show that the odd Sylow subgroups are cyclic with inversion in the normalizer and that the Sylow $2$-subgroups are dihedral.
This is \cref{afterinduction}.

\item We next handle the case $G=N_G(H)$ for some cyclic $p$-subgroup $H$ of order at least $3$.  Using induction on $C_G(H)$ and the restrictions on conjugation actions, \cref{inductnormal} shows that $G$ is again one of the groups in \cref{thm:maintheorem}.

\item In the final case, every such normalizer is proper.  We first show that the odd core of $G$ is trivial.  Walter's classification for Klein four Sylow $2$-subgroups and the Gorenstein--Walter classification for larger dihedral Sylow $2$-subgroups then give a short list of possibilities.  The restrictions on cyclic $p$-subgroups exclude the remaining projective-linear groups.
\end{enumerate}
Therefore only the groups in \cref{thm:maintheorem} survive.

\bigskip

We will use the following standard form of Burnside's normal \(p\)-complement theorem, as stated in \cite[Theorem~5.13]{IsaacsFGT}.

\begin{thm} \label[thm]{burnsidecomp}
Let $P$ be a Sylow $p$-subgroup of $G$ which is central in its normalizer $N_GP$. Then
\[
G=P\ltimes N,
\]
where $N\triangleleft G$ is a normal $p$-complement. In particular, $p\nmid |N|$.
\end{thm}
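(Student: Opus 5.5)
This is Burnside's normal $p$-complement theorem, which the paper quotes from \cite[Theorem~5.13]{IsaacsFGT}. The standard proof goes through the transfer homomorphism $V\colon G\to P$. Since $P$ is central in $N_G(P)$, it is abelian, so the transfer lands in $P$ itself with no need to pass to $P/P'$. The plan is to show that $V$ restricted to $P$ is the map $x\mapsto x^{[G:P]}$. This map is an automorphism of $P$, because $[G:P]$ is prime to $p$. Once that is known, $V$ is surjective, $N=\ker V$ meets $P$ trivially, and $|N|=|G|/|P|$. So $N$ is a normal $p$-complement and $G=P\ltimes N$.

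\textbf{Key steps.}

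First, I will compute $V(x)$ for $x\in P$ using the transfer evaluation lemma. Choose double coset representatives $t_i$ for $\langle x\rangle\backslash G/P$, and let $n_i$ be the size of the $i$th orbit of $\langle x\rangle$ on $G/P$. Then
\[
V(x)=\prod_i t_i^{-1}x^{n_i}t_i,
\]
where each factor $t_i^{-1}x^{n_i}t_i$ lies in $P$.

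Second, I will replace each factor by $x^{n_i}$ using Burnside's fusion lemma. Both $x^{n_i}$ and its conjugate $t_i^{-1}x^{n_i}t_i$ lie in $P$. Because $P$ is abelian, $P$ is contained in the centralizers of both elements, so it is a Sylow subgroup of each centralizer. Sylow's theorem applied inside $C_G(t_i^{-1}x^{n_i}t_i)$ then produces $c$ in that centralizer with $t_icP(t_ic)^{-1}=P$. Replacing $t_i$ by $t_ic$, we may assume $t_i\in N_G(P)$. Centrality of $P$ in $N_G(P)$ now gives $t_i^{-1}x^{n_i}t_i=x^{n_i}$.

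Third, $\sum_i n_i=[G:P]$, so the two previous steps combine to give $V(x)=x^{[G:P]}$.

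\textbf{Main obstacle.} The only step that needs care is the second one: choosing $c$ so that it centralizes the correct element, and checking that the corrected representative still gives the same factor. Everything else is formal bookkeeping.
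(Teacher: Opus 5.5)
Your argument is correct, and it is the standard transfer proof (transfer evaluation combined with Burnside's fusion lemma) of \cite[Theorem~5.13]{IsaacsFGT}; the paper cites that theorem rather than reproving it. One wording point: $t_ic$ is not a new double-coset representative, but only a way to recompute the factor as $t_i^{-1}x^{n_i}t_i=(t_ic)^{-1}x^{n_i}(t_ic)=x^{n_i}$, which is valid because you chose $c\in C_G(t_i^{-1}x^{n_i}t_i)$.
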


\subsection{The normal-complement case}

If \cref{burnsidecomp} applies, $P$ must be abelian,
and the Sylow condition shows that $P$ is either cyclic or a dihedral $2$-group. The latter is abelian only when $P\cong C_2\times C_2$.

The subgroup $N$ satisfies the Sylow and normalizer conditions, so the induction hypothesis shows that it is a group described in \cref{thm:maintheorem}, with $|N|$ coprime to $|P|$.
\bigskip

\subsubsection{For $p=2$}
If $P=C_2\times C_2$ or $P=C_{2^n}$, then $|N|$ is odd.  Since all odd-order groups in the list of \cref{thm:maintheorem} are cyclic, we have $N=C_m$ for some odd $m$.  For each prime $q\mid m$, let $Q_q\mathrel{\leq} C_m$ be the Sylow $q$-subgroup. Since $C_m$ is cyclic, $Q_q$ is its unique Sylow $q$-subgroup, so the conjugation action of $P$ preserves $Q_q$.
By the normalizer condition, every element of $P$ acts on $Q_q$ either by the identity or by inversion.
Since
\[
\operatorname{Aut}(C_m)\cong
\prod_{q\mid m}\operatorname{Aut}(Q_q),
\]
the image of $P$ in $\operatorname{Aut}(C_m)$ has exponent at most $2$.  This is exactly the action condition in \cref{thm:maintheorem}.  Therefore $G=P\ltimes C_m$ is in the list of \cref{thm:maintheorem}.

\subsubsection{For odd $p$}
For the odd-prime case, we will use the following two facts about automorphisms.

\begin{lem} \label[lem]{crossaut}
Let $N=K\ltimes A$, where $A$ is finite, abelian, and normal in $N$.  Let $U$ be the subgroup of $\operatorname{Aut}(N)$ consisting of automorphisms which fix $A$ pointwise and induce the identity on $N/A\cong K$.  Then there is an injective homomorphism
\[
U\longrightarrow \prod_{k\in K}A.
\]
In particular, every prime divisor of $|U|$ divides $|A|$.
\end{lem}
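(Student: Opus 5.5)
The plan is to use the standard crossed-homomorphism construction. Given $u\in U$, note that $u$ induces the identity on $N/A$, so for each $k\in K$ (viewed inside $N$ via the splitting) we have $u(k)k^{-1}\in A$. We therefore define
\[
\Phi\colon U\longrightarrow \prod_{k\in K}A,\qquad u\longmapsto \bigl(u(k)k^{-1}\bigr)_{k\in K}.
\]
The first step is injectivity of $\Phi$ as a map of sets. Every element of $N$ can be written as $ka$ with $k\in K$ and $a\in A$. Since $u$ fixes $A$ pointwise, we get $u(ka)=u(k)a$. Hence $u$ is determined by the values $u(k)$, and these are in turn determined by the elements $u(k)k^{-1}$. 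So $\Phi(u)$ trivial forces $u=\mathrm{id}$, and more generally $\Phi$ is injective.

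The second step is to arrange that $\Phi$ is a homomorphism, where the group structure on $\prod_{k\in K}A$ is the product of the abelian groups $A$. Write $\delta_u(k)=u(k)k^{-1}\in A$. For $u,v\in U$ we compute
\[
uv(k)k^{-1}=u\bigl(\delta_v(k)k\bigr)k^{-1}=\delta_v(k)\,u(k)k^{-1}=\delta_v(k)\delta_u(k).
\]
Here the middle equality uses that $u$ fixes $\delta_v(k)\in A$. Because $A$ is abelian, this product equals $\delta_u(k)\delta_v(k)$. Thus $\Phi(uv)=\Phi(u)\Phi(v)$, and $\Phi$ is an injective homomorphism.

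Finally, $U$ embeds in $\prod_{k\in K}A$, which has order $|A|^{|K|}$. By Lagrange, $|U|$ divides $|A|^{|K|}$, so every prime divisor of $|U|$ divides $|A|$. No step here is a serious obstacle. The only point needing care is the order of multiplication in the homomorphism check, and the abelianness of $A$ removes any ambiguity there. If one prefers to avoid that, one can note that $U$ is a subgroup, so injectivity alone already gives the divisibility bound via Lagrange once the homomorphism property holds.
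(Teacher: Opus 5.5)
Your proof is correct and is essentially the paper's argument: the same embedding of $U$ into $\prod_{k\in K}A$ via the $A$-component of $u(k)$, injectivity because $u$ is determined by its values on $K$ and is the identity on $A$, and Lagrange for the divisibility statement. The only difference is the side convention: the paper writes $\alpha(k)=k\,d_\alpha(k)$, for which $d_{\alpha\beta}(k)=d_\alpha(k)d_\beta(k)$ holds directly, whereas your convention $u(k)k^{-1}$ yields the product $\delta_v(k)\delta_u(k)$ in reversed order, and you correctly use that $A$ is abelian to reorder it.
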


\begin{proof}
Let $\alpha\in U$.  For every $k\in K$, the image of $\alpha(k)$ in $N/A$ is equal to the image of $k$.  There is therefore a unique element $d_\alpha(k)\in A$ such that
\[
\alpha(k)=k d_\alpha(k).
\]
Define
\[
D(\alpha)=\bigl(d_\alpha(k)\bigr)_{k\in K}\in \prod_{k\in K}A.
\]
If $D(\alpha)=D(\beta)$, then $\alpha$ and $\beta$ agree on $K$.  They also agree on $A$ because both restrict to the identity.  Since $N$ is generated by $K$ and $A$, it follows that $\alpha=\beta$.  Therefore $D$ is injective.

For $\alpha,\beta\in U$ and $k\in K$, we have
\[
(\alpha\beta)(k)
=\alpha\bigl(kd_\beta(k)\bigr)
=kd_\alpha(k)d_\beta(k),
\]
because $\alpha$ fixes $A$ pointwise.  Therefore
\[
d_{\alpha\beta}(k)=d_\alpha(k)d_\beta(k)
\]
for every $k\in K$, so $D$ is a homomorphism into the direct product with pointwise multiplication.  Since
\[
\left|\prod_{k\in K}A\right|=|A|^{|K|},
\]
every prime divisor of $|U|$ divides $|A|$.
\end{proof}

\begin{lem} \label[lem]{coprimeaut}
Let $K$ be one of
\[
C_{2^r},\qquad D_{2^{r+1}},\qquad T_{3^r},\qquad O_{3^r},\qquad A_5
\]
from \cref{thm:maintheorem}, where $r\geq 0$ for $C_{2^r}$ and $r\geq 1$ for $D_{2^{r+1}}$, $T_{3^r}$, and $O_{3^r}$.  Let $p$ be an odd prime such that $p\nmid |K|$.  Then every $p$-subgroup of $\operatorname{Aut}(K)$ is trivial, except when
\[
K\cong C_2\times C_2
\qquad\text{and}\qquad
p=3.
\]
In this exceptional case, $\operatorname{Aut}(K)\cong S_3$, and its nontrivial $3$-subgroup cyclically permutes the three nonidentity elements of $K$.
\end{lem}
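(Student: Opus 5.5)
The plan is to compute $|\operatorname{Aut}(K)|$ for each family and check which odd primes not dividing $|K|$ can divide it. A $p$-subgroup of $\operatorname{Aut}(K)$ is trivial exactly when $p\nmid|\operatorname{Aut}(K)|$. Since $p\nmid|K|$, we have $p\nmid|\operatorname{Inn}(K)|$, so it suffices to control $|\operatorname{Out}(K)|$, or $|\operatorname{Aut}(K)|$ directly.

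The cases run as follows.

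\emph{Cyclic case.} For $K=C_{2^r}$, $|\operatorname{Aut}(K)|=2^{r-1}$ (or $1$), a power of $2$. So there is nothing to prove.

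\emph{Dihedral case.} For $K=D_{2^{r+1}}$ with $r\geq2$, the classical formula gives $\operatorname{Aut}(D_{2^{r+1}})\cong C_{2^r}\rtimes(\mathbb Z/2^r)^\times$. Its order $2^r\cdot 2^{r-1}$ is a $2$-power. For $r=1$, $K=C_2\times C_2$ and $\operatorname{Aut}(K)\cong GL_2(\mathbb F_2)\cong S_3$. This gives the exception $p=3$, whose Sylow $3$-subgroup $A_3$ cyclically permutes the three nonidentity elements.

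\emph{$T_{3^r}$, $O_{3^r}$ cases.} I would use \cref{crossaut} together with the characteristic subgroup $E=O_2(K)$, which is the unique Sylow $2$-subgroup of $T_{3^r}$ and the Klein four normal subgroup of $O_{3^r}$. Let $\alpha$ be an automorphism of $p$-power order with $p\geq5$. Then:
\begin{enumerate}
\item $\alpha$ restricts to $E$, and $|\operatorname{Aut}(E)|=6$, so $\alpha$ is trivial on $E$.
\item $\alpha$ induces an automorphism of $K/E$, which is $C_{3^r}$ or $D_{2\cdot3^r}$. Both have automorphism groups of order divisible only by $2$ and $3$: $\operatorname{Aut}(C_{3^r})$ has order $2\cdot3^{r-1}$, and $\operatorname{Aut}(D_{2\cdot3^r})\cong C_{3^r}\rtimes(\mathbb Z/3^r)^\times$. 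So $\alpha$ acts trivially on $K/E$.
\item Writing $K=Q\ltimes E$ with $Q=K/E$, \cref{crossaut} embeds the group of such automorphisms into $\prod_{k\in Q}E$, a $2$-group. Hence $\alpha$ is trivial.
\end{enumerate}

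\emph{$A_5$ case.} We have $\operatorname{Aut}(A_5)\cong S_5$, of order $120=2^3\cdot3\cdot5$. Every odd prime dividing it also divides $|A_5|=60$, so nothing survives.

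The main obstacle I expect is the careful bookkeeping in the $T_{3^r}$, $O_{3^r}$ cases. One must confirm that $E$ is characteristic, which holds because it is the unique normal Klein four subgroup, namely $O_2(K)$. One must also confirm that $p=3$ is excluded by hypothesis, since $3\mid|K|$ there. For the dihedral $2$-group case, if I do not want to cite the formula, I would argue directly: an automorphism is determined by the images of $r$ and $s$. For $r\geq 2$, $r$ must go to a generator $r^a$ of the characteristic cyclic subgroup, and $s$ to a reflection $r^bs$. This gives at most $2^{r-1}\cdot2^r$ automorphisms, a power of $2$.
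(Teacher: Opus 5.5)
Your proposal is correct and follows essentially the same route as the paper: a $2$-power order count for $\operatorname{Aut}$ of the cyclic and dihedral $2$-groups, a characteristic Klein four subgroup $E$ combined with \cref{crossaut} for $T_{3^r}$ and $O_{3^r}$, and $\operatorname{Aut}(A_5)\cong S_5$. The differences are minor. You certify that $E$ is characteristic as $O_2(K)$ rather than through derived subgroups. You argue elementwise for an automorphism of $p$-power order rather than bounding the prime divisors of $|\operatorname{Aut}(K)|$. You cite $\operatorname{Aut}(A_5)\cong S_5$ as known, whereas the paper includes a short proof.
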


\begin{proof}
First suppose that $K=C_{2^r}$.  The group $\operatorname{Aut}(K)$ has order $\varphi(2^r)$, which is a power of $2$.  It has no nontrivial subgroup of odd order.

Next suppose that $K=D_{2^{r+1}}$.  If $r=1$, then $K\cong C_2\times C_2$ and $\operatorname{Aut}(K)\cong S_3$.  This gives the exceptional case.  Now assume that $r\geq 2$ and write
\[
K=\langle a,s\mid a^{2^r}=s^2=1,\ sas=a^{-1}\rangle.
\]
The rotation subgroup $\langle a\rangle$ is the unique cyclic subgroup of order $2^r$, so every automorphism of $K$ preserves it. Every automorphism of $K$ has the form
\[
a\longmapsto a^u,
\qquad
s\longmapsto a^v s,
\]
where $u$ is odd modulo $2^r$ and $v$ is arbitrary modulo $2^r$.  Therefore
\[
|\operatorname{Aut}(K)|=2^r\varphi(2^r)=2^{2r-1},
\]
so $\operatorname{Aut}(K)$ has no nontrivial subgroup of odd order.

Now let $K=T_{3^r}=C_{3^r}\ltimes E$, where $E=C_2\times C_2$.  The quotient $K/E$ is abelian, and the commutators of $C_{3^r}$ with $E$ generate $E$.  Therefore
\[
[K,K]=E.
\]
Every automorphism of $K$ preserves the derived subgroup $[K,K]$, and therefore preserves $E$.  Restriction to $E$ and passage to the quotient give a homomorphism
\[
\operatorname{Aut}(K)
\longrightarrow
\operatorname{Aut}(E)\times\operatorname{Aut}(C_{3^r}).
\]
The kernel fixes $E$ pointwise and induces the identity on $K/E$.  By \cref{crossaut}, this kernel is a $2$-group.  Since
\[
\operatorname{Aut}(E)\cong S_3
\qquad\text{and}\qquad
|\operatorname{Aut}(C_{3^r})|=2\cdot 3^{r-1},
\]
the only prime divisors of $|\operatorname{Aut}(K)|$ are $2$ and $3$.  Since the odd prime $p$ does not divide $|K|=4\cdot 3^r$, we have $p\ne 3$, so $p$ does not divide $|\operatorname{Aut}(K)|$.

Next let $K=O_{3^r}=D_{2\cdot 3^r}\ltimes E$.  The commutators with $E$ generate $E$, and the derived subgroup of $D_{2\cdot 3^r}$ is $C_{3^r}$.  It follows that
\[
[K,K]=T_{3^r}
\qquad\text{and}\qquad
[T_{3^r},T_{3^r}]=E.
\]
Both $T_{3^r}$ and $E$ are therefore preserved by automorphisms of $K$.  Restriction to $E$ and passage to the quotient give
\[
\operatorname{Aut}(K)
\longrightarrow
\operatorname{Aut}(E)\times\operatorname{Aut}(D_{2\cdot 3^r}).
\]
The kernel is a $2$-group by \cref{crossaut}.  Every automorphism of
\[
D_{2\cdot 3^r}=\langle a,s\mid a^{3^r}=s^2=1,\ sas=a^{-1}\rangle
\]
has the form
\[
a\longmapsto a^u,
\qquad
s\longmapsto a^v s,
\]
where $u$ is a unit modulo $3^r$ and $v$ is arbitrary modulo $3^r$.  Consequently,
\[
|\operatorname{Aut}(D_{2\cdot 3^r})|
=3^r\varphi(3^r)
=2\cdot 3^{2r-1}.
\]
Only the primes $2$ and $3$ divide $|\operatorname{Aut}(K)|$, and neither can be the odd prime $p$ with $p\nmid |K|$.

Finally, $\operatorname{Aut}(A_5)\cong S_5$.  To recall the argument, conjugation gives an injection
\[
S_5\longrightarrow\operatorname{Aut}(A_5)
\]
because $C_{S_5}(A_5)=1$.  Conversely, $\operatorname{Aut}(A_5)$ acts on the five Sylow $2$-subgroups of $A_5$.  The subgroup of inner automorphisms $\operatorname{Inn}(A_5)\cong A_5$ acts faithfully on this set.  If $L$ is the kernel of the action of $\operatorname{Aut}(A_5)$, then
\[
L\cap\operatorname{Inn}(A_5)=1.
\]
Both subgroups are normal in $\operatorname{Aut}(A_5)$, so they centralize each other.  Since $A_5$ is centerless, the centralizer of $\operatorname{Inn}(A_5)$ in $\operatorname{Aut}(A_5)$ is trivial.  Therefore $L=1$, and $\operatorname{Aut}(A_5)$ embeds in $S_5$.  We obtain $\operatorname{Aut}(A_5)\cong S_5$.  Its prime divisors are $2$, $3$, and $5$, all of which divide $|A_5|$.
\end{proof}

Assume that Burnside's normal $p$-complement theorem applies for an odd prime $p$.
By the Sylow condition, $P$ is cyclic.
Write $P=C_{p^n}$.  Then
\[
G=P\ltimes N
\]
with $N\triangleleft G$ a normal $p$-complement.
The subgroup $N$ satisfies the Sylow and normalizer conditions. Since $|N|<|G|$, the induction hypothesis gives
\[
N=K\ltimes C_m
\]
as one of the groups in \cref{thm:maintheorem}, with $m$ odd and $(m,|K|)=1$.  Since $N$ is a $p$-complement,
\[
p\nmid |K|m.
\]

We first prove that $P$ centralizes $C_m$.  Let $q\mid m$, and let $Q\mathrel{\leq} C_m$ be the Sylow $q$-subgroup.  Then $C_m\triangleleft N$ implies $Q\triangleleft N$.  Since $q\nmid |K|$, the subgroup $Q$ is a Sylow $q$-subgroup of $N$.  Therefore $Q$ is the unique Sylow $q$-subgroup of $N$.  Since $N\triangleleft G$, the conjugation action of $P$ preserves $Q$.
By the normalizer condition, each element of $P$ acts on the cyclic group $Q$ either trivially or by inversion.
The image of the odd $p$-group $P$ in $\operatorname{Aut}(Q)$ is therefore contained in a group of order at most $2$, so it is trivial.  This holds for every $q\mid m$.  It follows that $P$ centralizes $C_m$.

The action of $P$ on $N$ induces
\[
\theta\colon P\longrightarrow
\operatorname{Aut}(N/C_m)\cong\operatorname{Aut}(K).
\]
By \cref{coprimeaut}, $\theta$ is trivial except possibly when $K=E=C_2\times C_2$ and $p=3$.

Suppose first that $\theta$ is trivial.  Since $P$ also fixes $C_m$ pointwise, its image in $\operatorname{Aut}(N)$ lies in the cross-term subgroup from \cref{crossaut}.  Every prime divisor of the order of this subgroup divides $m$, while $p\nmid m$.  Therefore the image of the $p$-group $P$ is trivial, and $P$ centralizes all of $N$.  It follows that
\[
G\cong P\times N\cong K\ltimes(C_m\times P).
\]
The product $C_m\times P$ is cyclic because $(m,p)=1$.  Moreover, $(mp^n,|K|)=1$ because $(m,|K|)=1$ and $p\nmid |K|$.  The action of $K$ on this cyclic group is the old action on $C_m$ together with the trivial action on $P$, so every element of $K$ still acts by an automorphism of order at most $2$.  Therefore $G$ satisfies all the conditions in \cref{thm:maintheorem}.

It remains to consider the case $p=3$ and $K=E=C_2\times C_2$, with $\theta$ nontrivial.  Its image is the order-$3$ subgroup of $\operatorname{Aut}(E)\cong S_3$ and cyclically permutes the three nonidentity elements of $E$.  Let
\[
P_0=\ker(\theta).
\]
Then $[P:P_0]=3$.  The subgroup $P_0$ fixes $C_m$ pointwise and induces the identity on $N/C_m\cong E$.  By \cref{crossaut} and the fact that $3\nmid m$, the subgroup $P_0$ acts trivially on all of $N$.

Since $C_m$ is abelian, the conjugation action of $N$ on $C_m$ factors through $N/C_m$.  Identify $N/C_m$ with $E$, and write this action as
\[
\beta\colon E\longrightarrow\operatorname{Aut}(C_m).
\]
For $x\in P$ and $e\in E$, the fact that $x$ fixes $C_m$ pointwise gives
\[
\beta\bigl(\theta(x)(e)\bigr)=\beta(e).
\]
The three nonidentity elements $e_1,e_2,e_3$ of $E$ therefore have a common image $u\in\operatorname{Aut}(C_m)$.  Since each $e_i$ has order $2$, we have $u^2=1$.  Since $e_1e_2e_3=1$, we also have $u^3=1$.  Therefore $u=1$, the action of $E$ on $C_m$ is trivial, and
\[
N=E\times C_m.
\]

Now $E$ is the unique Sylow $2$-subgroup of $N$.  Consequently $P$ preserves $E$.  The action factors through $P/P_0\cong C_3$ and is the standard $3$-cycle on the three nonidentity elements of $E$.  It follows that
\[
P\ltimes E\cong T_{3^n}.
\]
Both $P$ and $E$ centralize $C_m$, so
\[
G\cong T_{3^n}\times C_m,
\]
which is the $T_{3^n}$-family in \cref{thm:maintheorem}.
\bigskip

\subsection{Sylow structure and the normalizer-equality case}

From now on, we assume that Burnside's normal $p$-complement theorem does not apply to any Sylow subgroup of $G$.

\begin{prop} \label[prop]{afterinduction}
Suppose that $G$ satisfies the Sylow and normalizer conditions and that \cref{burnsidecomp} does not apply to any Sylow subgroup of $G$. Then:

\begin{enumerate}[label=\textbf{(\arabic*)}]
\item for any odd prime $p\mid |G|$, every Sylow $p$-subgroup $P$ of $G$ is cyclic, with $P\mathrel{\leq} C_GP\mathrel{\leq} N_GP$, and $|N_GP : C_GP|=2$.
\item every Sylow $2$-subgroup $P$ of $G$ is dihedral.
\end{enumerate}
\end{prop}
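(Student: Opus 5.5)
The plan is to combine the Sylow condition with the failure of \cref{burnsidecomp} at every prime, and to use \cref{NLC} to bound the automorphism action coming from the normalizer.

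\emph{Odd primes.} Let $p$ be an odd prime dividing $|G|$ and let $P$ be a Sylow $p$-subgroup. By the Sylow condition (part \textup{(1)} of \cref{localcriterion}, already established via \cref{sylowpsubgp} and \cref{subgpcrit}), $P$ is cyclic. In particular $P$ is abelian, so $P\mathrel{\leq} C_G(P)\mathrel{\leq} N_G(P)$. By \cref{NLC} applied to the cyclic $p$-subgroup $P$, the index $|N_G(P):C_G(P)|$ is $1$ or $2$. If it were $1$, then $N_G(P)=C_G(P)$, so $P$ would be central in its normalizer and \cref{burnsidecomp} would apply, contrary to hypothesis. Therefore the index is $2$, which gives \textup{(1)}.

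\emph{The prime $2$.} Now let $P$ be a Sylow $2$-subgroup, so $P$ is nontrivial. By the Sylow condition, $P$ is cyclic or dihedral, and it suffices to rule out the cyclic case. Suppose $P=C_{2^n}$ is cyclic. Then $N_G(P)/C_G(P)$ embeds in $\operatorname{Aut}(P)$, which is a $2$-group. On the other hand, $P\mathrel{\leq} C_G(P)$ since $P$ is abelian, so $P$ is a Sylow $2$-subgroup of $N_G(P)$ contained in $C_G(P)$. Hence $|N_G(P):C_G(P)|$ is odd. An index that is both odd and a power of $2$ equals $1$, so $P$ is central in $N_G(P)$, and \cref{burnsidecomp} applies, a contradiction. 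Hence $P$ is dihedral in the sense of \cref{thm:maintheorem}. This includes $D_4=C_2\times C_2$, which is consistent with the statement, since for $C_2\times C_2$ the normalizer can act nontrivially through $\operatorname{Aut}(E)\cong S_3$.

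\emph{Expected difficulty.} There is essentially no obstacle; the only point needing care is the $2$-primary step. There one uses that $\operatorname{Aut}(C_{2^n})$ is a $2$-group while $N_G(P)/C_G(P)$ has odd order. Note that the normalizer condition alone would allow inversion, so this parity argument is what actually excludes the cyclic case.
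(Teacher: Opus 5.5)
Your proof is correct and follows the paper's argument. For odd $p$ you combine the bound $|N_G(P):C_G(P)|\le 2$ with the failure of Burnside's theorem, and for $p=2$ you use the clash between $\operatorname{Aut}(C_{2^n})$ being a $2$-group and $|N_G(P):C_G(P)|$ dividing the odd index $|N_G(P):P|$. The only difference is cosmetic: the paper takes the index bound directly from the normalizer condition, which is a hypothesis here, whereas you route it through \cref{NLC}, whose stated hypothesis is Anderson-dualizing but whose proof uses only that condition.
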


\begin{proof} \hfill
\begin{enumerate}[label=\textup{(\arabic*)}]
\item
By the Sylow condition, the subgroup $P$ is cyclic. By the normalizer condition, $|N_G(P):C_G(P)|\leq 2$. Since Burnside's theorem does not apply, this index must be $2$.

\item
By the Sylow condition, any Sylow $2$-subgroup $P$ of $G$ is either cyclic or dihedral.
Suppose first that $P \cong C_{2^n}$. Then $ \mathrm{Aut}(P)$ is a $2$-group.

Since
$$
P\mathrel{\leq} C_GP\mathrel{\leq} N_GP
$$
and $P$ is a Sylow $2$-subgroup of $G$, the index $|N_GP:P|$ is odd. Therefore the divisor $|N_GP:C_GP|$ is also odd.

On the other hand, there is a natural injective homomorphism
\[
N_G(P)/C_G(P)\hookrightarrow \mathrm{Aut}(P).
\]
As $\mathrm{Aut}(P)$ is a $2$-group, this forces $N_G(P)/C_G(P)$ to be trivial. This contradicts the assumption. Therefore $P$ is dihedral.

\end{enumerate}
\end{proof}

We now consider the case when $G=N_G(H)$ for some cyclic $p$-subgroup $H\mathrel{\leq} G$ with $|H|\geq 3$.

\begin{lem}\label[lem]{D16normalVfour}
A dihedral group of order $16$ has no normal subgroup isomorphic to $C_2\times C_2$.
\end{lem}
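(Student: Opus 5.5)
Write $D_{16}=\langle r,s\mid r^8=s^2=1,\ srs=r^{-1}\rangle$. The plan is a direct enumeration: list all Klein four subgroups of $D_{16}$, then show that none of them is invariant under conjugation.

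First we locate the involutions. The rotation subgroup $\langle r\rangle\cong C_8$ contains exactly one involution, namely the central element $z=r^4$. The remaining involutions are the eight reflections $r^is$ for $0\le i\le 7$.

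Next we list the Klein four subgroups. A subgroup $V\cong C_2\times C_2$ contains at most one nonidentity rotation, because the product of two distinct rotation involutions would again have to be a rotation involution, and $z$ is the only one. So $V$ contains at least two reflections $r^is$ and $r^js$. Their product is $r^{i-j}$, and it must be an involution, so $r^{i-j}=z$ and $j=i+4$. Hence
\[
V=V_i=\{1,\ z,\ r^is,\ r^{i+4}s\}\qquad (0\le i\le 3),
\]
which gives four subgroups in total.

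Finally we show that no $V_i$ is normal. Using $srs=r^{-1}$, conjugation by $r$ gives
\[
r\cdot r^is\cdot r^{-1}=r^{i+2}s .
\]
Since $i+2\not\equiv i$ and $i+2\not\equiv i+4 \pmod 8$, the element $r^{i+2}s$ does not lie in $V_i$. Therefore $rV_ir^{-1}=V_{i+2}\neq V_i$, so $V_i$ is not normal. This completes the argument.

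The only step needing care is the enumeration of Klein four subgroups. In particular, one must justify why each such subgroup must contain $z$. This follows from the fact that the product of two distinct commuting reflections is a rotation involution, and $z$ is the only rotation involution.
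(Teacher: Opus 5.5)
Your proof is correct and follows essentially the same route as the paper's: identify every Klein four subgroup as $V_i=\{1,r^4,r^is,r^{i+4}s\}$, then note that conjugation by $r$ sends $r^is$ to $r^{i+2}s\notin V_i$. Your version also spells out why each such subgroup must contain $r^4$, which the paper only asserts.
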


\begin{proof}
Write
\[
D_{16}=\langle r,s\mid r^8=s^2=1,\ srs=r^{-1}\rangle.
\]
Every Klein four subgroup contains the unique central involution $r^4$ and a reflection, so it has the form
\[
V_i=\langle r^4,r^is\rangle.
\]
Conjugation by $r$ sends $r^is$ to $r^{i+2}s$. The two reflections in $V_i$ are $r^is$ and $r^{i+4}s$, and $i+2$ is congruent to neither $i$ nor $i+4$ modulo $8$. It follows that
\[
rV_ir^{-1}\neq V_i.
\]
\end{proof}

\begin{lem}\label[lem]{Tindexextension}
Let $T_{3^n}=C_{3^n}\ltimes E$, where $E=C_2\times C_2$, with the standard $3$-cycle action. Suppose that $L$ contains $T_{3^n}$ as a normal subgroup of index $2$, that $E\triangleleft L$, and that a Sylow $2$-subgroup of $L$ is $D_8$. Then
\[
L\cong O_{3^n}.
\]
\end{lem}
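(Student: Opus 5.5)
The idea is to build a complement to $E$ in $L$ that is isomorphic to $D_{2\cdot 3^n}$ and to check that it acts on $E$ through $D_{2\cdot3^n}\twoheadrightarrow D_6\cong\operatorname{Aut}(E)$. Write $T=T_{3^n}$ and $P=C_{3^n}\leq T$. Since $E\triangleleft L$, conjugation gives a homomorphism $\phi\colon L\to\operatorname{Aut}(E)\cong S_3$.

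\textbf{Step 1: $\phi$ is surjective and $C_L(E)=E$.} The image $\phi(T)$ is the $C_3$ of $3$-cycles. Choose a Sylow $2$-subgroup $S\cong D_8$ of $L$. Then $S\cap T$ is a Sylow $2$-subgroup of $T$, namely $E$, so $[S:E]=2$. If $S$ centralized $E$, then $E$ would be central in $S$. But the center of $D_8$ has order $2$, a contradiction. So $\phi(S)\neq1$ and $\phi$ is onto $S_3$. Next, $C_T(E)=E$ because $P$ acts faithfully modulo $C_{3^{n-1}}$; more precisely, an element $xe$ with $x\in P$, $e\in E$ centralizes $E$ only if $x$ acts trivially, and then $xe$ commutes with all of $E$. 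So the kernel $\ker\phi\cap T$ is $E\times C_{3^{n-1}}$, where $C_{3^{n-1}}\leq P$ acts trivially on $E$. It follows that $K_0:=\ker\phi$ has order $4\cdot 3^{n-1}$.

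\textbf{Step 2: find the dihedral complement.} By Frattini, with $P$ a Sylow $3$-subgroup of the normal subgroup $T$, we get $L=T\,N_L(P)$, hence $[N_L(P):N_T(P)]=2$. We have $N_T(P)=P\times C_E(P)=P$, since $P$ acts on $E$ by a $3$-cycle and $C_E(P)=1$. So $N_L(P)$ has order $2\cdot3^n$ and contains an involution $t$. Any element of $N_L(P)\setminus P$ has $\phi$-image outside $C_3$, i.e.\ a transposition, so $t$ exists of order $2$ (its square lies in $P$ and has odd order, so we can adjust by an odd power). Then $N_L(P)=\langle t\rangle\ltimes P$ meets $E$ trivially and has order $2\cdot 3^n=[L:E]$. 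Hence $L=N_L(P)\ltimes E$.

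\textbf{Step 3: identify the action.} It remains to show that $t$ inverts $P$. Let $z$ generate $C_{3^{n-1}}\leq P$. Since $|\operatorname{Aut}(P)|=2\cdot3^{n-1}$, $t$ acts on $P$ by an automorphism of order at most $2$, hence either trivially or by inversion. The restriction to $P/C_{3^{n-1}}\cong C_3$ must be inversion, because $\phi(t)$ is a transposition normalizing, but not centralizing, the $3$-cycles. Therefore $t$ acts by inversion on $P$, so $N_L(P)\cong D_{2\cdot3^n}$, and it acts on $E$ through $\phi$, which is the quotient map onto $D_6\cong S_3$. This matches the definition of $O_{3^n}$.

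The main obstacle is Step 2: producing a genuine involution complement and confirming that $N_T(P)=P$. If the Frattini argument gets messy, I would instead apply Schur–Zassenhaus to the normal subgroup $E$ of $L$, which is legitimate when $n\geq1$ only after passing to the odd part. I expect the direct normalizer argument to suffice.
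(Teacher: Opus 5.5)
Your proof is correct and follows essentially the paper's argument. Both use the Frattini argument at a Sylow $3$-subgroup $P$ of $T_{3^n}$ with $N_{T_{3^n}}(P)=P$, an involution $t\in N_L(P)$ that acts on $E$ as a transposition because of the $D_8$ hypothesis, and the fact that an automorphism of $C_{3^n}$ of order at most $2$ that inverts $P/C_{3^{n-1}}$ must be inversion, giving $L=N_L(P)\ltimes E\cong O_{3^n}$.

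Two corrections, neither affecting the argument. First, the claims $C_T(E)=E$ and $C_L(E)=E$ in Step~1 are false for $n\geq 2$: in fact $C_T(E)=\ker\phi\cap T=E\times C_{3^{n-1}}$, as you note in the next sentence, but they are never used. Second, the closing Schur--Zassenhaus alternative does not apply, since $|E|$ and $[L:E]$ are both even.
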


\begin{proof}
Let $A$ be a Sylow $3$-subgroup of $T_{3^n}$. An element of $E$ normalizing $A$ centralizes $A$, and $C_E(A)=1$. Therefore
\[
N_{T_{3^n}}(A)=A.
\]
The Frattini argument gives
\[
L=T_{3^n}N_L(A),
\qquad |N_L(A)|=2\cdot3^n.
\]
Choose an involution $t\in N_L(A)$. Since $N_L(A)\cap T_{3^n}=A$, the element $t$ lies outside $T_{3^n}$. The subgroup $\langle E,t\rangle$ is a Sylow $2$-subgroup of $L$, so it is isomorphic to $D_8$. Consequently, $t$ acts on $E$ as a transposition.

Let $a$ generate $A$, and write $tat^{-1}=a^u$. Since $t^2=1$,
\[
u^2\equiv1\pmod{3^n}.
\]
The actions of $a$ and $t$ on $E$ are a $3$-cycle and a transposition, respectively, so $u\equiv-1\pmod3$. These congruences force $u\equiv-1\pmod{3^n}$.

Therefore $\langle A,t\rangle\cong D_{2\cdot3^n}$, acting on $E$ through the standard quotient $D_6$. Since
\[
L=\langle A,t\rangle\ltimes E,
\]
we obtain $L\cong O_{3^n}$.
\end{proof}
\par

\begin{prop} \label[prop]{inductnormal}
Suppose that $G$ satisfies the Sylow and normalizer conditions, that we are in the situation of \cref{afterinduction}, and that
\[
G=N_GH
\]
for some non-trivial cyclic $p$-subgroup $H\mathrel{\leq} G$ with $|H|\geq 3$. Then $G$ is listed in \cref{thm:maintheorem}.
\end{prop}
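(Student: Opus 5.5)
Since $H\trianglelefteq G$ and $G=N_G(H)$, the normalizer condition makes the image of $G\to\operatorname{Aut}(H)$ have order at most $2$. By \cref{afterinduction} this index is exactly $2$. For $p$ odd, $H\le P$ for a cyclic Sylow $P$, and $N_G(P)\le N_G(H)=G$. For $p=2$ we use $|H|\ge 3$: the hypothesis $|H|\ge 4$ puts $H$ inside the rotation subgroup of a dihedral Sylow $2$-subgroup, and an inversion acts nontrivially on $H$. In either case $C:=C_G(H)$ is a normal subgroup of index exactly $2$, because an element acting trivially on $H$ cannot realize the index-$2$ quotient forced by \cref{afterinduction}. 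The subgroup $C$ is proper and inherits the Sylow and normalizer conditions. By induction, $C\cong K\ltimes C_m$ is in the list of \cref{thm:maintheorem}, and $H\le Z(C)$.

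The next step uses that $H$ is central in $C$ to pin down $K$. Write $H=C_{p^k}$ with $p^k\ge 3$. The groups $T_{3^n}$, $O_{3^n}$ and $A_5$ have trivial center. Hence, if $p$ is odd, $H$ must lie in the cyclic part. Here we use the center of $K\ltimes C_m$, which is contained in $Z(K)\times C_{C_m}(K)$ after a short check. Only $K=C_{2^r}$ has a $2$-part of order at least $4$ in its center. So if $p=2$ we get $K=C_{2^r}$ with $H\le K$. We then also use that a Sylow $2$-subgroup of $G$ is dihedral, which is nonabelian, while $C$ has index $2$.

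Then we analyze the extension $1\to C\to G\to C_2\to 1$ case by case. Pick $t\in G\setminus C$ inverting $H$. Write $C=K\ltimes C_m$; here $C_m$ is characteristic, being the set of elements of $C$ of odd order prime to $|K|$ (or a Hall subgroup), so $t$ normalizes it. By the normalizer condition, $t$ acts on each Sylow subgroup of $C_m$ by $\pm1$, while $K$ does the same. This is exactly the action condition of \cref{thm:maintheorem} for the new top group.

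\textbf{Case $p=2$.} Here $K=C_{2^r}\ge H$, and the Sylow $2$-subgroup $\langle K,t\rangle$ is dihedral (\cref{afterinduction}). So $G\cong D_{2^{r+1}}\ltimes C_m$.

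\textbf{Case $p$ odd.} Here $H\le C_m$. If $K$ is $C_{2^r}$ or $D_{2^{r+1}}$, the Sylow $2$-subgroup of $G$ is $\langle S,t'\rangle$ for a $2$-element $t'$, and it must be dihedral. We choose $t$ to be a $2$-element, since $[G:C]=2$. When $K=T_{3^n}$, \cref{Tindexextension} identifies $\langle K,t\rangle$ with $O_{3^n}$. This requires $E\trianglelefteq G$, which holds since $E$ is characteristic in $C$, and a $D_8$ Sylow $2$-subgroup; the latter follows because it is dihedral of order $8$. When $K=O_{3^n}$ or $A_5$, the Sylow $2$-subgroup would have order $16$ and contain a normal Klein four subgroup, or would be $C_2\times D_4$-like. \cref{D16normalVfour} rules this out, or forces a non-dihedral Sylow. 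One must also choose the complement of $C_m$ in $G$, which exists by Schur--Zassenhaus, since $(m,|G/C_m|)=1$.

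The main obstacle is the last step: showing the top group $\langle K,t\rangle$ (a complement to $C_m$) is exactly one of the five families and not some other extension. This concerns especially $K=A_5$, where $G/C_m$ would be $A_5$ extended by $C_2$, that is, $S_5$ or $A_5\times C_2$. The first has Sylow subgroup $D_8$ but violates the normalizer condition on $C_5$: $N_{S_5}(C_5)$ acts by an element of order $4$. The second has Sylow subgroup $C_2^3$. For the $D_{2^{r+1}}$ case, an index-$2$ overgroup would have a Sylow $2$-subgroup of order $2^{r+2}$ containing a normal $D_{2^{r+1}}$. This is allowed only if that Sylow is dihedral, but then $t$ may fail to act on $H$ by inversion compatibly. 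I would handle it by restricting to the subgroup $H\times S$ and using \cref{blacklist}, since $C_2\times C_4$ and $C_2^3$ are excluded.
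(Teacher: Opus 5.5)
Your proposal follows the paper's proof: pass to $C=C_G(H)$ of index exactly $2$ (you exclude $C=G$ at the start by the inversion arguments the paper uses at the end), write $C=K\ltimes C_m$ by induction, and treat each $K$ via the dihedral Sylow $2$-subgroup, \cref{Tindexextension}, \cref{D16normalVfour}, and the $S_5$ versus $A_5\times C_2$ dichotomy (your Schur--Zassenhaus complement $L\cong G/C_m$ is a subgroup of $G$, so it inherits the Sylow and normalizer conditions; the paper instead uses $A_5=[C,C]\trianglelefteq G$ and $\operatorname{Aut}(A_5)\cong S_5$). Two corrections that do not affect the logic: $T_{3^n}$ is not centerless for $n\geq 2$, since the kernel $C_{3^{n-1}}$ of its action on $E$ is central (e.g.\ $G=O_9$, $H=C_3$ gives $C=T_9$ with $H\not\leq C_m$), so your claim $H\leq C_m$ for odd $p$ is false, though your $T_{3^n}$ subcase never uses it; and the worry in your final paragraph is unfounded, because every element of the dihedral Sylow $2$-subgroup $S$ of $G$ outside $C$ automatically inverts $H$ and $G=S\ltimes C_m$ already satisfies the action condition, so no appeal to \cref{blacklist} is needed.
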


\begin{proof}
The cyclic group $H$ contains a subgroup $H'$ whose order is either an odd prime or $4$. Since
\[
G=N_GH\mathrel{\leq} N_GH'\mathrel{\leq} G,
\]
we may replace $H$ by $H'$ and assume that $|H|$ is either an odd prime or $4$.

Put $C=C_GH$.
By the normalizer condition,
\[
[G:C]\in\{1,2\}.
\]
Suppose first that $[G:C]=2$. Then $C\mathrel{<}G$.
The subgroup $C$ satisfies the Sylow and normalizer conditions, so the induction hypothesis gives
\[
C=K\ltimes C_m
\]
as in \cref{thm:maintheorem}. For each prime $q\mid m$, its Sylow $q$-subgroup is the unique Sylow $q$-subgroup of $C$, since $q\nmid |K|$ and $C_m\triangleleft C$. Since $C\triangleleft G$, it follows that $C_m\triangleleft G$.
By the normalizer condition, every element of $G$ acts on each Sylow subgroup of $C_m$ either by the identity or by inversion.

If $K=1$, then a Sylow $2$-subgroup of $G$ has order $2$ and is cyclic. This contradicts \cref{afterinduction}(2), so this case does not occur.

Suppose that $K$ is a non-trivial cyclic or dihedral $2$-group, and let $S$ be a Sylow $2$-subgroup of $G$. Since $[G:C]=2$ and $C_m$ is odd,
\[
|S|=2|K|.
\]
By \cref{afterinduction}(2), the group $S$ is dihedral. The quotient map
\[
G\longrightarrow G/C_m
\]
restricts injectively to $S$, and both $S$ and $G/C_m$ have order $2|K|$. Therefore
\[
S\cong G/C_m.
\]
It follows that
\[
G=S\ltimes C_m,
\]
with the allowed action on $C_m$. This is a dihedral family in \cref{thm:maintheorem}.

Now suppose that $K=T_{3^n}$. The action on $C_m$ is trivial in this family, so
\[
C=T_{3^n}\times C_m.
\]
Since $(m,6)=1$, the subgroup $T_{3^n}$ is the unique Hall $\{2,3\}$-subgroup of $C$. Since $C\lhd G$, conjugation by $G$ preserves this unique Hall subgroup, so $T_{3^n}\lhd G$. Its normal Klein four subgroup $E$ is $O_2(C)$, the largest normal $2$-subgroup of $C$. Therefore $E$ is normal in $G$. A Sylow $2$-subgroup $S$ of $G$ has order $8$ and is $D_8$ by \cref{afterinduction}(2). Choose an involution $t\in S-E$. The subgroup
\[
L=\langle T_{3^n},t\rangle
\]
contains $T_{3^n}$ normally with index $2$, has Sylow $2$-subgroup $D_8$, and has $E\triangleleft L$. By \cref{Tindexextension},
\[
L\cong O_{3^n}.
\]
Since $|L|m=|G|$ and $L\cap C_m=1$,
\[
G=L\ltimes C_m\cong O_{3^n}\ltimes C_m.
\]
The subgroup $T_{3^n}$ acts trivially on $C_m$, while the outer involution acts on each Sylow subgroup of $C_m$ by the identity or by inversion. This is the $O_{3^n}$ family in \cref{thm:maintheorem}.

Next suppose that $K=O_{3^n}$. Let $E=C_2\times C_2$ be its distinguished normal Klein four subgroup. We claim that $E=O_2(C)$. In $O_{3^n}$, the quotient by $E$ is $D_{2\cdot 3^n}$, which has no non-trivial normal $2$-subgroup. Therefore $O_2(O_{3^n})=E$. Under the quotient map $C\to C/C_m\cong O_{3^n}$, every normal $2$-subgroup of $C$ has image contained in $E$ and therefore lies in the preimage of $E$. Since $E\mathrel{\leq} T_{3^n}$ acts trivially on $C_m$, this preimage is $E\times C_m$. Since $C_m$ has odd order, every normal $2$-subgroup of $C$ lies in $E$. It follows that $E=O_2(C)$, so $E$ is normal in $G$. A Sylow $2$-subgroup of $C$ is $D_8$. Therefore a Sylow $2$-subgroup of $G$ has order $16$ and is dihedral by \cref{afterinduction}(2). It contains $E$ normally, contradicting \cref{D16normalVfour}. This case is impossible.

Finally suppose that $K=A_5$. Then
\[
C=A_5\times C_m.
\]
Since $A_5$ is perfect and $C_m$ is abelian,
\[
[C,C]=A_5.
\]
Therefore $A_5$ is normal in $G$. Let
\[
\theta:G\longrightarrow\operatorname{Aut}(A_5)\cong S_5
\]
be the conjugation homomorphism. Its image on $C$ is the subgroup of inner automorphisms
\[
\operatorname{Inn}(A_5)\cong A_5.
\]
Since $[G:C]=2$, either $\theta(G)=A_5$ or $\theta(G)=S_5$.

Suppose first that $\theta(G)=A_5$. Every conjugation automorphism of $A_5$ induced by $G$ is inner. Since $A_5$ is centerless,
\[
G=A_5\times C_G(A_5).
\]
The centralizer has order $2m$, so it contains an involution $u$. If $E\cong C_2\times C_2$ is a Sylow $2$-subgroup of $A_5$, then
\[
\langle E,u\rangle\cong C_2\times C_2\times C_2,
\]
contradicting the Sylow condition.
\par

Suppose instead that $\theta(G)=S_5$. Let
\[
a=(0\ 1\ 2\ 3\ 4),
\qquad
b=(1\ 2\ 4\ 3).
\]
Under the affine labeling of the letters by $\mathbb F_5$, the element $a$ is translation by $1$ and $b$ is multiplication by $2$, so
\[
bab^{-1}=a^2.
\]
Choose $y\in G$ inducing conjugation by $b$. Then $y$ normalizes $\langle a\rangle\cong C_5$ and sends its generator to its square. This action is neither the identity nor inversion,
contradicting the normalizer condition.
Therefore the $A_5$ case is impossible.

This completes the case $[G:C]=2$.

Now suppose that $G=C_GH$. If $|H|=p$ is odd, let $P$ be a Sylow $p$-subgroup containing $H$. By \cref{afterinduction}(1), there is an element
\[
x\in N_G(P)-C_G(P)
\]
whose action on the cyclic group $P$ is inversion. Since $H$ is the unique subgroup of order $p$ in $P$, the element $x$ normalizes and inverts $H$. Therefore
\[
x\in N_G(H)-C_G(H),
\]
contradicting $G=N_G(H)=C_G(H)$.

If $|H|=4$, then $H$ is central in $G$. A Sylow $2$-subgroup containing $H$ is dihedral by \cref{afterinduction}(2), but a dihedral $2$-group has no central cyclic subgroup of order $4$. The Klein four group has no element of order $4$, and $D_8$ or any larger dihedral $2$-group has center of order $2$. This is a contradiction.

Therefore $G$ is listed in \cref{thm:maintheorem}.
\end{proof}

\bigskip

\section{Completion of the converse by finite-group classification}
\label{sec:classification}

We continue under the Sylow and normalizer conditions and the assumptions of \cref{afterinduction}. \cref{inductnormal} has treated the case
\[
G=N_G(H)
\]
for a nontrivial cyclic $p$-subgroup $H$ of order at least $3$. Therefore we may assume that
\begin{equation}\label{eq:propernormalizers}
N_G(H)\mathrel{<} G
\end{equation}
for every such $H$.

Let $O_{2'}(G)$ denote the largest normal subgroup of $G$ of odd order. We first claim that
\begin{equation}\label{eq:oddcoretrivial}
O_{2'}(G)=\{e\}.
\end{equation}
\cref{afterinduction} implies that $G$ has even order, so $O_{2'}(G)\mathrel{<} G$. The subgroup $O_{2'}(G)$ satisfies the Sylow and normalizer conditions, so the induction hypothesis applies. 

The only odd-order groups in \cref{thm:maintheorem} are cyclic. Therefore $O_{2'}(G)\cong C_m$ for some odd $m$. If $O_{2'}(G)\neq\{e\}$, choose a prime $p\mid m$ and let $C_p\mathrel{\leq} O_{2'}(G)$ be the unique subgroup of order $p$. Since $O_{2'}(G)\triangleleft G$ is cyclic, conjugation by any element of $G$ preserves its unique subgroup $C_p$ of order $p$. It follows that $C_p\triangleleft G$. Therefore $N_G(C_p)=G$, contradicting \eqref{eq:propernormalizers}. This proves \eqref{eq:oddcoretrivial}.

Let $P$ be a Sylow $2$-subgroup of $G$. By \cref{afterinduction}(2), the group $P$ is dihedral. We separate the Klein-four case from the case $|P|\geq 8$.

\medskip
\noindent \textbf{The Klein-four case.} Suppose that
\[
P\cong C_2\times C_2.
\]
We use the following standard form of Walter's theorem. Walter's original statement \cite[Theorem~1]{Wal69} contains an exceptional family whose identification with the Ree groups was completed later. For this identification, see \cite{Enguehard86}.

\begin{thm}[Walter]\label{thm:Walter-abelian-Sylow}
Let $X$ be a finite group with abelian Sylow $2$-subgroups. Then there are normal subgroups $M,N\triangleleft X$ with $M\mathrel{\leq} N$ such that $|M|$ and $[X:N]$ are odd, and
\[
N/M\cong A\times S_1\times\cdots\times S_k,
\]
where $A$ is an abelian $2$-group and each $S_i$ is isomorphic to one of
\[
\operatorname{PSL}_2(2^d)\quad(d\geq3),
\qquad
\operatorname{PSL}_2(q)\quad(q\geq5\text{ odd},\ q\equiv\pm3\pmod 8),
\]
\[
J_1,
\qquad
{}^2G_2(3^{2e+1})\quad(e\geq1).
\]
\end{thm}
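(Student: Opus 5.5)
This statement is a deep classification theorem quoted from the literature, and we will not reprove it. Our plan is to assemble it from Walter's original theorem \cite[Theorem~1]{Wal69} together with the later identification of his exceptional family \cite{Enguehard86}, and to check that the two sources combine into exactly the normal-series form recorded above.

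\textbf{Step 1: the normal series.} Take $M=O_{2'}(X)$, the largest normal subgroup of odd order. Take $N=O^{2'}(X)$, the smallest normal subgroup of odd index. Then $M\mathrel{\leq}N$ and both are normal. The section $N/M$ has the following properties:
\begin{enumerate}[label=\textup{(\roman*)},nosep]
\item a Sylow $2$-subgroup of $N/M$ is isomorphic to that of $X$, hence abelian;
\item $O_{2'}(N/M)=1$;
\item $N/M$ has no proper normal subgroup of odd index.
\end{enumerate}
Walter's theorem is stated for groups with abelian Sylow $2$-subgroups and no nontrivial normal subgroup of odd order (equivalently, after passing to such sections). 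It asserts that such a group, once reduced to $O^{2'}$, is a direct product of an abelian $2$-group and simple groups. The simple groups are $\operatorname{PSL}_2(2^d)$, $\operatorname{PSL}_2(q)$ with $q\equiv\pm3\pmod 8$, $J_1$, and groups of ``Ree type''. The congruence condition and the bounds $d\geq3$, $q\geq5$ simply record which $\operatorname{PSL}_2$ have abelian Sylow $2$-subgroups and are nonabelian simple.

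\textbf{Step 2: the exceptional family.} Walter's theorem leaves open a family of groups characterized by centralizer-of-involution data of Ree type. This is the main obstacle, and it is entirely outside the scope of the paper. Thompson, and Bombieri for the remaining small cases, proved that every such group is isomorphic to ${}^2G_2(3^{2e+1})$; the exposition in \cite{Enguehard86} completes this identification. The case $e=0$ is excluded because ${}^2G_2(3)\cong\operatorname{P\Gamma L}_2(8)$ is not simple and is already absorbed by the $\operatorname{PSL}_2(8)$ term.

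\textbf{Use in the paper.} We will only need the theorem when the Sylow $2$-subgroup is $C_2\times C_2$. In that case the abelian factor $A$ is trivial or $C_2\times C_2$, and at most one simple factor occurs. Combined with $O_{2'}(G)=1$, this reduces $G$ to a short explicit list, to which the normalizer condition will then be applied.
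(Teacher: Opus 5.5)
Your overall route is the paper's. The paper gives no proof of its own: it quotes Walter's Theorem~1, which is already stated in this normal-series form, and replaces Walter's family of groups of Ree type by ${}^2G_2(3^{2e+1})$, $e\geq1$, citing Enguehard for that identification. Your Step~2 and your remark that ${}^2G_2(3)\cong\mathrm{P}\Gamma\mathrm{L}_2(8)$ is absorbed by the $\operatorname{PSL}_2(8)$ term are consistent with this. So is your reading of the Klein-four case: either $A=C_2\times C_2$ and $k=0$, or $A=1$ and $k=1$, since nonabelian simple groups have noncyclic Sylow $2$-subgroups.

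Step~1, however, contains a false claim. With $M=O_{2'}(X)$ and $N=O^{2'}(X)$ one does not have $M\leq N$ in general. For $X=C_6$, whose Sylow $2$-subgroup $C_2$ is abelian, you get $M=C_3$ and $N=C_2$. So $N/M$ is undefined, and (i)--(iii) are asserted about a quotient that does not exist.

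The repair is to take $N=O^{2'}(X)$ and $M=O_{2'}(N)=O_{2'}(X)\cap N$. This $M$ is characteristic in $N$ and hence normal in $X$. Alternatively, keep $M=O_{2'}(X)$ and let $N$ be the preimage of $O^{2'}(X/M)$. With the first choice:
\begin{enumerate}[label=\textup{(\alph*)},nosep]
\item $|M|$ and $[X:N]$ are odd.
\item A Sylow $2$-subgroup of $X$ lies in $N$ and maps isomorphically onto a Sylow $2$-subgroup of $N/M$.
\item $O_{2'}(N/M)=1$, because its preimage is a normal odd-order subgroup of $N$.
\item $O^{2'}(N/M)=N/M$, because $O^{2'}(N)$ is normal of odd index in $X$ and therefore contains $O^{2'}(X)=N$.
\end{enumerate}
With this correction, the section form you describe and Walter's normal-series form are interchangeable. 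If you cite Theorem~1 directly, the reduction is not needed at all.

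There is also a small inaccuracy. The bound $d\geq3$ is not forced by simplicity, since $\operatorname{PSL}_2(4)$ is simple with abelian Sylow $2$-subgroup. The bound only avoids listing $\operatorname{PSL}_2(4)\cong\operatorname{PSL}_2(5)$ twice.
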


Apply the theorem to $G$. By \eqref{eq:oddcoretrivial}, the normal odd-order subgroup $M$ is $\{e\}$. Since $[G:N]$ is odd, the image of $P$ in $G/N$ is trivial, so $P\mathrel{\leq} N$ and $P$ is also a Sylow $2$-subgroup of $N$. Therefore $N$ has Sylow $2$-subgroup $C_2\times C_2$.

The Sylow $2$-subgroups of $\operatorname{PSL}_2(2^d)$ for $d\geq3$ have order $2^d$, while those of $J_1$ and of the Ree groups in the list have order $8$. The groups $\operatorname{PSL}_2(q)$ with $q\equiv\pm3\pmod 8$ have Sylow $2$-subgroup $C_2\times C_2$. Consequently, if no simple factor occurs, then $N=A\cong C_2\times C_2$. If a simple factor occurs, then $A=\{e\}$, there is exactly one simple factor, and it must be of the second type. Therefore
\[
N\cong C_2\times C_2
\qquad\text{or}\qquad
N\cong\operatorname{PSL}_2(q),
\quad q\geq5,\quad q\equiv\pm3\pmod 8.
\]
The case $q=5$ is $\operatorname{PSL}_2(5)\cong A_5$.

If $N=C_2\times C_2$, then $C_G(N)=N$. To see this, $N$ is a central Sylow $2$-subgroup of $C_G(N)$, so \cref{burnsidecomp} gives a normal odd-order complement $K\triangleleft C_G(N)$. Since $N\triangleleft G$, we also have $C_G(N)\triangleleft G$. For every $g\in G$, the subgroup $gKg^{-1}$ is again a normal odd-order complement in $C_G(N)$. Such a normal complement is unique, so $gKg^{-1}=K$. Therefore $K\triangleleft G$, and \eqref{eq:oddcoretrivial} gives $K=\{e\}$. It follows that $C_G(N)=N$. Conjugation therefore embeds the odd-order group $G/N$ in $\operatorname{Aut}(N)\cong S_3$, giving
\[
G\cong C_2\times C_2
\qquad\text{or}\qquad
G\cong A_4.
\]

If $N\cong\operatorname{PSL}_2(q)$, then
\[
C_G(N)\cap N=C_N(N)=Z(N)=\{e\},
\]
because $\operatorname{PSL}_2(q)$ is centerless for $q\geq5$. The kernel of the restriction of the quotient map $G\to G/N$ to $C_G(N)$ is precisely $C_G(N)\cap N=\{e\}$. Therefore the quotient map restricts to an injection
\[
C_G(N)\hookrightarrow G/N.
\]
Since $[G:N]=|G/N|$ is odd, it follows that $C_G(N)$ has odd order. Also $C_G(N)\triangleleft G$ because $N\triangleleft G$. Therefore
\[
C_G(N)\mathrel{\leq} O_{2'}(G)=\{e\}.
\]
It follows that $C_G(N)=\{e\}$. Conjugation therefore gives an injection
\[
G\hookrightarrow\operatorname{Aut}(N).
\]
Since $N$ is centerless, the restriction of this map to $N$ identifies $N$ with the inner automorphism group $\operatorname{Inn}(N)$. Therefore we may regard
\[
N\triangleleft G\mathrel{\leq}\operatorname{Aut}(N),
\]
and passing to quotients gives
\[
G/N\hookrightarrow\operatorname{Out}(N).
\]
Write $q=r^d$, where $r$ is prime. For $N\cong\operatorname{PSL}_2(q)$ with $q$ odd, the automorphism group has the standard form
\[
\operatorname{Aut}(N)=\mathrm{P}\Gamma\mathrm{L}_2(q)
=\operatorname{PGL}_2(q)\rtimes\operatorname{Gal}(\mathbb F_q/\mathbb F_r)
\]
\cite{Steinberg60}. Moreover,
\[
\operatorname{PGL}_2(q)/\operatorname{PSL}_2(q)\cong C_2,
\qquad
\operatorname{Gal}(\mathbb F_q/\mathbb F_r)\cong C_d.
\]
For $q$ odd these two outer actions commute, so
\[
\operatorname{Out}(N)\cong C_2\times C_d.
\]
The first factor is the diagonal outer automorphism, while the second is the cyclic group of field automorphisms. Since $G/N$ has odd order, its image in $C_2\times C_d$ has trivial projection to the first factor. Therefore $G/N$ lies in the field automorphism subgroup $C_d$. In particular,
\[
G/N\cong C_f
\]
for some odd divisor $f$ of $d$.

The groups $C_2\times C_2$, $A_4$, and $A_5$ already occur in the list of \cref{thm:maintheorem}. Therefore it remains only to consider the possibility
\begin{equation}\label{waltercase}
\operatorname{PSL}_2(q)\triangleleft G
\mathrel{\leq} \mathrm{P}\Gamma\mathrm{L}_2(q),
\qquad
G/\operatorname{PSL}_2(q)\cong C_f,
\end{equation}
where $q=r^d>5$ is an odd prime power satisfying
\[
q\equiv\pm3\pmod 8,
\]
and $f$ is an odd divisor of $d$. The quotient $C_f$ acts on $\operatorname{PSL}_2(q)$ by field automorphisms. We show that this possibility cannot occur.

Let $U$ be the image in $\operatorname{PSL}_2(q)$ of the upper-unitriangular subgroup
\[
\left\{
\begin{pmatrix}
1&t\\
0&1
\end{pmatrix}
\,\middle|\,
t\in\mathbb F_q
\right\}.
\]
Since $r$ is odd, this matrix subgroup meets $\{\pm I\}$ trivially. Therefore
\[
U\cong(\mathbb F_q,+)
\cong
\underbrace{C_r\times\cdots\times C_r}_{d\text{ factors}}.
\]
The Sylow condition shows that $U$ is cyclic. Therefore $d=1$, so $q=r$ is prime and the field automorphism group is trivial. It follows that
\[
G\cong\operatorname{PSL}_2(r)
\]
with $r>5$.

For $t\in\mathbb F_r$ and $a\in\mathbb F_r^\times$, write
\[
u(t)=
\left[
\begin{pmatrix}
1&t\\
0&1
\end{pmatrix}
\right],
\qquad
 d(a)=
\left[
\begin{pmatrix}
a&0\\
0&a^{-1}
\end{pmatrix}
\right].
\]
Direct multiplication gives
\[
d(a)u(t)d(a)^{-1}=u(a^2t).
\]
Therefore the square subgroup of $\mathbb F_r^\times$, which has order $(r-1)/2$, acts faithfully by conjugation on $U\cong C_r$. By the normalizer condition, the image of this conjugation action is contained in the group generated by the identity and inversion. Therefore
\[
\frac{r-1}{2}\leq 2.
\]
This is impossible for $r>5$. Therefore every group arising in the Klein-four case is already contained in the list of \cref{thm:maintheorem}.

\medskip
\noindent \textbf{Dihedral Sylow $2$-subgroups of order at least $8$.} Now suppose that $|P|\geq 8$. By the Gorenstein--Walter classification \cite{GW65I,GW65II,GW65III}, and by \eqref{eq:oddcoretrivial}, one of the following holds:
\begin{enumerate}[label=\textup{(\arabic*)}]
\item $G$ is a $2$-group.

\item $G\cong A_7$.

\item For some odd prime power $q$,
\[
\operatorname{PSL}_2(q)\triangleleft G
\mathrel{\leq} \mathrm{P}\Gamma\mathrm{L}_2(q).
\]
\end{enumerate}

The first case is impossible. We would have $G=P$, so $G$ would be a dihedral $2$-group of order at least $8$. Its cyclic rotation subgroup $H$ has order at least $4$ and satisfies $N_G(H)=G$, contradicting \eqref{eq:propernormalizers}.

The group $A_7$ is impossible. Label the seven letters by $\mathbb F_7$, and let
\[
a\colon x\longmapsto x+1.
\]
Then $a$ is a $7$-cycle. Multiplication by $2$ gives the even permutation
\[
b=(1\ 2\ 4)(3\ 6\ 5)\in A_7.
\]
It satisfies
\[
bab^{-1}=a^2.
\]
Therefore $b$ normalizes $\langle a\rangle\cong C_7$ and sends a generator to its square. Since
\[
2\not\equiv\pm1\pmod 7,
\]
this contradicts the normalizer condition.

It remains to consider case (3). Write $q=r^d$. As in the Klein-four case, the upper-unitriangular $r$-subgroup of $\operatorname{PSL}_2(q)$ is isomorphic to
\[
(\mathbb F_q,+)
\cong
\underbrace{C_r\times\cdots\times C_r}_{d\text{ factors}}.
\]
The Sylow condition therefore forces $d=1$, so $q=r$ is prime and there are no nontrivial field automorphisms. Therefore
\[
\mathrm{P}\Gamma\mathrm{L}_2(r)=\operatorname{PGL}_2(r).
\]
With the same diagonal conjugation argument used above, the normalizer condition gives
\[
\frac{r-1}{2}\leq2.
\]
Therefore $r=3$ or $5$.

If $r=3$, then
\[
\operatorname{PSL}_2(3)\cong A_4.
\]
The group $\operatorname{PGL}_2(3)$ acts faithfully on the four points of $\mathbb P^1(\mathbb F_3)$ and has order $24$. Therefore
\[
\operatorname{PGL}_2(3)\cong S_4.
\]
Since
\[
[\operatorname{PGL}_2(3):\operatorname{PSL}_2(3)]=2,
\]
the group $G$ is isomorphic to $A_4$ or $S_4$. The Sylow $2$-subgroup of $A_4$ is $C_2\times C_2$, so only $S_4$ occurs in the present branch. This group is $O_3$ and is contained in \cref{thm:maintheorem}.

If $r=5$, then
\[
\operatorname{PSL}_2(5)\cong A_5.
\]
Since $\operatorname{PSL}_2(5)$ has index $2$ in $\operatorname{PGL}_2(5)$, the only possibilities are $A_5$ and $\operatorname{PGL}_2(5)$. We exclude the second possibility. In $\operatorname{PGL}_2(5)$, the class of
\[
\begin{pmatrix}
2&0\\
0&1
\end{pmatrix}
\]
normalizes the unipotent subgroup
\[
U=
\left\{
\left[
\begin{pmatrix}
1&t\\
0&1
\end{pmatrix}
\right]
\,\middle|\,
t\in\mathbb F_5
\right\}
\cong C_5
\]
and acts by
\[
u(t)\longmapsto u(2t).
\]
Multiplication by $2$ has order $4$ in $\operatorname{Aut}(C_5)$, contradicting the normalizer condition. Therefore $\operatorname{PGL}_2(5)$ cannot occur. The remaining group $A_5$ is already in \cref{thm:maintheorem}. Its Sylow $2$-subgroup is Klein four, so it belongs to the previous branch in this case division.

We have proved that every group in the final branch with $|P|\geq8$ is isomorphic to $S_4$. This group is contained in the list of \cref{thm:maintheorem}.

\medskip
\noindent \textbf{Completion of the induction.} The normal-complement case above treated the branch in which Burnside's normal $p$-complement theorem applies. \cref{inductnormal} treated the branch in which $G=N_G(H)$ for a nontrivial cyclic $p$-subgroup $H$ of order at least $3$. The argument in this section treats the complementary branch in which all such normalizers are proper. These alternatives exhaust the induction. We have proved that every finite group satisfying the Sylow and normalizer conditions lies in the list of \cref{thm:maintheorem}. Every group in this list is Anderson-dualizing by the constructions in \cref{sec:Verification}. Together with part \textup{(1)} of \cref{localcriterion}, this completes the proof of \cref{thm:maintheorem}.

\section{\texorpdfstring{Anderson duality for $H\uFtwo$ and its
algebraic consequences}{Anderson duality for HF2 and its algebraic consequences}}
\label{sec:HF2application}

In this section we prove part \textup{(2)} of \cref{thm:HF2groups-v2}, parts \textup{(2)} of \cref{thm:HF2shifts-v2,thm:HF2periodicity-intro}, and \cref{thm:HF2selfinjective-v2}. We first calculate $I_{\mathbb Z}^G(H\uFtwo)$ and establish Sylow $2$-detection. The integral periodicity criterion then determines $H\uFtwo$-periodicity for every finite group. The local integral shifts, subgroup obstructions, and \cref{lem:prime-power-realization} give the classification of dualizing groups and shifts. We finish with the universal coefficient theorem, coefficient-ring duality, and graded self-injectivity.
\par

\subsection{From integral duality to Anderson duality for \texorpdfstring{$H\uFtwo$}{HF2}}

\begin{prop}\label[prop]{prop:HF2dual-v2}
For every finite group $G$, there is a natural equivalence of $H\uFtwo$-modules
\[
I_{\mathbb Z}^G(H\uFtwo)\simeq \Sigma^{-1}H\uFtwo^{*},
\]
where $\uFtwo^{*}$ denotes the Mackey dual of the constant Mackey functor $\uFtwo$. Consequently, for $V\in RO(G)$, $(G,V)$ is $H\uFtwo$-Anderson-dualizing if and only if there is an $H\uFtwo$-linear equivalence
\begin{equivalence}\label{eq:HF2linearcriterion}
H\uFtwo\wedge S^{1-V}\simeq H\uFtwo^{*}.
\end{equivalence}
\end{prop}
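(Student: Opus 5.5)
The plan is to compute $I_{\mathbb Z}^G(H\uFtwo)$ from the universal coefficient sequence \labelcref{ses:equivariantUCT} with $E=H\uFtwo$ and $X=S^0$, and then to upgrade the resulting equivalence to an $H\uFtwo$-linear one.

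\emph{Step 1: homotopy of the dual.} Since $\uFtwo$ is torsion at every orbit, the $\uHom_{\mathbb Z}(-,\mathbb Z)$ terms vanish. Also $\uExt^1_{\mathbb Z}(\Ftwo,\mathbb Z)\cong\Ftwo$ at each orbit. Therefore the integer-graded homotopy Mackey functors of $I_{\mathbb Z}^G(H\uFtwo)$ are concentrated in degree $-1$, where they equal $\uExt^1_{\mathbb Z}(\uFtwo,\mathbb Z)$. By the stated conventions for $\uExt$, restrictions of this Mackey functor are induced by the transfers of $\uFtwo$ (multiplication by indices mod $2$), and its transfers are induced by the restrictions (identities). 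This is exactly $\uFtwo^*$. The only check is that the identification $\operatorname{Ext}^1_{\mathbb Z}(\mathbb Z/2,\mathbb Z)\cong\mathbb Z/2$ is natural enough that multiplication by $n$ on $\mathbb Z/2$ induces multiplication by $n$ on $\operatorname{Ext}^1$. This holds by additivity of $\operatorname{Ext}$. Hence $I_{\mathbb Z}^G(H\uFtwo)$ is an Eilenberg--Mac Lane spectrum $\Sigma^{-1}H\uFtwo^*$.

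\emph{Step 2: linearity.} $I_{\mathbb Z}^G(H\uFtwo)=F_G(H\uFtwo,I_{\mathbb Z}^G)$ is naturally an $H\uFtwo$-module, via the action on the source. I will mimic \cref{lem:HZPostnikov} with $H\uFtwo$ in place of $H\uZ$. For $H\uFtwo$-modules concentrated in a single degree, $[X,Y]_{H\uFtwo}\cong\operatorname{Hom}_{\uFtwo}(\underline\pi_0X,\underline\pi_0Y)$. Here $\uFtwo$ acts on $\uFtwo^*$ by scalar multiplication at each orbit, so every Mackey isomorphism is $\uFtwo$-linear. This gives an $H\uFtwo$-linear equivalence $\Sigma I_{\mathbb Z}^G(H\uFtwo)\simeq H\uFtwo^*$. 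The main obstacle is making this Eilenberg--Mac Lane correspondence precise for modules over $H\uFtwo$, which is not connective-cohomological in the same way as $H\uZ$. I would first try the t-structure on $H\uFtwo$-modules whose heart is $\underline\pi_0(H\uFtwo)=\uFtwo$-modules in Mackey functors. Alternatively, I would lift to $H\uZ$-modules: both sides are $H\uZ$-modules via $H\uZ\to H\uFtwo$, and I would use that their module structures over $H\uFtwo$ in the heart are determined by the $\uFtwo$-action on $\pi_0$.

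\emph{Step 3: the criterion.} Smashing with $S^V$, $(G,V)$ is $H\uFtwo$-Anderson-dualizing if and only if $\Sigma^{-1}H\uFtwo^*\wedge S^V\simeq H\uFtwo$, equivalently $H\uFtwo\wedge S^{1-V}\simeq H\uFtwo^*$. For the linear version, suppose such an equivalence exists abstractly. Then $H\uFtwo\wedge S^{1-V}$ is an $H\uFtwo$-module with homotopy concentrated in degree $0$, with $\underline\pi_0\cong\uFtwo^*$. Step 2's correspondence then upgrades the abstract isomorphism of $\underline\pi_0$ to an $H\uFtwo$-linear equivalence. The converse direction is immediate.
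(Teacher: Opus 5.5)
Your proposal is correct and follows the paper's proof essentially step for step. The paper also reads off $\uHom_{\mathbb Z}(\uFtwo,\mathbb Z)=0$ and $\uExt^1_{\mathbb Z}(\uFtwo,\mathbb Z)\cong\uFtwo^*$ from \cref{ses:equivariantUCT}, obtains $H\uFtwo$-linearity by rerunning \cref{lem:HZPostnikov} for modules concentrated in one degree, and applies the same upgrade to $H\uFtwo\wedge S^{1-V}$. The obstacle you flag in Step~2 is not a real one: $H\uFtwo$ is connective, so the Postnikov t-structure on $H\uFtwo$-modules has heart the $\uFtwo$-modules in Mackey functors, and this is exactly what the paper's appeal to \cref{lem:HZPostnikov} uses.
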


\begin{proof}
Let $E=H\uFtwo$ and $X=S^0$. Since
\[
\uHom_{\mathbb Z}(\uFtwo,\mathbb Z)=0,
\qquad
\uExt^1_{\mathbb Z}(\uFtwo,\mathbb Z)\cong\uFtwo^{*},
\]
the universal coefficient sequence \labelcref{ses:equivariantUCT} shows that $I_{\mathbb Z}^G(H\uFtwo)$ has integer-graded homotopy concentrated in degree $-1$, with Mackey functor $\uFtwo^{*}$. Therefore
\[
I_{\mathbb Z}^G(H\uFtwo)\simeq\Sigma^{-1}H\uFtwo^{*}.
\]

The Anderson dual carries its natural $H\uFtwo$-module structure. The argument of \cref{lem:HZPostnikov} applies equally to $H\uFtwo$-modules concentrated in degree zero, because every map of their homotopy Mackey functors is automatically $\uFtwo$-linear. Applying this to $\Sigma I_{\mathbb Z}^G(H\uFtwo)$ makes the displayed equivalence $H\uFtwo$-linear.

A duality equivalence now gives
\[
H\uFtwo\wedge S^{1-V}\simeq H\uFtwo^{*}.
\]
Both sides are concentrated in integer degree zero, so the same argument makes this equivalence $H\uFtwo$-linear. The converse is immediate.
\end{proof}

The following proposition records the precise way in which the integral theorem enters the $H\uFtwo$-Anderson-duality problem.

\begin{prop}\label[prop]{prop:HZtoHF2}
If $(G,U)$ is an Anderson-dualizing pair for $H\uZ$, then $(G,U+1)$ is an $H\uFtwo$-Anderson-dualizing pair.  The resulting equivalence can be chosen in the category of $H\uFtwo$-modules.
\end{prop}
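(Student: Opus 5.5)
We would deduce the $H\uFtwo$ statement from the integral one by reducing modulo $2$, using the module-duality formula of \cref{cor:MAndersonDual} as in \cref{rmk:reductionmodn}. By \cref{lem:HZPostnikov}, choose an $H\uZ$-linear equivalence $H\uZ^*\wedge S^U\simeq H\uZ$. Equivalently, $I_{\mathbb Z}^G(H\uZ)\simeq\Sigma^{-U}H\uZ$ as $H\uZ$-modules. Then \cref{cor:MAndersonDual} applies to the $H\uZ$-module $M=H\uFtwo=H\uZ/2$ and gives
\[
I_{\mathbb Z}^G(H\uFtwo)\simeq\Sigma^{-U}F_{H\uZ}(H\uZ/2,H\uZ).
\]

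The next step is to identify $F_{H\uZ}(H\uZ/2,H\uZ)$. Apply $F_{H\uZ}(-,H\uZ)$ to the cofiber sequence $H\uZ\xrightarrow{2}H\uZ\to H\uZ/2$. This gives a fiber sequence $F_{H\uZ}(H\uZ/2,H\uZ)\to H\uZ\xrightarrow{2}H\uZ$. Since multiplication by $2$ is injective on $\uZ$ with cokernel $\uFtwo$ orbitwise, the fiber has homotopy concentrated in degree $-1$, with Mackey functor $\uZ/2=\uFtwo$. Hence $F_{H\uZ}(H\uFtwo,H\uZ)\simeq\Sigma^{-1}H\uFtwo$. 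Combining the two displays gives $I_{\mathbb Z}^G(H\uFtwo)\simeq\Sigma^{-U-1}H\uFtwo$. Smashing with $S^{U+1}$ then yields $I_{\mathbb Z}^G(H\uFtwo)\wedge S^{U+1}\simeq H\uFtwo$, so $(G,U+1)$ is $H\uFtwo$-Anderson-dualizing.

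For the module-categorical refinement, we would use the criterion \cref{eq:HF2linearcriterion} of \cref{prop:HF2dual-v2}. We already have an equivalence $H\uFtwo\wedge S^{1-(U+1)}=\Sigma^{-U}H\uFtwo\simeq H\uFtwo^*$ of spectra: combine the previous paragraph with $I_{\mathbb Z}^G(H\uFtwo)\simeq\Sigma^{-1}H\uFtwo^*$. Both sides are $H\uFtwo$-modules with homotopy concentrated in integer degree $0$. So the $\uFtwo$-module analogue of \cref{lem:HZPostnikov}, noted in the proof of \cref{prop:HF2dual-v2}, lifts the induced isomorphism of Mackey functors $\underline\pi_0$ to an $H\uFtwo$-linear equivalence.

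The main obstacle is bookkeeping of module structures. The identification $F_{H\uZ}(H\uFtwo,H\uZ)\simeq\Sigma^{-1}H\uFtwo$ is a priori only an equivalence of $H\uZ$-modules, so we must check that the Postnikov argument really sees an $H\uFtwo$-module on the left. This is why we would appeal to the degree-zero concentration argument at the end rather than trying to track the $H\uFtwo$-action through the adjunction \cref{eq:module-adjunction}. An alternative, which we would keep as a check, is a direct computation: the $H\uZ$-linear equivalence $H\uZ\wedge S^{-U}\simeq H\uZ^*$ induces $H\uFtwo\wedge S^{-U}\simeq H\uZ^*\wedge_{H\uZ}H\uZ/2$. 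The right side is concentrated in degree $0$ with Mackey functor $\uZ^*/2=\uFtwo^*$, since $\uZ^*$ is torsion-free orbitwise.
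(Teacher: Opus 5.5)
Your proposal is correct and follows the paper's own route. The paper also takes $n=2$ in \cref{rmk:reductionmodn}, that is, it applies \cref{cor:MAndersonDual} to $H\uZ/2$ and uses $F_{H\uZ}(H\uZ/2,H\uZ)\simeq\Sigma^{-1}H\uZ/2$ from the cofiber sequence; it then gets $H\uFtwo$-linearity from the degree-zero Postnikov argument in \cref{prop:HF2dual-v2}, exactly as you do.
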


\begin{proof}
Taking $n=2$ in \cref{rmk:reductionmodn} gives the shift $U+1$. The equivalence may be chosen to be $H\uFtwo$-linear by \cref{prop:HF2dual-v2}.
\end{proof}
\par

The next theorem is the $H\uFtwo$ analogue of \cref{decomposetopsubgps}.  It also explains why the classification depends only on a Sylow $2$-subgroup.

\begin{thm}[Sylow $2$-detection]\label[thm]{thm:HF2Sylow-v2}
Let $P\mathrel{\leq} G$ be a Sylow $2$-subgroup and let $V\in RO(G)$.  Then $(G,V)$ is $H\uFtwo$-Anderson-dualizing if and only if $(P,V|_P)$ is $H\uFtwo$-Anderson-dualizing.  When these conditions hold, the duality equivalence can be chosen to be $H\uFtwo$-linear.
\end{thm}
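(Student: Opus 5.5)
The plan is to follow the proof of \cref{decomposetopsubgps}, replacing $\Z$ by $\Ftwo$. The key extra input is that at odd primes everything is trivial. The ``only if'' direction is restriction to $P$, as in \cref{subgpcrit}. For the converse, use \cref{prop:HF2dual-v2} and set
\[
X=H\uFtwo\wedge S^{1-V}.
\]
This is an $H\uFtwo$-module, hence an $H\uZ$-module. It suffices to show that $\underline\pi_n(X)=0$ for $n\neq0$ and that $\underline\pi_0(X)\cong\uFtwo^*$. The $H\uFtwo$-linear version of \cref{lem:HZPostnikov}, recorded in the proof of \cref{prop:HF2dual-v2}, then upgrades the isomorphism of Mackey functors to an $H\uFtwo$-linear equivalence $X\simeq H\uFtwo^*$.

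The homotopy groups of $X$ are $\Ftwo$-vector spaces at every orbit, so $p$-localization at odd $p$ kills them. Hence $\pi_*^K(X)\cong\pi_*^K(X)_{(2)}$ for every $K\mathrel{\leq} G$. Apply \cref{prop:LiuLimit} at $p=2$ to $X|_K$: $\pi_*^K(X)$ is the limit over the $2$-subgroups of $K$ of the groups $\pi_*^H(X)$, with restrictions and conjugations. By hypothesis each $2$-subgroup $H$ is conjugate into $P$, and $X|_H\simeq H\uFtwo^*$ by restriction of the $P$-equivalence. So every term of the diagram is concentrated in degree zero, and $X$ has integer-graded homotopy concentrated in degree zero.

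Next identify $M=\underline\pi_0(X)$. Fix a generator $1_e\in M(e)\cong\Ftwo$; the $G$-action on $M(e)$ is automatically trivial because $\operatorname{Aut}(\Ftwo)=1$. For each $2$-subgroup $H$ the local identification with $\uFtwo^*$ gives a unique generator $e_H$ with $\tr_e^H(1_e)=e_H$. These generators are compatible with conjugation, because the nonzero element of $\Ftwo$ is unique. Restrictions are multiplication by indices mod $2$. For $K\mathrel{\leq} G$ with Sylow $2$-subgroup $Q$, the family $x_H=[Q:H]\,e_H$ is compatible. Arguing as in the proof of \cref{decomposetopsubgps}, projection gives $\res_Q^K\colon M(K)\cong M(Q)$. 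Cohomologicality of $M$ gives $\tr_Q^K\res_Q^K=[K:Q]$, which is odd, so $\tr_Q^K$ is an isomorphism. Hence $\tr_e^K=\tr_Q^K\tr_e^Q$ is an isomorphism $\Ftwo\cong M(K)$. Choosing generators $\tr_e^K(1_e)$ makes all transfers identities, and the cohomological relation makes restrictions multiplication by indices. Therefore $M\cong\uFtwo^*$.

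I expect the main obstacle to be justifying the use of \cref{prop:LiuLimit} for the $H\uZ$-module $X$ when $|G|$ has odd prime divisors. The geometric-fixed-point vanishing in \cref{lem:LiuModuleVanishing} is stated after inverting the other primes dividing $|G|$. Here that inversion is harmless, since $2$-localization is already an isomorphism on $\underline\pi_*(X)$. Even so, I would check that the fusion-stable description still reduces correctly to $P$-data through conjugations by elements outside $N_G(P)$. The existence of the compatible family $x_H$, which uses only that the generators $e_H$ are canonical, should make this work.
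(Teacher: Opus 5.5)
Your proposal is correct and follows essentially the same route as the paper. You form $X=H\uFtwo\wedge S^{1-V}$ and treat it as an $H\uZ$-module whose homotopy is already $2$-local, apply \cref{prop:LiuLimit} at $p=2$ to each $X|_K$ to get concentration in degree zero and $\res_Q^K\colon M(K)\cong M(Q)$, then use $\tr_Q^K\res_Q^K=[K:Q]$ odd to conclude $M\cong\uFtwo^*$, with the $H\uFtwo$-linear upgrade from \cref{prop:HF2dual-v2}. Your explicit compatible family $x_H=\frac{|Q|}{|H|}e_H$ is the same computation as the paper's remark that proper restrictions vanish mod $2$, so compatible families are supported exactly on the Sylow $2$-subgroups; this also settles your fusion concern.
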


\begin{proof}
The ``only if'' direction follows by restriction. For the ``if'' direction, put
\[
X=H\uFtwo\wedge S^{1-V}.
\]
\Cref{prop:HF2dual-v2} and the hypothesis give $X|_P\simeq H\uFtwo^*$. Every $2$-subgroup is conjugate into $P$, so the same identification holds on every $2$-subgroup.

Regard $X$ as an $H\uZ$-module by restriction of scalars. Its homotopy groups are $\Ftwo$-vector spaces, so localization at $2$ does not change them. For $K\mathrel{\leq} G$, choose a Sylow $2$-subgroup $Q\mathrel{\leq} K$ and apply \cref{prop:LiuLimit} to $X|_K$.

The diagram vanishes outside integer degree zero. In degree zero, its values are $\Ftwo$, its proper restriction maps are zero, and its conjugation maps are identities. A compatible family has equal components at the Sylow $2$-subgroups and zero components at every non-Sylow $2$-subgroup. Consequently,
\[
\res_Q^K\colon M(K)\xrightarrow{\;\cong\;}M(Q)\cong\Ftwo,
\qquad M=\underline\pi_0(X),
\]
and all other integer-graded homotopy Mackey functors vanish.

The cohomological relation gives
\[
\tr_Q^K\res_Q^K=[K:Q]=1
\]
over $\Ftwo$. Therefore $\tr_Q^K$ is an isomorphism. Since the local identification with $\uFtwo^*$ makes $\tr_e^Q$ an isomorphism, every $\tr_e^K$ is an isomorphism.

Identifying the values through these transfers makes all transfers identities. The cohomological relation makes restrictions multiplication by subgroup indices, and conjugation is trivial. Therefore $M\cong\uFtwo^*$. The module-linearity argument in \cref{prop:HF2dual-v2} gives the required $H\uFtwo$-linear equivalence.
\end{proof}
\par

\subsection{\texorpdfstring{Periodicity of $H\uFtwo$}{Periodicity of HF2}}

We prove part \textup{(2)} of \cref{thm:HF2periodicity-intro} using integral periodicity and Sylow $2$-detection.

\enlargethispage{\baselineskip}
\begin{lem}\label[lem]{lem:HF2geofixed-v2}
Let $H$ be a finite $2$-group. The spectrum $\Phi^H H\uFtwo$ is connective and
\[
\pi_0\Phi^H H\uFtwo\cong \Ftwo.
\]
Consequently, if $d\in\mathbb Z$ and
\[
\Sigma^d\Phi^H H\uFtwo\simeq \Phi^H H\uFtwo,
\]
then $d=0$.
\end{lem}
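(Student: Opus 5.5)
The plan is to mirror the proof of \cref{geofixedhz}, with $\uZ$ replaced by $\uFtwo$. The trivial group needs a separate remark. If $H=e$, then $\Phi^e H\uFtwo=H\Ftwo$, which is connective with $\pi_0=\Ftwo$. So we may assume $H\neq e$.

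For nontrivial $H$, let $\mathcal P$ be the family of proper subgroups of $H$. We write
\[
\Phi^H H\uFtwo\simeq(\widetilde E\mathcal P\wedge H\uFtwo)^H.
\]
Connectivity of this spectrum comes from the following facts:
\begin{itemize}
\item $\widetilde E\mathcal P$ admits a $G$-CW structure with cells in dimension $\geq0$;
\item its only $0$-cell is the basepoint-adjoined trivial cell;
\item $H\uFtwo$ is connective, so categorical fixed points of a smash with a connective cell complex stay connective.
\end{itemize}
This is the same argument as in \cref{geofixedhz}, or one can simply invoke that $\Phi^H$ of a connective $H$-spectrum is connective.

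To compute $\pi_0$, use the cofiber sequence $E\mathcal P_+\to S^0\to\widetilde E\mathcal P$. Its long exact sequence on $\pi_0^H(-\wedge H\uFtwo)$ identifies $\pi_0\Phi^H H\uFtwo$ with the cokernel of
\[
\bigoplus_{K<H}\pi_0^K H\uFtwo\xrightarrow{\ \sum\tr_K^H\ }\pi_0^H H\uFtwo=\Ftwo.
\]
This uses that $\pi_0^H(E\mathcal P_+\wedge H\uFtwo)$ is generated by transfers from proper subgroups, which one sees from the $0$-skeleton of $E\mathcal P$ consisting of orbits $H/K$ with $K$ proper. For the constant Mackey functor, $\tr_K^H(1)=[H:K]$. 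Every proper subgroup of a $2$-group has index divisible by $2$, so all these transfers vanish in $\Ftwo$ and the cokernel is $\Ftwo$.

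For the final claim, a connective spectrum $Y$ with $\pi_0Y\neq0$ cannot satisfy $\Sigma^dY\simeq Y$ with $d\neq0$. If $d>0$, then $\Sigma^dY$ is $d$-connective, so $\pi_0\Sigma^dY=0\neq\pi_0Y$. If $d<0$, then $\pi_d\Sigma^dY=\pi_0Y\neq0$ while $\pi_dY=0$ since $d<0$.

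The only delicate step is the $\pi_0$ identification. Specifically, one must check that the image of $\pi_0^H(E\mathcal P_+\wedge H\uFtwo)$ is the span of the transfers, which follows from right exactness along the cellular filtration. Since the computation is formally the same as in \cref{geofixedhz}, I would cite that argument directly.
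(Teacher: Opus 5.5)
Your proposal is correct and follows the paper's proof. You treat $H=e$ separately, then for nontrivial $H$ use the cellular filtration of $\widetilde E\mathcal P$ from \cref{geofixedhz} to get connectivity and to identify $\pi_0$ with $\Ftwo$ modulo the proper-subgroup transfers, which vanish because every index $[H:K]$ is even. Your closing argument that a connective spectrum with nonzero $\pi_0$ admits no nonzero integer self-suspension is the same observation that ends \cref{geofixedhz}.
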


\begin{proof}
If $H=e$, the assertion is immediate. If $H$ is nontrivial, the same $H$-CW filtration argument as in \cref{geofixedhz} proves connectivity and identifies $\pi_0$ with the quotient by the proper-subgroup transfers. Every such transfer is multiplication by the even index $[H:K]$ and therefore vanishes over $\Ftwo$, so $\pi_0\Phi^H H\uFtwo\cong\Ftwo$.
\end{proof}

\begin{prop}
\label[prop]{prop:HF2pgroup-periodicity}
Let $P$ be a finite $2$-group and let $W\in RO(P)$. The following conditions are equivalent.
\begin{enumerate}[label=\textup{(\arabic*)},nosep]
\item $H\uFtwo\wedge S^W\simeq H\uFtwo$.
\item $|W^H|=0$ for every subgroup $H\mathrel{\leq} P$.
\item $W\in\Lambda_P(H\uZ)$.
\end{enumerate}
When these conditions hold, the equivalence in \textup{(1)} may be chosen to be $H\uFtwo$-linear.
\end{prop}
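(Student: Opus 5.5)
We will prove the cycle of implications (1)$\Rightarrow$(2)$\Rightarrow$(3)$\Rightarrow$(1).

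\emph{(1)$\Rightarrow$(2).} Suppose $H\uFtwo\wedge S^W\simeq H\uFtwo$. The underlying equivalence gives $\Sigma^{|W|}H\Ftwo\simeq H\Ftwo$, so $|W|=0$. For a nontrivial subgroup $H\mathrel{\leq} P$, we first restrict to $H$ and then apply geometric fixed points. Since $\Phi^H$ is symmetric monoidal and $\Phi^H S^W\simeq S^{|W^H|}$, we obtain
\[
\Sigma^{|W^H|}\Phi^H H\uFtwo\simeq\Phi^H H\uFtwo.
\]
\Cref{lem:HF2geofixed-v2} then forces $|W^H|=0$.

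\emph{(2)$\Rightarrow$(3).} By \cref{prop:pgroup-periodicity}, applied with $V=W$ and the zero representation in place of its $W$, vanishing of all fixed-point dimensions on subgroups of the $2$-group $P$ gives $H\uZ\wedge S^W\simeq H\uZ$. Hence $W\in\Lambda_P(H\uZ)$.

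\emph{(3)$\Rightarrow$(1).} Fix an equivalence $H\uZ\wedge S^W\simeq H\uZ$. By the proof of part (1) of \cref{thm:maintheoremC}, or directly by \cref{lem:HZPostnikov}, it may be chosen $H\uZ$-linear. We then apply $-\wedge_{H\uZ}H\uFtwo$, where $H\uFtwo\simeq H\uZ/2$ is the cofiber of multiplication by $2$. Since $(H\uZ\wedge S^W)\wedge_{H\uZ}H\uFtwo\simeq H\uFtwo\wedge S^W$, this yields $H\uFtwo\wedge S^W\simeq H\uFtwo$. Alternatively, smash the cofiber sequence $H\uZ\xrightarrow{2}H\uZ\to H\uFtwo$ with $S^W$ and use linearity to identify the maps.

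For $H\uFtwo$-linearity, $X=H\uFtwo\wedge S^W$ is an $H\uFtwo$-module with integer-graded homotopy concentrated in degree zero and $\underline\pi_0X\cong\uFtwo$. The Eilenberg--Mac Lane argument of \cref{lem:HZPostnikov}, in the $H\uFtwo$ form already used in \cref{prop:HF2dual-v2}, lifts this isomorphism to an $H\uFtwo$-linear equivalence.

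The main point needing care is (1)$\Rightarrow$(2). We must verify that geometric fixed points commute with restriction and smash products, so that $\Phi^H(S^W\wedge H\uFtwo)\simeq S^{|W^H|}\wedge\Phi^H H\uFtwo$ for virtual $W$. This holds because $\Phi^H$ is strong symmetric monoidal on invertible objects, and $\Phi^H S^{V}=S^{V^H}$ for actual representations extends to virtual ones by invertibility. The other two steps are essentially formal consequences of earlier results.
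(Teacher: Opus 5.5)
Your proposal is correct and follows essentially the same route as the paper. You use geometric fixed points with \cref{lem:HF2geofixed-v2} for (1)$\Rightarrow$(2), \cref{prop:pgroup-periodicity} for (2)$\Rightarrow$(3), and extension of scalars of an $H\uZ$-linear equivalence along $H\uZ\to H\uFtwo$ for (3)$\Rightarrow$(1), which already gives the $H\uFtwo$-linearity; your Eilenberg--Mac Lane lifting argument is a valid second way to get it.
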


\begin{proof}
If \textup{(1)} holds, restriction to $H$ and geometric fixed points give
\[
\Sigma^{|W^H|}\Phi^H H\uFtwo\simeq\Phi^H H\uFtwo.
\]
\cref{lem:HF2geofixed-v2} gives \textup{(2)}. The equivalence of \textup{(2)} and \textup{(3)} is \cref{prop:pgroup-periodicity}. If \textup{(3)} holds, the proof of \cref{thm:maintheoremC} gives an $H\uZ$-linear equivalence $H\uZ\wedge S^W\simeq H\uZ$. Extension of scalars along $H\uZ\to H\uFtwo$ gives an $H\uFtwo$-linear equivalence in \textup{(1)}.
\end{proof}

By \cref{lem:cyclic-dimension-detection}, it is enough in \textup{(2)} to test the trivial subgroup and the nontrivial cyclic subgroups of $P$.

\begin{lem}
\label[lem]{lem:HF2periodicity-Sylow}
Let $G$ be a finite group, let $P\mathrel{\leq} G$ be a Sylow $2$-subgroup, and let $W\in RO(G)$. Then
\[
H\uFtwo\wedge S^W\simeq H\uFtwo
\quad\Longleftrightarrow\quad
H\uFtwo\wedge S^{W|_P}\simeq H\uFtwo.
\]
When these conditions hold, the global equivalence may be chosen to be $H\uFtwo$-linear.
\end{lem}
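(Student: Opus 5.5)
The ``only if'' direction follows by applying $\res_P^G$. For the converse, I will copy the proof of \cref{thm:HF2Sylow-v2}, with $H\uFtwo^*$ replaced by $H\uFtwo$. Put
\[
X=H\uFtwo\wedge S^{W}.
\]
By hypothesis $X|_P\simeq H\uFtwo$, and by \cref{prop:HF2pgroup-periodicity} this equivalence may be taken $H\uFtwo$-linear. Every $2$-subgroup of $G$ is conjugate into $P$, so $X|_H\simeq H\uFtwo$ for every $2$-subgroup $H$. Regard $X$ as an $H\uZ$-module by restriction of scalars. Its homotopy groups are $\Ftwo$-vector spaces, so $2$-localization changes nothing and localization at odd primes kills everything. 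For $K\mathrel{\leq} G$, choose a Sylow $2$-subgroup $Q\mathrel{\leq} K$ and apply \cref{prop:LiuLimit} to $X|_K$ at the prime $2$. The diagram of $2$-subgroups of $K$ vanishes outside integer degree $0$. In degree $0$ all its values are $\Ftwo$, and the restrictions and conjugations are identities, since those of $\uFtwo$ are. Conjugation acts trivially because the local identification with the constant Mackey functor is canonical: the unit class restricts to the unit. The limit is therefore $\Ftwo$, and
\[
\res_e^K\colon M(K)\xrightarrow{\;\cong\;}M(e)\cong\Ftwo,\qquad M=\underline\pi_0(X),
\]
with all other integer-graded homotopy Mackey functors zero.

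Next I identify $M$ with $\uFtwo$. Take the basis of $M(K)$ that restricts to $1\in M(e)$. Then all restrictions are identities, and the $G$-action on $M(e)$ is trivial, because its restriction to each Sylow subgroup is trivial and only the $2$-part matters over $\Ftwo$. More precisely, conjugation fixes the unique nonzero element of $\Ftwo$. Since $M$ is cohomological (\cite[Proposition~2.5]{Angeltveit22}), $\tr_L^K\res_L^K=[K:L]$, so every transfer is multiplication by the index. Hence $M\cong\uFtwo$.

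Finally, the $H\uFtwo$-module version of \cref{lem:HZPostnikov}, already used in \cref{prop:HF2dual-v2}, lifts the isomorphism $M\cong\uFtwo$ to an $H\uFtwo$-linear equivalence $X\simeq H\uFtwo$. For this, $X$ has to be an $H\uFtwo$-module, and it is: $H\uFtwo\wedge S^W$ is a free module suspension.

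I expect the main point needing care to be the limit computation. Restriction from $Q$ to smaller subgroups is the identity here, unlike the zero maps in the dual case. This means a compatible family is determined by its value at $e$, or equivalently at $Q$. I also have to check that no unexpected conjugation twist arises in the diagram, which holds because every automorphism of $\Ftwo$ is trivial.
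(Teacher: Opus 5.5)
Your proposal is correct and follows the paper's proof essentially step for step. You restrict to $2$-subgroups, regard $X=H\uFtwo\wedge S^W$ as an $H\uZ$-module, and apply \cref{prop:LiuLimit} at the prime $2$ to get a degree-zero diagram of copies of $\Ftwo$ with identity restrictions and conjugations; from there you deduce that $\res_e^K$ is an isomorphism, identify $\underline\pi_0(X)\cong\uFtwo$ via the cohomological relation, and lift to an $H\uFtwo$-linear equivalence by the argument of \cref{prop:HF2dual-v2}. Your appeal to \cref{prop:HF2pgroup-periodicity} for $H\uFtwo$-linearity of the local equivalence is harmless but not needed.
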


\begin{proof}
The forward implication follows by restriction. For the converse, set $X=H\uFtwo\wedge S^W$. Every $2$-subgroup of $G$ is conjugate to a subgroup of $P$, so the hypothesis gives $X|_H\simeq H\uFtwo$ for every $2$-subgroup $H\mathrel{\leq} G$. The spectrum $X$ is an $H\uZ$-module by restriction of scalars, and all its homotopy groups are $\Ftwo$-vector spaces. Localizing at $2$ therefore does not change them.

For every subgroup $K\mathrel{\leq} G$, apply \cref{prop:LiuLimit} to $X|_K$ at the prime $2$. The diagram is zero in integer degrees other than zero. In degree zero, all its values are $\Ftwo$, and all restrictions between $2$-subgroups are identities. The conjugation maps are also identities, since $\operatorname{Aut}_{\Ftwo}(\Ftwo)=1$. Every object maps by restriction to the trivial subgroup, so the limit is $\Ftwo$. The compatibility with subdiagrams in \cref{prop:LiuLimit} shows that all restrictions between the groups $\pi_0^K(X)$ are identities.

The Mackey functor $\underline\pi_0(X)$ is a module over $\uFtwo$, so the cohomological relation
\[
\tr_L^K\res_L^K=[K:L]
\]
shows that its transfers are multiplication by subgroup indices. Therefore $\underline\pi_0(X)\cong\uFtwo$, and all other integer-graded homotopy Mackey functors vanish. The module-linearity argument in the proof of \cref{prop:HF2dual-v2} gives an $H\uFtwo$-linear equivalence $X\simeq H\uFtwo$.
\end{proof}

\begin{proof}[Proof of part \textup{(2)} of \cref{thm:HF2periodicity-intro}]
Let $P$ be a Sylow $2$-subgroup. By \cref{lem:HF2periodicity-Sylow} and \cref{prop:HF2pgroup-periodicity}, $W$ is a period if and only if $|W^H|=0$ for every $H\mathrel{\leq} P$. By \cref{lem:cyclic-dimension-detection}, it is enough to test $e$ and the nontrivial cyclic subgroups of $P$. Every cyclic $2$-subgroup of $G$ is conjugate into $P$, so this is exactly the stated criterion. The equivalence may be chosen to be $H\uFtwo$-linear by \cref{lem:HF2periodicity-Sylow}.

Let $C_1,\ldots,C_r$ represent the $G$-conjugacy classes of nontrivial cyclic $2$-subgroups. The periodicity lattice $\Lambda_G(H\uFtwo)$ is the kernel of
\[
RO(G)\longrightarrow\mathbb Z^{r+1},
\qquad W\longmapsto\bigl(|W|,|W^{C_1}|,\ldots,|W^{C_r}|\bigr).
\]
These coordinate functionals are linearly independent by \cref{lem:fixed-dimension-rank}. The quotient by the kernel is the image, which is free abelian of rank $r+1$.
\end{proof}

\begin{cor}
\label[cor]{cor:HF2integral-periodicity}
Let $G$ be a finite group and let $P$ be a Sylow $2$-subgroup. Then
\[
\Lambda_G(H\uFtwo)
=(\res_P^G)^{-1}(\Lambda_P(H\uZ)).
\]
This lattice contains $\Lambda_G(H\uZ)$, and its quotient by $\Lambda_G(H\uZ)$ is free abelian of rank equal to the number of $G$-conjugacy classes of nontrivial cyclic subgroups of odd prime-power order. The integral and mod-$2$ periodicity lattices coincide if and only if $G$ is a $2$-group.
\end{cor}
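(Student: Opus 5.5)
The first claim follows by comparing the two characterizations already available. By part (2) of \cref{thm:HF2periodicity-intro}, $W\in\Lambda_G(H\uFtwo)$ exactly when $|W|=0$ and $|W^C|=0$ for every nontrivial cyclic $2$-subgroup $C\mathrel{\leq} G$. Every such $C$ is conjugate into $P$, so this condition depends only on $W|_P$. Fixed-point dimensions are conjugation invariant, so the condition says precisely that $|(W|_P)^C|=0$ for all cyclic $C\mathrel{\leq} P$, including $e$. By part (1) of \cref{thm:maintheoremC} applied to the $2$-group $P$, this is equivalent to $W|_P\in\Lambda_P(H\uZ)$. Hence $\Lambda_G(H\uFtwo)=(\res_P^G)^{-1}(\Lambda_P(H\uZ))$. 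The containment $\Lambda_G(H\uZ)\subseteq\Lambda_G(H\uFtwo)$ is immediate, since the integral conditions include all the mod-$2$ conditions.

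For the rank of the quotient, let $C_0=e,C_1,\dots,C_s$ be the conjugacy classes of cyclic $2$-subgroups, and let $C_{s+1},\dots,C_r$ be the classes of nontrivial cyclic subgroups of odd prime-power order. Consider the full fixed-dimension map $d_G\colon RO(G)\to\mathbb Z^{r+1}$ of \cref{lem:fixed-dimension-rank}. Its kernel is $\Lambda_G(H\uZ)$, and its image $I$ has rank $r+1$. The lattice $\Lambda_G(H\uFtwo)$ is the preimage under $d_G$ of $I\cap(0\times\mathbb Z^{r-s})$, the subgroup where the first $s+1$ coordinates vanish. Therefore
\[
\Lambda_G(H\uFtwo)/\Lambda_G(H\uZ)\cong I\cap(0\times\mathbb Z^{r-s}).
\]
This is a subgroup of a free abelian group, so it is free.

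To compute its rank, note that $I$ has full rank $r+1$ in $\mathbb Z^{r+1}$. Hence $I\otimes\mathbb Q=\mathbb Q^{r+1}$, and the intersection has rank $r-s$, which is the number of classes of nontrivial cyclic subgroups of odd prime-power order. The only care needed is rationalizing correctly: because $I$ has finite index, $I\cap L$ has finite index in $L=0\times\mathbb Z^{r-s}$.

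The final assertion follows from the rank formula. If $G$ is a $2$-group, there are no odd classes, so the quotient has rank $0$. Being free, it is then zero, and the two lattices coincide. Conversely, if $G$ is not a $2$-group, some odd prime divides $|G|$, and Cauchy's theorem gives a subgroup of odd prime order. Then $r-s\geq1$, the quotient is nonzero, and the lattices differ.

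I expect no real obstacle here. The only delicate point is making sure the preimage description uses the same lattice as the kernel of the coordinate projection, which is handled by the full-rank statement in \cref{lem:fixed-dimension-rank}.
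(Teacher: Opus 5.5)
Your proposal is correct and follows essentially the same route as the paper: the containment is read off from the two fixed-point criteria, freeness comes from sitting inside a free abelian group, and Cauchy's theorem handles the last assertion. The differences are minor. The paper gets the preimage description from \cref{lem:HF2periodicity-Sylow} and \cref{prop:HF2pgroup-periodicity} rather than from the numerical criteria, and it gets the rank by subtracting the ranks of $RO(G)/\Lambda_G(H\uFtwo)$ and $RO(G)/\Lambda_G(H\uZ)$, whereas you intersect the finite-index image of $d_G$ with a coordinate sublattice.
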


\begin{proof}
The displayed equality follows from \cref{lem:HF2periodicity-Sylow} and \cref{prop:HF2pgroup-periodicity}. The inclusion of $\Lambda_G(H\uZ)$ follows from \cref{thm:maintheoremC} and part \textup{(2)} of \cref{thm:HF2periodicity-intro}. The quotient by $\Lambda_G(H\uZ)$ is a subgroup of the free abelian group $RO(G)/\Lambda_G(H\uZ)$, so it is free abelian. Subtracting the ranks in these two parts gives the stated rank. This rank is zero exactly when $G$ has no nontrivial cyclic subgroup of odd prime-power order, which by Cauchy's theorem is equivalent to $G$ being a $2$-group.
\end{proof}

\begin{rmk}\label[rmk]{rmk:HF2suspension-periodicity}\rm
Applying part \textup{(2)} of \cref{thm:HF2periodicity-intro} to $V-W$ also classifies when $H\uFtwo\wedge S^V\simeq H\uFtwo\wedge S^W$: the underlying dimensions must agree, as must the fixed-point dimensions on every nontrivial cyclic $2$-subgroup.

By the module-linearity in the proof of part \textup{(2)} of \cref{thm:HF2periodicity-intro}, $\Lambda_G(H\uFtwo)$ is also the kernel of the representation-sphere map
\[
RO(G)\longrightarrow
\Pic\!\left(\operatorname{Mod}_{H\uFtwo}(Sp^G)\right),
\qquad U\longmapsto[H\uFtwo\wedge S^U],
\]
and the set of degrees containing homogeneous units in $\pi_\star^G H\uFtwo$.

If $|G|$ is odd, the periodicity lattice is exactly the subgroup of virtual representations of dimension zero.  For $G=C_3$, this includes $\lambda(1,3)-2$. Its $C_3$-fixed dimension is $-2$, so it is not an integral period.
\end{rmk}

\begin{rmk}\label[rmk]{rmk:HF2explicit-periodicity}\rm
The explicit local lattices are those in \cref{cor:cyclicperiodicity} and \cref{cor:dihedralperiodicity}. For $P=C_{2^n}$, a restricted class $U=W|_P$ is a period if and only if $|U|=0$ and $\mu_j(U)=0$ for $0\leq j\leq n-1$. For $P=C_2\times C_2$, this condition is $U=0$. For $P=C_2\ltimes C_{2^n}$ with $n\geq2$, the trivial and sign coefficients of $U$ must vanish, and $\mu_j(U)=0$ for $0\leq j\leq n-2$.
\end{rmk}

\subsection{The local \texorpdfstring{$2$}{2}-group classification}

The local coefficient calculation is inherited from \cref{sec:LocalClassification}. The periodicity results above allow us to compare every local dualizing shift with a basic one.

\begin{prop}\label[prop]{prop:HF2local-v2}
Let $P$ be a nontrivial cyclic or dihedral $2$-group and let $V\in RO(P)$. Then $(P,V)$ is $H\uFtwo$-Anderson-dualizing if and only if $V-1$ satisfies the corresponding coefficient condition in \cref{cyclicpgroup} or \cref{dihedralgoodrep}.

In particular, the basic $H\uFtwo$-Anderson-dualizing shifts are
\begin{enumerate}
    \item $P=C_{2^n}$: $\lambda(1,2^n)-1$.
    \item $P=C_2\times C_2$: $\sigma_1+\sigma_2+\sigma_3-2$.
    \item $P=C_2\ltimes C_{2^n}$, $n\geq 2$: $\rho(1,2^n)+\sigma_1-2$, where $\sigma_1$ is the sign representation with kernel the rotation subgroup.
\end{enumerate}
\end{prop}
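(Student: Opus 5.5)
The strategy is to reduce the mod-$2$ question to the integral one by comparing an arbitrary shift with a known one through the periodicity lattice. First I establish that each listed basic shift is $H\uFtwo$-Anderson-dualizing. For every nontrivial cyclic or dihedral $2$-group $P$, \cref{pgrouprep} provides an integral Anderson-dualizing shift $U_0$. These are $\lambda(1,2^n)-2$ in the cyclic case, $\sigma_1+\sigma_2+\sigma_3-3$ for $C_2\times C_2$, and $\rho(1,2^n)+\sigma_1-3$ in the dihedral case. By \cref{prop:HZtoHF2}, the shift $U_0+1$ is then $H\uFtwo$-Anderson-dualizing. This is exactly the displayed list (1)--(3). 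It also shows that $U_0=(U_0+1)-1$ satisfies the coefficient condition of \cref{cyclicpgroup} or \cref{dihedralgoodrep}.

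Next I run the affine-translate argument of \cref{prop:affine-shifts} in the mod-$2$ setting. Write $\mathcal S^{(2)}_P$ for the set of $H\uFtwo$-dualizing shifts of $P$, and fix $V_0=U_0+1\in\mathcal S^{(2)}_P$. If $V\in\mathcal S^{(2)}_P$, then comparing the equivalences $I^P_{\mathbb Z}(H\uFtwo)\wedge S^V\simeq H\uFtwo\simeq I^P_{\mathbb Z}(H\uFtwo)\wedge S^{V_0}$ and smashing with $S^{-V_0}$ gives $H\uFtwo\wedge S^{V-V_0}\simeq H\uFtwo$. Here I use that $I_{\mathbb Z}^P(H\uFtwo)\wedge S^{V_0}\simeq H\uFtwo$ is invertible in the relevant sense, so one can rewrite $I^P_{\mathbb Z}(H\uFtwo)\simeq H\uFtwo\wedge S^{-V_0}$. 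Conversely, if $V-V_0$ is an $H\uFtwo$-period, then $V$ is a shift. Therefore $\mathcal S^{(2)}_P=V_0+\Lambda_P(H\uFtwo)$.

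The key input is that $P$ is a $2$-group. By \cref{prop:HF2pgroup-periodicity}, or equivalently \cref{cor:HF2integral-periodicity}, we have $\Lambda_P(H\uFtwo)=\Lambda_P(H\uZ)$. Hence $V$ is $H\uFtwo$-dualizing if and only if $(V-1)-U_0\in\Lambda_P(H\uZ)$. By \cref{prop:affine-shifts}, this holds if and only if $V-1$ is an integral Anderson-dualizing shift. By \cref{cyclicpgroup} and \cref{dihedralgoodrep}, that is in turn equivalent to $V-1$ satisfying the stated coefficient conditions.

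The main obstacle is mild. It is justifying the cancellation step, i.e. that one may smash both equivalences with $S^{-V_0}$ and identify $I^P_{\mathbb Z}(H\uFtwo)$ with $H\uFtwo\wedge S^{-V_0}$. This is formal since $S^{V_0}$ is invertible. Everything else is bookkeeping already done in the integral case.
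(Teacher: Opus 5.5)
Your proposal is correct and follows essentially the same route as the paper. Both arguments take $V_0=U_0+1$ from \cref{pgrouprep} and \cref{prop:HZtoHF2}, use the comparison $I_{\mathbb Z}^P(H\uFtwo)\wedge S^V\simeq H\uFtwo\wedge S^{V-V_0}$, apply \cref{prop:HF2pgroup-periodicity} to identify the mod-$2$ periods with $\Lambda_P(H\uZ)$, and then finish with \cref{prop:affine-shifts} and the cyclic and dihedral restriction theorems.
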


\begin{proof}
Let $U_0$ be the corresponding basic integral shift from \cref{pgrouprep}, and put $V_0=U_0+1$. By \cref{prop:HZtoHF2}, $V_0$ is an $H\uFtwo$-Anderson-dualizing shift. Comparing with this duality gives
\[
I_{\mathbb Z}^P(H\uFtwo)\wedge S^V
\simeq H\uFtwo\wedge S^{V-V_0}.
\]
By \cref{prop:HF2pgroup-periodicity}, $V$ is a dualizing shift if and only if $V-V_0\in\Lambda_P(H\uZ)$. Since $U_0\in\mathcal S_P$, \cref{prop:affine-shifts} identifies this condition with $V-1\in\mathcal S_P$. The coefficient conditions follow from \cref{cyclicpgroup} and \cref{dihedralgoodrep}.
\end{proof}
\par

\begin{lem}\label[lem]{lem:HF2obstructions-v2}
The groups
\[
C_2\times C_2\times C_2,\qquad C_2\times C_4,\qquad Q_8
\]
are not $H\uFtwo$-Anderson-dualizing.
\end{lem}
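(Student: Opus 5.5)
The approach is to reduce to the integral obstruction calculations of \cref{blacklist}, via restriction to cyclic subgroups. Suppose $P$ is one of the three groups and $V\in RO(P)$ is an $H\uFtwo$-Anderson-dualizing shift. By restriction (the mod-$2$ analogue of \cref{subgpcrit}, which holds because $\res$ commutes with smashing and with $I_{\mathbb Z}$), the pair $(C,V|_C)$ is $H\uFtwo$-Anderson-dualizing for every cyclic subgroup $C\mathrel{\leq} P$. Every nontrivial cyclic subgroup of these groups is $C_2$ or $C_4$. So \cref{prop:HF2local-v2} applies to each such restriction: $V|_C-1$ satisfies the coefficient conditions of \cref{cyclicpgroup}. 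In particular $|V|=1$ and $\mu_0(V|_C-1)=1$.

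The second step is to observe that these are exactly the equations used in the proof of \cref{blacklist}, applied to $U=V-1$. That proof never used any global information about $U$. It only used the cyclic restriction conditions $\mu_0(U|_C)=1$ (and $\mu_j=0$) for every nontrivial cyclic $C$. Subtracting the trivial summand $1$ does not change the coefficients of nontrivial irreducibles, so the same linear systems arise.
\begin{itemize}
\item[(a)] For $C_2^3$, summing over the seven involutions gives $4\sum_{\chi\neq1}x_\chi=14$.
\item[(b)] For $C_4\times C_2$, restricting to $\langle ab\rangle$ and $\langle a^2b\rangle$ forces $y_0=y_1$ with $y_0+y_1=1$.
\item[(c)] For $Q_8$, restricting the four-dimensional irreducible $\rho$ to a $C_4$ gives $2d=1$.
\end{itemize}
Each is an integrality contradiction, so no such $V$ exists.

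An equivalent cleaner packaging is possible. For each cyclic $C$, \cref{prop:HF2local-v2} together with \cref{prop:affine-shifts} gives $V|_C-1\in\mathcal S_C$, and this is precisely the input of \cref{rmk:integral-dimension-existence}. That remark already records that the cyclic restriction calculations exclude these three groups.

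The only point needing care is the first step. Restriction must preserve the mod-$2$ dualizing property, and for $C=C_2$ the condition from \cref{prop:HF2local-v2} must be read correctly: $V|_{C_2}-1$ has $\sigma$-coefficient $2$, which matches the value $2$ used in case (a). With that bookkeeping checked, the rest is a verbatim reuse of the linear algebra in \cref{blacklist}.
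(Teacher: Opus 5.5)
Your proposal is correct and matches the paper's proof: restrict to each nontrivial cyclic subgroup, use \cref{prop:HF2local-v2} to see that $U=V-1$ satisfies the integral cyclic coefficient conditions there, and note that the proof of \cref{blacklist} uses nothing beyond these conditions. One small slip is in your summary of (b): the equation $y_0=y_1$ comes from the two noncentral order-$2$ subgroups $\langle b\rangle$ and $\langle a^2b\rangle$, not from $\langle ab\rangle$ (on which both two-dimensional representations restrict faithfully), so $\langle b\rangle$ is genuinely needed.
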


\begin{proof}
Suppose that $(G,V)$ were $H\uFtwo$-Anderson-dualizing for one of these groups, and put $U=V-1$. Restriction to every nontrivial cyclic subgroup $C\mathrel{\leq} G$, followed by \cref{prop:HF2local-v2}, shows that $U|_C$ satisfies the integral cyclic coefficient condition.  The proof of \cref{blacklist} uses only these cyclic restrictions, so the same contradiction applies to $U$.  This excludes all three groups.
\end{proof}

\begin{cor}\label{cor:HF2localgroups-v2}
A finite $2$-group is $H\uFtwo$-Anderson-dualizing if and only if it is cyclic or dihedral.  
\end{cor}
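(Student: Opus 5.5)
The corollary has two directions, and both follow quickly from the local results just established. The trivial group is covered by the convention that $C_{2^0}=e$ is cyclic. For $G=e$ we have $I_{\mathbb Z}(H\Ftwo)\simeq\Sigma^{-1}H\Ftwo$, so the shift $V=1$ works.

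For the ``if'' direction, let $P$ be a nontrivial cyclic or dihedral $2$-group. \Cref{prop:HF2local-v2} supplies an explicit shift in each case. For $C_{2^n}$ the shift is $\lambda(1,2^n)-1$. For $C_2\times C_2$ it is $\sigma_1+\sigma_2+\sigma_3-2$. For the larger dihedral groups it is $\rho(1,2^n)+\sigma_1-2$. Each of these is obtained from a basic integral shift of \cref{pgrouprep} by adding $1$, and \cref{prop:HZtoHF2} converts the integral shift into an $H\uFtwo$ one.

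For the ``only if'' direction, suppose $P$ is a $2$-group that is neither cyclic nor dihedral. By \cite[Corollary~6.2]{BoucStancuWebb17}, $P$ contains a subgroup $Q$ isomorphic to one of
\[
C_2\times C_4,\qquad Q_8,\qquad C_2\times C_2\times C_2.
\]
If $(P,V)$ were $H\uFtwo$-Anderson-dualizing, then applying $\res^P_Q$ to the equivalence $I_{\mathbb Z}^P(H\uFtwo)\wedge S^V\simeq H\uFtwo$ would show that $(Q,V|_Q)$ is $H\uFtwo$-Anderson-dualizing. This uses the mod-$2$ analogue of \cref{subgpcrit}, which has the same one-line proof, since restriction commutes with Anderson duality and with smashing with $S^V$. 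That conclusion contradicts \cref{lem:HF2obstructions-v2}, so no such $P$ is $H\uFtwo$-Anderson-dualizing.

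The only step that needs any care is the compatibility of restriction with $I_{\mathbb Z}^G$, namely $\res^G_H I_{\mathbb Z}^G(E)\simeq I_{\mathbb Z}^H(\res^G_H E)$. This is standard, because $\res^G_H I_{\mathbb Z}^G\simeq I_{\mathbb Z}^H$ and restriction is strong monoidal and preserves function spectra. Alternatively, one can avoid this point by using \cref{prop:HF2dual-v2}, which rewrites the condition as $H\uFtwo\wedge S^{1-V}\simeq H\uFtwo^*$, a statement that visibly restricts to subgroups.
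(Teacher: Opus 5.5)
Your proposal is correct and follows the paper's proof essentially step for step. Both dispose of the trivial group with shift $1$, get the cyclic and dihedral cases from \cref{prop:HF2local-v2}, and treat the remaining $2$-groups by combining the Bouc--Stancu--Webb subgroups, restriction, and \cref{lem:HF2obstructions-v2}. Your remark on why $H\uFtwo$-Anderson duality restricts to subgroups, directly or via \cref{prop:HF2dual-v2}, simply spells out the step the paper states in one phrase.
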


\begin{proof}
The trivial group is $H\uFtwo$-Anderson-dualizing with shift $1$. Therefore we may assume that the $2$-group is nontrivial.
The cyclic and dihedral groups are covered by \cref{prop:HF2local-v2}.  A finite $2$-group which is neither cyclic nor dihedral contains a subgroup isomorphic to $C_2\times C_2\times C_2$, $C_2\times C_4$, or $Q_8$ (see  \cite[Lemma~6.1 and Corollary~6.2]{BoucStancuWebb17}).  An $H\uFtwo$-Anderson duality restricts to subgroups, and \cref{lem:HF2obstructions-v2} excludes each of these three groups.
\end{proof}

\subsection{\texorpdfstring{Proofs of
part \textup{(2)} of \cref{thm:HF2groups-v2} and part \textup{(2)} of \cref{thm:HF2shifts-v2}}{Proofs of part (2) of Theorem A and part (2) of Theorem C}}

\begin{proof}[Proof of part \textup{(2)} of \cref{thm:HF2groups-v2}]
Suppose first that $G$ admits an $H\uFtwo$-Anderson-dualizing shift.  By \cref{thm:HF2Sylow-v2}, a Sylow $2$-subgroup is $H\uFtwo$-Anderson-dualizing, and \cref{cor:HF2localgroups-v2} shows that it is cyclic or dihedral.

Conversely, let $P$ be a cyclic or dihedral Sylow $2$-subgroup.  If $P=e$, then $|G|$ is odd and $\uFtwo^{*}=\uFtwo$.  The equivalence in \cref{prop:HF2dual-v2} shows that $(G,1)$ is $H\uFtwo$-Anderson-dualizing.  We may therefore assume that $P\neq e$. Let $V_P\in RO(P)$ be the corresponding basic shift from \cref{prop:HF2local-v2}.

We proceed in three steps.
\begin{enumerate}[label=\textup{(\arabic*)}]
\item Compute the fixed-point dimension function of $V_P$.
\item  Extend this function to the prime-power subgroups of $G$ and check the hypotheses of \cref{lem:prime-power-realization}.
\item Realize the function by $V\in RO(G)$, compare $V|_P$ with $V_P$ using periodicity, and apply Sylow $2$-detection.
\end{enumerate}

We first compute
\begin{equation}\label{eq:HF2basic-fixeddim}
\dim_{\mathbb R}(V_P^H)=
\begin{cases}
1,&H=e,\\
-1,&H\neq e\text{ is cyclic},\\
-2,&H\text{ is noncyclic},
\end{cases}
\qquad H\mathrel{\leq} P.
\end{equation}
For $P=C_{2^n}$, the faithful rotation representation has no fixed vectors on any nontrivial subgroup.  For $P=C_2\times C_2$, each subgroup of order $2$ is the kernel of exactly one of the three nontrivial sign representations.  For a higher dihedral group, a nontrivial rotation subgroup fixes $\sigma_1$ and no vector in $\rho(1,2^n)$, while a reflection subgroup fixes a line in $\rho(1,2^n)$ and no vector in $\sigma_1$.  A noncyclic subgroup fixes no vector in either summand.

Next, define $f\colon\mathcal P(G)\to\mathbb Z$ by
\[
f(H)=
\begin{cases}
1,&H=e\text{ or }H\text{ has odd prime-power order},\\
-1,&H\neq e\text{ is a cyclic }2\text{-group},\\
-2,&H\text{ is a noncyclic }2\text{-group}.
\end{cases}
\]
This function is invariant under conjugation. On an odd Sylow subgroup, it is the dimension function of the trivial representation. On $P$, it is the dimension function of $V_P$ by \cref{eq:HF2basic-fixeddim}.

We check the congruence in \cref{lem:prime-power-realization}. If $p$ is odd, then $f(L)=f(H)=1$. If $p=2$, the action on $H/L\cong C_2$ is trivial, so the modulus is $1$. Therefore \cref{lem:prime-power-realization} gives $V\in RO(G)$ with
\[
|V^H|=f(H)
\]
for every prime-power subgroup $H\mathrel{\leq} G$. In particular, $V|_P$ and $V_P$ have the same fixed-point dimensions on every subgroup of $P$.
By \cref{prop:HF2pgroup-periodicity}, $V|_P-V_P$ is an $H\uFtwo$-period. Comparing with the duality for $V_P$ shows that $V|_P$ is a dualizing shift for $P$. Sylow $2$-detection, \cref{thm:HF2Sylow-v2}, gives the required duality for $G$.
\end{proof}

\begin{prop}
\label[prop]{prop:HF2affine-shifts}
Let $G$ be a finite group and let $V_0\in RO(G)$ be an $H\uFtwo$-Anderson-dualizing shift. Then the set of all such shifts is
\[
V_0+\Lambda_G(H\uFtwo).
\]
\end{prop}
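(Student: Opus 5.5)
The argument mirrors the integral version, \cref{prop:affine-shifts}: the set of shifts is a torsor under the periodicity lattice. The only point to check is that the equivalences we compare can be manipulated by smashing with representation spheres, which is automatic in $G$-spectra.

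First we show that $V_0+\Lambda_G(H\uFtwo)$ consists of shifts. Let $W\in\Lambda_G(H\uFtwo)$, so $H\uFtwo\wedge S^W\simeq H\uFtwo$. Smashing this equivalence with $S^{-W}$ gives $H\uFtwo\simeq H\uFtwo\wedge S^{-W}$. Combining with the duality for $V_0$, we get
\[
I_{\mathbb Z}^G(H\uFtwo)\wedge S^{V_0+W}
\simeq H\uFtwo\wedge S^{W}
\simeq H\uFtwo .
\]
Hence $V_0+W$ is an $H\uFtwo$-Anderson-dualizing shift.

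Next we show that every shift lies in this coset. Let $V$ be any shift. Smash the duality equivalence for $V_0$ with $S^{V-V_0}$. This identifies $I_{\mathbb Z}^G(H\uFtwo)\wedge S^{V}$ with $H\uFtwo\wedge S^{V-V_0}$. The left side is also equivalent to $H\uFtwo$ by hypothesis on $V$. Therefore $H\uFtwo\wedge S^{V-V_0}\simeq H\uFtwo$, so $V-V_0\in\Lambda_G(H\uFtwo)$ by \cref{def:periodicity-lattices}.

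Nothing here is a real obstacle. $\Lambda_G(E)$ is defined purely by the existence of some $G$-equivariant equivalence, not a linear one, and each step only composes such equivalences and smashes them with invertible spheres. The one thing to check is that $\Lambda_G(H\uFtwo)$ is a subgroup, in particular closed under negation, which the first step uses. Closure under negation follows from the same smashing trick. Closure under addition follows by composing $H\uFtwo\wedge S^{W+W'}\simeq H\uFtwo\wedge S^{W}\simeq H\uFtwo$.

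If one prefers to stay within $H\uFtwo$-modules, the linearity statements in \cref{prop:HF2dual-v2} and \cref{lem:HF2periodicity-Sylow} let every equivalence above be chosen $H\uFtwo$-linear. This refinement is not needed for the statement.
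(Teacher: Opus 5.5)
Your proof is correct and is essentially the paper's argument: smashing the $V_0$-duality with $S^{V-V_0}$ gives $I_{\mathbb Z}^G(H\uFtwo)\wedge S^V\simeq H\uFtwo\wedge S^{V-V_0}$, and both inclusions follow at once. One small correction: your first step uses $H\uFtwo\wedge S^W\simeq H\uFtwo$ directly, so it does not actually need closure of $\Lambda_G(H\uFtwo)$ under negation.
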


\begin{proof}
The duality with shift $V_0$ gives
\[
I_{\mathbb Z}^G(H\uFtwo)\wedge S^V
\simeq H\uFtwo\wedge S^{V-V_0}.
\]
Therefore $V$ is a dualizing shift if and only if $V-V_0\mathrel{\in}\Lambda_G(H\uFtwo)$.
\end{proof}

\begin{proof}[Proof of part \textup{(2)} of \cref{thm:HF2shifts-v2}]
First assume that $G$ is $H\uFtwo$-Anderson-dualizing. Let $P$ be a Sylow $2$-subgroup. By part \textup{(2)} of \cref{thm:HF2groups-v2}, $P$ is cyclic or dihedral. If $P=e$, \cref{thm:HF2Sylow-v2} and \cref{prop:HF2dual-v2} give the condition $|V|=1$.

If $P\neq e$, let $V_P$ be the basic shift in \cref{prop:HF2local-v2}. By \cref{thm:HF2Sylow-v2} and \cref{prop:HF2affine-shifts}, $V$ is a dualizing shift for $G$ if and only if $V|_P-V_P$ is a period for $P$. By \cref{prop:HF2pgroup-periodicity}, \cref{lem:cyclic-dimension-detection}, and \cref{eq:HF2basic-fixeddim}, this is equivalent to
\[
|V|=1
\qquad\text{and}\qquad
|V^C|=-1
\]
for every nontrivial cyclic subgroup $C\mathrel{\leq} P$. Every cyclic $2$-subgroup of $G$ is conjugate into $P$, which proves the result in this case. The general case follows from \cref{rmk:HF2dimension-existence}.
\end{proof}

\begin{rmk}
\label[rmk]{rmk:HF2dimension-existence}\rm
The dimension conditions in part \textup{(2)} of \cref{thm:HF2shifts-v2} already imply that $G$ is $H\uFtwo$-Anderson-dualizing. More precisely, suppose that $G$ is any finite group and $V\in RO(G)$ satisfies $|V|=1$ and $|V^C|=-1$ for every nontrivial cyclic $2$-subgroup $C\mathrel{\leq} G$. Put $U=V-1$. For each such $C$, the representations $U|_C$ and $\lambda(1,|C|)-2$ have the same fixed-point dimensions on every subgroup of $C$. By \cref{thm:maintheoremC}, their difference lies in $\Lambda_C(H\uZ)$. Since $\lambda(1,|C|)-2$ is a basic integral dualizing shift, \cref{prop:affine-shifts} shows that $U|_C$ satisfies the integral cyclic coefficient condition in \cref{cyclicpgroup}.

The cyclic restriction calculations in the proof of \cref{blacklist} therefore exclude $C_2\times C_2\times C_2$, $C_4\times C_2$, and $Q_8$ as subgroups of a Sylow $2$-subgroup $P$. It follows that $P$ is cyclic or dihedral \cite[Lemma~6.1]{BoucStancuWebb17}. By part \textup{(2)} of \cref{thm:HF2groups-v2}, $G$ is $H\uFtwo$-Anderson-dualizing, and the case just proved shows that $V$ is a dualizing shift.
\end{rmk}

\begin{cor}
\label[cor]{cor:HF2Sylow-coefficients}
Let $G$ be a finite group whose Sylow $2$-subgroup $P$ is cyclic or dihedral, and let $V\in RO(G)$. If $P=e$, then $(G,V)$ is $H\uFtwo$-Anderson-dualizing if and only if $|V|=1$. If $P\neq e$, then $(G,V)$ is $H\uFtwo$-Anderson-dualizing if and only if $V|_P-1$ satisfies the cyclic coefficient condition of \cref{cyclicpgroup} when $P$ is cyclic, or the dihedral coefficient condition of \cref{dihedralgoodrep} when $P$ is dihedral.
\end{cor}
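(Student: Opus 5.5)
The plan is to reduce the corollary to the Sylow $2$-subgroup and then read off the local coefficient conditions already recorded.

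\emph{Case $P=e$.} Here $|G|$ is odd, so $\uFtwo^*=\uFtwo$. By \cref{thm:HF2Sylow-v2}, $(G,V)$ is $H\uFtwo$-Anderson-dualizing if and only if $(e,V|_e)$ is. Over the trivial group, \cref{prop:HF2dual-v2} gives $I_{\mathbb Z}(H\Ftwo)\simeq\Sigma^{-1}H\Ftwo$. Hence $I_{\mathbb Z}(H\Ftwo)\wedge S^{n}\simeq H\Ftwo$ holds exactly when $n=1$, since $H\Ftwo$ is not equivalent to any nonzero integer suspension of itself. The condition $V|_e=1$ is the same as $|V|=1$. Alternatively, part~(2) of \cref{thm:HF2shifts-v2} with an empty set of nontrivial cyclic $2$-subgroups gives this directly.

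\emph{Case $P\neq e$.} By Sylow $2$-detection, \cref{thm:HF2Sylow-v2}, $(G,V)$ is $H\uFtwo$-Anderson-dualizing if and only if $(P,V|_P)$ is. Since $P$ is a nontrivial cyclic or dihedral $2$-group, \cref{prop:HF2local-v2} identifies the second condition with $V|_P-1$ satisfying the coefficient condition of \cref{cyclicpgroup} or \cref{dihedralgoodrep}.

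The one point needing care is the case split $P\cong C_2\times C_2$ versus $C_2\ltimes C_{2^n}$ with $n\geq2$ inside \cref{dihedralgoodrep}. This distinction is already built into that theorem, so it costs nothing extra. A mild bookkeeping point is that the notation $\mu_j$ and $\sigma_i$ refers to the chosen presentation of $P$. Conjugating $P$ inside $G$ does not change the answer, because all the conditions are conjugation invariant by \cref{thm:HF2Sylow-v2}.

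I expect no real obstacle: the statement is a direct assembly of \cref{thm:HF2Sylow-v2} and \cref{prop:HF2local-v2}. If anything needs checking, it is that \cref{prop:HF2local-v2} is stated for all nontrivial cyclic $P$, including $C_2$. There the condition reads $V|_{C_2}-1=\sigma-1$, consistent with $\lambda(1,2)=2\sigma$ giving $\mu_0=1$ and $|{\cdot}|=0$.
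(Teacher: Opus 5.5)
Your argument is correct and is exactly the paper's proof: Sylow $2$-detection (\cref{thm:HF2Sylow-v2}) reduces to $(P,V|_P)$, after which \cref{prop:HF2dual-v2} settles $P=e$ and \cref{prop:HF2local-v2} gives the coefficient conditions when $P\neq e$. One slip in your closing sanity check, which is not used in the argument: for $C_2$ one has $\mu_0=\tfrac12(\text{coefficient of }\sigma)$, so the condition is $V|_{C_2}-1=2\sigma-2$, i.e.\ $V|_{C_2}=\lambda(1,2)-1=2\sigma-1$. Your $\sigma-1$ has $\mu_0=\tfrac12$.
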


\begin{proof}
Let $P$ be a Sylow $2$-subgroup of $G$.  By \cref{thm:HF2Sylow-v2}, the pair $(G,V)$ is $H\uFtwo$-Anderson-dualizing if and only if $(P,V|_P)$ is.  If $P=e$, \cref{prop:HF2dual-v2} gives the condition $|V|=1$.  If $P\neq e$, \cref{prop:HF2local-v2} identifies the condition with the cyclic coefficient condition of \cref{cyclicpgroup} or the dihedral coefficient condition of \cref{dihedralgoodrep} for $V|_P-1$.
\end{proof}

\begin{cor}
\label{cor:HF2basic-consequences-v2}
The following statements hold.
\begin{enumerate}
\item If $|G|$ is odd, then $(G,V)$ is $H\uFtwo$-Anderson-dualizing if and only if $|V|=1$.
\item If $G\mathrel{\leq} SO(3)$ is finite and $W$ is the restriction of the standard three-dimensional representation, then $(G,W-2)$ is $H\uFtwo$-Anderson-dualizing.
\end{enumerate}
\end{cor}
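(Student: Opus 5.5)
Both parts will be read off from part \textup{(2)} of \cref{thm:HF2shifts-v2}, which says that $(G,V)$ is $H\uFtwo$-Anderson-dualizing exactly when $|V|=1$ and $|V^C|=-1$ for every nontrivial cyclic $2$-subgroup $C\mathrel{\leq} G$.

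For part (1), $|G|$ is odd, so $G$ has no nontrivial cyclic $2$-subgroups and only the condition $|V|=1$ is left. One could also get this from \cref{cor:HF2Sylow-coefficients} with $P=e$: there $\uFtwo^*=\uFtwo$ because all indices are odd, and \cref{prop:HF2dual-v2} together with \cref{thm:HF2Sylow-v2} turns the criterion into $|V|=1$.

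For part (2), I would argue in one of two ways. The first uses \cref{listremark} directly: it gives that $(G,W-3)$ is integrally Anderson-dualizing, and \cref{prop:HZtoHF2} then gives that $(G,W-3+1)=(G,W-2)$ is $H\uFtwo$-Anderson-dualizing. If \cref{listremark} should not be taken as established at this point, I would use the dimension test instead. The underlying dimension is $|W-2|=1$. A nontrivial cyclic subgroup $C\mathrel{\leq} G\mathrel{\leq} SO(3)$ consists of rotations about a common axis, so $W^C$ is exactly that axis. Hence $|(W-2)^C|=1-2=-1$, and part \textup{(2)} of \cref{thm:HF2shifts-v2} applies.

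The only step that needs care is the geometric claim that a nontrivial cyclic subgroup of $SO(3)$ fixes exactly a line. Its generator is a nontrivial rotation, whose fixed space is its axis, and every element of $C$ is a power of that generator, so the whole subgroup fixes the axis and nothing more.
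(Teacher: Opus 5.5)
Your proposal is correct and follows the paper's proof. Part (1) is read off from part (2) of \cref{thm:HF2shifts-v2}, since an odd-order group has no nontrivial cyclic $2$-subgroups, and part (2) combines the integral shift $W-3$ from \cref{listremark} with \cref{prop:HZtoHF2}. Your fallback dimension-test argument is also sound: each nontrivial cyclic subgroup of $SO(3)$ fixes exactly its rotation axis, so $|(W-2)^C|=-1$, and this route has the small advantage of not depending on \cref{listremark}.
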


\begin{proof}
For an odd-order group, the Sylow $2$-subgroup is trivial, and part \textup{(2)} of \cref{thm:HF2shifts-v2} reduces the duality condition to $|V|=1$.  For a finite subgroup of $SO(3)$, \cref{listremark} gives the integral dualizing shift $W-3$, and \cref{prop:HZtoHF2} increases the shift by one.
\end{proof}

\subsection{\texorpdfstring{Universal coefficients and
shift-Frobenius duality}{Universal coefficients and shift-Frobenius duality}}

\begin{proof}[Proof of part \textup{(1)} of \cref{thm:HF2selfinjective-v2}]
Let $X$ be a $G$-spectrum.  By adjunction and the chosen $H\uFtwo$-Anderson duality, there are natural equivalences
\[
I_{\mathbb Z}^G(H\uFtwo\wedge X)
\simeq
F_G\!\left(X,I_{\mathbb Z}^G(H\uFtwo)\right)
\simeq
\Sigma^{-V}F_G(X,H\uFtwo).
\]
Apply \cref{ses:equivariantUCT} with $E=H\uFtwo$. Every orbitwise homology group is an $\Ftwo$-vector space. For any $\Ftwo$-vector space $A$, the connecting homomorphism for
\[
0\longrightarrow\mathbb Z\longrightarrow\mathbb Q
\longrightarrow\mathbb Q/\mathbb Z\longrightarrow0
\]
gives natural identifications
\[
\begin{gathered}
\operatorname{Hom}_{\mathbb Z}(A,\mathbb Z)=0,\\
\operatorname{Ext}_{\mathbb Z}^1(A,\mathbb Z)
\cong \operatorname{Hom}_{\mathbb Z}(A,\mathbb Q/\mathbb Z)
\cong \operatorname{Hom}_{\Ftwo}(A,\Ftwo).
\end{gathered}
\]
Therefore the $\operatorname{Hom}_{\mathbb Z}$ term in \cref{ses:equivariantUCT} vanishes orbitwise, while the $\operatorname{Ext}_{\mathbb Z}^1$ term is the orbitwise $\Ftwo$-linear dual.  Substituting the duality equivalence into that sequence and reindexing gives
\[
H\uFtwo^{\star+1-V}(X)
\cong
\uHom_{\Ftwo}\!\bigl({H\uFtwo}_{\star}(X),\Ftwo\bigr).
\]
Naturality of both the universal coefficient sequence and the Ext--Hom identification gives naturality in $X$, with restrictions dual to homology transfers and transfers dual to homology restrictions.
\end{proof}

Let $R_G=\pi_\star^G H\uFtwo$ be the $RO(G)$-graded coefficient ring.  For an $RO(G)$-graded $\Ftwo$-vector space $M$, define
\[
D(M)_\alpha=\operatorname{Hom}_{\Ftwo}(M_{-\alpha},\Ftwo),
\qquad
(\Sigma^W M)_\alpha=M_{\alpha-W}.
\]
The $R_G$-action on $D(R_G)$ is the contragredient action: for homogeneous $r\in(R_G)_\beta$, $\phi\in D(R_G)_\alpha$, and $x\in(R_G)_{-\alpha-\beta}$,
\[
(r\cdot\phi)(x)=\phi(xr).
\]

\begin{lem}\label[lem]{lem:HF2dualinjective-v2}
The graded $R_G$-module $D(R_G)$ is injective in the category of $RO(G)$-graded left $R_G$-modules and degree-zero homomorphisms.
\end{lem}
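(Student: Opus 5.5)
The plan is to deduce injectivity from the Hom–tensor adjunction together with exactness of linear duality over a field. Write $\mathcal M$ for the category of $RO(G)$-graded left $R_G$-modules with degree-zero maps. For a graded left module $N$, we will construct a natural isomorphism
\[
\operatorname{Hom}_{\mathcal M}\bigl(N,D(R_G)\bigr)\cong\operatorname{Hom}_{\Ftwo}\bigl(N_0,\Ftwo\bigr).
\]
The forward map is $\phi\mapsto(n\mapsto\phi(n)(1))$, where $\phi(n)\in D(R_G)_0=\operatorname{Hom}_{\Ftwo}((R_G)_0,\Ftwo)$ is evaluated on the unit $1\in(R_G)_0$. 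The inverse sends a functional $\ell$ on $N_0$ to
\[
\phi_\ell(n)(x)=\ell(x\cdot n)
\]
for $n\in N_\alpha$ and $x\in(R_G)_{-\alpha}$, so that $x\cdot n\in N_0$.

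The main check is that $\phi_\ell$ is $R_G$-linear for the contragredient action $(r\cdot\psi)(x)=\psi(xr)$. For $r\in(R_G)_\beta$ and $x$ of degree $-\alpha-\beta$, both sides evaluate to $\ell(x\cdot(r\cdot n))=\ell((xr)\cdot n)$, by associativity of the module action. One then verifies that the two constructions are mutually inverse. In one direction $\phi_\ell(n)(1)=\ell(n)$. In the other direction we need $\phi(n)(x)=\phi(x\cdot n)(1)$, and this follows from linearity, since $\phi(x\cdot n)=x\cdot\phi(n)$ and $(x\cdot\phi(n))(1)=\phi(n)(x)$.

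The sign conventions in $D$ need some care. A subtle point is whether $R_G$ is graded-commutative with signs; since we work over $\Ftwo$, signs are irrelevant. Even so, the action must be written so that it is a genuine left action: $(r s)\cdot\psi$ evaluated at $x$ is $\psi(x r s)$, which equals $(r\cdot(s\cdot\psi))(x)$. We will confirm this ordering explicitly, and it is the only place where bookkeeping could fail.

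Given the natural isomorphism, injectivity is formal. The functor $N\mapsto N_0$ is exact on $\mathcal M$, because kernels and cokernels are computed degreewise. The functor $\operatorname{Hom}_{\Ftwo}(-,\Ftwo)$ is exact on $\Ftwo$-vector spaces, since every vector space is injective over a field. Hence $\operatorname{Hom}_{\mathcal M}(-,D(R_G))$ is exact, and so $D(R_G)$ is injective in $\mathcal M$.

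No infinite-dimensionality issue arises here, because the argument never uses finiteness of the graded pieces. The identification of $D(R_G)$ with $\Sigma^{1-V}R_G$ is not needed for this lemma.
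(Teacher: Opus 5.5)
Your proposal is correct and is essentially the paper's argument. The paper identifies $D(R_G)$ with the graded coinduction of $\Ftwo$ placed in degree zero and uses that the right adjoint of the exact forgetful functor preserves injectives; your natural isomorphism $\operatorname{Hom}_{\mathcal M}(N,D(R_G))\cong\operatorname{Hom}_{\Ftwo}(N_0,\Ftwo)$ is that adjunction written out explicitly, including the check, left implicit in the paper, that the contragredient action is a left action compatible with it.
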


\begin{proof}
The category of $RO(G)$-graded $\Ftwo$-vector spaces is a product of copies of $\operatorname{Vect}_{\Ftwo}$ indexed by $RO(G)$, so every object is injective.  The forgetful functor from graded $R_G$-modules to graded $\Ftwo$-vector spaces is exact, and its right adjoint is graded coinduction. Therefore coinduction preserves injective objects. Coinducing $\Ftwo$ concentrated in degree zero gives
\[
\underline{\operatorname{Hom}}_{\Ftwo}(R_G,\Ftwo)_\alpha
=\operatorname{Hom}_{\Ftwo}((R_G)_{-\alpha},\Ftwo)
=D(R_G)_\alpha.
\]
This identifies $D(R_G)$ with a coinduced module, so it is injective.
\end{proof}

\begin{proof}[Proof of part \textup{(2)} of \cref{thm:HF2selfinjective-v2}]
Taking $X=S^0$ in part \textup{(1)} and evaluating at $G/G$ gives an isomorphism of $RO(G)$-graded $\Ftwo$-vector spaces
\[
D(R_G)\cong \Sigma^{1-V}R_G.
\]
By \cref{prop:HF2dual-v2}, the duality equivalence may be chosen to be $H\uFtwo$-linear.  For homogeneous $r\in(R_G)_\beta$, multiplication is represented by an $H\uFtwo$-module map $m_r\colon\Sigma^\beta H\uFtwo\to H\uFtwo$.  Naturality identifies precomposition by $m_r$ with $\phi\mapsto(x\mapsto\phi(xr))$.  Therefore the displayed isomorphism is $R_G$-linear.

Put $\delta=V-1$, and let
\[
\theta\colon \Sigma^{-\delta}R_G\xrightarrow{\ \cong\ }D(R_G)
\]
be the inverse of this isomorphism.  The image under $\theta$ of the suspended unit in $(\Sigma^{-\delta}R_G)_{-\delta}=(R_G)_0$ is a linear map
\[
\epsilon\colon (R_G)_\delta\longrightarrow\Ftwo.
\]
For every $\gamma\in RO(G)$, the degree $-\gamma$ part of $\theta$ is an isomorphism
\[
(R_G)_{\delta-\gamma}
\xrightarrow{\ \cong\ }
\operatorname{Hom}_{\Ftwo}((R_G)_\gamma,\Ftwo).
\]
By $R_G$-linearity and the contragredient action, this isomorphism sends $y$ to the functional $x\mapsto\epsilon(xy)$.  Therefore multiplication followed by $\epsilon$ gives the perfect pairing in the statement.  Since $S^\gamma$ is a finite $G$-CW spectrum and $\uFtwo$ has finite values, each graded piece of $R_G$ is finite-dimensional.  Therefore
\[
\dim_{\Ftwo}(R_G)_\gamma
=\dim_{\Ftwo}(R_G)_{V-1-\gamma}.
\]
This is the locally finite $RO(G)$-graded shift-Frobenius property with shift $V-1$.

The module $D(R_G)$ is injective by \cref{lem:HF2dualinjective-v2}, and grading suspension preserves injective objects.  Therefore $R_G$ is injective as a graded module over itself.
\end{proof}

\begin{cor}\label[cor]{cor:HF2finite-free-splitting}
Let $G$ be a finite group whose Sylow $2$-subgroups are cyclic or dihedral, and put $R_G=\pi_\star^G H\uFtwo$. Every finite-rank graded free submodule of an $RO(G)$-graded left $R_G$-module is a graded direct summand.
\end{cor}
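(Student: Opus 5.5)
The plan is to use the self-injectivity of $R_G$ from part \textup{(2)} of \cref{thm:HF2selfinjective-v2}, which applies because $G$ is $H\uFtwo$-Anderson-dualizing by part \textup{(2)} of \cref{thm:HF2groups-v2}. A finite-rank graded free module is a finite direct sum of shifts $\Sigma^{\alpha_i}R_G$, so I first show that such a sum is injective in the category of $RO(G)$-graded left $R_G$-modules with degree-zero maps. Grading suspension is an autoequivalence of this category, so it preserves injectives, and each $\Sigma^{\alpha_i}R_G$ is injective. A finite direct sum is a finite product, and a product of injectives is injective. Hence $F=\bigoplus_{i=1}^k\Sigma^{\alpha_i}R_G$ is injective.

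With that in hand, let $F\subseteq M$ be a finite-rank graded free submodule of a graded left module $M$. Apply injectivity of $F$ to the monomorphism $F\hookrightarrow M$ and the identity $F\to F$. This gives a degree-zero $R_G$-linear retraction $r\colon M\to F$ with $r|_F=\mathrm{id}_F$. Then $M\cong F\oplus\ker r$ as graded modules, and $F$ is a graded direct summand.

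The only point needing care is that the injectivity statement in \cref{thm:HF2selfinjective-v2} is in the same category: graded left modules with degree-zero homomorphisms. It is, since $D(R_G)$ was shown injective there in \cref{lem:HF2dualinjective-v2} and $R_G\cong\Sigma^{V-1}D(R_G)$. We also need that suspension commutes with the module structure. Since $R_G$ is graded-commutative up to the usual sign, which is trivial over $\Ftwo$, left and right modules cause no issue.

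I expect no real obstacle. The main subtlety is to remember that infinite direct sums need not be injective, which is why the statement restricts to finite rank.
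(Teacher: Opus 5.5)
Your proposal is correct and takes the same route as the paper. Part \textup{(2)} of \cref{thm:HF2groups-v2} and part \textup{(2)} of \cref{thm:HF2selfinjective-v2} make $R_G$ self-injective, injectivity is preserved by grading suspensions and finite direct sums, and the inclusion of a finite-rank graded free submodule therefore admits a degree-zero retraction.
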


\begin{proof}
By part \textup{(2)} of \cref{thm:HF2groups-v2} and part \textup{(2)} of \cref{thm:HF2selfinjective-v2}, $R_G$ is injective in the category of $RO(G)$-graded left $R_G$-modules and degree-zero homomorphisms. Grading suspensions and finite direct sums preserve injectivity. Therefore every finite-rank graded free module is injective, so its inclusion into a graded $R_G$-module admits a degree-zero retraction.
\end{proof}

For $G=C_2$, May proved self-injectivity of the coefficient ring \cite[Proposition~4.2]{May20}. Related $C_2$ and Klein-four Anderson-duality calculations appear in \cite[Propositions~3.8 and~4.8]{EllisBloor20} and \cite[Proposition~4.2]{GuillouYarnall20}. \Cref{thm:HF2selfinjective-v2} gives an Anderson-dual explanation and extends the self-injectivity statement to all finite groups with cyclic or dihedral Sylow $2$-subgroups.

\bibliographystyle{alpha}
\bibliography{References}

@misc{And69,
  author       = {Anderson, D. W.},
  title        = {Universal coefficient theorems for {K}-theory},
  note         = {Univ. California, Berkeley, Calif. (1969)},
  year         = {1969}
}

@book{IsaacsFGT,
  author    = {Isaacs, I. Martin},
  title     = {Finite Group Theory},
  series    = {Graduate Studies in Mathematics},
  volume    = {92},
  publisher = {American Mathematical Society},
  year      = {2008}
}

@article{GW65I,
  author  = {Gorenstein, Daniel and Walter, John H.},
  title   = {The characterization of finite groups with dihedral {S}ylow 2-subgroups. {I}},
  journal = {Journal of Algebra},
  volume  = {2},
  number  = {1},
  pages   = {85--151},
  year    = {1965},
  doi     = {10.1016/0021-8693(65)90027-X}
}

@article{GW65II,
  author  = {Gorenstein, Daniel and Walter, John H.},
  title   = {The characterization of finite groups with dihedral {S}ylow 2-subgroups. {II}},
  journal = {Journal of Algebra},
  volume  = {2},
  number  = {2},
  pages   = {218--270},
  year    = {1965},
  doi     = {10.1016/0021-8693(65)90019-0}
}

@article{GW65III,
  author  = {Gorenstein, Daniel and Walter, John H.},
  title   = {The characterization of finite groups with dihedral {S}ylow 2-subgroups. {III}},
  journal = {Journal of Algebra},
  volume  = {2},
  number  = {3},
  pages   = {354--393},
  year    = {1965},
  doi     = {10.1016/0021-8693(65)90015-3}
}

@article{Wal69,
  author  = {Walter, John H.},
  title   = {The characterization of finite groups with abelian {S}ylow 2-subgroups},
  journal = {Ann. of Math. (2)},
  volume  = {89},
  number  = {3},
  pages   = {405--514},
  year    = {1969},
  doi     = {10.2307/1970648}
}

@article{May20,
  author  = {May, Clover},
  title   = {A structure theorem for {$RO(C_2)$}-graded {B}redon cohomology},
  journal = {Algebr. Geom. Topol.},
  volume  = {20},
  number  = {4},
  pages   = {1691--1728},
  year    = {2020},
  doi     = {10.2140/agt.2020.20.1691}
}

@misc{EllisBloor20,
  author       = {Ellis-Bloor, Benjamin},
  title        = {The {K}lein four homotopy {M}ackey functor structure of {$H\underline{\mathbb F}_2$}},
  howpublished = {Bachelor of Philosophy (Honours) thesis, Australian National University},
  year         = {2020},
  month        = oct,
  doi          = {10.25911/RM29-M298},
  url          = {https://doi.org/10.25911/RM29-M298}
}

@article{GuillouYarnall20,
  author  = {Guillou, Bertrand and Yarnall, Carolyn},
  title   = {The {K}lein four slices of {$\Sigma^n H\underline{\mathbb F}_2$}},
  journal = {Math. Z.},
  volume  = {295},
  number  = {3--4},
  pages   = {1405--1441},
  year    = {2020},
  doi     = {10.1007/s00209-019-02433-3}
}

@article{BC76,
  author       = {Brown, Edgar H. and Comenetz, Michael},
  title        = {Pontrjagin duality for generalized homology and cohomology theories},
  journal      = {Amer. J. Math.},
  volume       = {98},
  number       = {1},
  pages        = {1--27},
  year         = {1976},
  doi          = {10.2307/2373610}
}

@article {BHSZ21,
    AUTHOR = {Beaudry, Agn\`es and Hill, Michael A. and Shi, XiaoLin Danny and
              Zeng, Mingcong},
     TITLE = {Models of {L}ubin-{T}ate spectra via real bordism theory},
   JOURNAL = {Adv. Math.},
  FJOURNAL = {Advances in Mathematics},
    VOLUME = {392},
      YEAR = {2021},
     PAGES = {Paper No. 108020, 58},
      ISSN = {0001-8708},
   MRCLASS = {55N22 (11S31 55P42)},
  MRNUMBER = {4313964},
       DOI = {10.1016/j.aim.2021.108020},
       URL = {https://doi.org/10.1016/j.aim.2021.108020},
}

@article {BHLSZ25,
    AUTHOR = {Beaudry, Agn\`es and Hill, Michael A. and Lawson, Tyler and
              Shi, XiaoLin Danny and Zeng, Mingcong},
     TITLE = {On the slice spectral sequence for quotients of norms of real
              bordism},
   JOURNAL = {J. Topol.},
  FJOURNAL = {Journal of Topology},
    VOLUME = {18},
      YEAR = {2025},
    NUMBER = {1},
     PAGES = {Paper No. e70015, 66},
      ISSN = {1753-8416,1753-8424},
   MRCLASS = {55P91 (55P92 55Q91 55T15 55T25)},
  MRNUMBER = {4877605},
       DOI = {10.1112/topo.70015},
       URL = {https://doi.org/10.1112/topo.70015},
}

@article{Greenlees92,
  author  = {Greenlees, J. P. C.},
  title   = {Some remarks on projective {M}ackey functors},
  journal = {Journal of Pure and Applied Algebra},
  volume  = {81},
  number  = {1},
  year    = {1992},
  pages   = {17--38},
  doi     = {10.1016/0022-4049(92)90132-Y}
}

@article{GreenleesMay92,
  author  = {Greenlees, J. P. C. and May, J. P.},
  title   = {Some remarks on the structure of {M}ackey functors},
  journal = {Proceedings of the American Mathematical Society},
  volume  = {115},
  number  = {1},
  year    = {1992},
  pages   = {237--243},
  doi     = {10.1090/S0002-9939-1992-1076574-2}
}

@article{Kai71,
  author       = {Kainen, Paul C.},
  title        = {Universal coefficient theorems for generalized homology and stable cohomotopy},
  journal      = {Pacific J. Math.},
  volume       = {37},
  pages        = {397--407},
  year         = {1971},
  url          = {https://projecteuclid.org/euclid.pjm/1102975793}
}

@article{Ric16,
  author       = {Ricka, Nicolas},
  title        = {Equivariant {A}nderson duality and {M}ackey functor duality},
  journal      = {Glasg. Math. J.},
  volume       = {58},
  number       = {3},
  pages        = {649--676},
  year         = {2016},
  doi          = {10.1017/S0017089515000397},
  eprint       = {1408.1581},
  archivePrefix= {arXiv},
  primaryClass = {math.AT}
}

@article{HM17,
  author       = {Hill, Michael A. and Meier, Lennart},
  title        = {The {$C_2$}-spectrum {$\mathrm{Tmf}_1(3)$} and its invertible modules},
  journal      = {Algebr. Geom. Topol.},
  volume       = {17},
  number       = {4},
  pages        = {1953--2011},
  year         = {2017},
  doi          = {10.2140/agt.2017.17.1953},
  eprint       = {1507.08115},
  archivePrefix= {arXiv},
  primaryClass = {math.AT}
}

@article {HHR,
    AUTHOR = {Hill, M. A. and Hopkins, M. J. and Ravenel, D. C.},
     TITLE = {On the nonexistence of elements of {K}ervaire invariant one},
   JOURNAL = {Ann. of Math. (2)},
  FJOURNAL = {Annals of Mathematics. Second Series},
    VOLUME = {184},
      YEAR = {2016},
    NUMBER = {1},
     PAGES = {1--262},
      ISSN = {0003-486X},
   MRCLASS = {55P91 (55N22 55P42 55Q45 55T15 55U35 57R15)},
  MRNUMBER = {3505179},
MRREVIEWER = {Paul G. Goerss},
       DOI = {10.4007/annals.2016.184.1.1},
       URL = {https://doi.org/10.4007/annals.2016.184.1.1},
}

@article{Zen18,
  author       = {Zeng, Mingcong},
  title        = {Equivariant {E}ilenberg--{M}ac {L}ane spectra in cyclic {$p$}-groups},
  journal={ArXiv 1710.01769},
  year         = {2018},
  eprint       = {1710.01769},
  archivePrefix= {arXiv},
  primaryClass = {math.AT}
}

@article{Sto12,
  author       = {Stojanoska, Vesna},
  title        = {Duality for topological modular forms},
  journal      = {Documenta Math.},
  volume       = {17},
  pages        = {271--311},
  year         = {2012},
  url          = {https://emis.de/journals/DMJDMV/vol-17/09.html}
}

@article{HS14,
  author       = {Heard, Drew and Stojanoska, Vesna},
  title        = {{$K$}-theory, reality, and duality},
  journal      = {J. K-Theory},
  volume       = {14},
  number       = {3},
  pages        = {526--555},
  year         = {2014},
  doi          = {10.1017/is014007001jkt275},
  eprint       = {1401.2581},
  archivePrefix= {arXiv},
  primaryClass = {math.AT}
}

@article{Liu23,
  author        = {Liu, Yutao},
  title         = {Computing equivariant homotopy with a splitting method},
  journal       = {Topology and its Applications},
  volume        = {334},
  pages         = {Paper No. 108554},
  year          = {2023},
  doi           = {10.1016/j.topol.2023.108554},
  eprint        = {2203.14361},
  archivePrefix = {arXiv},
  primaryClass  = {math.AT},
}

@incollection{GS18,
  author       = {Greenlees, J. P. C. and Stojanoska, Vesna},
  title        = {Anderson and {G}orenstein duality},
  booktitle    = {Geometric and Topological Aspects of the Representation Theory of Finite Groups},
  series       = {Springer Proc. Math. Stat.},
  volume       = {242},
  pages        = {105--130},
  year         = {2018},
  publisher    = {Springer},
  doi          = {10.1007/978-3-319-94033-5_5},
  eprint       = {1705.02664},
  archivePrefix= {arXiv},
  primaryClass = {math.AT}
}

@article{Bauer89,
  author  = {Bauer, Stefan},
  title   = {A linearity theorem for group actions on spheres with applications to homotopy representations},
  journal = {Commentarii Mathematici Helvetici},
  volume  = {64},
  number  = {1},
  pages   = {167--172},
  year    = {1989},
  url     = {https://eudml.org/doc/140146}
}

@article{ReehYalcin18,
  author  = {Reeh, Sune Precht and Yal\c{c}{\i}n, Erg\"un},
  title   = {Representation rings for fusion systems and dimension functions},
  journal = {Mathematische Zeitschrift},
  volume  = {288},
  number  = {1--2},
  pages   = {509--530},
  year    = {2018},
  doi     = {10.1007/s00209-017-1898-8},
  eprint  = {1603.08237},
  archivePrefix = {arXiv},
  primaryClass = {math.AT}
}

@article{BoucStancuWebb17,
  author  = {Bouc, Serge and Stancu, Radu and Webb, Peter J.},
  title   = {On the projective dimensions of {M}ackey functors},
  journal = {Algebras and Representation Theory},
  volume  = {20},
  number  = {6},
  pages   = {1467--1481},
  year    = {2017},
  doi     = {10.1007/s10468-017-9695-y},
  eprint  = {1503.03955},
  archivePrefix = {arXiv},
  primaryClass = {math.RT}
}

@misc{Angeltveit22,
  author  = {Angeltveit, Vigleik},
  title   = {Recovering a cohomological {M}ackey functor from its restriction to {S}ylow subgroups},
  year    = {2022},
  eprint  = {2201.01444},
  archivePrefix = {arXiv},
  primaryClass = {math.AT},
  note    = {arXiv:2201.01444}
}

@misc{AngeltveitPicard,
  author        = {Angeltveit, Vigleik},
  title         = {The {P}icard group of the category of {$C_n$}-equivariant stable homotopy theory},
  year          = {2021},
  eprint        = {2109.04625},
  archivePrefix = {arXiv},
  primaryClass  = {math.AT},
  note          = {arXiv:2109.04625}
}

@article{BasuGhosh24,
  author        = {Basu, Samik and Ghosh, Surojit},
  title         = {Equivariant cohomology for cyclic groups of square-free order},
  journal       = {Res. Math. Sci.},
  volume        = {11},
  pages         = {Paper No. 29},
  year          = {2024},
  doi           = {10.1007/s40687-024-00443-0},
  eprint        = {2006.09669},
  archivePrefix = {arXiv},
  primaryClass  = {math.AT}
}

@incollection{Enguehard86,
  author    = {Enguehard, Michel},
  title     = {Caract{\'e}risation des groupes de {Ree}},
  booktitle = {R{\'e}vision dans les groupes finis: groupes du type de {Lie} de rang 1},
  series    = {Ast{\'e}risque},
  number    = {142--143},
  pages     = {49--139},
  publisher = {Soci{\'e}t{\'e} Math{\'e}matique de France},
  year      = {1986},
  url       = {https://www.numdam.org/item/AST_1986__142-143__49_0/}
}

@article{Steinberg60,
  author  = {Steinberg, Robert},
  title   = {Automorphisms of finite linear groups},
  journal = {Canadian Journal of Mathematics},
  volume  = {12},
  pages   = {606--615},
  year    = {1960},
  doi     = {10.4153/CJM-1960-054-6}
}

@unpublished{HogleMayZou,
  author = {Hogle, Eric and May, Clover and Zou, Foling},
  title  = {{$RO(G)$}-graded cohomology of a point is self-injective},
  note   = {Preprint}
}

\end{document}